\newtheorem{thm}{Theorem}[section]
\newtheorem{lem}[thm]{Lemma}
\newtheorem{cor}[thm]{Corollary}
\newtheorem{rem}[thm]{Remark}
\numberwithin{equation}{section}
\newcommand{\mcf}{\mathcal{F}}
\newcommand{\mci}{\mathcal{I}}
\newcommand{\mcl}{\mathcal{L}}
\newcommand{\mbbh}{\mathbb{H}}
\newcommand{\mbbr}{\mathbb{R}}
\newcommand{\mbbzp}{\mathbb{Z}_{+}}
\newcommand{\al}{\alpha} \newcommand{\be}{\beta} 
\newcommand{\ep}{\epsilon} \newcommand{\ve}{\varepsilon}
\newcommand{\vp}{\varphi} 
\newcommand{\del}{\delta} \newcommand{\D}{\Delta} 
\newcommand{\sig}{\sigma}  
\newcommand{\gam}{\gamma} 
\newcommand{\lam}{\lambda} 
\newcommand{\p}{\partial}
\newcommand{\ra}{\rangle} \newcommand{\la}{\langle}
\newcommand{\wc}{\Rightarrow} 
\newcommand{\cil}{\xrightarrow{\mcl}} 
\newcommand{\cip}{\xrightarrow{p}} 
\def\ds#1{\displaystyle{#1}}
\def\nn{\nonumber}
\newcommand{\pr}{P_{\theta}} \newcommand{\E}{E_{\theta}}
\newcommand{\sgn}{{\rm sgn}}
\def\diag{{\rm diag}}
\def\sumj{\sum_{j=1}^{n}}
\def\lp{L\'evy process}
\def\cadlag{c\`adl\`ag}
\def\ssou{ssOU}
\newcommand{\tz}{\theta_{0}}
\newcommand{\mz}{\mu_{0}}
\newcommand{\tes}{\hat{\theta}_{n}}
\newcommand{\aes}{\hat{\alpha}_{n}}
\newcommand{\bes}{\hat{\beta}_{n}}
\newcommand{\mes}{\hat{\mu}_{n}}
\newcommand{\ses}{\hat{\sigma}_{n}}
\newcommand{\les}{\hat{\lambda}_{n}}
\newcommand{\AllHistFigure}[1]{%
\begin{figure}[p]
\centering
\caption{
Histograms of parameter estimators (Euler-QMLE), $\alpha=#1$.
Each row corresponds to a parameter, and columns correspond to
$n=500,1000,2000$.
}
\label{fig:hist-all-alpha-#1}

\textbf{$\lambda$}\par\vspace{1mm}
\begin{minipage}{0.32\linewidth}\centering
\includegraphics[width=\linewidth]{fig/hist_Euler_MLE_lambda_#1_5_timescale.pdf}\\[-1mm]
{\small $n=500$}
\end{minipage}\hfill
\begin{minipage}{0.32\linewidth}\centering
\includegraphics[width=\linewidth]{fig/hist_Euler_MLE_lambda_#1_10_timescale.pdf}\\[-1mm]
{\small $n=1000$}
\end{minipage}\hfill
\begin{minipage}{0.32\linewidth}\centering
\includegraphics[width=\linewidth]{fig/hist_Euler_MLE_lambda_#1_20_timescale.pdf}\\[-1mm]
{\small $n=2000$}
\end{minipage}

\vspace{2mm}

\textbf{$\mu$}\par\vspace{1mm}
\begin{minipage}{0.32\linewidth}\centering
\includegraphics[width=\linewidth]{fig/hist_Euler_MLE_mu_#1_5_timescale.pdf}
\end{minipage}\hfill
\begin{minipage}{0.32\linewidth}\centering
\includegraphics[width=\linewidth]{fig/hist_Euler_MLE_mu_#1_10_timescale.pdf}
\end{minipage}\hfill
\begin{minipage}{0.32\linewidth}\centering
\includegraphics[width=\linewidth]{fig/hist_Euler_MLE_mu_#1_20_timescale.pdf}
\end{minipage}

\vspace{2mm}

\textbf{$\alpha$}\par\vspace{1mm}
\begin{minipage}{0.32\linewidth}\centering
\includegraphics[width=\linewidth]{fig/hist_Euler_MLE_alpha_#1_5_timescale.pdf}
\end{minipage}\hfill
\begin{minipage}{0.32\linewidth}\centering
\includegraphics[width=\linewidth]{fig/hist_Euler_MLE_alpha_#1_10_timescale.pdf}
\end{minipage}\hfill
\begin{minipage}{0.32\linewidth}\centering
\includegraphics[width=\linewidth]{fig/hist_Euler_MLE_alpha_#1_20_timescale.pdf}
\end{minipage}

\vspace{2mm}

\textbf{$\sigma$}\par\vspace{1mm}
\begin{minipage}{0.32\linewidth}\centering
\includegraphics[width=\linewidth]{fig/hist_Euler_MLE_sigma_#1_5_timescale.pdf}
\end{minipage}\hfill
\begin{minipage}{0.32\linewidth}\centering
\includegraphics[width=\linewidth]{fig/hist_Euler_MLE_sigma_#1_10_timescale.pdf}
\end{minipage}\hfill
\begin{minipage}{0.32\linewidth}\centering
\includegraphics[width=\linewidth]{fig/hist_Euler_MLE_sigma_#1_20_timescale.pdf}
\end{minipage}

\vspace{2mm}

\textbf{$\beta$}\par\vspace{1mm}
\begin{minipage}{0.32\linewidth}\centering
\includegraphics[width=\linewidth]{fig/hist_Euler_MLE_beta_#1_5_timescale.pdf}
\end{minipage}\hfill
\begin{minipage}{0.32\linewidth}\centering
\includegraphics[width=\linewidth]{fig/hist_Euler_MLE_beta_#1_10_timescale.pdf}
\end{minipage}\hfill
\begin{minipage}{0.32\linewidth}\centering
\includegraphics[width=\linewidth]{fig/hist_Euler_MLE_beta_#1_20_timescale.pdf}
\end{minipage}
\end{figure}
}
\title[Asymptotic inference for skewed stable OU process]
{Asymptotic inference for skewed stable Ornstein-Uhlenbeck process}
\author[E. Kawamo]{Eitaro Kawamo}
\address{
Joint Graduate School of Mathematics for Innovation, Kyushu University, 744 Motooka Nishi-ku Fukuoka 819-0395, Japan}
\author[H. Masuda]{Hiroki Masuda}
\address{Graduate School of Mathematical Sciences, University of Tokyo, 3-8-1 Komaba Meguro-ku Tokyo 153-8914, Japan}
\email{hmasuda@ms.u-tokyo.ac.jp}
\date{\today}
\begin{document}
\mathtoolsset{showonlyrefs=true}

\begin{abstract}
We consider the parametric estimation of the Ornstein-Uhlenbeck process driven by a non-Gaussian $\al$-stable L\'{e}vy process with the stable index $\al>1$ and possibly skewed jumps, based on a discrete-time sample over a fixed period. 
By employing a suitable non-diagonal normalizing matrix, we present the following: the parametric family satisfies the local asymptotic mixed normality with a non-degenerate Fisher information matrix; there exists a local maximum of the log-likelihood function which is asymptotically mixed-normal; the local maximum is asymptotically efficient in the sense that it has maximal concentration around the true value over symmetric convex Borel subsets. In the proof, we prove the asymptotic equivalence between the genuine likelihood and the much simpler Euler-type quasi-likelihood. Furthermore, we propose a simple moment-based method to estimate the parameters of the driving stable L\'{e}vy process, which serves as an initial estimator for numerical search of the (quasi-)likelihood, reducing the computational burden of the optimization to a large extent. 
We also present simulation results, which illustrate the theoretical results and highlight the advantages and disadvantages of the genuine and quasi-likelihood approaches.
\end{abstract}

\maketitle
\section{Introduction}

We consider the parameter estimation of the skewed stable OU ({\ssou}) process
\begin{equation}\label{hm:ssou_sde}
dY_t = (\mu  - \lam Y_t)dt + \sig dJ_t
\end{equation}
based on high-frequency observations $(Y_{t_j})_{j=0}^{n}$, where $t_j = t_{n,j} \coloneqq jh$ with $h = h_n \coloneqq T/n$ and a fixed interval $[0,T]$ and where $J=(J_t)_{t\in[0,T]}$ is a stable {\lp} with stability index $\alpha \in (1,2)$ and skewness parameter $\beta \in (-1,1)$. 
The primary objectives of this paper are two-fold:
\begin{itemize}
    \item Under assumptions $\alpha > 1$ and $\beta = 0$, we show that the log-quasi-likelihood function based on the Euler approximation has a local quadratic expansion expressed by the score function and information matrix. Moreover, there exist local maxima of the likelihood and quasi-likelihood that are asymptotically equivalent and asymptotically mixed-normally distributed if we employ a class of non-diagonal rate matrices.
    \item To reduce the computational burden, we propose a moment estimator of the noise parameter $(\alpha, \sigma, \beta)$ and use them as initial values (plug-in estimators) for optimizing the (quasi-)likelihood of $(\lambda, \mu)$. This hybrid approach balances computational feasibility and estimation precision.
\end{itemize}

The present study was theoretically motivated by \cite{BroMas18}, \cite{CleGlo20}, and \cite{Mas23} together with the references therein. The papers rigorously established the asymptotic properties of the (quas-)likelihood-based inference procedures for models driven by a symmetric stable {\lp} based on high-frequency observations over a fixed period. In \cite{BroMas18}, the local asymptotic normality (LAN) of the true likelihood for a stable {\lp} with drift was proved, along with constructing a practical moment-based one-step estimator that achieved accuracy comparable to the maximum likelihood estimator. Later, in \cite{Mas23}, the local asymptotic mixed normality (LAMN) property of the true likelihood was established for the OU process driven by a symmetric stable {\lp}, and it was shown that efficient estimators can be constructed via $k$-step improvements from quasi-likelihood-based initial estimators. These results provided a theoretical basis for inference in symmetric stable models and motivate the extension to skewed settings considered in this paper.

Originally, this paper was motivated by an application of non-Gaussian stochastic process models to neuroimaging given in \cite{CosBocCauFer2016}. Stochastic modelling of neural signals, particularly under resting-state conditions, has increasingly employed continuous-time frameworks that incorporate both deterministic and stochastic components. The previous work \cite{CosBocCauFer2016} modeled fMRI fluctuations using a generalized Langevin equation with an $\alpha$-stable driving {\lp}. There, the authors claimed that a linear drift can well approximate the deterministic component, consistent with the Ornstein-Uhlenbeck process, and the stochastic component reflects heavy-tailed non-Gaussian fluctuations that are captured by an $\alpha$-stable distribution. However, the exhibited parameter estimation results are not supported by theoretical justification for the asymptotic properties of the estimators. Indeed, to the best of the authors' knowledge, there is no previous theoretical study on statistical inference for the {\ssou} process, and the results in the present paper handling a possibly skewed driving noise are new. 


The rest of this paper is organized as follows.
In Section \ref{hm:sec_preliminaries}, we begin with some necessary background of our model. Section \ref{hm:sec_main} presents the main results, the local asymptotics related to the genuine likelihood and quasi-likelihood. In Section \ref{hm:sec_ME}, we will describe the explicit construction of the easy-to-compute preliminary estimator of the noise parameter. 
Section \ref{hm:sec_sim} presents some numerical experiments. 
In Section \ref{hm:sec_time.scale}, we will discuss estimating the model-time scale in our original model setup where $T>0$ is known.
Finally, Section \ref{sec:main_result_proof} is devoted to the proof of the main result, Theorem \ref{thm:main_result}.

\section{Preliminaries}
\label{hm:sec_preliminaries}

\subsection{Skewed stable distribution}

We denote by $S^{0}_{\alpha} (\beta,\sigma,\mu)$ the non-Gaussian stable distribution corresponding to the characteristic function
\begin{equation}\label{hm:0stable.cf}
\varphi (u;\al,\be,\sig,\mu) \coloneqq 
\exp \left\{ i \mu u - (\sigma |u|)^{\alpha} \left( 1 + i \beta \sgn(u) t_{\al} \left( |\sigma u|^{1 - \alpha} - 1 \right) \right) \right\},
\end{equation}
where 
\begin{equation}
    (\al,\be,\sig,\mu)\in(0,2)\times[-1,1]\times(0,\infty)\times\mbbr.
\end{equation}
The parametrization \eqref{hm:0stable.cf} introduced by \cite{Nol98} is continuous; in particular, it is continuous at $\al=1$ in $\al$ even when $\beta\ne 0$.
The possibly skewed Cauchy case is defined as the limit for $\al\to 1$ 
through $\lim_{\alpha \to 1} \tan(\al\pi/2) (|u|^{1 - \alpha} - 1) = (2/\pi)\log |u|$:
\begin{align}
\varphi (u;1,\beta,\sig,\mu) &= \lim_{\al\to 1}\varphi (u;\al,\be,\sig,\mu)
\nn\\
&=\exp \left\{ i \mu u - \sigma |u| \left( 1 + i \beta \dfrac{2}{\pi} \sgn (u) \log (\sigma |u|) \right) \right\}.
\label{hm:0stable.cf.cauchy}
\end{align}
It is obvious from \eqref{hm:0stable.cf} that for any $a\ne 0$ and $b\in\mbbr$,
\begin{equation}\label{hm:standard.Z}
X\sim S_\al^0(\beta,\sig,\mu) 
\quad \wc \quad 
aX+b \sim S_{\al}^{0}\left(\beta\, \sgn(a), |a|\sig, a\mu+b \right).
\end{equation}
Here and in what follows, $\xi\sim F$ means that a random variable $\xi$ obeys a distribution $F$.

It is known that $S_{\alpha} (\beta,\sigma,\mu)$ for $\beta\in(-1,1)$ admits an everywhere positive smooth Lebesgue density: see \cite{Sat99} or \cite{Sha69}. We denote by the probability density function of $S^{0}_{\al}(\be, 1, 0)$:
\begin{equation}\label{hm:phi_def}
    \phi_{\al,\be}(x) \coloneqq \frac{1}{2\pi} \int e^{-ixu}\vp(u;\al,\beta,1,0)du.
\end{equation}
Let us denote by $\p_a$ the partial differentiation with respect to a variable $a$.
The density $\phi_{\al,\beta}$ is smooth in $(x,\al,\beta)\in\mbbr\times(0,2)\times(-1,1)$, and for each $j,k,l\in \mbbzp$ we have
\begin{equation}
\sup_{x\in\mbbr}
\frac{(1+|x|)^{\al+1+j}}{ \{1+\log(1+|x|)\}^k} \big|\p_x^{j}\p_{\al}^{k}\p_{\be}^l\phi_{\al,\beta}(x)\big| < \infty
\nn
\end{equation}
and also
\begin{equation}
\p_x^{j}\p_{\al}^{k}\p_{\be}^l\phi_{\al,\beta}(x) 
\asymp \frac{\{\log|x|\}^k}{|x|^{\al+1+j}}
\end{equation}
for both $x\to\pm\infty$, where $\asymp$ denotes the asymptotic equivalence up to a multiplicative constant. 
We refer to \cite{DuM73} and \cite[p.1949]{AndCalDav09} for more related details.

\subsection{Skewed stable L\'{e}vy and Ornstein-Uhlenbeck processes}
\label{hm:sec_slp}

It is obvious from \eqref{hm:0stable.cf} that $S_{\alpha} (\beta,\sigma,\mu)$ is infinitely divisible for each admissible parameter value, implying that there exists a {\lp} whose distribution at time $1$ is $S_{\alpha} (\beta,\sigma,\mu)$.
Let
\begin{equation}
    t_{\al} \coloneqq \tan \frac{\al \pi}{2}.
\end{equation}

\begin{lem}
\label{hm:ssou_inc}
Let $J=(J_t)_{t\in[0,T]}$ denote a {\lp} such that $J_1 \sim S_\al^0(\beta,1,0)$. 
Then, for each $h>0$,
\begin{equation}\label{hm:Jh_dist}
J_h \sim 
\left\{
\begin{array}{ll}
\ds{S_\al^0\left(\beta,\, h^{1/\al},\, \beta (h^{1/\al}-h) t_{\al}\right)}
& \al\ne 1, \\[2mm]
\ds{S_1^0\left(\beta,\, h,\, \frac{2}{\pi}h\beta\log h\right)}
& \al= 1.
\end{array}
\right.
\end{equation}
\end{lem}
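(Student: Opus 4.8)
The plan is to use the defining property of a \lp: since $J_1\sim S_\al^0(\beta,1,0)$ and $J$ has independent and stationary increments, for every $h>0$ and $u\in\mbbr$ we have
\[
E\bigl[e^{iuJ_h}\bigr]=\bigl(E\bigl[e^{iuJ_1}\bigr]\bigr)^{h}=\vp(u;\al,\be,1,0)^{h},
\]
the right-hand side being unambiguous because $\vp(\,\cdot\,;\al,\be,1,0)$ is continuous, equals $1$ at $u=0$, and never vanishes, so that $\vp^{h}=\exp(h\log\vp)$ with the continuous branch of $\log\vp$ vanishing at $u=0$. By the uniqueness theorem for characteristic functions it then suffices to identify the admissible triple $(\be',\sig',\mu')$ with $\vp(u;\al,\be,1,0)^{h}=\vp(u;\al,\be',\sig',\mu')$ for all $u$.

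First I would handle $\al\ne1$. Reading off from \eqref{hm:0stable.cf},
\[
\log\vp(u;\al,\be,1,0)^{h}=-h|u|^{\al}\Bigl(1+i\be\,\sgn(u)\,t_\al\bigl(|u|^{1-\al}-1\bigr)\Bigr),
\]
whereas $\log\vp(u;\al,\be',\sig',\mu')=i\mu'u-(\sig'|u|)^{\al}\bigl(1+i\be'\,\sgn(u)\,t_\al(|\sig'u|^{1-\al}-1)\bigr)$. Comparing real parts gives $(\sig')^{\al}=h$, i.e.\ $\sig'=h^{1/\al}$; with this choice the imaginary parts decompose into a term of order $|u|^{\al}$ and a term linear in $u$, the former forcing $\be'=\be$ and the latter, after simplifying with $\sgn(u)|u|=u$ and $h\cdot h^{(1-\al)/\al}=h^{1/\al}$, giving $\mu'=\be(h^{1/\al}-h)t_\al$. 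This is precisely the asserted law for $\al\ne1$.

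For $\al=1$ I would repeat the computation starting from the Cauchy form \eqref{hm:0stable.cf.cauchy}: raising it to the power $h$ produces the exponent $-h|u|\bigl(1+i\be\tfrac{2}{\pi}\sgn(u)\log|u|\bigr)$, while $\log\vp(u;1,\be',h,\mu')=i\mu'u-h|u|\bigl(1+i\be'\tfrac{2}{\pi}\sgn(u)(\log h+\log|u|)\bigr)$; matching the $\log|u|$ contributions gives $\be'=\be$ and the remaining identity $i\mu'u=i\be\tfrac{2}{\pi}h|u|\sgn(u)\log h$ gives $\mu'=\tfrac{2}{\pi}h\be\log h$. Equivalently one can pass to the limit $\al\to1$ in the formula obtained above, which is legitimate because Nolan's parametrization is continuous at $\al=1$ and $\lim_{\al\to1}t_\al(|u|^{1-\al}-1)=(2/\pi)\log|u|$. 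There is no substantive obstacle in this proof; the only point demanding care is the bookkeeping of the skewness term $t_\al(|\sig u|^{1-\al}-1)$, together with the need to work throughout in the $S^{0}$ parametrization, in which the location parameter transforms as in \eqref{hm:standard.Z} and the matching of exponents stays transparent. (An alternative, equally short, is to express $S^{0}_\al(\be,1,0)$ in the classical parametrization, use the additivity of the location under convolution of stable laws sharing the same $(\al,\be)$, and convert back.)
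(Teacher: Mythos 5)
Your proposal is correct and follows exactly the route the paper takes: the paper's proof is a one-line remark that \eqref{hm:Jh_dist} follows from direct calculation of the characteristic function $u\mapsto\vp(u;\al,\be,1,0)^h$ via \eqref{hm:0stable.cf} and \eqref{hm:0stable.cf.cauchy}, and your argument simply carries out that matching of parameters in detail (correctly, in both the $\al\ne1$ and $\al=1$ cases).
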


\begin{proof}
\eqref{hm:Jh_dist} can be derived by direct calculation of the $J_h$'s 
characteristic function $u \mapsto \vp(u;\al,\beta,1,0)^h$ 
through the expressions \eqref{hm:0stable.cf} and \eqref{hm:0stable.cf.cauchy}.
\end{proof}

It follows from \eqref{hm:Jh_dist} that the distribution $\mcl(J_h)$ for $h\ne 1$ has an extra non-trivial drift when $\beta\ne 0$. For each $h>0$, we have $(h^{1/\al}-h) \tan(\al\pi/2) \to (2/\pi)h \log h$ as $\al\to 1$ by l'H\^{o}pital's rule. In this sense, the case $\al\ne 1$ in \eqref{hm:Jh_dist} is the unified expression:
\begin{equation}\label{hm:Jh_dist_unified}
J_h \sim S_\al^0\left(\beta,\, h^{1/\al},\, \beta (h^{1/\al}-h) t_{\al}\right),\qquad \al\in(0,2).
\end{equation}
Moreover, for fixed $\al$ with $\beta\ne 0$, the drift in $\mcl(J_h)$ satisfies that for $h\to 0$,
\begin{equation}
    \beta (h^{1/\al}-h) t_{\al} = \left\{
    \begin{array}{cc}
    O(h^{1/\al}) & \al>1, \\
    O(h \overline{l}) & \al=1,  \\
    O(h) & \al<1.
    \end{array}
    \right.
\end{equation}
This fact will be used later.

\medskip

Next, we consider the {\ssou} process $Y=(Y_t)_{t\in[0,T]}$ driven by a {\lp} $J$ satisfying \eqref{hm:Jh_dist}:
\begin{equation}\label{hm:ou.sde}
dY_t = (\mu - \lam Y_t)dt + \sig dJ_t,
\end{equation}
where, just for simplicity, the initial variable $Y_0=y_0\in\mbbr$ is assumed to be a constant.
As is well-known, the Markovian stochastic differential equation \eqref{hm:ou.sde} admits the explicit solution: for each $j\le n$,
\begin{align}
Y_{t_j}&= e^{-\lambda h}Y_{t_{j-1}} + \frac{\mu}{\lambda}(1- e^{-\lambda h}) 
+ \sigma \int_{t_{j-1}}^{t_j} e^{-\lambda(t_j-u)}dJ_u,
\label{eq:ssOU}
\end{align}
where the last term on the right-hand side is the L\'{e}vy integral, defined through the following limit in probability: for $f_j(s):=\exp(-\lam(t_j - s))$,
\begin{equation}
\int_{t_{j-1}}^{t_j} f_j(u) dJ_u
:= \lim_{m \rightarrow \infty} \sum_{k=1}^{m}f\left(t_{j-1}+
\frac{k-1}{m}h \right) \left(J_{t_{j-1}+\frac{k}{m}h} - J_{t_{j-1}+\frac{k-1}{m}h}\right).
\end{equation}
See \cite{SatYam83} for details.

Let 
\begin{equation}
    \theta \coloneqq (\lam,\mu,\al,\sig, \be) \in \Theta,
\end{equation}
where $\Theta$ is a bounded convex domain whose closure satisfies
\begin{equation}\label{hm:Theta_closure}
\overline{\Theta} \subset \mbbr\times\mbbr\times (1,2)\times(0,\infty)\times(-1,1).
\nonumber
\end{equation}
Note that we are excluding $\al\in(0,1]$ from the admissible region of $\al$
\footnote{We assumed this throughout to make technical arguments concise.}. 
Notably, it does not matter in our study whether or not the process $Y$ is ergodic, that is, $\lam>0$ or not.

Let $\pr$ denote the distribution of $(J,Y)$ corresponding to $\theta$; in particular, $J=(J_t)_{t\in[0,T]}$ a {\lp} such that $J_1 \sim S_\al^0(\beta,1,0)$ under $\pr$.
We set $(\mcf_t)_{t\le T}$ to be the natural filtration generated by $J$: 
\begin{equation}
    \mcf_t:=\sig(J_s:\, s\le t),
\end{equation}
which is known to satisfy the usual hypothesis; see \cite[Theorem I.31]{Pro05}. Thus, we are given the family $\{(\Omega,\mcf,(\mcf_t)_{t\le T},\pr)\}_{\theta\in\overline{\Theta}}$ of a filtered probability spaces. 
We will denote by $\E$ the expectation concerning $\pr$. 
Moreover, we will denote by $\cil$ and $\cip$ the convergences in law and in probability, respectively; all stochastic convergences in probability will be taken under $\pr$ unless otherwise mentioned.

Now, our objective is to estimate the true parameter (assumed to exist) 
\begin{equation}
    \tz \coloneqq (\lam_0,\mz, \al_0,\sig_0, \be_0) \in \Theta
\end{equation}
based on a sample $(Y_{t_j})_{j=0}^{n}$.

To specify the conditional distribution $\mcl(Y_{t_j}|Y_{t_{j-1}})$ under $\pr$, we recall the following lemma; it is more or less well-known, but we give the proof for the sake of reference.

\begin{lem}
\label{lem-p1} 
Let $\vp_1(u):=\vp(u;\al,\beta,1,0)$ and let $f: [0, \infty) \rightarrow \mbbr$ be a real-valued measurable function. Then, for each $t>0$,
\begin{equation}
\log \E \left[ \exp \left( iu \int_{0}^{t} f(s)dJ_s \right) \right] = \int_{0}^{t} \log\vp_{1}\left(u f(s)\right)\,ds
\end{equation}
\end{lem}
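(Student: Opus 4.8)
The plan is to prove the identity first for step functions — where it is simply the multiplicativity of the characteristic function under independent stationary increments — and then to pass to a general integrand through the Riemann-sum approximation that defines the L\'evy integral.

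First I would take $f=\sum_{k=1}^{m}c_k\mathbf{1}_{(s_{k-1},s_k]}$ for a partition $0=s_0<\dots<s_m=t$. Direct evaluation of the defining Riemann sums gives $\int_0^t f(s)\,dJ_s=\sum_{k=1}^m c_k(J_{s_k}-J_{s_{k-1}})$, and by independence and stationarity of the increments of $J$,
\[
\E\Bigl[\exp\Bigl(iu\int_0^t f(s)\,dJ_s\Bigr)\Bigr]=\prod_{k=1}^m\E\bigl[\exp(iuc_k J_{s_k-s_{k-1}})\bigr].
\]
By Lemma~\ref{hm:ssou_inc} (equivalently, the infinite divisibility of $S_\al^0(\be,1,0)$), $\E[\exp(ivJ_r)]=\vp_1(v)^r=\exp(r\log\vp_1(v))$, where $\log\vp_1$ is the distinguished continuous logarithm of the nowhere-vanishing function $\vp_1$ normalized by $\log\vp_1(0)=0$; from \eqref{hm:0stable.cf} this logarithm is explicitly $\log\vp_1(v)=-|v|^\al\bigl(1+i\be\,\sgn(v)\,t_\al(|v|^{1-\al}-1)\bigr)$, a continuous function of $v$ with $|\log\vp_1(v)|\le C(|v|+|v|^\al)$. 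Taking the distinguished logarithm of the product above then gives $\sum_{k=1}^m(s_k-s_{k-1})\log\vp_1(uc_k)=\int_0^t\log\vp_1(uf(s))\,ds$, which is the claim for step $f$.

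Next I would pass to a general integrand; since in the application the integrand is the bounded continuous function $f_j(s)=e^{-\lam(t_j-s)}$ appearing in \eqref{eq:ssOU}, it suffices to treat bounded continuous $f$. Let $(f_m)$ be the left-endpoint step functions occurring in the limit that defines $\int_0^t f(s)\,dJ_s$, so that $f_m\to f$ uniformly on $[0,t]$ and $\int_0^t f_m\,dJ\to\int_0^t f\,dJ$ in probability, hence in law; therefore $\E[\exp(iu\int_0^t f_m\,dJ)]\to\E[\exp(iu\int_0^t f\,dJ)]$. By the step-function case, $\log\E[\exp(iu\int_0^t f_m\,dJ)]=\int_0^t\log\vp_1(uf_m(s))\,ds$, and the continuity of $\log\vp_1$ together with $f_m\to f$ uniformly gives $\int_0^t\log\vp_1(uf_m(s))\,ds\to\int_0^t\log\vp_1(uf(s))\,ds$. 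Exponentiating and comparing the two limits yields $\E[\exp(iu\int_0^t f\,dJ)]=\exp\bigl(\int_0^t\log\vp_1(uf(s))\,ds\bigr)$; since the exponent is continuous in $u$ and vanishes at $u=0$, it is the distinguished logarithm, which is the assertion.

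I expect the one genuinely delicate point to be the compatibility, in the last step, between the in-probability limit defining the stochastic integral and the deterministic convergence of the right-hand Riemann integrals: this is routine for bounded Riemann-integrable (in particular continuous) $f$, which is all that is needed here, but for an arbitrary measurable $f$ one would have to invoke the construction of L\'evy integrals with respect to stable processes from \cite{SatYam83}, both to make sense of $\int_0^t f(s)\,dJ_s$ and to secure $\int_0^t|\log\vp_1(uf(s))|\,ds<\infty$. Everything else is bookkeeping with the explicit exponent of $\vp_1$.
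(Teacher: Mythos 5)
Your proof is correct and follows essentially the same route as the paper's: Riemann-sum approximation of the L\'evy integral, factorization of the characteristic function via independent stationary increments, the identity $\E[e^{ivJ_r}]=\vp_1(v)^r$, and convergence of the resulting Riemann sums to $\int_0^t\log\vp_1(uf(s))\,ds$. The only difference is organizational—you isolate the step-function case and justify the limit interchange explicitly (via convergence in law and the distinguished logarithm), whereas the paper performs the same interchange in a single line.
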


\begin{proof}
By the independence and stationarity of $J$'s increments,
\begin{align}
& \log \E \left[ \exp \left( iu \int_{0}^{t} f(s)dJ_s \right) \right]\\
&=\log \E \left[ \exp\left( iu\lim_{m \rightarrow \infty} \sum_{j=1}^{m}f\left(\frac{(j-1)t}{m} \right) (J_{jt/m} - J_{(j-1)t/m})\right)\right]\\
&= \lim_{m \rightarrow \infty} \log \E \left[\exp \left(iu\sum_{j=1}^{m} f\left(\frac{(j-1)t}{m} \right) (J_{jt/m} - J_{(j-1)t/m})\right) \right] \\
&=\lim_{m \rightarrow \infty}\log \prod_{j=1}^{m} \E \left[\exp \left(iuf\left(\frac{(j-1)t}{m} \right) J_{t/m} \right)\right]\\
&=\lim_{m \rightarrow \infty} \sum_{j=1}^{m} \log \vp_{1} \left(uf\left(\frac{(j-1)t}{m} \right)\right)^{t/m}\\
&= \lim_{m \rightarrow \infty} \frac{t}{m}\sum_{j=1}^{m} \log \vp_{1} \left(uf\left(\frac{(j-1)t}{m} \right)\right)\\
&= \int_{0}^{t} \log \vp_{1} \left(uf\left(s \right)\right)ds.
\end{align}
Hence, the result follows.
\end{proof}

Let
\begin{equation}
    \eta(x) \coloneqq \frac1x (1- e^{-x}) = \sum_{k=0}^{\infty} \frac{(-1)^{k}}{(k+1)!}x^{k},
\end{equation}
which is smooth and positive in $\mbbr$, and satisfies that
\begin{equation}
    \forall k\ge 0,\quad \p_x^k\eta(0)=\frac{(-1)^k}{k+1}.
\end{equation}

\begin{lem}
\label{lem:int_dist}
The conditional distribution of $Y_{t_j}$ given $Y_{t_{j-1}}=y$ 
under $\pr$ is
\begin{equation}\label{hm:trans.prob}
    \mcl(Y_{t_j}|Y_{t_{j-1}}=y) \overset{\theta}{=} S_\al^0\left(\beta,\,\sig_h,\, e^{-\lambda h} y+\frac{\mu}{\lambda}(1 - e^{-\lambda h})+ \mu_h\right),
\end{equation}
where
\begin{align}
\sig_h&=\sigma_{h}(\lam, \al,\sig)\coloneqq
h^{1/\alpha}(\eta(\lam \al h))^{1/\alpha}\sigma,\\
\mu_h &=\mu_{h}(\lam,\al, \sig, \be)\coloneqq 
\beta \sigma\left((h\eta(\lam \al h))^{1/\alpha}-h\eta(\lam h)\right)
t_{\al}.
\end{align}
\end{lem}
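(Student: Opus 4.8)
The plan is to read off the conditional law directly from the explicit solution \eqref{eq:ssOU}. Conditionally on $Y_{t_{j-1}}=y$ it gives
\[
Y_{t_j} = c + \sigma\, I, \qquad c := e^{-\lambda h}y + \frac{\mu}{\lambda}\big(1-e^{-\lambda h}\big), \qquad I := \int_{t_{j-1}}^{t_j} e^{-\lambda(t_j-u)}\,dJ_u ,
\]
and by the independence and stationarity of the increments of $J$ the L\'evy integral $I$ has the same law as $\int_0^h f(s)\,dJ_s$ with $f(s):=e^{-\lambda(h-s)}$. So it suffices to identify the law of $\int_0^h f\,dJ$ and then apply the affine-transformation rule \eqref{hm:standard.Z} with $a=\sigma>0$ and $b=c$; since $\sgn(\sigma)=+1$, the skewness parameter stays equal to $\beta$.

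To find the law of $\int_0^h f\,dJ$ I would invoke Lemma \ref{lem-p1} with $t=h$, which yields
\[
\log\E\Big[\exp\Big(iu\int_0^h f(s)\,dJ_s\Big)\Big] = \int_0^h \log\vp_1\big(uf(s)\big)\,ds ,
\]
and then substitute the explicit form of $\log\vp_1$ obtained from \eqref{hm:0stable.cf} with $(\sigma,\mu)=(1,0)$. Because $f>0$ we have $\sgn(uf(s))=\sgn(u)$ and $|uf(s)|=|u|f(s)$, so the integrand is an affine combination of $f(s)^\alpha$ and $f(s)^\alpha(|u|f(s))^{1-\alpha}=|u|^{1-\alpha}f(s)$, and the whole computation reduces to the two elementary integrals
\[
\int_0^h f(s)^\alpha\,ds = \int_0^h e^{-\lambda\alpha(h-s)}\,ds = h\,\eta(\lambda\alpha h), \qquad \int_0^h f(s)\,ds = \int_0^h e^{-\lambda(h-s)}\,ds = h\,\eta(\lambda h),
\]
recalling $\eta(x)=x^{-1}(1-e^{-x})$.

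Collecting terms, $\log\E[e^{iuI}]$ emerges as a linear combination of $|u|^\alpha$, of $u$, and of $\sgn(u)|u|^\alpha$, with coefficients assembled from $h\,\eta(\lambda\alpha h)$ and $h\,\eta(\lambda h)$. Matching this term by term against the expansion of $\log\vp(u;\alpha,\beta,\sigma',\mu')$ read from \eqref{hm:0stable.cf} forces $\sigma'=(h\,\eta(\lambda\alpha h))^{1/\alpha}$ and then $\mu'=\beta t_\alpha\big((h\,\eta(\lambda\alpha h))^{1/\alpha}-h\,\eta(\lambda h)\big)$, so $\int_0^h f\,dJ \sim S_\alpha^0(\beta,\sigma',\mu')$. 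Passing through \eqref{hm:standard.Z} with $a=\sigma$, $b=c$ then turns $\sigma'$ into $\sigma_h=h^{1/\alpha}(\eta(\lambda\alpha h))^{1/\alpha}\sigma$, $\mu'$ into $\mu_h$, and the location into $c+\mu_h$, which is precisely \eqref{hm:trans.prob}. The one delicate step is the bookkeeping in this term-matching: one has to keep the signs straight and use $|u|^\alpha|u|^{1-\alpha}=|u|$, so that the ``$-1$'' inside the bracket of \eqref{hm:0stable.cf} contributes both to the $\sgn(u)|u|^\alpha$ coefficient (pinning down $\sigma'$) and, after regrouping the terms linear in $u$, to the location. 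Since $\alpha\in(1,2)$ throughout, the Cauchy formula \eqref{hm:0stable.cf.cauchy} is irrelevant and no $\alpha\to1$ limiting argument is needed.
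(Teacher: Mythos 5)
Your proposal is correct and follows essentially the same route as the paper: both reduce the claim via \eqref{eq:ssOU} to identifying the law of the L\'evy integral, apply Lemma \ref{lem-p1}, compute the two integrals $h\eta(\lam\al h)$ and $h\eta(\lam h)$, and regroup the characteristic exponent to match \eqref{hm:0stable.cf}. The only cosmetic difference is that you factor out $\sigma$ and finish with the affine rule \eqref{hm:standard.Z}, whereas the paper carries $\sigma$ through the characteristic-function computation directly; the term-matching you describe checks out.
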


\begin{proof}
By the expression \eqref{eq:ssOU}, 
it suffices to show that 
\begin{equation}
    \sig \int_{t_{j-1}}^{t_j} e^{-\lam (t_j - u)} dJ_u \sim S_{\al}^{0}(\be,\sig_{h}, \mu_{h}).
\end{equation}
This can be seen as follows:
\begin{align}
&\log \E \left[ \exp\left \{ i u \left( \sig \int_{t_{j-1}}^{t_j} e^{-\lambda(t_j-s) }dJ_s\right) \right \} \right]\\
&= \int_{t_{j-1}}^{t_j} \log (\vp_{1}(u \sig e^{-\lambda (t_j-s)}; \al, \be))ds\\
&= \int_{t_{j-1}}^{t_j} \{ -|u\sig e^{-\lambda (t_j - s)}|^{\al} \left(1+i\be \sgn(u) t_{\al} (|u \sig e^{-\lambda (t_j - s)}|^{1-\al} - 1) \right)\}ds\\
&= -\int_{t_{j-1}}^{t_j}|u \sig e^{- \lam (t_j - s)}|^{\al}ds - i\beta\sgn(u) t_{\al} 
\nn\\
&{}\qquad \times
\left( \int_{t_{j-1}}^{t_j} |u \sig e^{-\lam(t_j - s)}|ds - \int_{t_{j-1}}^{t_j} |u \sig e^{-\lam(t_j - s)}|^{\al}ds \right)\\
&= -|u\sig|^{\al}h\eta(\lam \al h) - i\beta\sgn(u) t_{\al} \left \{ |u \sig| h \eta(\lam h) - |u\sig|^{\al}h\eta(\lam \al h) \right\}\\
&= -(|u\sig|h^{1/\al}\eta(\lam \al h)^{1/\al})^{\al} \left\{ 1+ i\beta \sgn(u) t_{\al} \left( \frac{|u\sig|h\eta(\lam h)}{|u\sig|^{\al}h\eta(\lam \al h)}-1 \right)\right\}\\
&=-(|u\sig|h^{1/\al}\eta(\lam \al h)^{1/\al})^{\al} \left\{ 1+ i\beta \sgn(u) t_{\al} \left((|u\sig|h^{1/\al}\eta(\lam \al h)^{1/\al})^{1 - \al}  -1 \right)\right\}\\
&{}\qquad-(|u\sig|h^{1/\al}\eta(\lam \al h)^{1/\al})^{\al}i\beta \sgn(u) t_{\al} 
\nn\\
&{}\qquad \times
\left( \frac{|u\sig|h\eta(\lam h)}{|u\sig|^{\al}h\eta(\lam \al h)}-(|u\sig|h^{1/\al}\eta(\lam \al h)^{1/\al})^{1-\al} \right)\\
&=-(|u\sig|h^{1/\al}\eta(\lam \al h)^{1/\al})^{\al} \left\{ 1+ i\beta \sgn(u) t_{\al} \left((|u\sig|h^{1/\al}\eta(\lam \al h)^{1/\al})^{1 - \al}  -1 \right)\right\}\\
&\qquad + i\beta \sgn(u) t_{\al} (|u\sig|h^{1/\al}\eta(\lam \al h)^{1/\al} - |u\sig|h\eta(\lam h))\\
&=-(|u\sig|h^{1/\al}\eta(\lam \al h)^{1/\al})^{\al} \left\{ 1+ i\beta \sgn(u) t_{\al} \left((|u\sig|h^{1/\al}\eta(\lam \al h)^{1/\al})^{1 - \al}  -1 \right)\right\}\\
&\qquad+ i\beta \sig t_{\al} (h^{1/\al} \eta(\lam \al h)^{1/\al} - h\eta(\lam h))u\\
&= i\mu_h u -(\sig_h |u|)^\al \left( 1+ i \beta \sgn(u) t_\al (|\sig_h u|^{1-\al} - 1)\right).
\end{align}
Comparing the last expression with \eqref{hm:0stable.cf} ends the proof.
\end{proof}

Since $\mu_h \rightarrow 0$ as $\lam \rightarrow 0$, by \eqref{hm:trans.prob} we have
\begin{equation}
    \mcl(Y_{t_j}|Y_{t_{j-1}}=y) \wc \mcl(y + \mu h + \sig J_h),\qquad \lam\to 0,
\end{equation}
which is consistent with Lemma \ref{hm:ssou_inc}.

\begin{rem}
\label{hm:rem_inv.law}
If $\lam>0$, then the following claims are immediate from Lemma \ref{lem:int_dist}, although we are concerned here with the fixed terminal sampling time $T$.
\begin{itemize}
    \item We have
    \begin{equation}
    \label{hm:rem_inv.law-1}
    Y_t \cil 
    S_{\al}^{0}\left(\be, \sig_{\infty}, \frac{\mu}{\lambda} + \mu_{\infty}\right),
    \qquad t\to\infty,
    \end{equation}
    where
    \begin{align}
        \sig_{\infty} &\coloneqq \lim_{t \rightarrow \infty} \sig_t=\sig\left( \frac{1}{\lam \al}\right)^{1/\al}, 
        \nn\\
        \mu_{\infty} &\coloneqq \lim_{t \rightarrow \infty}\mu_t= \be \sig \left\{ \left(\frac{1}{\lam \al }\right)^{1/\al} - \frac{1}{\lam} \right\} t_\al.
    \end{align}

    \item Moreover, $X$ is exponentially $\beta$-mixing and satisfies that
    \begin{equation}
    \label{hm:rem_inv.law-2}
        \forall q\in(0,\al_0),\quad \sup_{t\ge 0}\E[|X_t|^q]<\infty,
    \end{equation}
    and that for every $F\in L^1(\pi_\theta)$,
    \begin{equation}
    \label{hm:rem_inv.law-3}
        \frac1T \int_0^T F(X_t)dt \cip \int F(x) \pi_\theta(dx),\qquad T\to\infty,
    \end{equation}
    where $\pi_\theta$ denotes the asymptotic stable distribution in \eqref{hm:rem_inv.law-1}.
\end{itemize}
Indeed, from Lemma \ref{lem:int_dist},
    \begin{align}
    Y_t&= e^{-\lambda t}y_0 + \frac{\mu}{\lambda}(1 - e^{-\lambda t}) + \sig\int_{0}^{t} e^{- \lambda(t-u)}dJ_u\\
    &\sim  e^{-\lambda t}y_0 + \frac{\mu}{\lambda}(1 - e^{-\lambda t}) + S_{\al}^{0}(\be, \sig_t,\mu_t) \\
    & \cil \frac{\mu}{\lambda} + S_{\al}^{0}(\be, \sig_{\infty},\mu_{\infty})
    \end{align}
    for $t\to\infty$. 
    The remaining claims \eqref{hm:rem_inv.law-2} and \eqref{hm:rem_inv.law-3} are direct consequences of \cite[Proposition 5.4]{Mas13as}.
\end{rem}

\subsection{Exact Log-likelihood function}

For notational brevity, let
\begin{equation}
\overline{l} \coloneqq\log(1/h),\quad c \coloneqq \eta(\lam \al h)^{-1/\al},
\end{equation}
\begin{equation}
\label{hm:def_ep'}
\ep'_j(\theta) \coloneqq \frac{1}{\sig_h}\left( Y_{t_j} - e^{-\lambda h} Y_{t_{j-1}} - \frac{\mu}{\lambda}(1 - e^{-\lambda h})- \mu_h\right).
\end{equation}
For each $n$, the random variables $\ep'_1(\theta),\dots,\ep'_n(\theta)$ forms an $S_\al^0(\beta,1,0)$-i.i.d. array under $\pr$:
\begin{equation}
    \ep'_1(\theta),\dots,\ep'_n(\theta) \overset{\theta}{\sim} \text{i.i.d.}~S_\al^0(\beta,1,0).
\end{equation}
Recall the notation $\phi_{\al,\be}(x)$ in \eqref{hm:phi_def}. By Lemma \ref{lem:int_dist}, the exact log-likelihood function associated with $\mcl((Y_{t_j})_{j=0}^{n})$ under $\pr$ is given by
\begin{align}
\ell_n(\theta) &\coloneqq \sumj \log\left(\frac{1}{\sig_h}
\phi_{\al,\be}\left(\ep'_j(\theta)\right)\right)
\nn\\
&= \sumj \left\{ - \log\sig_h + \log \phi_{\al,\be}\left(\ep'_j(\theta)\right)\right\}\\
&=\sumj \left(-\log \sig + \frac{1}{\al}\overline{l} + \log c + \log \phi_{\al, \be}(\ep_j'(\theta))\right).
\label{hm:exact.log-lf}
\end{align}
Although $\ell_n(\theta)$ is explicitly given, its numerical optimization requires repeated numerical integration of $\phi_{\al, \be}$. Furthermore, the nonlinearity of $\mu_h$, $\sig_h$, and $c$ with respect to $\theta$ makes the optimization computationally intensive (See Section \ref{hm:sec_sim}). 
To partly circumvent this difficulty, we will introduce a small-time approximation based on the Euler scheme in the following section.

\section{Local asymptotics}
\label{hm:sec_main}

\subsection{Construction of Euler-type quasi-likelihood}

Let 
\begin{equation}
    \D_j\zeta := \zeta_j - \zeta_{j-1}
\end{equation}
for $\zeta=Y$ and $J$.
To centralize the location of $\Delta_j J$, we introduce the notation
\begin{equation}\label{hm:def_xi}
    \xi_h(\al)\coloneqq(1-h^{1-1/\al})t_{\al}.
\end{equation}
By Lemma \ref{hm:ssou_inc} and \eqref{hm:standard.Z}, for each $n\ge 1$, the sequence 
\begin{equation}\label{hm:def_ep0}
    \ep_{0,j}(\theta)\coloneqq h^{-1/\al}(\Delta_j J - \be h^{1/\al}\xi_h(\al))
    \qquad j=1,2,\dots,n
\end{equation}
forms an $S_\al^0(\beta,1,0)$-i.i.d. array under $\pr$. 
In what follows, we will write $\ep_{0,j}$, omitting the explicit dependence on $\theta$.

The Euler approximation of \eqref{hm:ou.sde} under $\pr$ is
\begin{align}\nn
    Y_{t_j} &\overset{\theta}{\approx} Y_{t_{j-1}} + (\mu-\lam Y_{t_{j-1}})h + \sig \D_j J,
\end{align}
which combined with Lemma \ref{hm:ssou_inc} gives the following formal distributional approximation of the conditional probability:
\begin{align}\label{hm:euler.qlf-1}
    \mcl(Y_{t_j}|Y_{t_{j-1}}) &\overset{\theta}{\approx} 
    S_\al^0\left(\beta,\, \sig h^{1/\al},\, 
    Y_{t_{j-1}} + (\mu - \lam Y_{t_{j-1}}) h
    + \beta \sig h^{1/\al} \xi_h(\al)
    \right).
\end{align}
Consequently, 
the variable
\begin{align}
\ep_{j}(\theta)
&\coloneqq \frac{1}{h^{1/\al}\sig}
\left( \D_j Y - (\mu - \lam Y_{t_{j-1}}) h - \beta \sig h^{1/\al} \xi_h(\al) \right)
\label{hm:def_E.ep}\\
&= \frac{1}{h^{1/\al}\sig}
\left( \D_j Y - (\mu - \lam Y_{t_{j-1}}) h\right) - \beta \xi_h(\al),
\nn
\end{align}
is expected to be close to $\ep_{j}'(\theta)$ of \eqref{hm:def_ep'} when $h$ is small. 
Now, the \textit{Euler-type quasi-likelihood function} based on the approximation \eqref{hm:euler.qlf-1} is defined as follows:
    \begin{align}
        \mbbh_{n}(\theta) &\coloneqq \sumj \log \left( \frac{1}{\sig h^{1/\al}}
        \phi_{\al,\beta} \left(\ep_{j}(\theta)\right) \right)\nn\\
        &= \sumj\left( - \log \sig + \al^{-1}\overline{l} + \log \phi_{\al, \be}(\ep_j(\theta))\right).
        \nn
    \end{align}
This gives an approximate version of the exact log-likelihood function $\ell_n(\theta)$.

\subsection{Some notation}

To state our asymptotic results, we need further notation. 
We write 
\begin{equation}
    r_n(\theta) = \sqrt{n} \,h^{1-1/\al}.
\end{equation}
and define the $C^1$-class mapping $\vp_n:\,\overline{\Theta}\to\mbbr^5\otimes\mbbr^5$ taking values in invertible matrices by
\begin{align}
\varphi_n(\theta)&\coloneqq \begin{pmatrix}
r_n(\theta)^{-1} &  & O \\
 & r_n(\theta)^{-1} &\\
& O & \frac{1}{\sqrt{n} \xi_h(\al)} \begin{pmatrix}
\varphi_{33,n}(\theta) & \varphi_{34,n}(\theta) & 0 \\
\varphi_{43,n}(\theta) & \varphi_{44,n}(\theta) & 0 \\
0 & 0 & 1
\end{pmatrix}
\end{pmatrix}\\
&=: \begin{pmatrix}
r_n(\theta)^{-1} I_2 & O \\
O & \frac{1}{\sqrt{n} \xi_h(\al)} \tilde{\varphi}_n(\theta) \\
\end{pmatrix},
\label{hm:def_vp}
\end{align}
where $O$ denotes the zero matrix of appropriate size and the function $\tilde{\varphi}_n$ satisfies the conditions given later; 
the matrix $\varphi_n(\theta)$ will be used to normalize the estimators.
Historically, asymptotic analysis based on non-diagonal matrix norming has been developed to handle the heterogeneity of data more theoretically efficiently; see, for example, \cite{Fah88}.

We will write $\rightarrow_u$ for the usual uniform convergence in $\theta\in\overline{\Theta}$. 
Note that $r_n(\theta)\gtrsim n^{(2-\al)/(2\al)} \to_u \infty$. 
We assume the following conditions regarding $\varphi_n(\theta)$:
there exist functions $\overline{\varphi}_{kl}$ such that
\begin{equation} \label{assump:rate_mat}
\begin{cases}
\varphi_{33,n}(\theta) \rightarrow_{u} \overline{\varphi}_{33}(\theta),\\[1mm]
\varphi_{34,n}(\theta) \rightarrow_{u} \overline{\varphi}_{34}(\theta),\\[1mm]
s_{43,n}(\theta) \coloneqq \al^{-2}\overline{l} \varphi_{33,n}(\theta) + \sig^{-1} \varphi_{43,n}(\theta)\rightarrow_{u} \overline{\varphi}_{43}(\theta),\\[1mm]
s_{44,n}(\theta)\coloneqq \al^{-2}\overline{l} \varphi_{34,n}(\theta) + \sig^{-1} \varphi_{44,n}(\theta) \rightarrow_{u} \overline{\varphi}_{44}(\theta),\\[1mm]
\inf_{\theta \in \overline{\Theta}}\overline{\varphi}_{33}(\theta)>0,\\[1mm]
\inf_{\theta \in \overline{\Theta}} \left|\overline{\varphi}_{33}(\theta) \overline{\varphi}_{44}(\theta) - \overline{\varphi}_{34} (\theta)\overline{\varphi}_{43}(\theta)\right| >0.
\end{cases}
\end{equation}
We note that these uniform-in-$\theta$ conditions are more or less standard in parametric estimation in the context of high-frequency sampling. Previously, they were employed in \cite{BroMas18}, \cite{CleGlo20}, and \cite{Mas23} to study the local asymptotic property of the likelihood associated with homoskedastic stochastic process models driven by a symmetric stable {\lp}. 
See also \cite{BroFuk16} for estimation of the fractional Brownian motion. 

Let us also introduce the related stochastic order symbols. Given continuous random functions $\zeta_{0}(\theta)$ and $\zeta_{n}(\theta)$, $n\ge 1$, and a positive sequence $(a_n)$, we will use the following notation.
\begin{itemize}
    \item We write
    \begin{equation}
        \zeta_{n}(\theta)\cip_u \zeta_{0}(\theta)
    \end{equation}
    if the joint distribution of $\zeta_n$ and $\zeta_0$ are well-defined under $\pr$ and if
    \begin{equation}
        \forall \ep>0,\quad 
        \pr[ |\zeta_{n}(\theta)-\zeta_{0}(\theta)|>\ep ] \to_{u} 0.
    \end{equation}
    \item $\zeta_n(\theta)=o_{u,p}(a_n)$ if $a_n^{-1}\zeta_{n}(\theta)\cip_u 0$;
    \item $\zeta_n(\theta)=O_{u,p}(a_n)$ if for every $\ep>0$ there exists a constant $M>0$ for which 
    \begin{equation}
        \sup_\theta\pr[|a_n^{-1}\zeta_n(\theta)| > M]<\ep.
    \end{equation}
    \item Finally, we write $\zeta_{n}(\theta)\cil_{u}\zeta_{0}(\theta)$ if
\begin{equation}
\left|\int f(x)P^{\zeta_{n}(\theta)}(dx) - \int f(x)P^{\zeta_{0}(\theta)}(dx)\right|\to_{u}0
\nonumber
\end{equation}
as $n\to\infty$ for every bounded uniformly continuous function $f$, where $P^{\mathsf{X}}$ denotes the distribution of a random variable $\mathsf{X}$.

\end{itemize}

\subsection{Main result}
\label{sec:main_result}

Recall that the parameter space $\Theta$ is a bounded convex domain whose closure satisfies \eqref{hm:Theta_closure}.
Let
    \begin{align}
        \D_n^\star(\theta) &:= \vp_n(\theta)^\top \p_\theta \ell_n(\theta), \nn\\
        \mci_n^\star(\theta) &:= -\vp_n(\theta)^\top \p_\theta^2 \ell_n(\theta) \vp_n(\theta).
    \end{align}
For a symmetric positive definite random matrix $A(\theta)\in\mbbr^p \otimes \mbbr^p$ possibly depending on $\theta$, we denote by $A(\theta)^{1/2}$ the positive definite square root of $A(\theta)$ and by $MN_{p,\theta}(0,A(\theta))$ the covariance mixture of $p$-dimensional normal distribution corresponding to the characteristic function
\begin{equation}
    \mbbr^p \ni u\mapsto\E\left[\exp\left(-\frac12 u^\top A(\theta)u\right)\right].
\end{equation}
We are now in a position to state the main result of this paper.

\begin{thm} \label{thm:main_result}
We have the following asymptotic properties.
\begin{enumerate}
\item 
For the positive definite random matrix $\mci(\theta)$ defined in \eqref{hm:def_FIm} below, we have the joint convergence
\begin{equation}\label{hm:joint.conv_Del-I}
    \left( \mci^\star_n(\theta),\, \Delta^\star_n(\theta) \right)
    \cil_u \big( \mci(\theta),\, \mci(\theta)^{1/2}Z \big),
\end{equation}
where $Z$ is a standard normal random vector in $\mbbr^5$ which is defined on an extension of the original probability space and independent of $\mci(\theta)$; hence, in particular, $\Delta^\star_n(\theta) \cil_u MN_{5,\theta}(0, \mci(\theta))$. 
Moreover,
\begin{equation}
\left|\ell_n(\theta + \varphi_{n}(\theta)v) - \ell_n(\theta) - \left( v^{\top} \Delta^\star_n(\theta) - \frac{1}{2} v^{\top} \mci^\star_n(\theta) v \right) \right| \cip_u 0
\end{equation}
for each $v\in\mbbr^5$.

\item With $\pr$-probability tending to $1$, there exist local maxima $\tes^\star$ and $\tes$ of $\ell_n(\theta)$ and $\mbbh_n(\theta)$, respectively, for which
\begin{align}
& \varphi_n(\theta)^{-1} (\tes^\star - \theta) \cil_u MN_{5,\theta}(0, \mci(\theta)^{-1}),
\nn\\
& \left|
\varphi_n(\theta)^{-1} (\tes^\star - \theta) - \varphi_n(\theta)^{-1} (\tes - \theta)\right| \cip_u 0.
\end{align}

\item For any estimator $\theta'_n$ of $\theta$ such that $\vp_n(\theta)^{-1}(\theta'_n - \theta) \cil_u \nu_\theta$ for some distribution $\nu_\theta$, and for any symmetric convex Borel subset $A \subset \mbbr^5$, we have
\begin{equation}
    F_\theta(A) \ge \nu_\theta(A)
\end{equation}
for every $\theta\in\Theta$, where $F_\theta := MN_{5,\theta}(0, \mci(\theta)^{-1})$. 

\end{enumerate}
\end{thm}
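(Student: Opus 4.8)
The plan is to establish the three assertions in sequence, deriving (2) and (3) from (1) via now-standard locally-asymptotically-mixed-normal (LAMN) machinery, so that the genuine work is concentrated in part (1) and in the $\ell_n$-versus-$\mathbb{H}_n$ comparison. For part (1), I would begin with a preliminary reduction: using Lemma \ref{lem:int_dist}, the sampled chain $(Y_{t_j})$ has exactly the conditional law $S_\al^0(\beta,\sig_h,\cdot)$, so $\ell_n(\theta)$ is a genuine i.i.d.-type sum in the standardized residuals $\ep'_j(\theta)$, which are $S_\al^0(\beta,1,0)$-distributed. The first step is a Taylor expansion of $\log\phi_{\al,\beta}(\ep'_j(\theta+\vp_n(\theta)v))$ about $\ep'_j(\theta)$, combined with the explicit expansions of $\sig_h$, $\mu_h$, $c$ (via $\eta$ and its derivatives at $0$) in powers of $h$; the non-diagonal rate matrix $\vp_n$ and the conditions \eqref{assump:rate_mat} are precisely engineered so that the mixed $\al$--$\sig$ directions, which would otherwise be collinear at rate $\overline{l}=\log(1/h)$, separate into a non-degenerate limit. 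I would collect the first-order term into $v^\top\D_n^\star(\theta)$ and the second-order term into $-\tfrac12 v^\top\mci_n^\star(\theta)v$, and show the remainder is $o_{u,p}(1)$ using the moment/tail bounds for $\p_x^j\p_\al^k\p_\be^l\phi_{\al,\beta}$ stated in Section \ref{hm:sec_preliminaries} (these control the integrability of the score and its derivatives against the $\al$-stable law, for which moments of order $<\al$ exist).

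The core convergence \eqref{hm:joint.conv_Del-I} is then a martingale/triangular-array CLT argument. Writing $\chi_j(\theta)$ for the $j$-th summand's normalized score, $\D_n^\star(\theta)=\sum_j \chi_j(\theta)$ is a sum of martingale differences with respect to $(\mcf_{t_j})$, because each depends on $\ep'_j(\theta)$ (independent of $\mcf_{t_{j-1}}$) and on $Y_{t_{j-1}}$ (which is $\mcf_{t_{j-1}}$-measurable) only through the score's mean-zero directions. The quadratic-variation sum $\sum_j \E[\chi_j\chi_j^\top\mid\mcf_{t_{j-1}}]$ converges, by a Riemann-sum/law-of-large-numbers argument on the path functionals of $Y$, to a random matrix $\mci(\theta)$; the randomness is exactly the ergodic-type average $\frac1T\int_0^T$ of functions of $Y_t$ appearing in \eqref{hm:rem_inv.law-3}-type limits (here over the fixed interval $[0,T]$, so the limit is a genuine random variable, not deterministic — this is the source of the mixing). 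One must also check the conditional Lindeberg condition and the nesting condition $\sum_j \E[\chi_j\mid\mcf_{t_{j-1}}]\cip 0$; then the stable/mixing CLT (e.g. Jacod's theorem) gives \eqref{hm:joint.conv_Del-I} with $Z$ independent of $\mci(\theta)$ on the extended space. The same computation identifies the limit of $\mci_n^\star(\theta)$ with $\mci(\theta)$, and positive-definiteness of $\mci(\theta)$ follows from the last two lines of \eqref{assump:rate_mat} together with the a.s. non-degeneracy of the path average. Positive definiteness should also be recorded as the definition in \eqref{hm:def_FIm}.

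For part (2), I would invoke the standard argument (as in \cite{Mas23}): the quadratic expansion of (1) plus $\mci_n^\star(\theta)\cip_u\mci(\theta)>0$ guarantees, with probability $\to1$, a local maximizer $\tes^\star$ of $\ell_n$ with $\vp_n(\theta)^{-1}(\tes^\star-\theta)=\mci_n^\star(\theta)^{-1}\D_n^\star(\theta)+o_{u,p}(1)\cil_u MN_{5,\theta}(0,\mci(\theta)^{-1})$ by the continuous mapping theorem (the ratio of a mixed-normal to its mixing matrix). The asymptotic equivalence of $\tes$ with $\tes^\star$ reduces to showing $\mathbb{H}_n(\theta)-\ell_n(\theta)$ and its first two $\vp_n$-normalized derivatives are $o_{u,p}(1)$ uniformly on shrinking neighborhoods; this is the promised ``asymptotic equivalence of genuine and Euler-type quasi-likelihood.'' The key estimate is $\ep_j(\theta)-\ep'_j(\theta)=o_p(\cdot)$ at the right rate, which follows by comparing \eqref{hm:def_E.ep} with \eqref{hm:def_ep'}: the differences $\sig_h-\sig h^{1/\al}$, $\mu_h-\beta\sig h^{1/\al}\xi_h(\al)$, and $\D_j Y - \sig\D_j J-(\mu-\lam Y_{t_{j-1}})h$ are all $O(h^{1+1/\al})$ or better (the last one because $\int_{t_{j-1}}^{t_j}(e^{-\lam(t_j-u)}-1)dJ_u$ is small, controllable via Lemma \ref{lem-p1}), after accounting for the $\overline{l}$-factors in $\vp_n$. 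Finally, part (3) is the convolution/minimal-concentration theorem for LAMN families: given the LAMN property from (1) and an estimator with $\vp_n(\theta)^{-1}(\theta'_n-\theta)\cil_u\nu_\theta$, Jeganathan's theorem yields that $\nu_\theta$ is a mixture of convolutions of $MN_{5,\theta}(0,\mci(\theta)^{-1})$ with another law, and Anderson's lemma (applied fiberwise in the mixing variable) gives $F_\theta(A)\ge\nu_\theta(A)$ for symmetric convex $A$.

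I expect the main obstacle to be the quadratic-expansion step of part (1): disentangling the $\al$- and $\sig$-directions requires tracking the $\log(1/h)$-divergences carefully through the $h$-expansions of $\sig_h,\mu_h,c,\xi_h(\al)$ and through $\p_\al\phi_{\al,\beta}$ (whose tail carries an extra $\log|x|$ factor), and verifying that the non-diagonal conditions \eqref{assump:rate_mat} make every divergent cross-term cancel into a finite limit; a secondary difficulty is establishing the remainder bounds uniformly in $\theta$ on $\overline\Theta$, for which one needs the locally-uniform $\al$-stable tail estimates of Section \ref{hm:sec_preliminaries} together with a uniform control of $\sup_j\E_\theta[|Y_{t_{j-1}}|^q]$ for $q<\al_0$ on the fixed interval, and of the quasi-score increments near the boundary.
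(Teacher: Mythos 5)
Your proposal is correct in outline, but it takes a genuinely different route from the paper at the central step. The paper does \emph{not} prove the joint convergence \eqref{hm:joint.conv_Del-I} by a martingale/stable CLT. Instead it works entirely within Sweeting's framework: it verifies Sweeting's conditions C1--C2 for the Euler quasi-likelihood $\mbbh_n(\theta)$ (convergence of $-\vp_n^\top\p_\theta^2\mbbh_n\vp_n$ to the positive definite $\mci(\theta)$, continuity of the rate matrix over $\mathfrak{R}_n(c;\theta)$, and a uniform modulus-of-continuity bound on the normalized Hessian), then shows that the gap $\del_n(\theta)=\ell_n(\theta)-\mbbh_n(\theta)$ has $o_{u,p}(1)$ normalized first and second derivatives, so that C1--C2 transfer to the \emph{genuine} likelihood $\ell_n$; Sweeting's Theorem~1 then delivers the joint mixed-normal convergence of $(\mci_n^\star,\D_n^\star)$ as a consequence of the convergence of the observed information alone, with no Lindeberg or orthogonality conditions to check (the paper explicitly notes that Jacod's stable CLT is an available alternative, which is exactly your route). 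Part (3) is then a one-line appeal to Sweeting (1983) rather than the Jeganathan convolution theorem plus Anderson's lemma, though these are equivalent in content. The trade-off: the paper's route concentrates all the analysis in the Hessian and its uniform continuity, but crucially needs $\ell_n$ to be an exact likelihood for the score convergence (hence the direction of transfer is information-first, likelihood-to-quasi-likelihood), whereas your route handles the heavy-tailed score directly and would apply to quasi-likelihoods as well, at the cost of verifying the stable-convergence conditions. Two small cautions on your sketch: what you call the ``nesting condition'' is actually the asymptotic negligibility of the conditional drift (the score summands are already exact martingale differences here since $\ep'_j(\theta)$ is independent of $\mcf_{t_{j-1}}$ with law free of $Y_{t_{j-1}}$); and the a.s.\ positive definiteness of $\mci(\theta)$ needs more than non-degeneracy of the path averages --- the paper's Lemma on this point also requires that no nontrivial linear combination of $f_1$, $g_1$, $\ep_{0,1}\psi_1+1$ is a.s.\ proportional to $\psi_1$, plus the Schur-complement and Cauchy--Schwarz arguments, which your sketch elides.
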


\medskip

The proof of Theorem \ref{thm:main_result} is technical and is deferred to Section \ref{sec:main_result_proof}. 
Below, we give a couple of remarks on Theorem \ref{thm:main_result}.
    
\begin{enumerate}[label=\arabic*.]

    \item Theorem \ref{thm:main_result}(1) claims the local asymptotic mixed normality (LAMN). It establishes the asymptotic optimality of all regular estimators. See \cite{Jeg95} for a related detailed account.
    \item 
    As in \cite{BroFuk16}, \cite{BroMas18}, \cite{CleGlo20}, and \cite{Mas23}, the non-diagonality condition of the normalizing matrix $\vp_n(\theta)$ is essential for the non-degeneracy of $\mci(\theta)$. Indeed, we could prove that $\mci(\theta)$ is constantly (for all $\theta\in\Theta$) singular when $\vp_n(\theta)$ is diagonal: $\vp_n(\theta)=\diag(\vp_{1,n}(\theta),\dots,\vp_{5,n}(\theta))$ with $\vp_{k,n}(\theta)\downarrow 0$.
%
    It is the joint estimation of the index $\al$ and the scale $\sig$ that gives rise to the asymptotic singularity and necessitates the non-diagonality of $\vp_n(\theta)$. Under Nolan's $S^0$-parametrization \cite{Nol98}, the off-diagonal components of the rate matrix associated with the skewness parameter $\beta$ and the other parameters can all be zero.

    \item Since $\cip_u$ implies $\cil_u$ as was noted in \cite[Lemma 2]{Swe80}, Theorem \ref{thm:main_result}(2) implies
    \begin{equation}
        \varphi_n(\theta)^{-1} (\tes - \theta) \cil_u MN_{5,\theta}(0, \mci(\theta)^{-1}),
    \end{equation}
    so that the local maxima of $\mbbh_n$ is asymptotically efficient as well in the sense of Theorem \ref{thm:main_result}(3).

    \item Thanks to \eqref{hm:joint.conv_Del-I} and \eqref{hm:outline-6} in Section \ref{sec:main_result_proof}, Studentization is straightforward:
    \begin{equation}
    \mci_n(\tes)^{1/2}\vp_n(\tes)^{-1}(\tes-\tz) \cil N_5(0,I_5).
    \end{equation}

    \item 
    The restriction $\al \in (1,2)$ came from the singular asymptotic behavior of the key quantity $\xi_h(\al)$ at $\al=1$; recall the definition \eqref{hm:def_xi}. 
    This restriction would be a technical one. We will not delve into the details of the case $\alpha\in (0,1]$ in this paper. However, we conjecture that our results associated with the true log-likelihood $\ell_n(\theta)$ would hold even for $\alpha\in (0,1]$ and that the employed continuous parametrization has a stablizing effect on the estimator performance for $\al \approx 1$ with $\beta\ne 0$.
    
    \item 
    \cite{IvaKulMas15} studied the LAN property of the possibly skewed locally stable {\lp}. In our genuine stable driven case, it corresponds to the case where $\lam=0$ and where $(\al,\beta)$ is known. They showed that the rate matrix may be asymmetric if $\beta\ne 0$. It is expected that this difference from \cite{IvaKulMas15} occurred because we are here employing the continuous parametrization.

    \item It should be noted that there is no universal definition of the skewness parameter within the context of L\'{e}vy processes. For example, \cite{Woe04} considered the LAN property of the skewness parameter defined by the exponential tilting of a (symmetric) L\'{e}vy density, namely, $\rho\in\mbbr$ when the L\'{e}vy measure is given by $\nu(dz)=e^{\rho z}g(z)dz$ with $g$ being symmetric in $\mbbr\setminus\{0\}$. This is not the case in our skewed stable case. On the one hand, the LAN property at rate $\sqrt{T}$ in \cite{Woe04}, implying that consistent estimation of this $\rho$ essentially requires that $T=T_n\to\infty$. On the other hand, however, a fixed $T$ is enough in our skewed stable case.

    \item 
    Once again, we emphasize that the asymptotics given in Theorem \ref{thm:main_result} hold for \textit{each} $T>0$; for $\al>1$, a larger $T$ would lead to a better performance of $(\les,\mes)$ since the rate of convergence $r_n(\theta) = T^{1-1/\al} \,n^{1/\al-1/2}$. We could consider the case where $T=T_n\to\infty$ as well. However, the asymptotics of any estimators will essentially depend on the sign of the true value of $\lam$. Even if we assume the ergodicity (namely $\lam>0$: see \cite{Mas04} and \cite{Mas07}), the conventional asymptotic normality fails to hold without an extra device such as the self-weighting; we refer to \cite{Mas10ejs} for related details.
    
\end{enumerate}

\section{Simple estimator of noise parameters}
\label{hm:sec_ME}

In this section, we discuss how to construct a simple estimator of the noise parameters $(\al,\sig,\beta)$ based on moment fitting. We will follow a similar route to what was done in \cite{Kaw23_mt} and \cite{KawMas25}: we will first construct an estimator of $(\al,\sig)$ through symmetrization; then, we will provide an estimator of $\beta$ through centralization. We do not pursue the exact asymptotic distribution, but focus on the convergence rates. Still, we can use the estimators obtained in this section as initial values for the numerical optimization.

We have
\begin{equation} \label{eq: model2}
\Delta_j Y = \int_{t_{j-1}}^{t_j} (\mu - \lambda Y_s)ds + \sigma \Delta_j J,
\end{equation}
where $\sigma \Delta_jJ \sim S_{\alpha}^{0}(\beta, h^{1/\alpha}\sigma,0)$.
Throughout this section, we assume that
\begin{equation}\label{hm:q_range}
    q\in \left(0,\frac{\al}{2}\right).
\end{equation}

\subsection{Index and scale}

To eliminate the skewness of the asymmetric stable noise and to weaken the effect of the drift, we make use of symmetrization through the second-order increments:
\begin{align}
\Delta_j^{2}Y &:= \D_j Y - \D_{j-1}Y \nn\\
&= \sigma \Delta_j^2 J 
+ \lam \left(\int_{t_{j-2}}^{t_{j-1}} (Y_s - Y_{t_{j-2}})ds - \int_{t_{j-1}}^{t_j} (Y_s - Y_{t_{j-2}})ds \right)
\nn\\
&=: \sigma \Delta_j^2 J + \lam A_j,
\end{align}
where $\sigma \Delta_j^2 J \sim S_{\alpha}^{0}(0, (2h)^{1/\alpha}\sigma, 0)$.

Since $q\in(0,1)$, applying the Jensen and the reverse-Jensen inequalities, we get
\begin{align}\label{sym_gap_order}
    & \frac{1}{n-1}\sum_{j=2}^{n}
    \left|\frac1h \int_{t_{j-2}}^{t_{j-1}} h^{-1/\al}(Y_s - Y_{t_{j-2}})ds \right|^q 
    \nn\\
    &\le \frac{1}{n-1}\sum_{j=2}^{n}
    \left( \frac1h \int_{t_{j-2}}^{t_{j-1}} \left|h^{-1/\al}(Y_s - Y_{t_{j-2}})\right| ds \right)^q 
    \nn\\
    &\le 
    \left(\frac{1}{n-1}\sum_{j=2}^{n}
    \frac1h \int_{t_{j-2}}^{t_{j-1}} \left| h^{-1/\al}(Y_s - Y_{t_{j-2}}) \right| ds \right)^q.
\end{align}
The random sequence inside the parentheses $(\dots)$ in the rightmost side is $O_p(1)$; obviously, the same estimate holds if we replace $\int_{t_{j-2}}^{t_{j-1}}$ by $\int_{t_{j-1}}^{t_{j}}$. 
Using these order estimate and the elementary inequality $||a+b|^q-|a|^q| \le |b|^q$ for any $a,b\in\mbbr$, we get
\begin{align}
& \left| \frac{1}{n-1} \sum_{j=2}^{n} \left| \frac{\Delta_j^2 Y}{(2h)^{1/\al}} \right|^q 
-
\frac{1}{n-1} \sum_{j=2}^{n} \left| \frac{\sig \Delta_j^2 J}{(2h)^{1/\al}} \right|^q \right| \nn\\
&\le 
\frac{1}{n-1} \sum_{j=2}^{n} \left| \frac{\lam A_j}{(2h)^{1/\al}} \right|^q 
= O_p(h^q) = o_p(1).
\label{hm:me-1}
\end{align}
Let $m_{1,q}(\alpha)$ denote the $q$-th degree moment of $S_{\alpha}^{0}(0,1,0)$.
From \cite{Kur01}, we know its explicit expression:
\begin{equation}
m_{1,q}(\alpha) := \frac{\Gamma(1-\frac{q}{\alpha})}{\Gamma(1-q)\cos\frac{q \pi}{2}}.
\end{equation}
From \eqref{hm:me-1}, and the fact that $\{\sig (2h)^{-1/\al}\Delta_j^2 J\}_{j=2}^{n}$ forms a $S_{\alpha}^{0}(0,\sig,0)$-i.i.d. array (allowing us to use the central limit theorem under \eqref{hm:q_range}), it follows that
\begin{align}
    & n^{q\wedge (1/2)} \left(
    \frac{1}{n-1} \sum_{j=2}^{n} \left| \frac{\Delta_j^2 Y}{(2h)^{1/\al}} \right|^q 
    -
    \sig^q \, m_{1,q}(\alpha)
    \right) \nn\\
    &= n^{q\wedge (1/2)-q}\times n^{q}\left(
    \frac{1}{n-1} \sum_{j=2}^{n} \left| \frac{\Delta_j^2 Y}{(2h)^{1/\al}} \right|^q 
    -
    \frac{1}{n-1} \sum_{j=2}^{n}\left| \frac{\sig \Delta_j^2 J}{(2h)^{1/\al}} \right|^q
    \right) \nn\\
    &{} \qquad + n^{q\wedge (1/2)-1/2}\times \sqrt{n} \left(
    \frac{1}{n-1} \sum_{j=2}^{n}\left| \frac{\sig \Delta_j^2 J}{(2h)^{1/\al}} \right|^q
    - \sig^q \, m_{1,q}(\alpha) \right)
    \nn\\
    &= O_p(n^{q\wedge (1/2)-q}) + O_p(n^{q\wedge (1/2)-1/2})=O_p(1).
\end{align}

The parameter $\alpha$ can be consistently estimated by solving the following moment ratio equation, which is independent of $\sigma$:
\begin{equation}
\frac{(\frac{1}{n} \sum_{j=2}^{n} |\Delta_j^{2}Y|^{q})^2}{\frac{1}{n} \sum_{j=2}^{n} |\Delta_j^{2}Y|^{2q}} 
\approx \frac{m_{1,q}(\alpha)}{m_{1,2q}(\alpha)} = \frac{\Gamma(1-2q)}{\Gamma(1- \frac{2q}{\alpha})} \left\{ \frac{\Gamma(1- \frac{q}{\alpha})}{\Gamma(1-q)} \right\}^2\frac{\cos q \pi}{(\cos \frac{q \pi}{2})^2}.
\end{equation}
Denote the solution by $\tilde \alpha_n$. 
Then, we can consistently estimate $\sig$ by
\begin{equation}
    \tilde \sigma_n
    := \left(\frac{1}{m_{1,q}(\tilde \alpha_n)} \frac1n \sum_{j=2}^{n} \left| \frac{\Delta_j^2 Y}{(2h)^{1/\tilde{\al}_n}} \right|^q \right)^{1/q}.
\end{equation}
In practice, the moment exponent $q$ should be specified beforehand by the user.

From the above arguments, combined with the continuous mapping theorem and the delta method, it is easy to see that
\begin{equation}\label{hm:al0.sig0_tightness}
    \left(n^{q\wedge (1/2)}(\tilde{\al}_n -\al),\, \frac{n^{q\wedge (1/2)}}{\overline{l}}(\tilde{\sig}_n^q -\sig^q)\right) =O_p(1).
\end{equation}

\begin{rem}
\eqref{hm:al0.sig0_tightness} implies not only the consistency of $(\tilde \al_n, \tilde \sig_n)$ but also that
\begin{equation}
    \log(h^{1/\al-1/\tilde{\al}_n}) = -\log(1/h)(\tilde{\al}_n - \al) /(\tilde{\al}_n \al) = o_p(1).
\end{equation}
Hence, for any estimator $\tilde{\gam}_n$ of $\gam$ such that
\begin{equation}
    \tilde{u}_{\gam,n} := \sqrt{n} \,h^{1-1/\al}(\tilde{\gam}_n -\gam) = O_p(1),
\end{equation}
we have
\begin{equation}
    \sqrt{n}\, h^{1-1/\tilde{\al}_n}(\tilde{\gam}_n -\gam) 
    = \tilde{u}_{\gam,n} \times h^{1/\al-1/\tilde{\al}_n}
    = \tilde{u}_{\gam,n} + o_p(1).
\end{equation}    
\end{rem}

\begin{rem}
The rate of convergence given in \eqref{hm:al0.sig0_tightness} is far from being the best possible. With a slight restriction on the range of $q$, we could directly apply \cite[Theorem 2]{Tod13} to construct an estimator of $(\al,\sig)$ possessing the asymptotic normality.
\end{rem}

\subsection{Skewness parameter}

To estimate $\beta$, we centralize the increments of \eqref{eq: model2} using a second-order difference operator:

\begin{align}
\Delta_j^{C} Y&:=\Delta_{j}Y + \Delta_{j-2}Y -2\Delta_{j-1}Y \\
&=\sig(\Delta_{j}J + \Delta_{j-2}J -2 \Delta_{j-1}J)\\
&{}\qquad + \lam \bigg( \int_{t_{j-2}}^{t_{j-1}} (Y_s - Y_{t_{j-3}})ds - \int_{t_{j-3}}^{t_{j-2}} (Y_s - Y_{t_{j-3}})ds \\
&{}\qquad  - \int_{t_{j-1}}^{t_{j-2}} (Y_s - Y_{t_{j-2}})ds + \int_{t_{j-2}}^{t_{j-1}} (Y_s - Y_{t_{j-2}})ds \bigg)\\
&\eqqcolon \sig \Delta_j^{C} J + \lam B_j.
\end{align}
Recalling that $\Delta_j J \sim S_{\al}^{0}(\beta, h^{1/\al}\sig, 0)$, we get
\begin{align}
&\sig \Delta_{j}^{C}J \sim S_{\alpha}^{0} \left(\left(\frac{2-2^{\alpha}}{2+2^{\alpha}}\right)\beta,(2+2^{\alpha})^{1/\alpha}h^{1/\alpha}\sigma, \left(\frac{2-2^{\alpha}}{2+2^{\alpha}}\right)\beta(2+2^{\alpha})^{1/\alpha}h^{1/\alpha}\sigma t_{\al} \right),
\end{align}
as seen from \eqref{hm:0stable.cf}.

By \eqref{sym_gap_order}, 
\begin{align}\label{hm:me-2}
& \left| \frac{1}{n-1} \sum_{j=3}^{n} \left| \frac{\Delta_j^{C} Y}{(2 + 2^{\al})^{1/\al} h^{1/\al}} \right|^q 
-
\frac{1}{n-1} \sum_{j=3}^{n} \left| \frac{\sig \Delta_j^C J}{(2 + 2^{\al})^{1/\al} h^{1/\al}} \right|^q \right| \nn\\
&\le 
\frac{1}{n-1} \sum_{j=3}^{n} \left| \frac{\lam B_j}{(2 + 2^{\al})^{1/\al} h^{1/\al}} \right|^q 
= O_p(h^q) = o_p(1).
\end{align}

Let $m_{2,q}(\al,\be)$ and $m_{2,\la q\ra}(\al,\be)$ denote, respectively, the $q$-th and $\la q \ra$-th degree moment of $F_{\al,\beta} \coloneqq S_{\alpha}^{0}\left (\left( \frac{2 - 2^{\al}}{2 + 2^{\al}} \right)\be,1 ,\left( \frac{2 - 2^{\al}}{2 + 2^{\al}} \right)\be t_{\al} \right)$.
From \cite{Kur01}, we know its explicit expression:
\begin{align}
m_{2,q}(\al,\be) &:= \frac{\Gamma(1-\frac{q}{\alpha})}{\Gamma(1-q)} \frac{1}{\left(\cos \eta(\al,\be) \right)^{q/\al}} \frac{\cos \frac{q \eta(\al,\be)}{2}}{\cos \frac{q \pi}{2}},\\
m_{2,\la q \ra}(\al,\be) &:= \frac{\Gamma(1-\frac{q}{\alpha})}{\Gamma(1-q)} \frac{1}{\left(\cos \eta(\al,\be) \right)^{q/\al}} \frac{\sin \frac{q \eta(\al,\be)}{2}}{\sin \frac{q \pi}{2}}.
\end{align}
From \eqref{hm:me-2} and the fact that $\sig \Delta_{j}^{C}J/ (2 + 2^{\al})^{1/\al}h^{1/\al}$ forms an 
$F_{\al,\beta}$-i.i.d. array, 
it follows that
\begin{align}
    n^{q\wedge (1/2)} \left(
    \frac{1}{n-2} \sum_{j=3}^{n} \left| \frac{\Delta_j^C Y}{(2 + 2^{\al})^{1/\al} h^{1/\al}} \right|^q 
    -
    \sig^q \, m_{2,q}(\al,\be)
    \right) &= O_p(1),\\
    n^{q\wedge (1/2)} \left(
    \frac{1}{n-2} \sum_{j=3}^{n} \left| \frac{\Delta_j^C Y}{(2 + 2^{\al})^{1/\al} h^{1/\al}} \right|^{\la q \ra} 
    -
    \sig^q \, m_{2,\la q \ra}(\al,\be)
    \right) &= O_p(1).
\end{align}
By the delta method, we get
\begin{equation}
    n^{q\wedge (1/2)} \left(
    \frac{\sum_{j=3}^{n} |\Delta_j^{C}Y|^{\la q \ra}}{\sum_{j=3}^{n} |\Delta_j^{C}Y|^{q}}  - \frac{m_{2,\la q \ra}(\al,\be)}{m_{2,q}(\al,\be)}
    \right) = O_p(1).
\end{equation}
By \eqref{hm:al0.sig0_tightness} and the smoothness of $(\al,\beta) \mapsto (m_{2,q}(\al,\be),m_{2,\la q \ra}(\al,\be))$, we have
\begin{equation}
    n^{q\wedge (1/2)} \left(
    \frac{\sum_{j=3}^{n} |\Delta_j^{C}Y|^{\la q \ra}}{\sum_{j=3}^{n} |\Delta_j^{C}Y|^{q}}  - \frac{m_{2,\la q \ra}(\tilde \al_n,\be)}{m_{2,q}(\tilde \al_n,\be)}
    \right) = O_p(1).
\end{equation}
Now, denote by $\tilde \beta_n$ the solution to the following moment ratio equation:
\begin{equation}
\frac{\sum_{j=3}^{n} |\Delta_j^{C}Y|^{\la q \ra}}{\sum_{j=3}^{n} |\Delta_j^{C}Y|^{q}} 
\approx \frac{m_{2,\la q \ra}(\tilde \al_n,\be)}{m_{2,q}(\tilde \al_n,\be)} = \frac{\tan \frac{q\eta(\tilde \al_n,\be)}{2}}{\tan \frac{q\pi}{2}}.
\end{equation}
By means of the continuous mapping theorem and the delta method, we see that
\begin{equation}
    n^{q\wedge (1/2)} (\tilde{\be}_n -\be) =O_p(1),
\end{equation}
hence in particular, the consistency of $\tilde \be_n$.


\begin{rem}
The estimation methods for $(\al,\sig,\beta)$ discussed so far are directly applicable to the case of a more general drift:
\begin{equation}
    Y_t = Y_0 + \int_0^t \mu_s ds + \sig J_t,
\end{equation}
where $\mu$ is an unobserved (nuisance) adaptive process satisfying mild conditions. This is evident since we did not require any specific structure for the drift part in the above arguments.
\end{rem}

\begin{rem}
In cases where $T=T_n\to\infty$ and $X$ is ergodic ($\lam>0$) and stationary, that is, $Y_t \sim \pi_\theta \coloneqq S_{\al}^{0}\left(\be, \sig_{\infty},\mu_{\infty}' +\frac{\mu}{\lambda}\right)$ for each $t\ge 0$ under $\pr$, then it is possible to consider the method of moments based on the estimating equation (Remark \ref{hm:rem_inv.law})
\begin{equation}
    \frac1n \sumj \psi(Y_{t_{j-1}}) \approx \int \psi(y) \pi_\theta(dy)
\end{equation}
for suitable $\psi$.
\end{rem}

\section{Numerical experiments}
\label{hm:sec_sim}

\subsection{Design}

    %
    

To observe the finite sample performance of the proposed estimators, we conducted numerical experiments for parameter estimation based on $\ell_n(\theta)$ and $\mbbh_n(\theta)$, together with the method of moments presented in Section \ref{hm:sec_ME}.
The simulation design is as follows:

\begin{itemize}
\item The terminal sampling time $T=1,100$.
\item Initial value of stochastic process $Y_0=0$.
\item True values: $\lam =1,\ \mu=2,\ \al \in \left\{1.0, 1.01, 1.5, 1.8\right\},\ \sig=5,\ \be=0.5$, with initial value $Y_0 = 0$; just for reference, we also ran the case $\al=1.0$.
\item Sample size: $n=500,1000,2000$.
\item The degree of the moment: $q=0.2$.
\item The number of the Monte Carlo simulations: $L=1000$.
\end{itemize}
Here, we additionally run simulations for $\al=1$, although we derived theoretical results under $\al > 1$. 

To compute estimators, we first estimated $\al,\sig$, and $\be$ by the method of moments; the estimates can be obtained in a flash. Then, we optimized the objective functions to obtain the MLE and Euler-QMLE by maximizing $\ell_n(\theta)$ or $\mbbh_n(\theta)$ by using the BFGS algorithm implemented in the \texttt{optim} function of R; for the initial values for the numerical optimization, we used the value $(\lambda, \mu) = (2,3)$ for the drift parameters together with the moment estimators for the noise parameters. In addition, to reduce the computation time, we performed the parallel computation using the \texttt{mcapply} function provided by the \texttt{parallel} package. For the calculation, we used a laptop with an Apple M1, 16GB of memory, and 7 CPU cores.

For the generation of random samples and probability density functions, we used the \texttt{libstable4u} package and evaluated the probability density function by the \texttt{stable\_pdf}.
In the special case $\al=1$, where the \texttt{libstable4u} package did not work suitably, the random samples were generated by the CMS method given in \cite{Cha76} and the probability density function was evaluated by the \texttt{dstable} function in the \texttt{stabledist} package.

\subsection{Tables}

Tables \ref{tab:mle-vs-euler-1.0-T1}-\ref{tab:mle-vs-euler-1.8-T1} report the finite-sample performance of the MLE and the Euler-QMLE for various values of $\alpha$ with $T=1$, 
while Tables \ref{tab:mle-vs-euler-1.0}-\ref{tab:mle-vs-euler-1.8} present the corresponding results for $T=100$.

In each Table, \textit{Time} denotes the average computation time for getting one set of estimates over the 1000 Monte Carlo simulations. We observed the following.
\begin{itemize}
    \item Across all cases, the MLE and the Euler-QMLE exhibited similar performance. Both estimators showed negligible bias in most scenarios and converged to the true parameters as $n$ increased, demonstrating their consistency. 
    
    \item As expected from the theoretical findings, the precision of $(\lambda, \mu)$ remains poor unless $T$ is large. Furthermore, we observed upward bias in the estimation of $\sigma$ unless $n$ is sufficiently large.
    
    \item Regarding the standard deviations, those for $\mu$, $\al$, and $\be$ are broadly comparable between the two methods, while the MLE remains preferable for $\lam$ and $\sig$ in terms of its smaller bias. 
    
    \item Computation time for the Euler-QMLE is roughly two to ten times shorter than that of the full MLE for all $n$.
\end{itemize}

In sum, there is a natural trade-off: on the one hand, the MLE yields higher estimation accuracy, while it has higher computational costs; on the other hand, the Euler QMLE is more biased while having shorter computation time.
Still, the Euler-QMLE has both merits for sufficiently large $n$ (here, about $2000$).

\begin{table}[H]
\centering
\scriptsize
\setlength{\tabcolsep}{3pt}
\resizebox{\linewidth}{!}{
\begin{tabular}{c|cc|cc|cc|cc|cc|cc}
\hline
\multirow{2}{*}{$n$}
 & \multicolumn{2}{c|}{$\hat\lambda_n$}
 & \multicolumn{2}{c|}{$\hat\mu_n$}
 & \multicolumn{2}{c|}{$\hat\alpha_n$}
 & \multicolumn{2}{c|}{$\hat\sigma_n$}
 & \multicolumn{2}{c|}{$\hat\beta_n$}
 & \multicolumn{2}{c}{Time (s)} \\
 & MLE & Euler & MLE & Euler & MLE & Euler & MLE & Euler & MLE & Euler & MLE & Euler \\
\hline
500
 & 1.0189 & 1.0180
 & 2.2697 & 2.2365
 & 0.9988 & 0.9989
 & 6.3024 & 6.3090
 & 0.4980 & 0.4971
 & 43.47 & 42.93      \\
 & (0.2656) & (0.2651)
 & (2.7004) & (2.6014)
 & (0.0564) & (0.0568)
 & (19.9627) & (19.9868)
 & (0.0755) & (0.0776)
 &          &         \\
\hline
1000
 & 1.0174 & 1.0171
 & 2.3355 & 2.3456
 & 0.9959 & 0.9959
 & 5.8660 & 5.8736
 & 0.5007 & 0.5004
 & 89.25 & 108.91   \\
 & (0.2069) & (0.2065)
 & (2.1318) & (2.1972)
 & (0.0409) & (0.0409)
 & (11.3751) & (11.3808)
 & (0.0540) & (0.0555)
 &          &         \\
\hline
2000
 & 1.0072 & 1.0066
 & 2.4892 & 2.4915
 & 0.9938 & 0.9948
 & 5.6004 & 5.3321
 & 0.5018 & 0.5029
 & 112.16 & 145.97    \\
 & (0.1405) & (0.1399)
 & (1.7420) & (1.6535)
 & (0.0303) & (0.0248)
 & (5.2421) & (1.2992)
 & (0.0399) & (0.0355)
 &          &         \\
\hline
\end{tabular}}
\caption{True values: $\alpha=1.0$, $(\lambda,\mu,\sigma,\beta)=(1,2,5,0.5)$, $T=1$.}
\label{tab:mle-vs-euler-1.0-T1}
\end{table}

\begin{table}[H]
\centering
\scriptsize
\setlength{\tabcolsep}{3pt}
\resizebox{\linewidth}{!}{
\begin{tabular}{c|cc|cc|cc|cc|cc|cc}
\hline
\multirow{2}{*}{$n$}
 & \multicolumn{2}{c|}{$\hat\lambda_n$}
 & \multicolumn{2}{c|}{$\hat\mu_n$}
 & \multicolumn{2}{c|}{$\hat\alpha_n$}
 & \multicolumn{2}{c|}{$\hat\sigma_n$}
 & \multicolumn{2}{c|}{$\hat\beta_n$}
 & \multicolumn{2}{c}{Time (s)} \\
 & MLE & Euler & MLE & Euler & MLE & Euler & MLE & Euler & MLE & Euler & MLE & Euler \\
\hline
500
 & 1.0535 & 1.0498
 & 2.5022 & 2.5102
 & 1.0040 & 1.0048
 & 5.9835 & 5.7668
 & 0.5007 & 0.5020
 &  19.24    &  19.29       \\
 & (0.3219) & (0.3190)
 & (2.8059)  & (2.8386)
 & (0.0548) & (0.0532)
 & (10.1616) & (8.3449)
 & (0.0730) & (0.0700)
 &           &          \\
\hline
1000
 & 1.0193 & 1.0195
 & 2.5111 & 2.4366
 & 1.0048 & 1.0051
 & 5.5863 & 5.5952
 & 0.5024 & 0.5014
 & 34.97     & 31.59       \\
 & (0.1976) & (0.1974)
 & (2.4464)  & (2.0847)
 & (0.0374) & (0.0380)
 & (5.5092) & (5.6466)
 & (0.0524) & (0.0549)
 &           &          \\
\hline
2000
 & 1.0036 & 1.0026
 & 2.6578 & 2.6428
 & 1.0022 & 1.0024
 & 5.4416 & 5.3871
 & 0.5038 & 0.5038
 & 49.45   & 50.88      \\
 & (0.1571) & (0.1594)
 & (1.4847)  & (1.4762)
 & (0.0245) & (0.0233)
 & (1.9589) & (0.9241)
 & (0.0355) & (0.0326)
 &           &          \\
\hline
\end{tabular}}
\caption{True values: $\alpha=1.01$, $(\lambda,\mu,\sigma,\beta)=(1,2,5,0.5)$, $T=1$.}
\label{tab:mle-vs-euler-1.01-T1}
\end{table}

\begin{table}[H]
\centering
\scriptsize
\setlength{\tabcolsep}{3pt}
\resizebox{\linewidth}{!}{
\begin{tabular}{c|cc|cc|cc|cc|cc|cc}
\hline
\multirow{2}{*}{$n$}
 & \multicolumn{2}{c|}{$\hat\lambda_n$}
 & \multicolumn{2}{c|}{$\hat\mu_n$}
 & \multicolumn{2}{c|}{$\hat\alpha_n$}
 & \multicolumn{2}{c|}{$\hat\sigma_n$}
 & \multicolumn{2}{c|}{$\hat\beta_n$}
 & \multicolumn{2}{c}{Time (s)} \\
 & MLE & Euler & MLE & Euler & MLE & Euler & MLE & Euler & MLE & Euler & MLE & Euler \\
\hline
500
 & 2.0345 & 1.9437
 & 2.7498 & 2.5361
 & 1.5006 & 1.4962
 & 5.1058 & 5.1480
 & 0.5055 & 0.4941
 & 26.17   & 20.00      \\
 & (1.8317) & (1.7158)
 & (7.0510) & (7.1591)
 & (0.0681) & (0.0671)
 & (0.9411) & (0.9336)
 & (0.1191) & (0.1171)
 &          &          \\
\hline
1000
 & 1.8365 & 1.8067
 & 2.5613 & 2.5450
 & 1.5012 & 1.5013
 & 5.0520 & 5.0447
 & 0.5023 & 0.5025
 & 50.15 & 37.05       \\
 & (1.6518) & (1.5986)
 & (6.2549) & (6.1811)
 & (0.0481) & (0.0478)
 & (0.7101) & (0.7094)
 & (0.0875) & (0.0871)
 &          &          \\
\hline
2000
 & 1.5820 & 1.6037
 & 2.5000 & 2.5293
 & 1.4990 & 1.4994
 & 5.0554 & 5.0475
 & 0.5021 & 0.5020
 &  67.56  & 68.18       \\
 & (1.3842) & (1.3997)
 & (5.2342) & (5.2386)
 & (0.0344) & (0.0344)
 & (0.5635) & (0.5614)
 & (0.0616) & (0.0608)
 &          &          \\
\hline
\end{tabular}}
\caption{True values: $\alpha=1.5$, $(\lambda,\mu,\sigma,\beta)=(1,2,5,0.5)$, $T=1$.}
\label{tab:mle-vs-euler-1.5-T1}
\end{table}

\begin{table}[H]
\centering
\scriptsize
\setlength{\tabcolsep}{3pt}
\resizebox{\linewidth}{!}{
\begin{tabular}{c|cc|cc|cc|cc|cc|cc}
\hline
\multirow{2}{*}{$n$}
 & \multicolumn{2}{c|}{$\hat\lambda_n$}
 & \multicolumn{2}{c|}{$\hat\mu_n$}
 & \multicolumn{2}{c|}{$\hat\alpha_n$}
 & \multicolumn{2}{c|}{$\hat\sigma_n$}
 & \multicolumn{2}{c|}{$\hat\beta_n$}
 & \multicolumn{2}{c}{Time (s)} \\
 & MLE & Euler & MLE & Euler & MLE & Euler & MLE & Euler & MLE & Euler & MLE & Euler \\
\hline
500
 & 4.0254 & 3.9328
 & 4.3578 & 3.8034
 & 1.7866 & 1.7912
 & 5.1647 & 5.1027
 & 0.4699 & 0.4767
 &  23.49 & 23.05       \\
 & (3.4876) & (3.3354)
 & (15.0040) & (14.5220)
 & (0.0613) & (0.0586)
 & (0.5906) & (0.5539)
 & (0.2338) & (0.2239)
 &          &          \\
\hline
1000
 & 3.6723 & 3.6314
 & 4.0432 & 3.4872
 & 1.7970 & 1.7970
 & 5.0584 & 5.0476
 & 0.5022 & 0.5008
 &  48.71   & 43.27       \\
 & (3.4117) & (3.3793)
 & (13.3460) & (13.2150)
 & (0.0425) & (0.0430)
 & (0.4352) & (0.4420)
 & (0.1683) & (0.1706)
 &          &          \\
\hline
2000
 & 3.2226 & 3.1905
 & 3.1643 & 3.1563
 & 1.7990 & 1.8000
 & 5.0269 & 5.0101
 & 0.5025 & 0.5062
 &  81.33  & 81.20     \\
 & (3.0107) & (2.9563)
 & (11.5850) & (11.3060)
 & (0.0315) & (0.0306)
 & (0.3570) & (0.3410)
 & (0.1246) & (0.1209)
 &          &          \\
\hline
\end{tabular}}
\caption{True values: $\alpha=1.8$, $(\lambda,\mu,\sigma,\beta)=(1,2,5,0.5)$, $T=1$.}
\label{tab:mle-vs-euler-1.8-T1}
\end{table}

\begin{table}[H]
\centering
\scriptsize
\setlength{\tabcolsep}{3pt}
\resizebox{\linewidth}{!}{
\begin{tabular}{c|cc|cc|cc|cc|cc|cc}
\hline
\multirow{2}{*}{$n$}
 & \multicolumn{2}{c|}{$\hat\lambda_n$}
 & \multicolumn{2}{c|}{$\hat\mu_n$}
 & \multicolumn{2}{c|}{$\hat\alpha_n$}
 & \multicolumn{2}{c|}{$\hat\sigma_n$}
 & \multicolumn{2}{c|}{$\hat\beta_n$}
 & \multicolumn{2}{c}{Time (s)} \\
 & MLE & Euler & MLE & Euler & MLE & Euler & MLE & Euler & MLE & Euler & MLE & Euler \\
\hline
500
 & 1.0000 & 0.9064
 & 1.7983 & 1.7241
 & 0.9979 & 0.9870
 & 5.0788 & 4.7769
 & 0.5014 & 0.4963
 & 54.08  & 44.39 \\
 & (0.0136) & (0.0108)
 & (0.5960) & (0.6479)
 & (0.0535) & (0.0725)
 & (0.5829) & (0.9639)
 & (0.0717) & (0.0938)
 &         &        \\
\hline
1000
 & 1.0000 & 0.9516
 & 1.8731 & 1.8534
 & 0.9959 & 0.9886
 & 5.0832 & 4.9825
 & 0.5022 & 0.4992
 & 67.97  & 96.41 \\
 & (0.0084) & (0.0075)
 & (0.5558) & (0.5970)
 & (0.0385) & (0.0533)
 & (0.5189) & (0.9934)
 & (0.0497) & (0.0666)
 &         &        \\
\hline
2000
 & 1.0001 & 0.9755
 & 1.9584 & 1.9596
 & 0.9976 & 0.9926
 & 5.0736 & 5.0787
 & 0.5020 & 0.4993
 & 193.79 & 200.61 \\
 & (0.0057) & (0.0053)
 & (0.5278) & (0.5620)
 & (0.0274) & (0.0390)
 & (0.4644) & (1.0340)
 & (0.0365) & (0.0467)
 &         &        \\
\hline
\end{tabular}}
\caption{True values: $\alpha=1.0$, $(\lambda,\mu,\sigma,\beta)=(1,2,5,0.5)$, $T=100$.}
\label{tab:mle-vs-euler-1.0}
\end{table}

\begin{table}[H]
\centering
\scriptsize
\setlength{\tabcolsep}{3pt}
\resizebox{\linewidth}{!}{%
\begin{tabular}{c|cc|cc|cc|cc|cc|cc}
\hline
\multirow{2}{*}{$n$}
 & \multicolumn{2}{c|}{$\hat\lambda_n$}
 & \multicolumn{2}{c|}{$\hat\mu_n$}
 & \multicolumn{2}{c|}{$\hat\alpha_n$}
 & \multicolumn{2}{c|}{$\hat\sigma_n$}
 & \multicolumn{2}{c|}{$\hat\beta_n$}
 & \multicolumn{2}{c}{Time (s)} \\
 & MLE & Euler & MLE & Euler & MLE & Euler & MLE & Euler & MLE & Euler & MLE & Euler \\
\hline
500
 & 1.0005 & 0.9068
 & 2.0867 & 1.8721
 & 0.9991 & 1.0025
 & 5.2592 & 4.6529
 & 0.4982 & 0.5015
 & 55.69  & 22.19 \\
 & (0.0129) & (0.0108)
 & (0.7410) & (0.5533)
 & (0.0693) & (0.0588)
 & (1.3303) & (0.6268)
 & (0.0764) & (0.0714)
 &          &        \\
\hline
1000
 & 0.9998 & 0.9068
 & 2.1233 & 1.8721
 & 1.0010 & 1.0025
 & 5.2789 & 4.6529
 & 0.5002 & 0.5015
 & 115.37 & 37.07 \\
 & (0.0085) & (0.0108)
 & (0.8341) & (0.5533)
 & (0.0559) & (0.0588)
 & (1.5185) & (0.6268)
 & (0.0589) & (0.0714)
 &          &        \\
\hline
2000
 & 0.9998 & 0.9753
 & 2.1523 & 2.0802
 & 1.0021 & 1.0026
 & 5.2780 & 5.0380
 & 0.5007 & 0.5010
 & 237.12 & 83.92 \\
 & (0.0058) & (0.0056)
 & (0.8394) & (0.5356)
 & (0.0442) & (0.0309)
 & (1.6871) & (0.5819)
 & (0.0426) & (0.0386)
 &          &        \\
\hline
\end{tabular}}
\caption{True values:$\alpha=1.01$, $(\lambda,\mu,\sigma,\beta)=(1,2,5,0.5)$, $T=100$.}
\label{tab:mle-vs-euler-1.01}
\end{table}

\begin{table}[H]
\centering
\scriptsize
\setlength{\tabcolsep}{3pt}
\resizebox{\linewidth}{!}{%
\begin{tabular}{c|cc|cc|cc|cc|cc|cc}
\hline
\multirow{2}{*}{$n$}
 & \multicolumn{2}{c|}{$\hat\lambda_n$}
 & \multicolumn{2}{c|}{$\hat\mu_n$}
 & \multicolumn{2}{c|}{$\hat\alpha_n$}
 & \multicolumn{2}{c|}{$\hat\sigma_n$}
 & \multicolumn{2}{c|}{$\hat\beta_n$}
 & \multicolumn{2}{c}{Time (s)} \\
 & MLE & Euler & MLE & Euler & MLE & Euler & MLE & Euler & MLE & Euler & MLE & Euler \\
\hline
500
 & 1.0151 & 0.9123
 & 2.0548 & 1.8203
 & 1.5018 & 1.5006
 & 5.0021 & 4.5351
 & 0.5040 & 0.5043
 & 28.14  & 9.94 \\
 & (0.1023) & (0.0562)
 & (0.7200) & (0.5939)
 & (0.0683) & (0.0699)
 & (0.2866) & (0.2756)
 & (0.1246) & (0.1210)
 &        &       \\
\hline
1000
 & 1.0086 & 0.9572
 & 2.0359 & 1.9350
 & 1.5006 & 1.5004
 & 5.0108 & 4.7499
 & 0.5039 & 0.5025
 & 58.16  & 20.68 \\
 & (0.0729) & (0.0539)
 & (0.6759) & (0.6389)
 & (0.0479) & (0.0480)
 & (0.2481) & (0.3350)
 & (0.0874) & (0.0877)
 &        &       \\
\hline
2000
 & 1.0095 & 0.9792
 & 2.0644 & 1.9882
 & 1.5000 & 1.4993
 & 5.0090 & 4.8822
 & 0.5025 & 0.5030
 & 651.66 & 45.34 \\
 & (0.0849) & (0.0512)
 & (0.6936) & (0.6350)
 & (0.0361) & (0.0339)
 & (0.2228) & (0.2475)
 & (0.0636) & (0.0633)
 &        &       \\
\hline
\end{tabular}}
\caption{True values: $\alpha=1.5$, $(\lambda,\mu,\sigma,\beta)=(1,2,5,0.5)$, $T=100$.}
\label{tab:mle-vs-euler-1.5}
\end{table}

\begin{table}[H]
\centering
\scriptsize
\setlength{\tabcolsep}{3pt}
\resizebox{\linewidth}{!}{%
\begin{tabular}{c|cc|cc|cc|cc|cc|cc}
\hline
\multirow{2}{*}{$n$}
 & \multicolumn{2}{c|}{$\hat\lambda_n$}
 & \multicolumn{2}{c|}{$\hat\mu_n$}
 & \multicolumn{2}{c|}{$\hat\alpha_n$}
 & \multicolumn{2}{c|}{$\hat\sigma_n$}
 & \multicolumn{2}{c|}{$\hat\beta_n$}
 & \multicolumn{2}{c}{Time (s)} \\
 & MLE & Euler & MLE & Euler & MLE & Euler & MLE & Euler & MLE & Euler & MLE & Euler \\
\hline
500
 & 1.0262 & 0.9259
 & 2.0996 & 1.8348
 & 1.7891 & 1.7902
 & 5.0248 & 4.3619
 & 0.4694 & 0.4807
 & 114.34 & 9.89 \\
 & (0.1234) & (0.1030)
 & (0.7840) & (0.6553)
 & (0.0604) & (0.0604)
 & (0.2105) & (0.6052)
 & (0.2318) & (0.2250)
 &        &       \\
\hline
1000
 & 1.0259 & 0.9710
 & 2.0872 & 1.9770
 & 1.7969 & 1.7968
 & 5.0151 & 4.6728
 & 0.4978 & 0.4970
 & 196.85  & 24.75 \\
 & (0.1262) & (0.1070)
 & (0.7658) & (0.7172)
 & (0.0427) & (0.0430)
 & (0.1754) & (0.5485)
 & (0.1715) & (0.1819)
 &        &       \\
\hline
2000
 & 1.0389 & 0.9954
 & 2.1198 & 2.0546
 & 1.7981 & 1.7985
 & 5.0167 & 4.8709
 & 0.5020 & 0.5046
 & 363.93 & 61.85 \\
 & (0.1670) & (0.1077)
 & (0.7707) & (0.7423)
 & (0.0339) & (0.0319)
 & (0.1665) & (0.2707)
 & (0.1318) & (0.1298)
 &        &       \\
\hline
\end{tabular}}
\caption{True values: $\alpha=1.8$, $(\lambda,\mu,\sigma,\beta)=(1,2,5,0.5)$, $T=100$.}
\label{tab:mle-vs-euler-1.8}
\end{table}

\subsection{Histograms}

Figures \ref{fig:param-by-n_mle-vs-euler-1.0}-\ref{fig:param-by-n_mle-vs-euler-1.8} display histograms of the normalized estimators. 
Each estimator is normalized by its theoretical rate of convergence, as prescribed in \cite[Remark 2 following Theorem 1]{BroMas18}:
\begin{align}
&\les: \sqrt{n} h^{1-1/\al} (\les - \lam), \quad \mes: \sqrt{n} h^{1-1/\al} (\mes - \mu)\\
&\aes: \sqrt{n} (\aes - \al), \quad \ses: \frac{\sqrt{n}}{\overline{l}}(\ses - \sig), \quad \bes: \sqrt{n} (\bes - \be).
\end{align}
The histograms of the normalized estimators become closer to a Gaussian-like shape; as we have shown in Theorem \ref{thm:main_result}, this holds for the estimators of the noise parameters $(\al,\beta,\sig)$.
The MLE histograms are mostly peaked and unimodal around zero for all parameters, whereas those of the Euler-QMLE exhibit leftward biases for $\lam$ and $\sig$. 
For $\al$ and $\be$, we cannot see a clear difference between the two methods.
The normalized $\mu$-histograms are the widest when $\al$ is near 1, and they tighten as $\al$ increases; note that the scales of $\mu$-histograms in the four figures are rather different across the different $\al$-values. It should be noted that the normalized estimator $\hat{\lambda}$ follows a mixed-normal asymptotic distribution. Consequently, the distribution of $\hat{\lambda}$ is leptokurtic, that is, it exhibits heavier tails and a sharper peak at the origin than the standard normal distribution.



  \newcommand{\CellPair}[3]{%
    \begin{subfigure}[t]{\colw}
      \centering
      \begin{minipage}{0.49\linewidth}
        \centering
        \includegraphics[width=\linewidth]{#1}\\[-2pt]
        \scriptsize MLE
      \end{minipage}\hfill
      \begin{minipage}{0.49\linewidth}
        \centering
        \includegraphics[width=\linewidth]{#2}\\[-2pt]
        \scriptsize Euler
      \end{minipage}
      \caption{#3}
    \end{subfigure}%
  }
  
\begin{figure}[t]
  \centering
  \captionsetup[subfigure]{justification=centering}
  \newcommand{\colw}{0.32\linewidth} 

  \newcommand{\rowtitle}[1]{\par\medskip\textbf{ #1}\par\smallskip}

  \rowtitle{$\lambda$}
  \CellPair{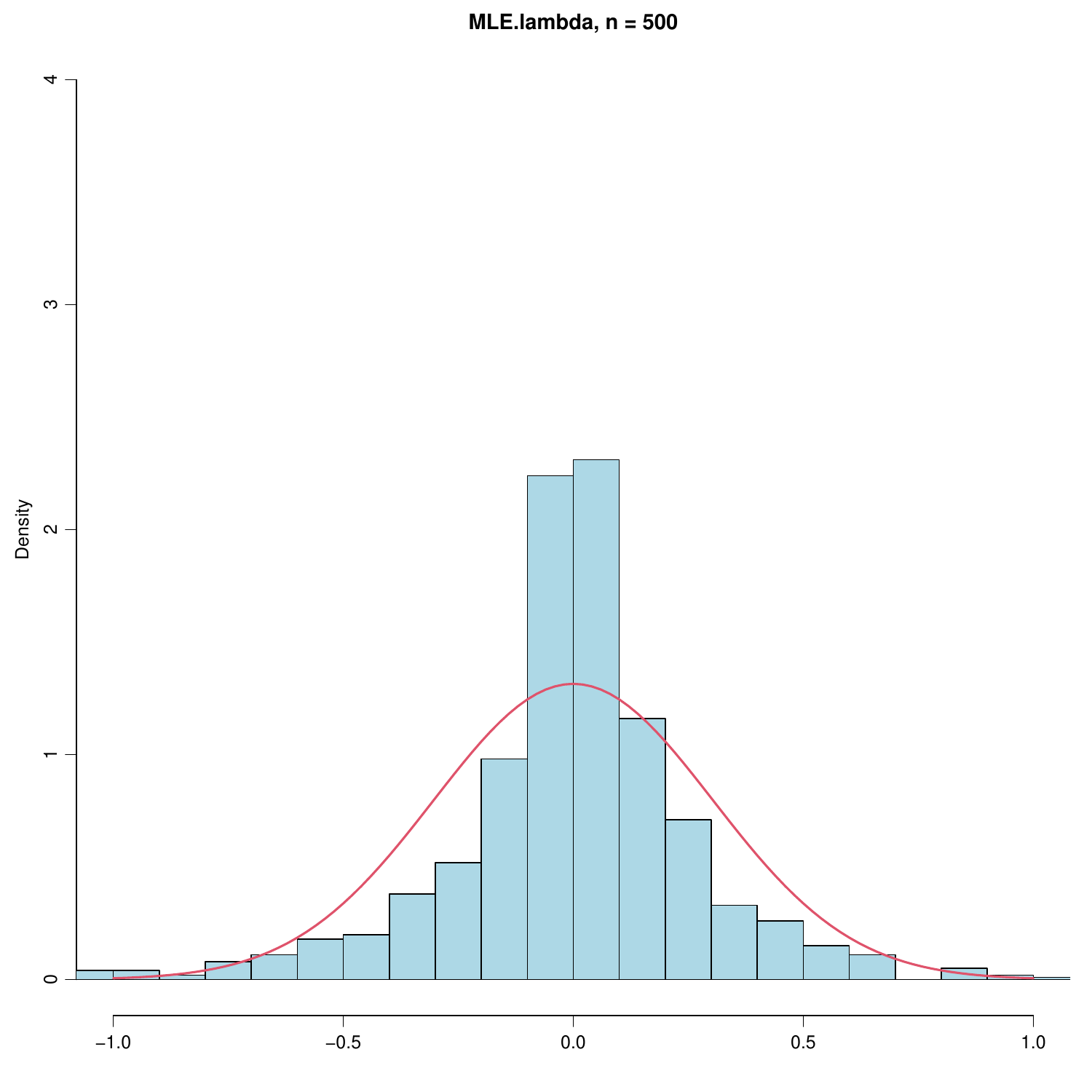}{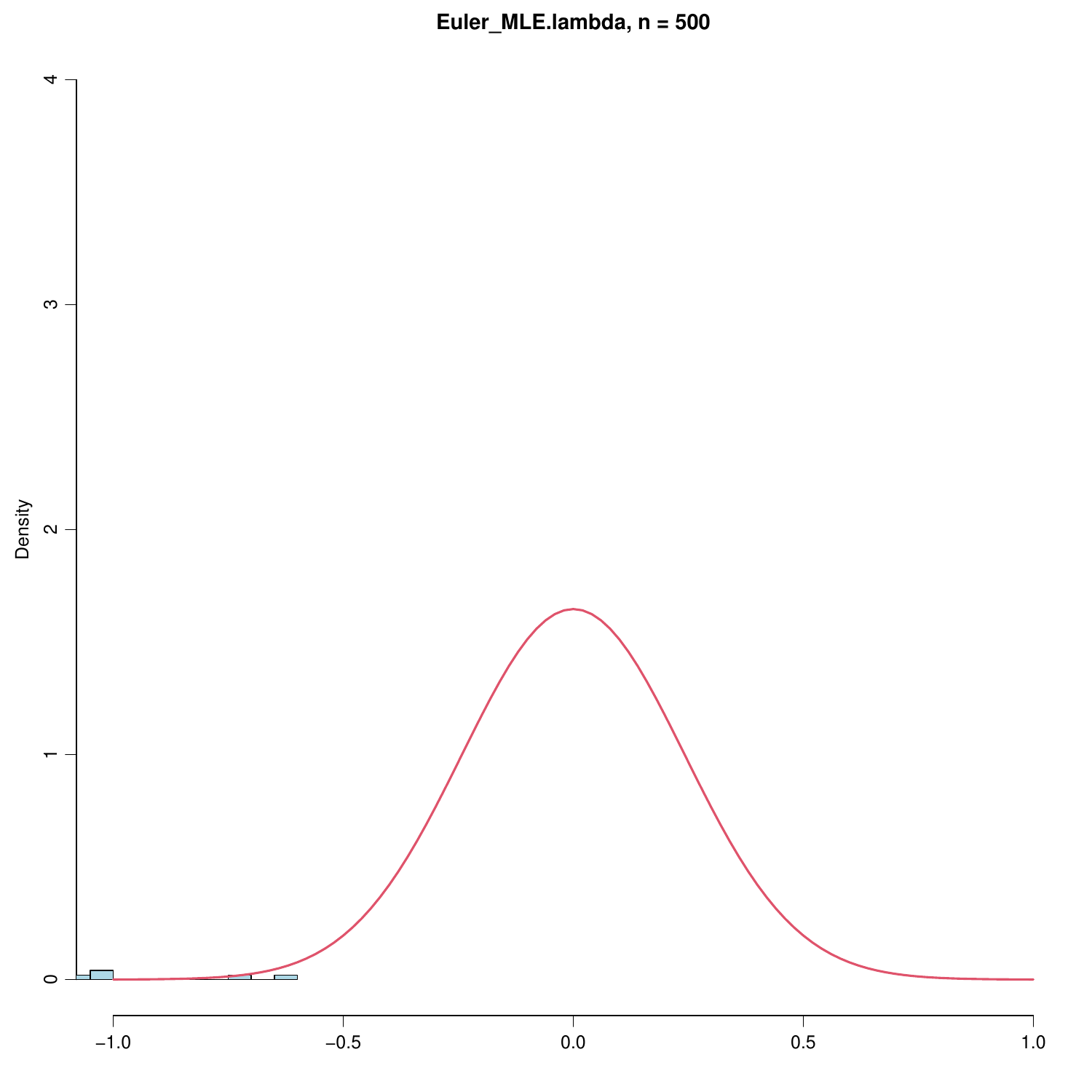}{$n=500$}\hfill
  \CellPair{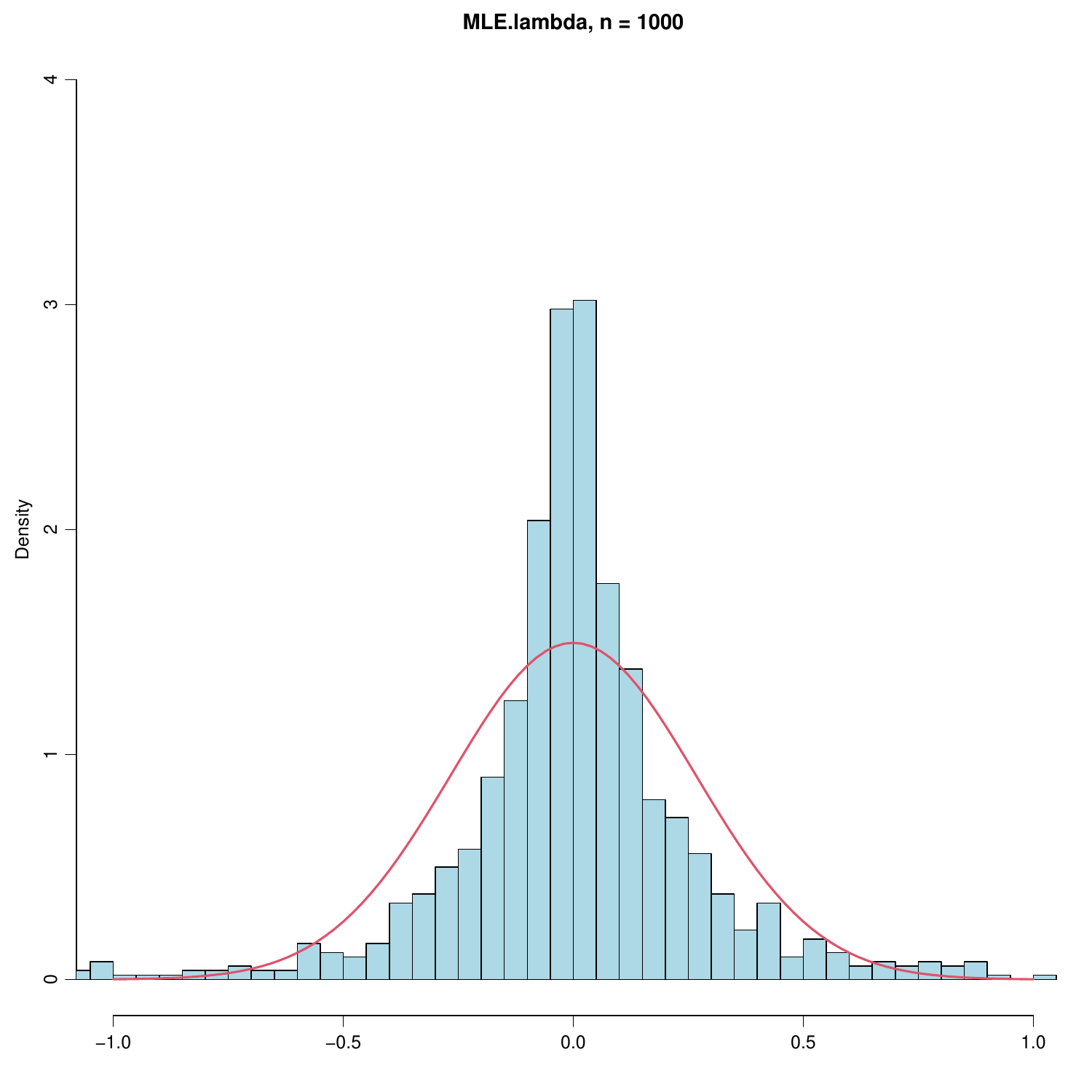}{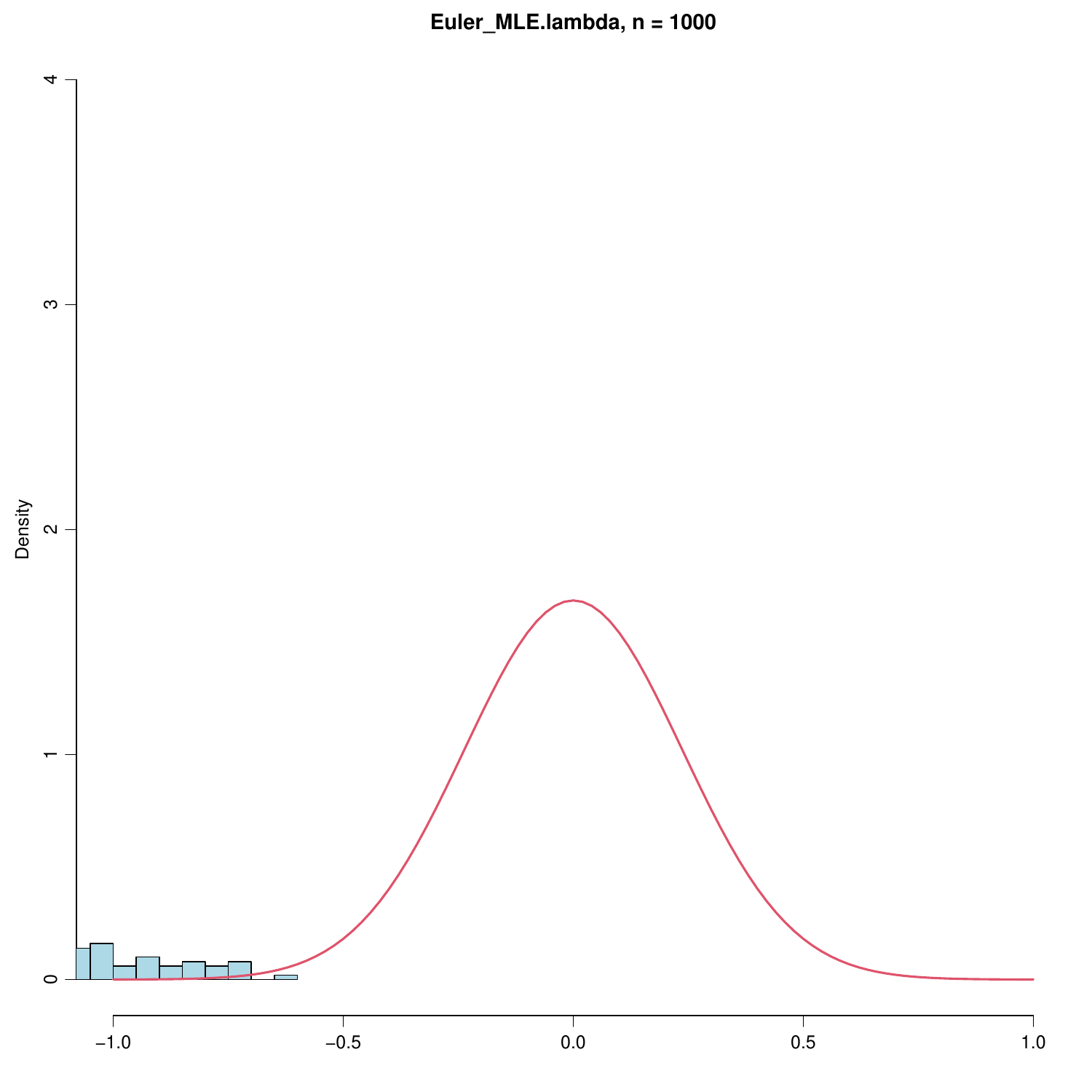}{$n=1000$}\hfill
  \CellPair{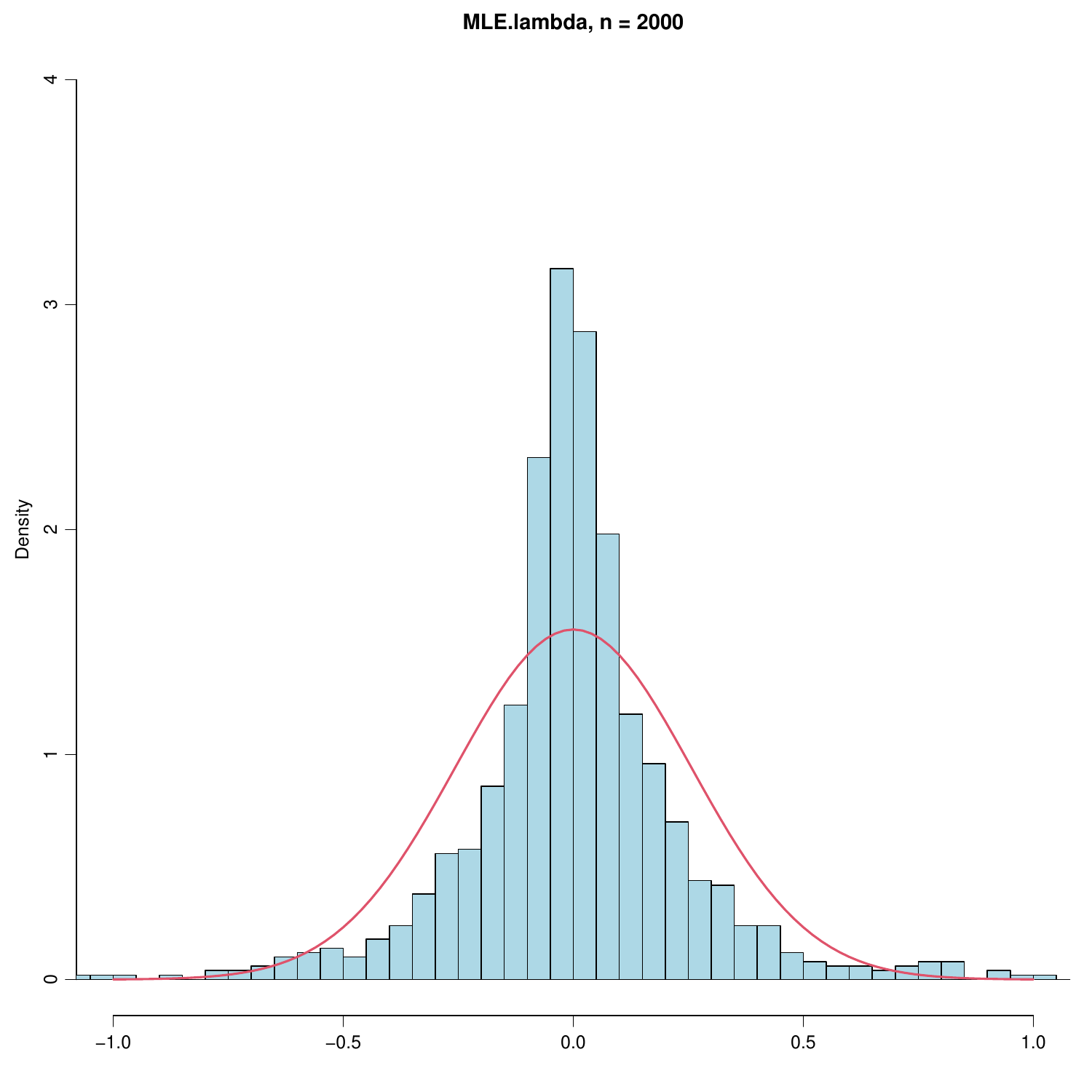}{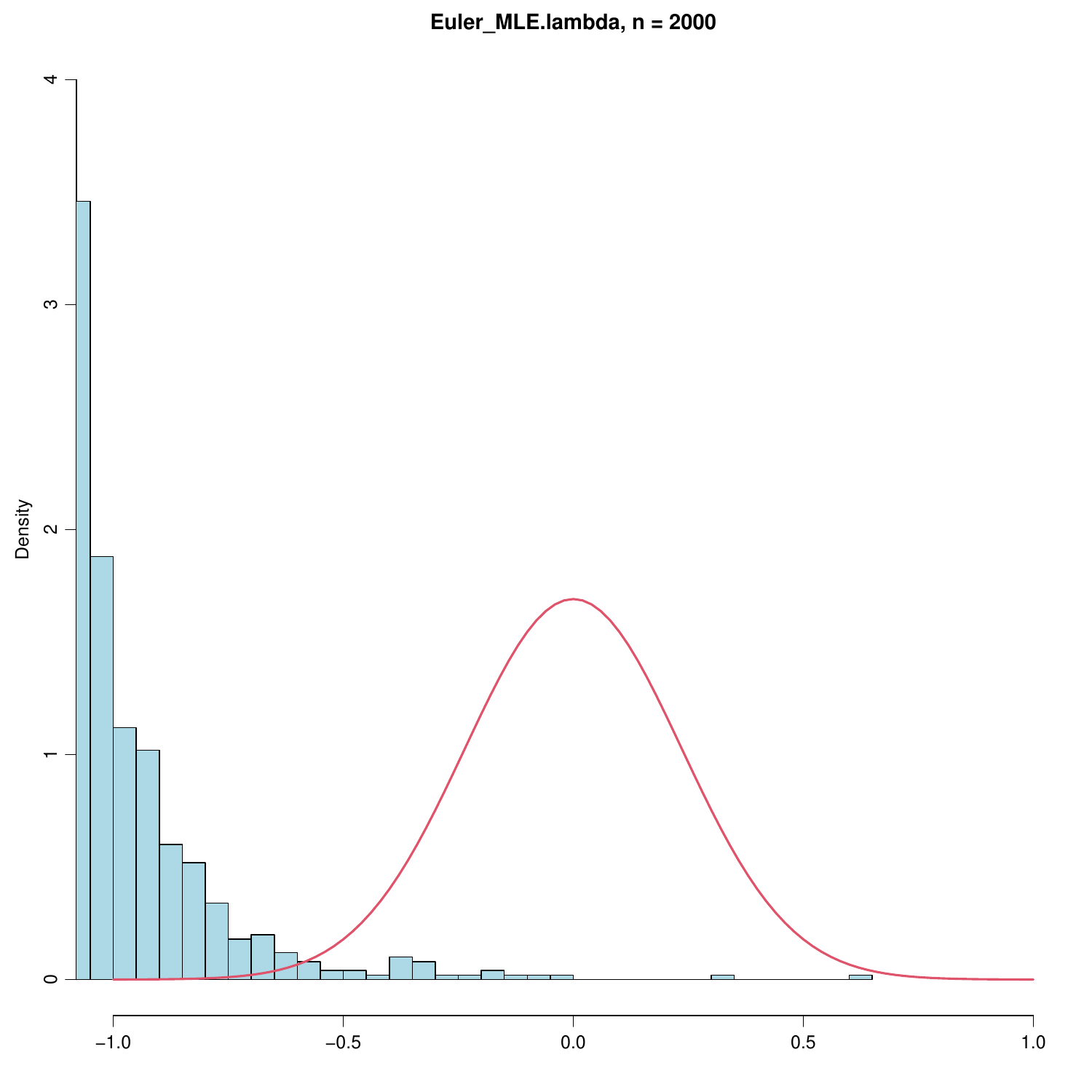}{$n=2000$}
  
  \vspace{5mm}
  
  \rowtitle{$\mu$}
  \CellPair{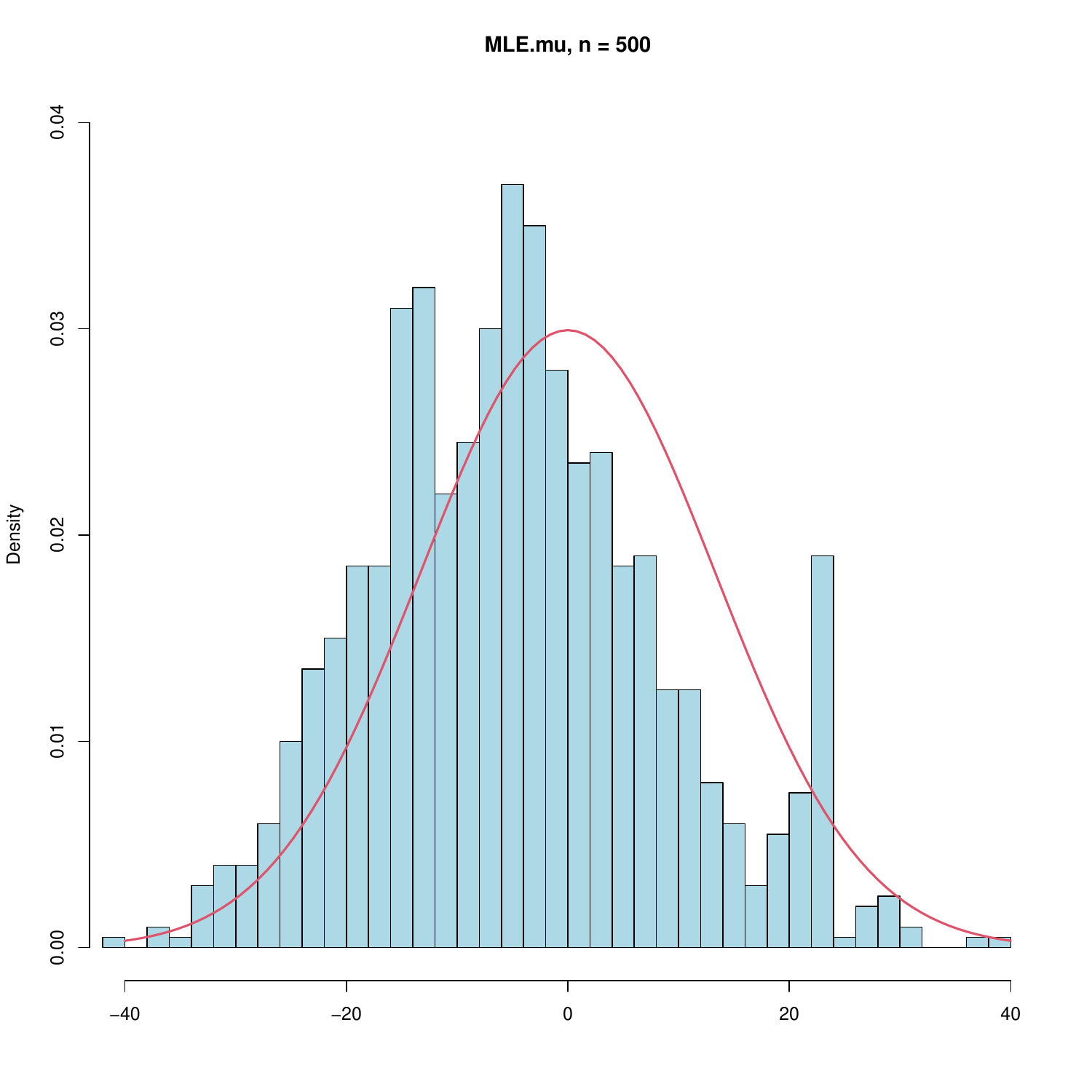}{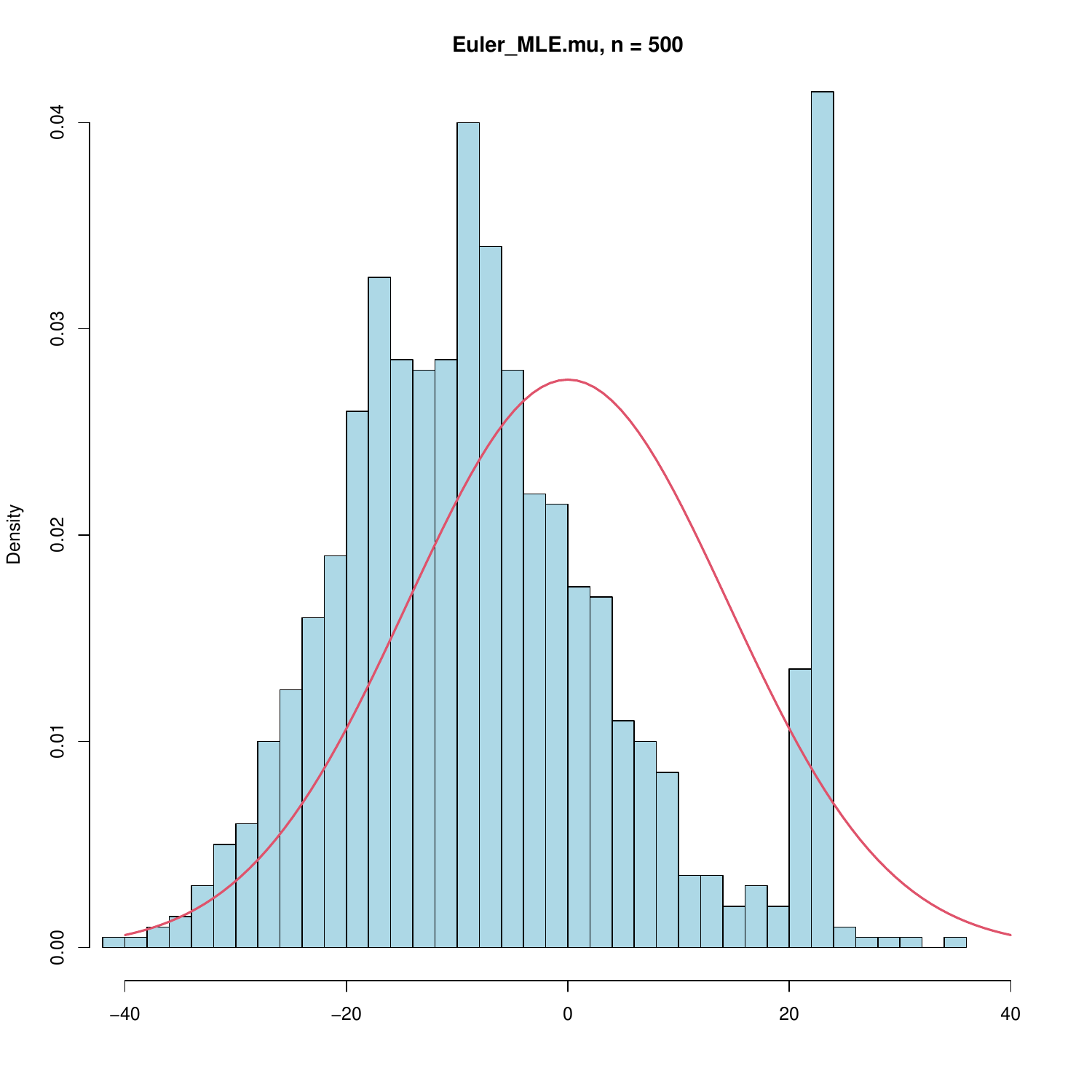}{$n=500$}\hfill
  \CellPair{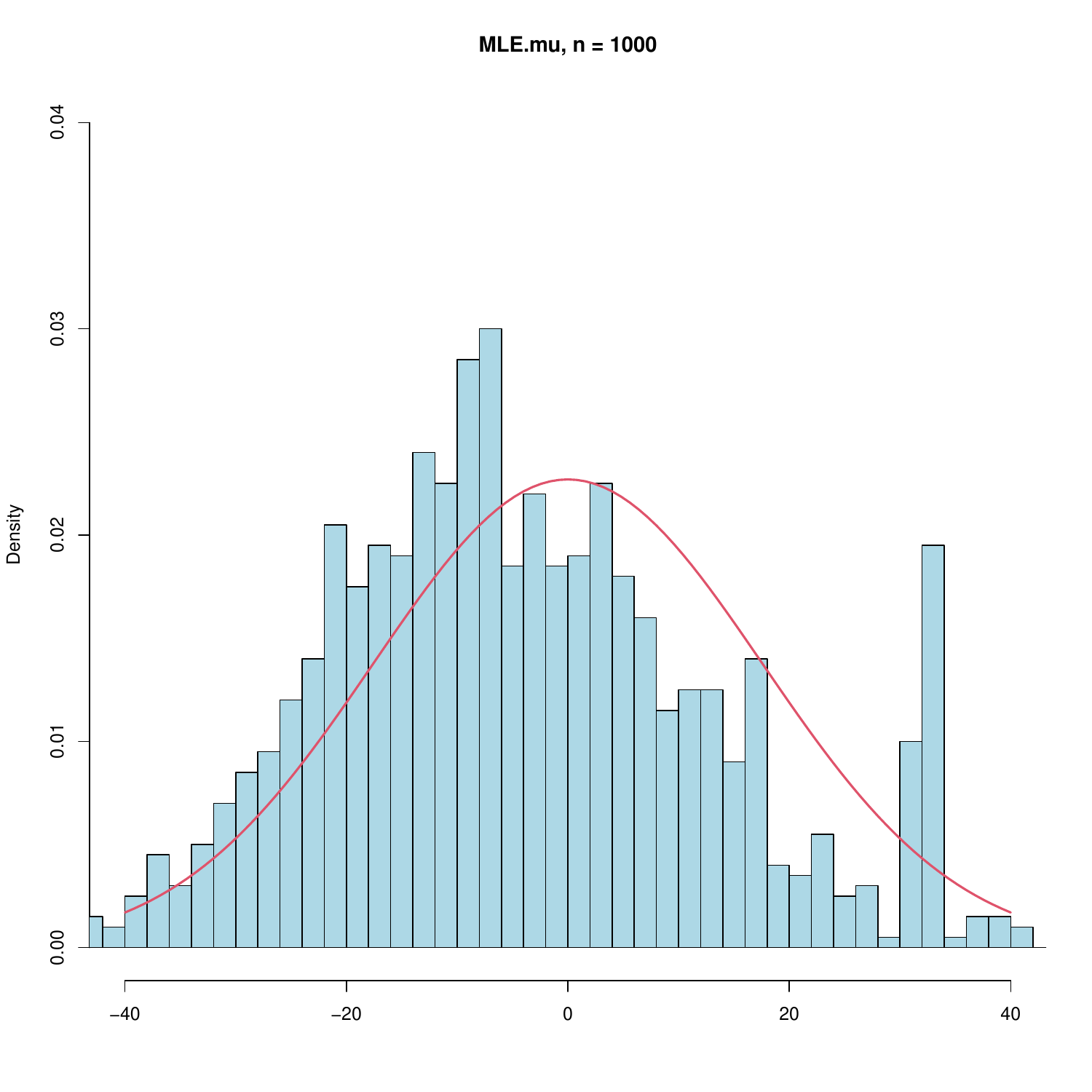}{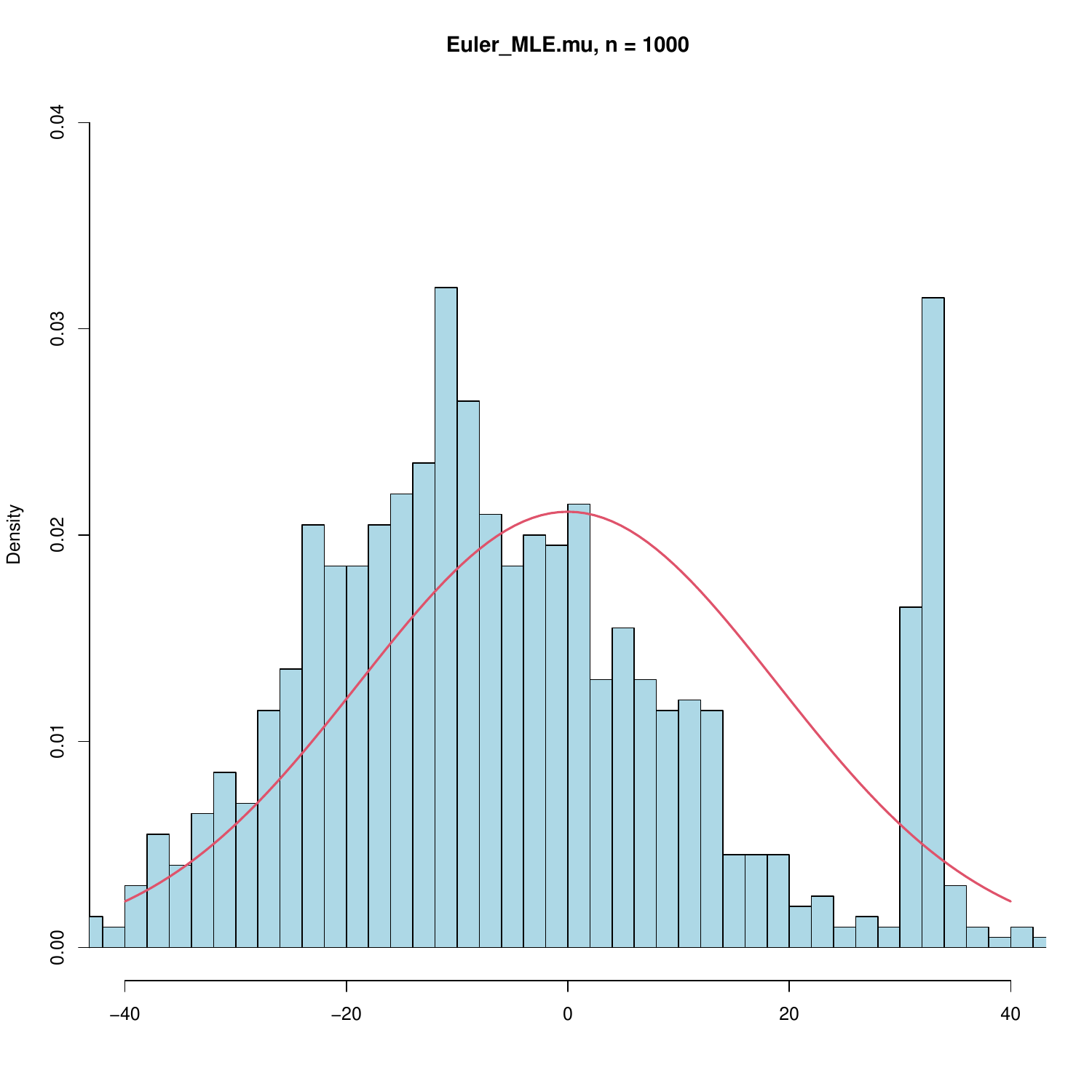}{$n=1000$}\hfill
  \CellPair{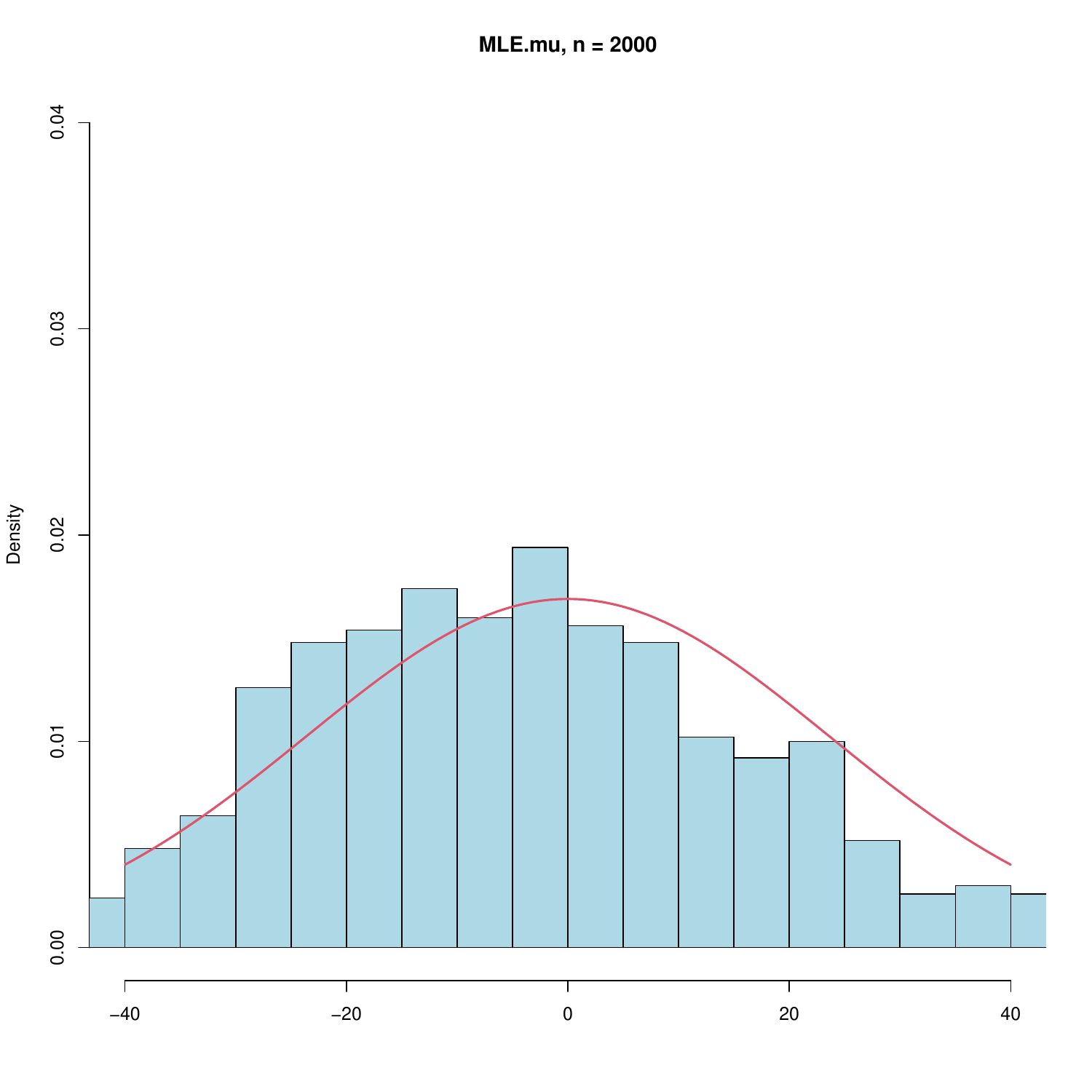}{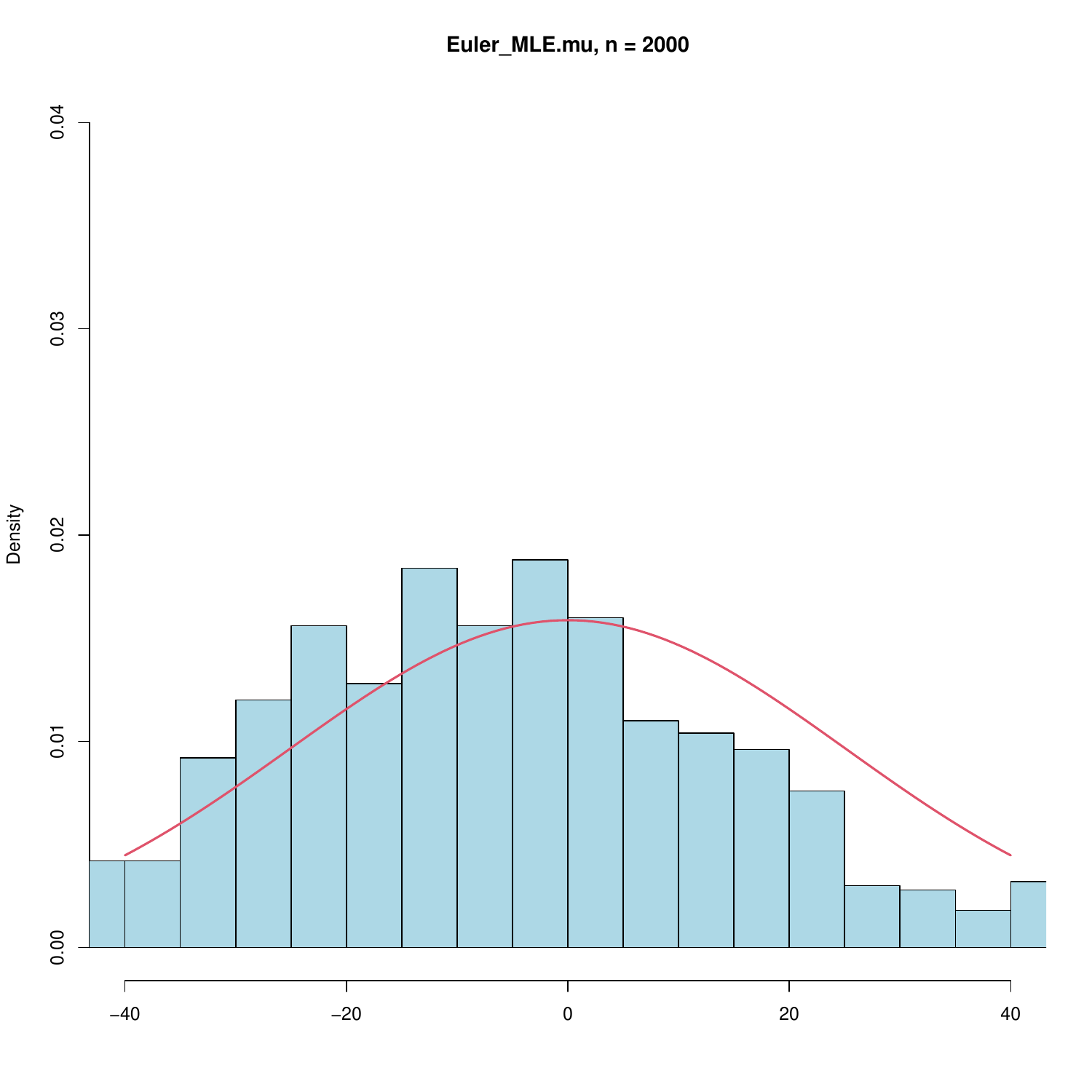}{$n=2000$}

  \vspace{5mm}
  
  \rowtitle{$\al$}
  \CellPair{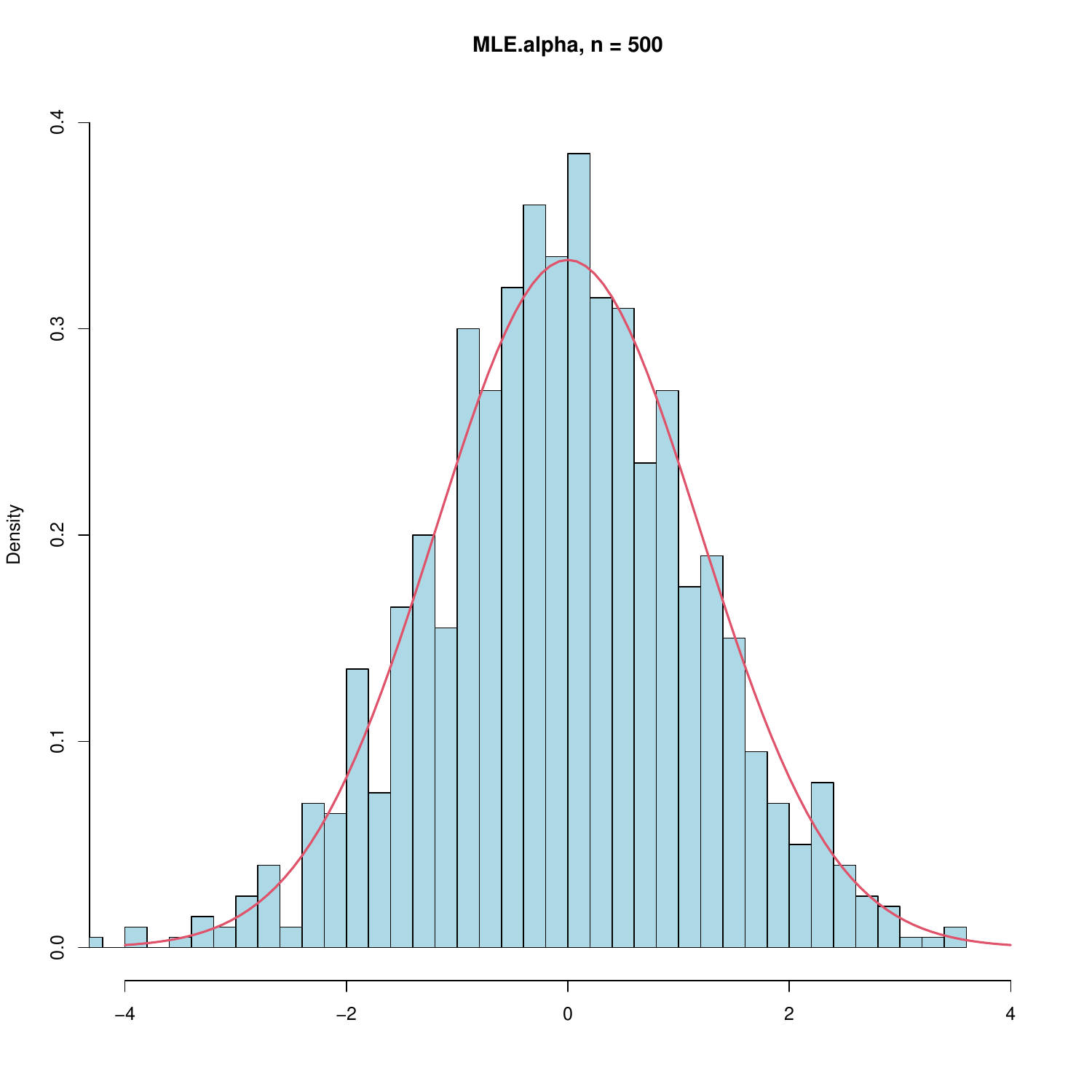}{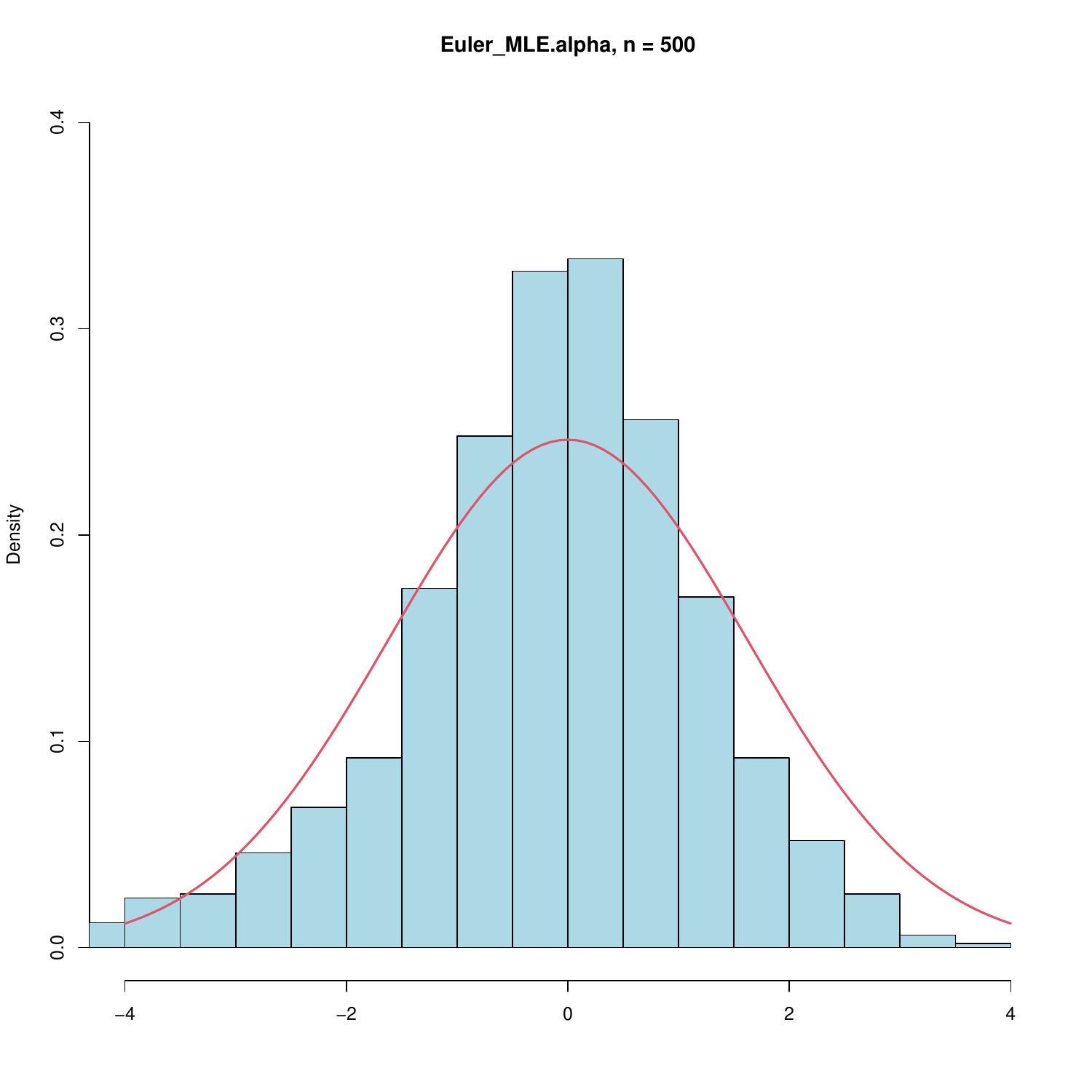}{$n=500$}\hfill
  \CellPair{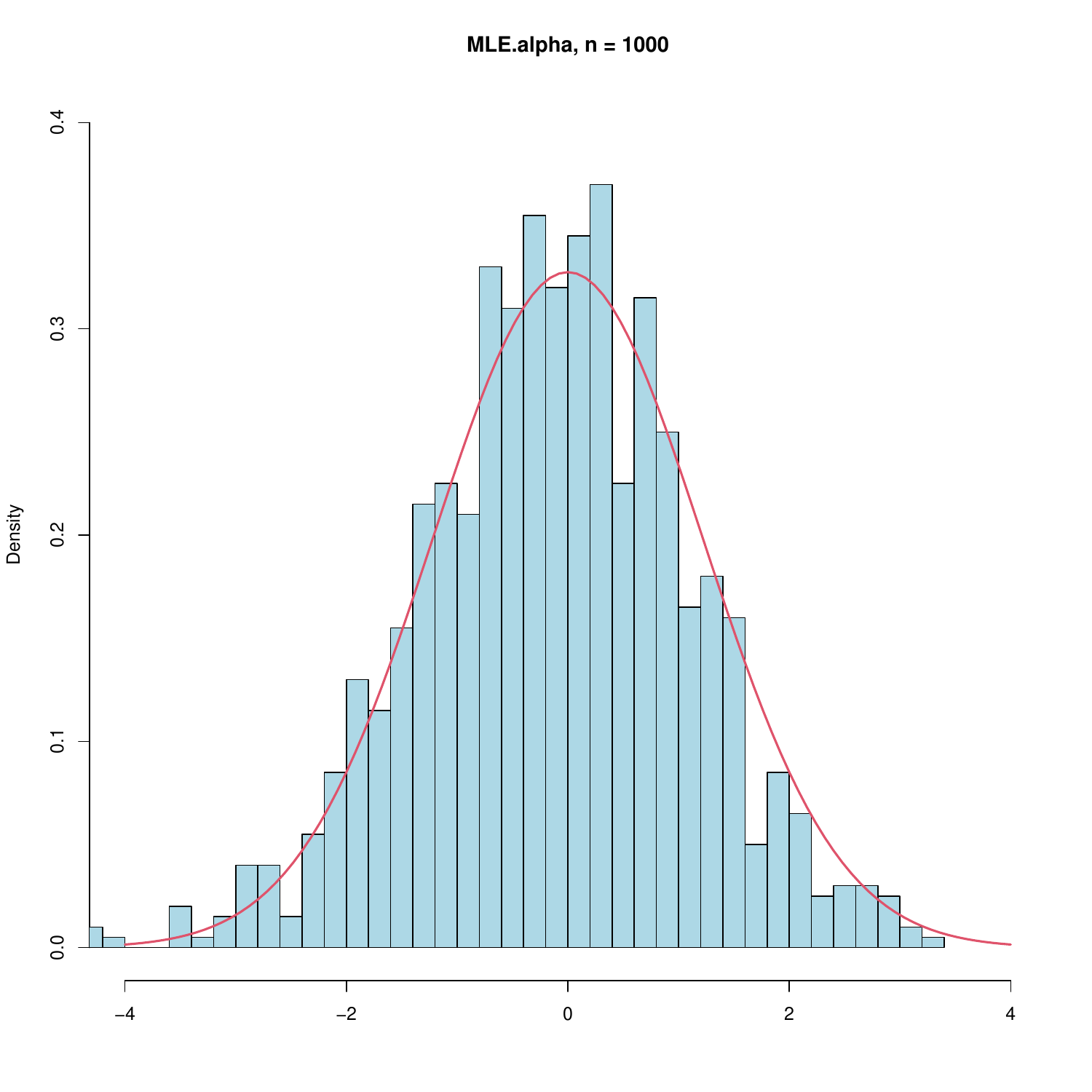}{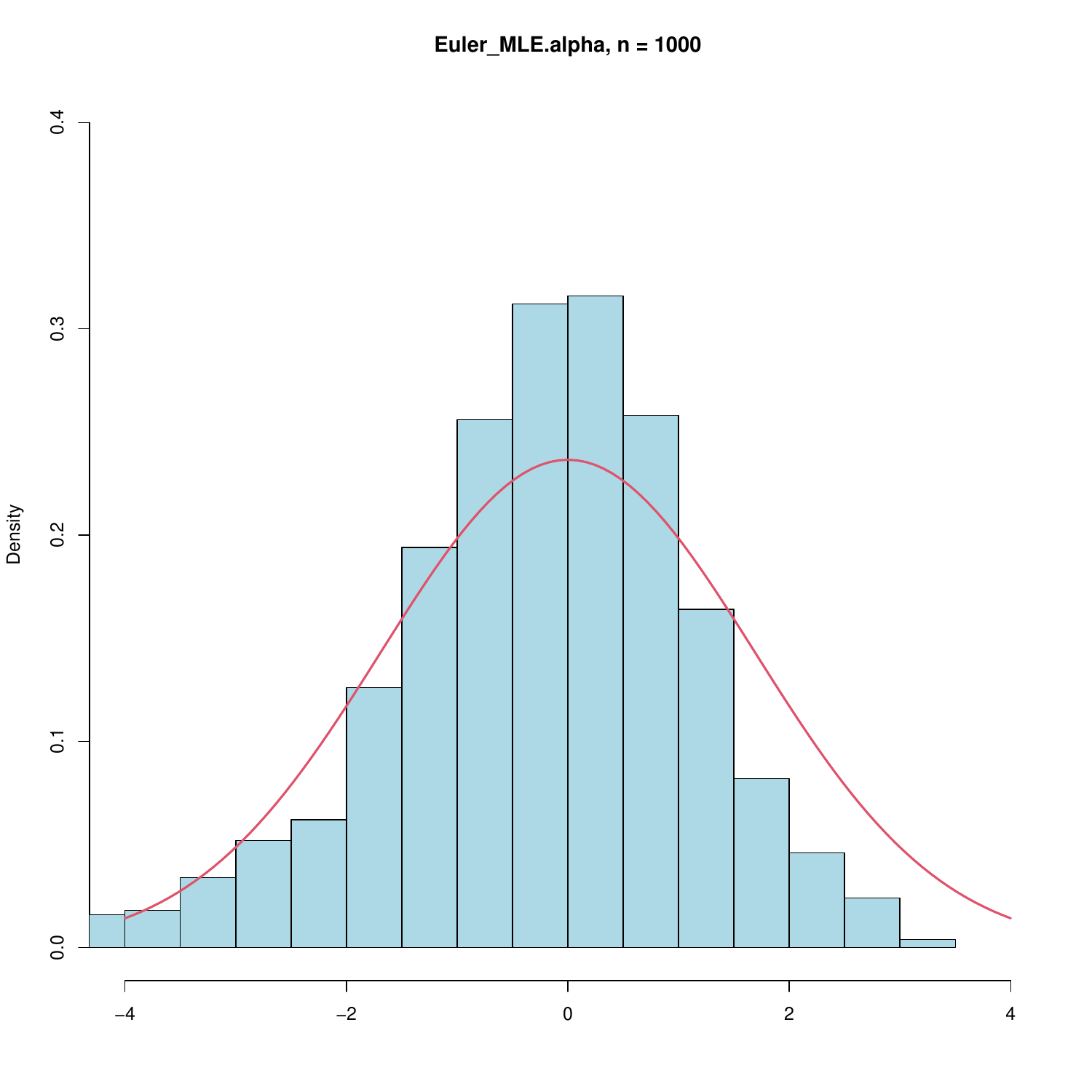}{$n=1000$}\hfill
  \CellPair{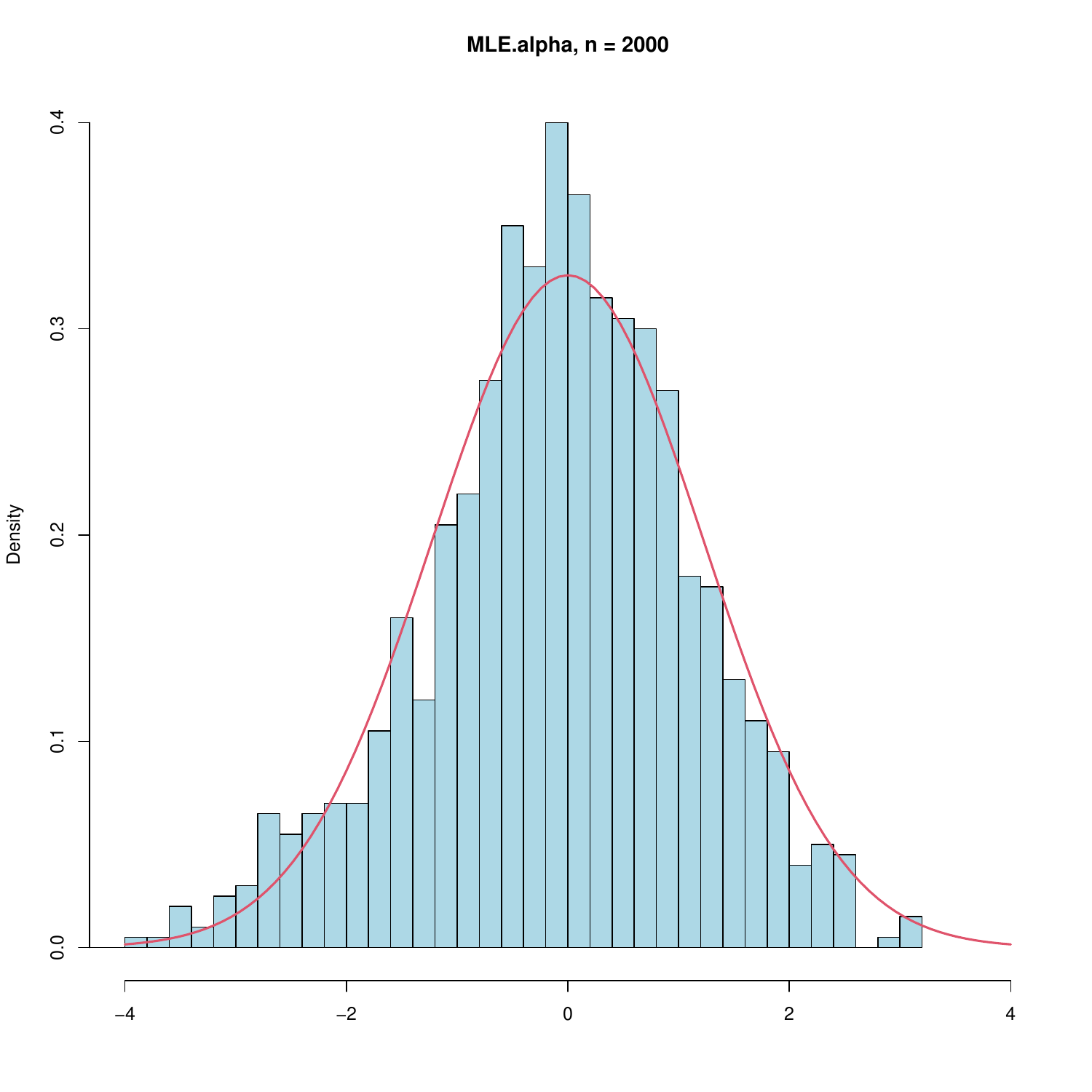}{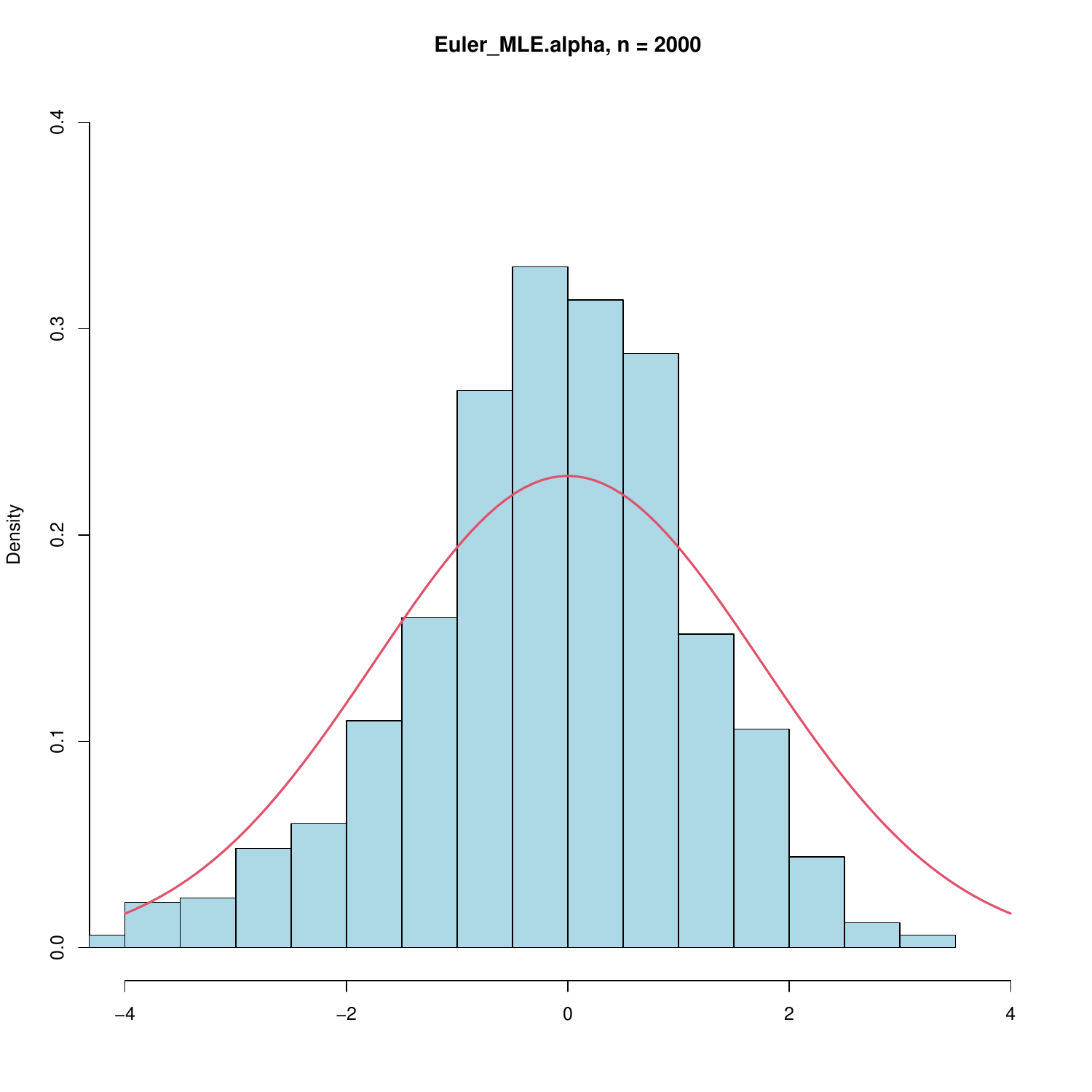}{$n=2000$}

  \vspace{5mm}
  
  \rowtitle{$\sig$}
  \CellPair{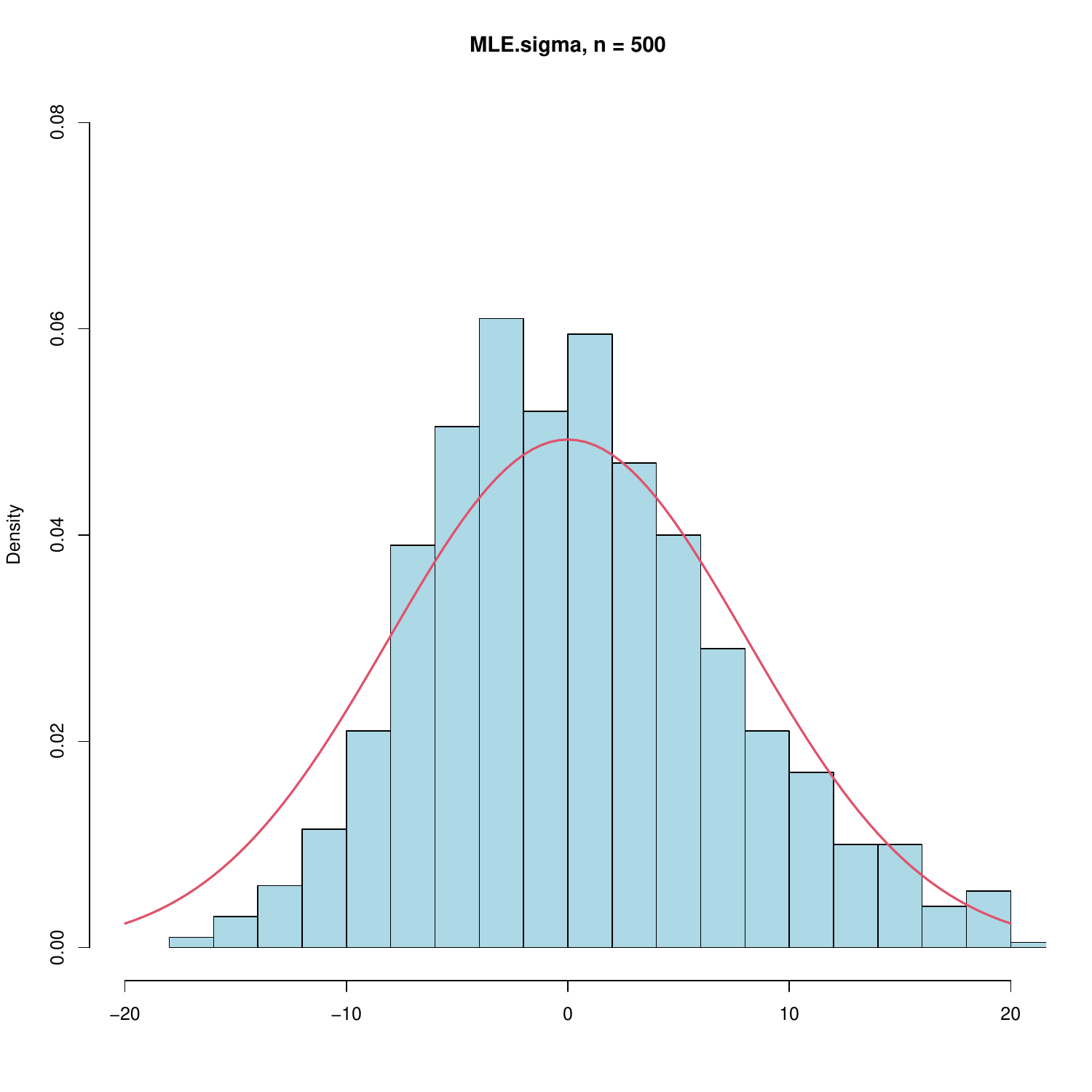}{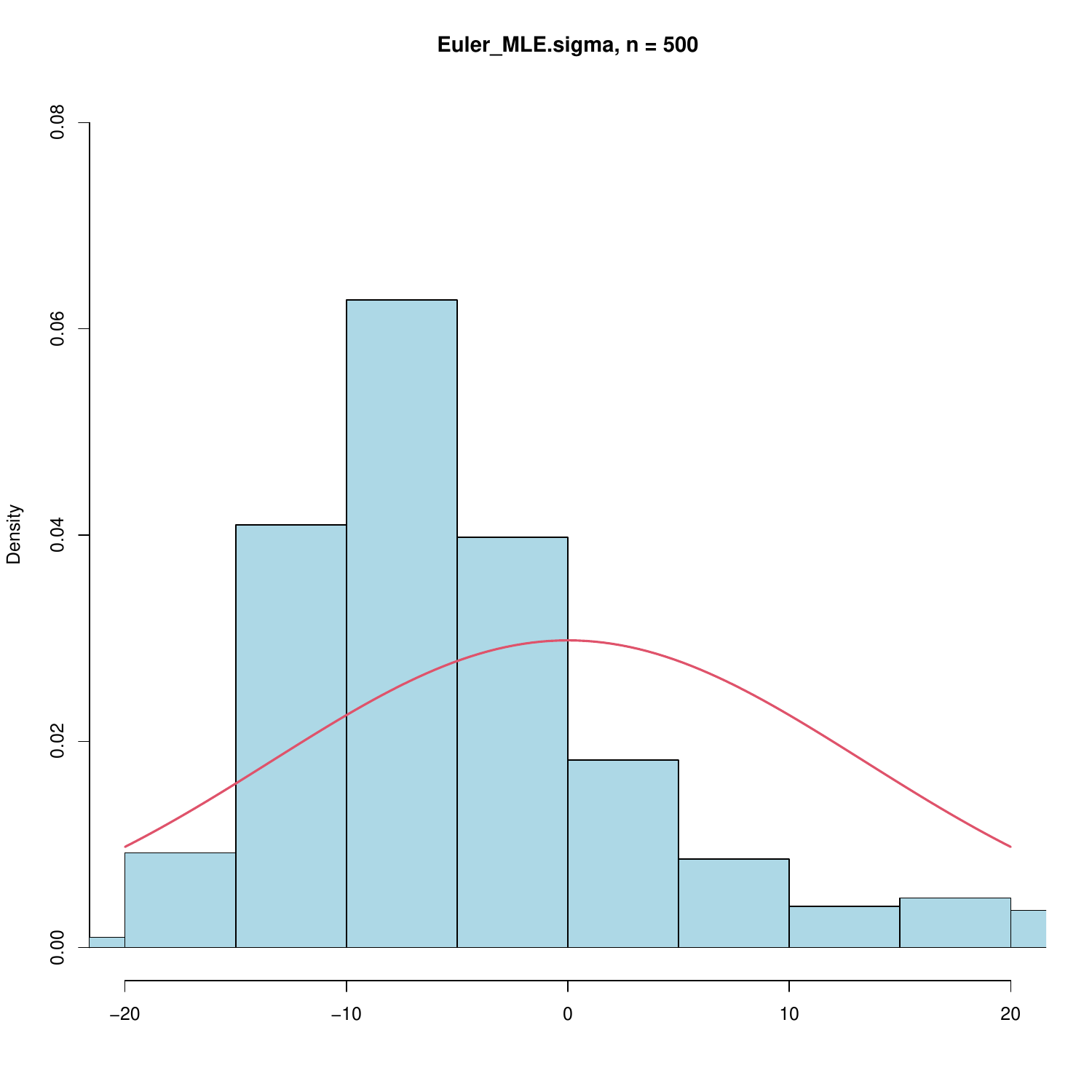}{$n=500$}\hfill
  \CellPair{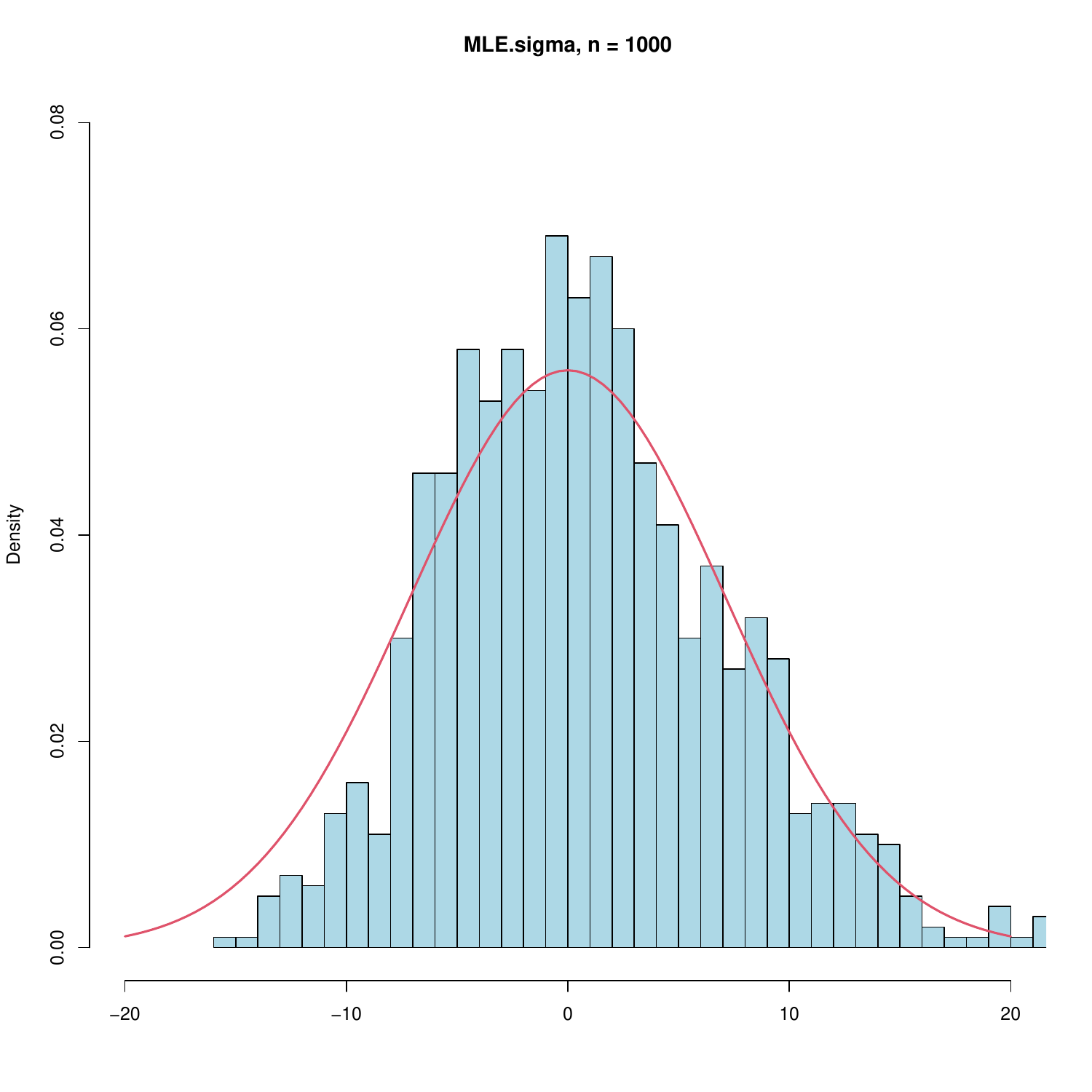}{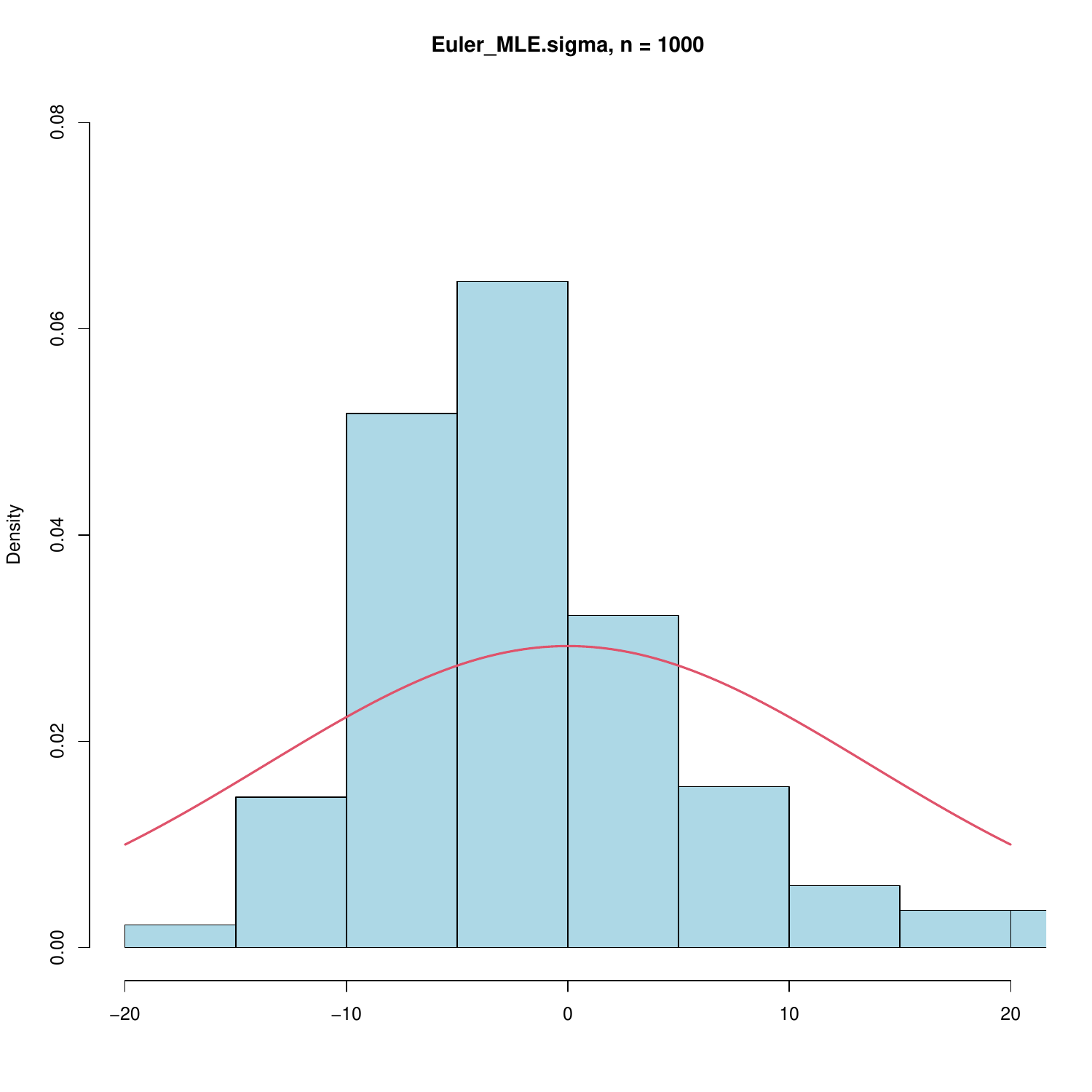}{$n=1000$}\hfill
  \CellPair{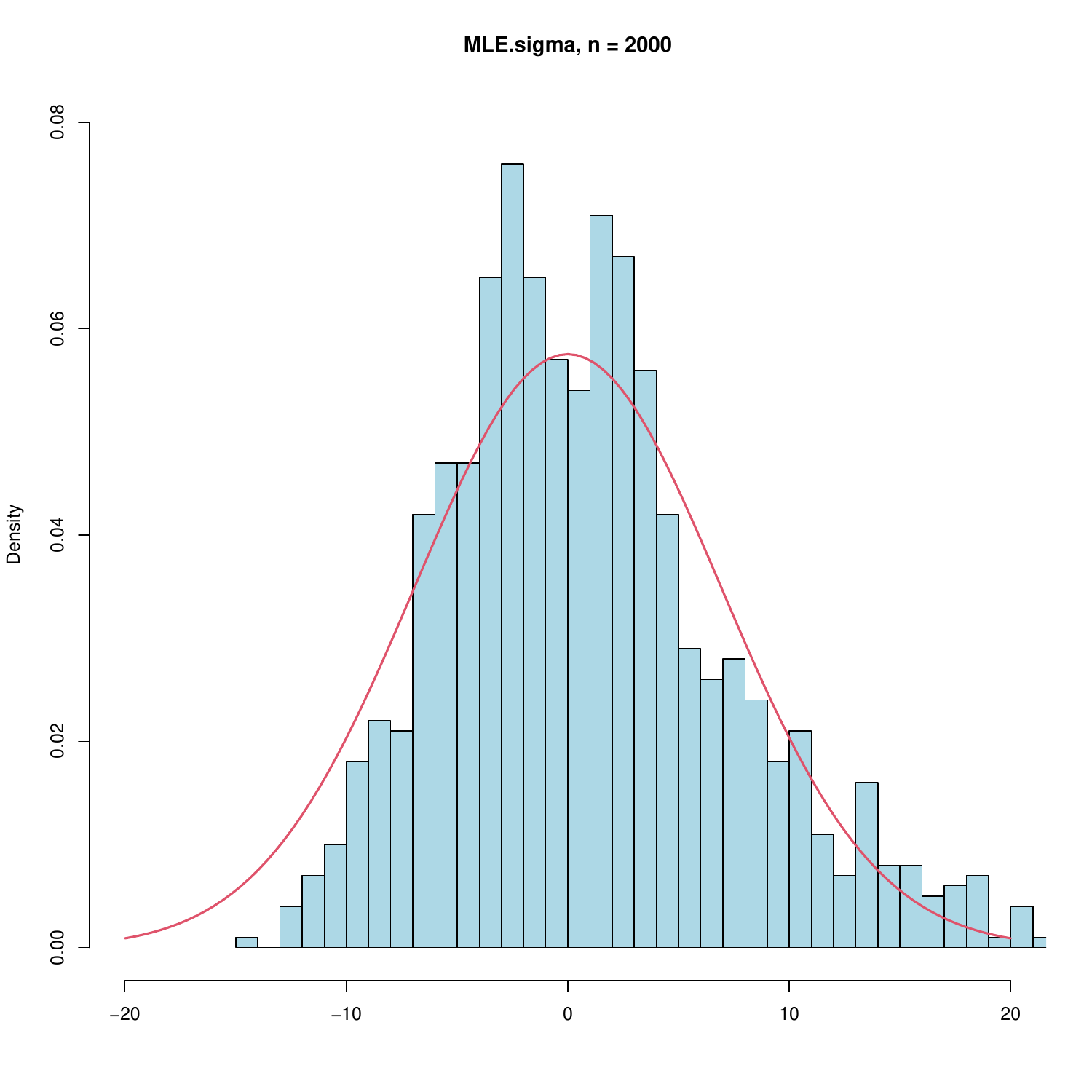}{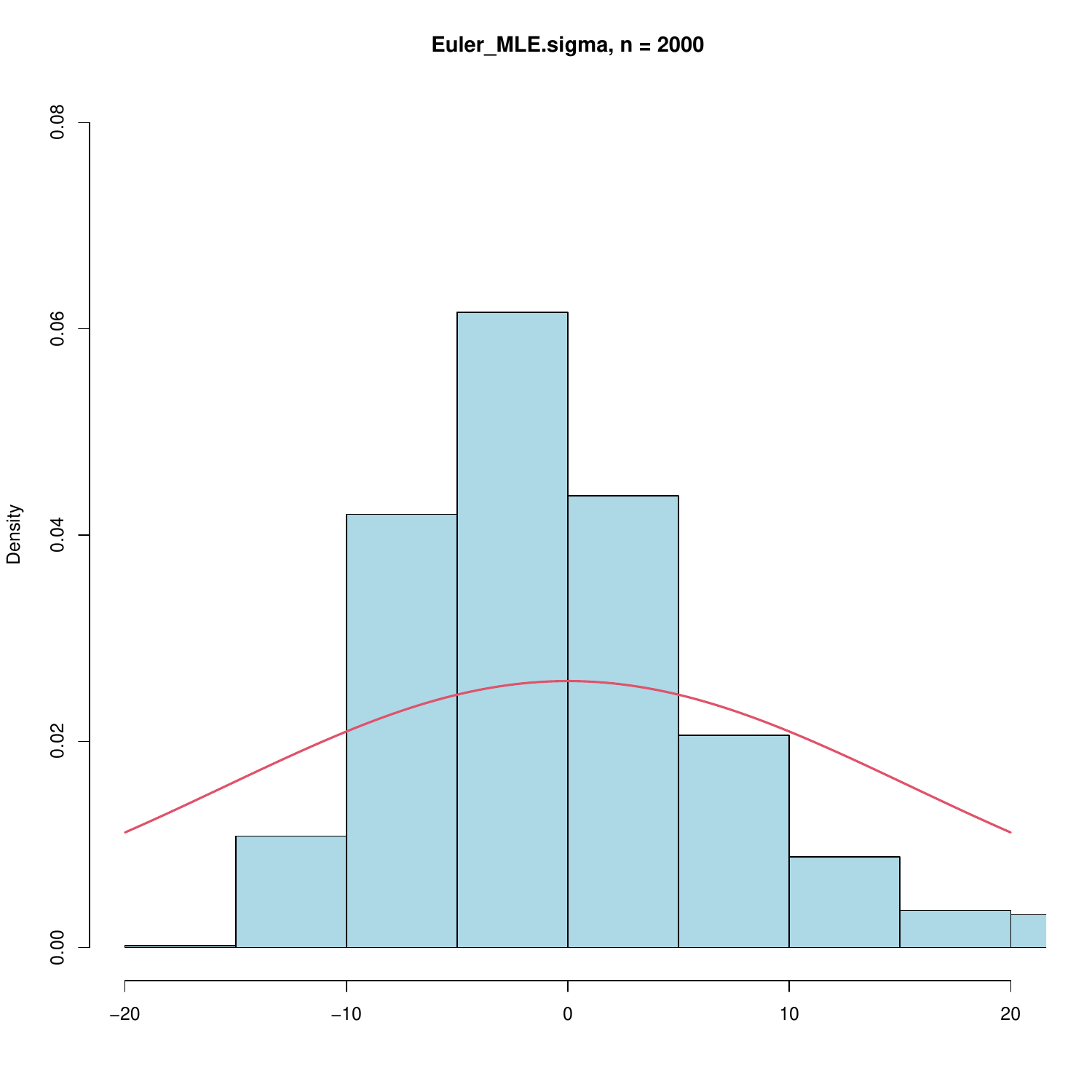}{$n=2000$}

  \vspace{5mm}

  \rowtitle{$\beta$}
  \CellPair{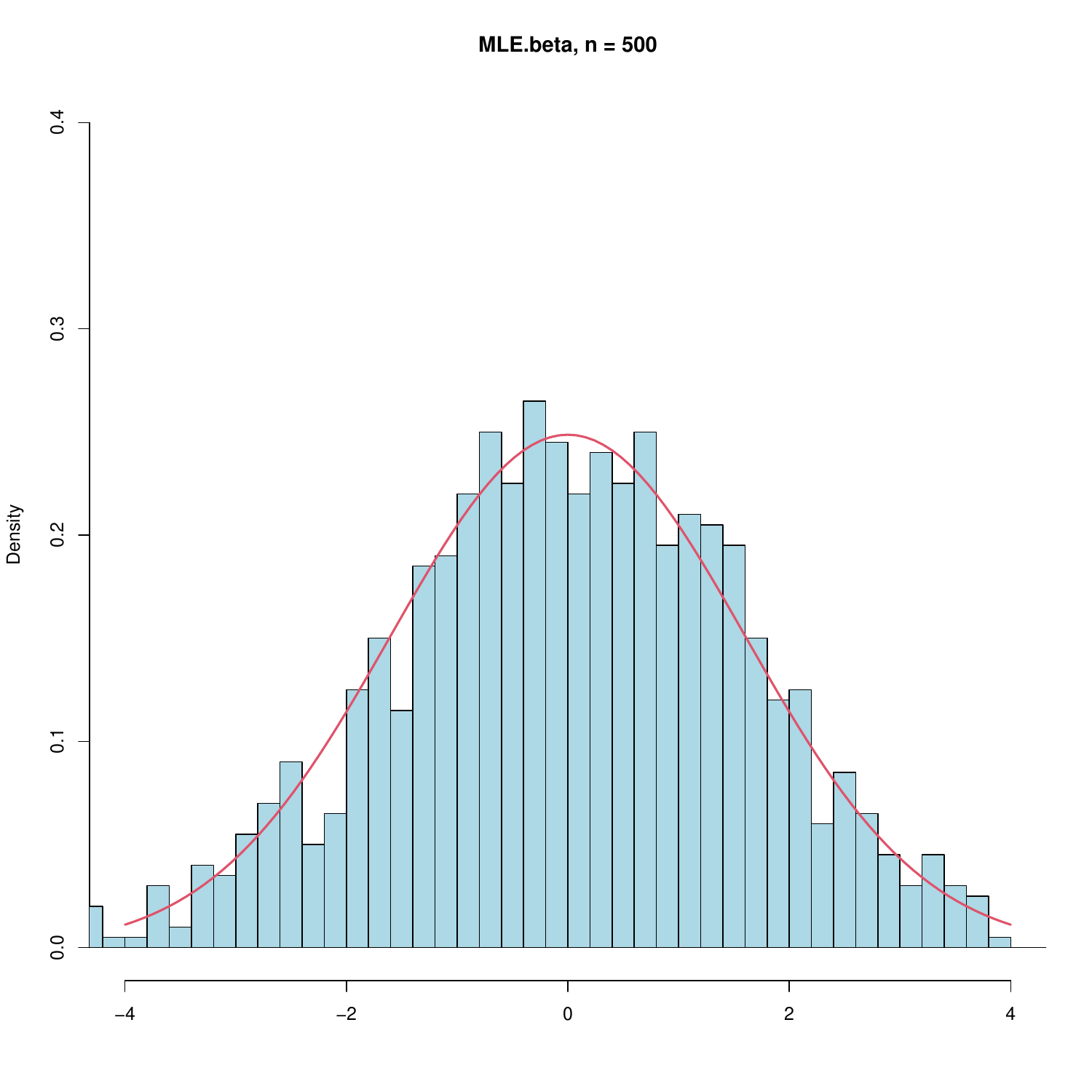}{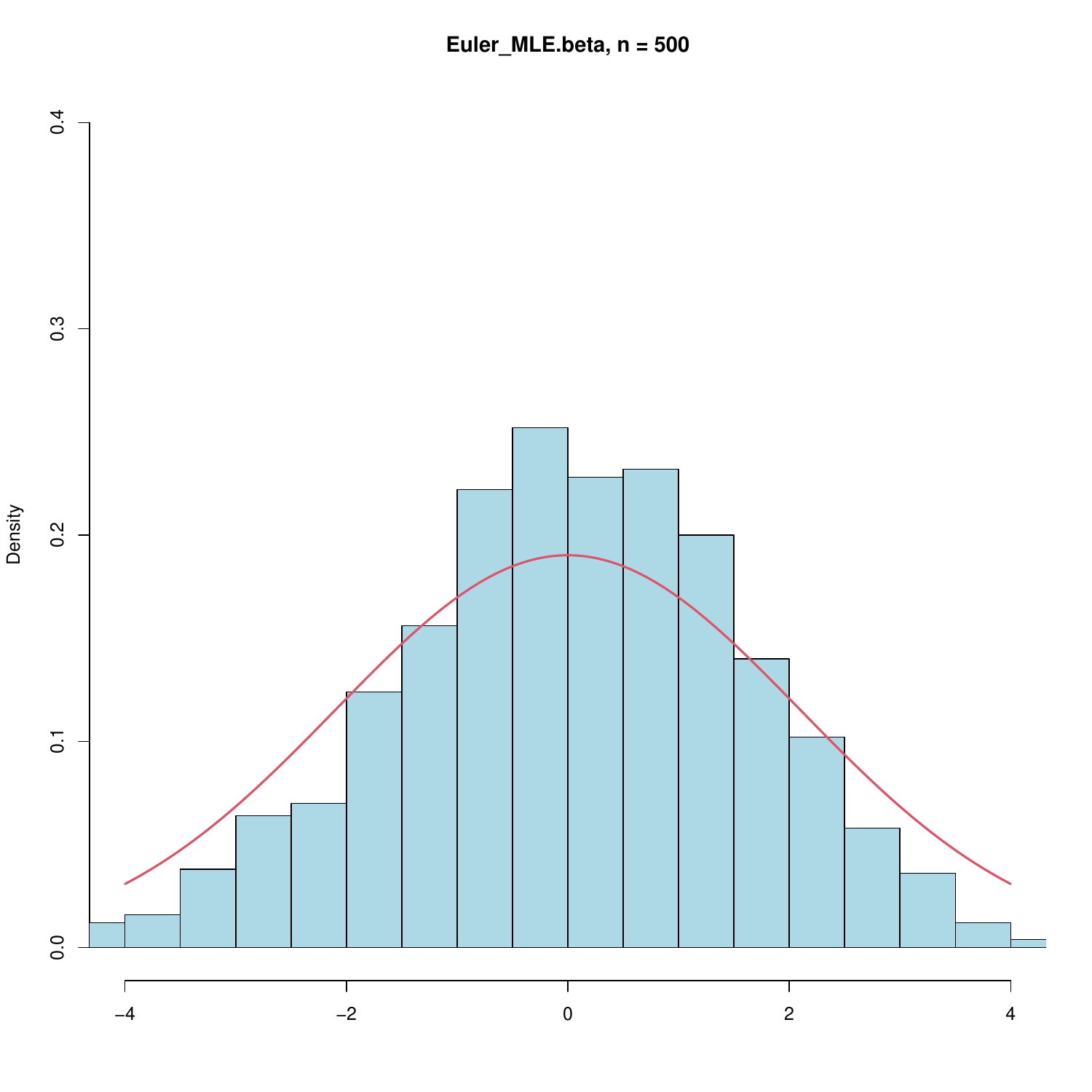}{$n=500$}\hfill
  \CellPair{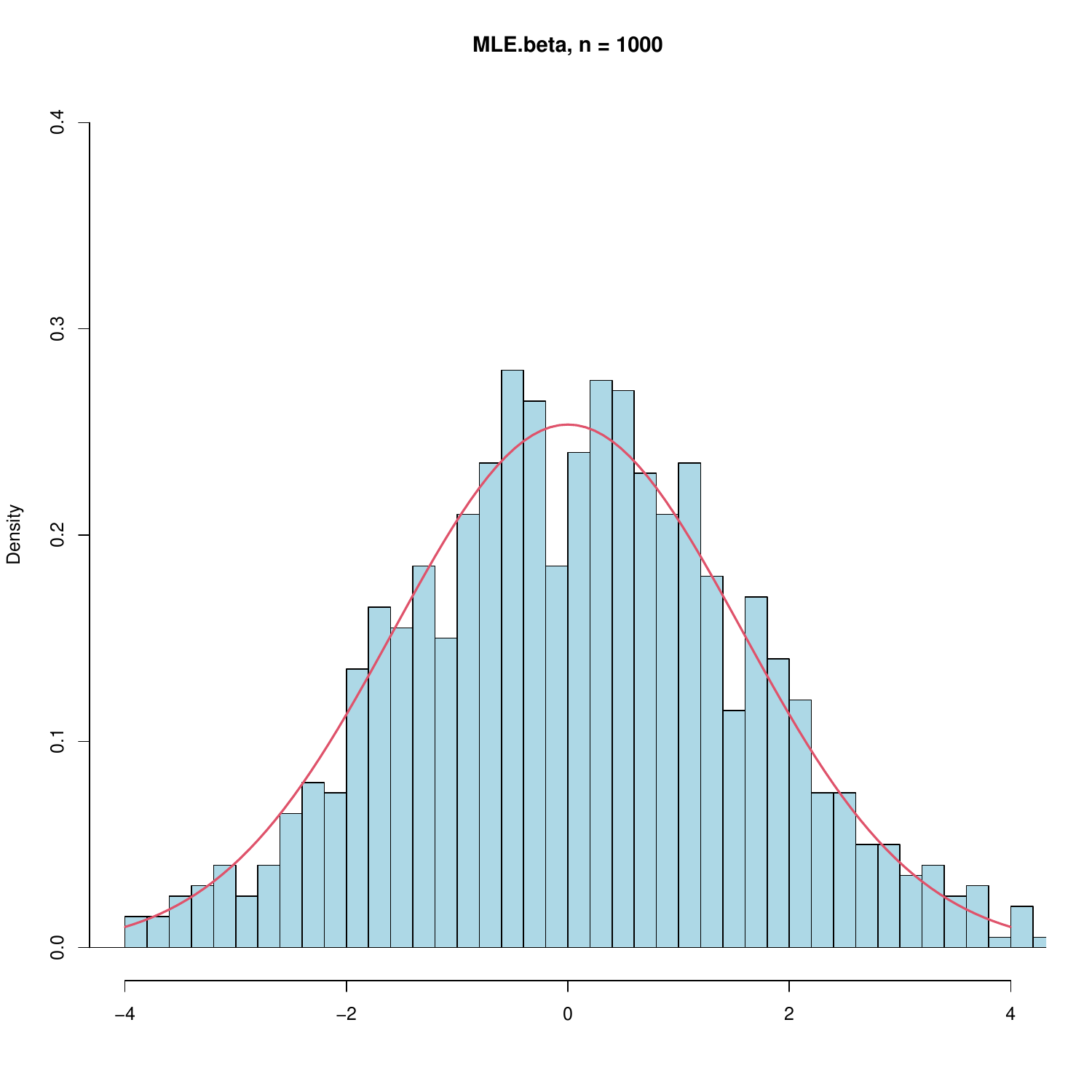}{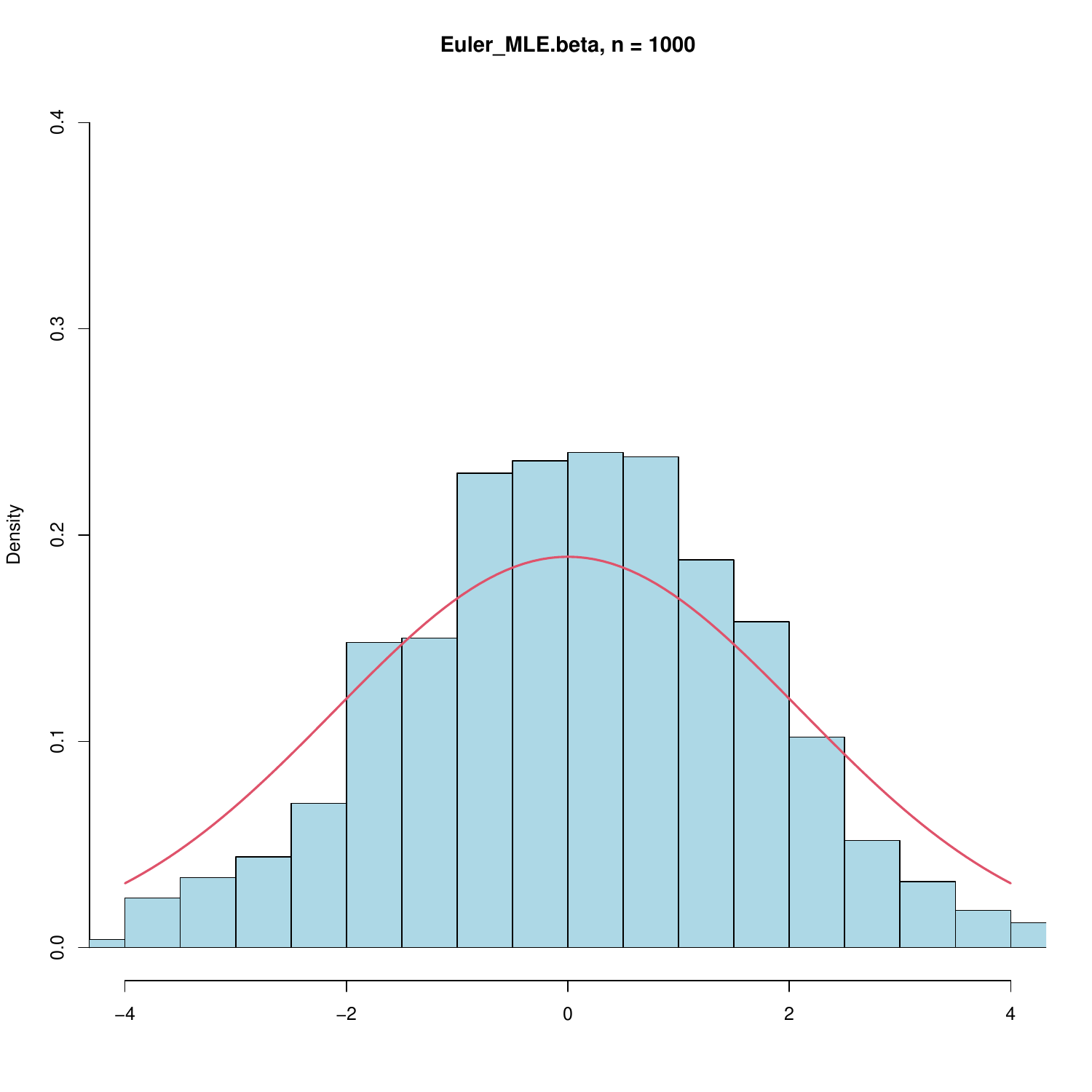}{$n=1000$}\hfill
  \CellPair{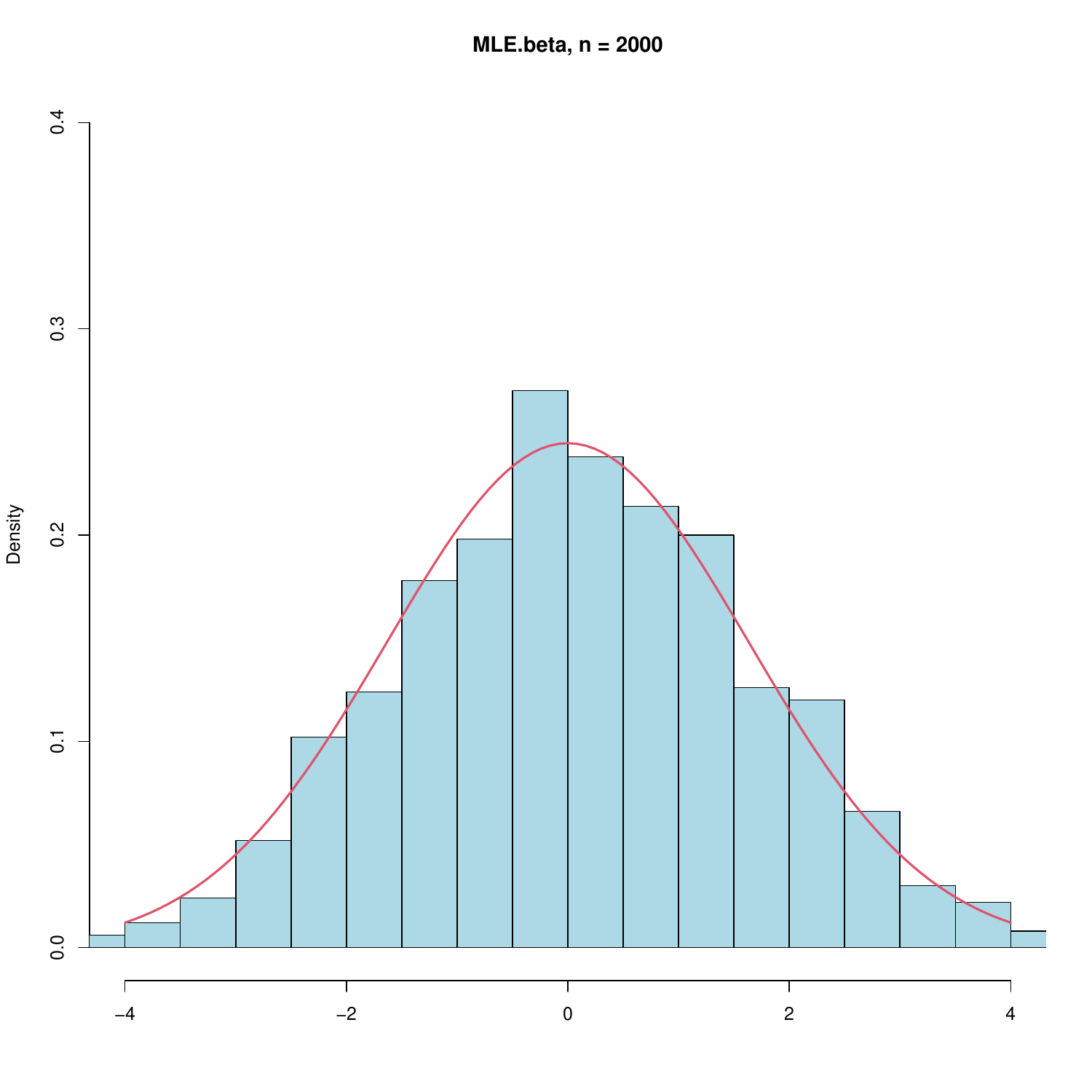}{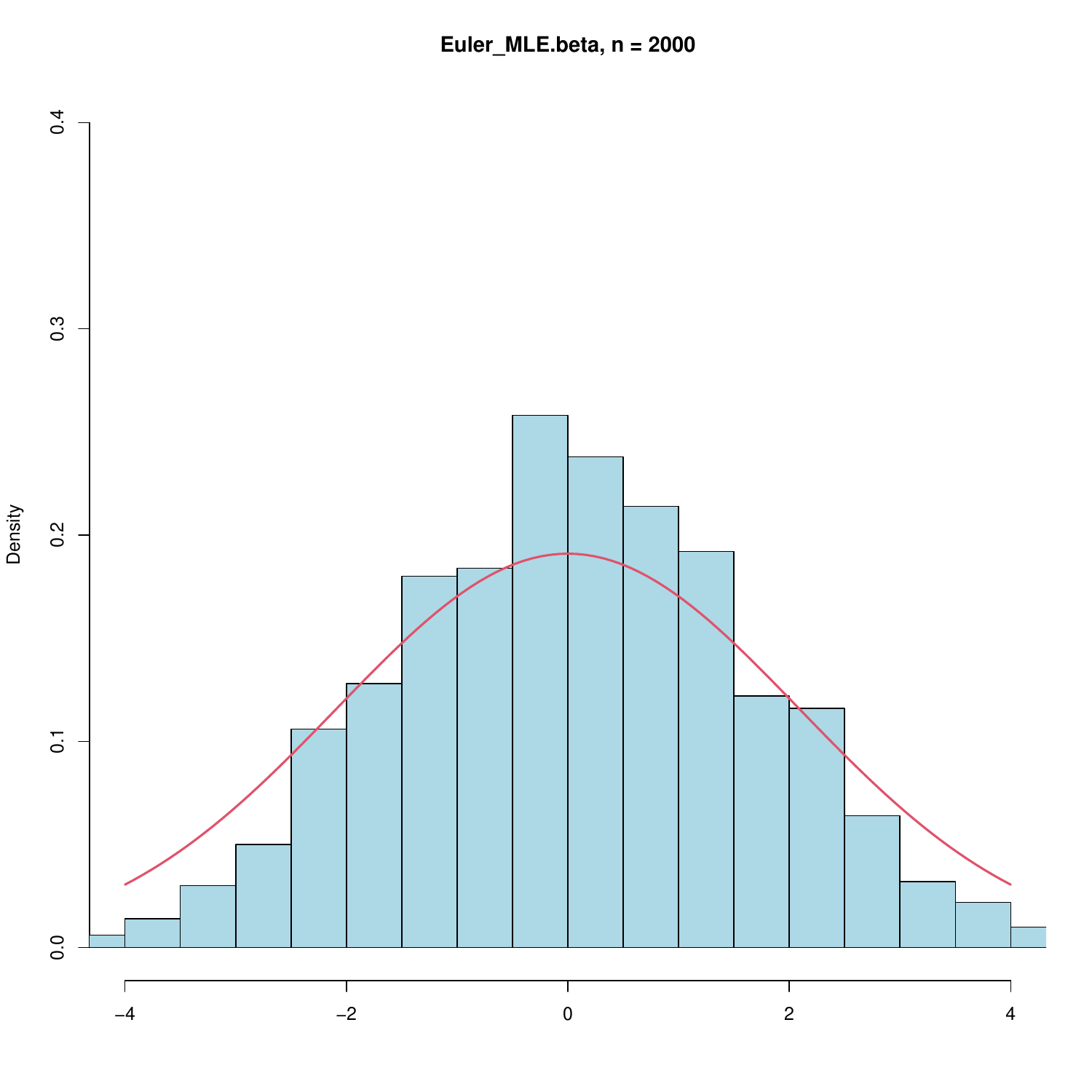}{$n=2000$}
  
  \caption{Comparison between histograms of the normalized MLE and Euler-QMLE with $\alpha=1.0$, $(\lambda,\mu,\sigma,\beta)=(1,2,5,0.5)$.
  }
  \label{fig:param-by-n_mle-vs-euler-1.0}
\end{figure}


\begin{figure}[t]
  \centering
  \captionsetup[subfigure]{justification=centering}
  \newcommand{\colw}{0.32\linewidth} 


  \newcommand{\rowtitle}[1]{\par\medskip\textbf{ #1}\par\smallskip}

  \rowtitle{$\lam$}
  \CellPair{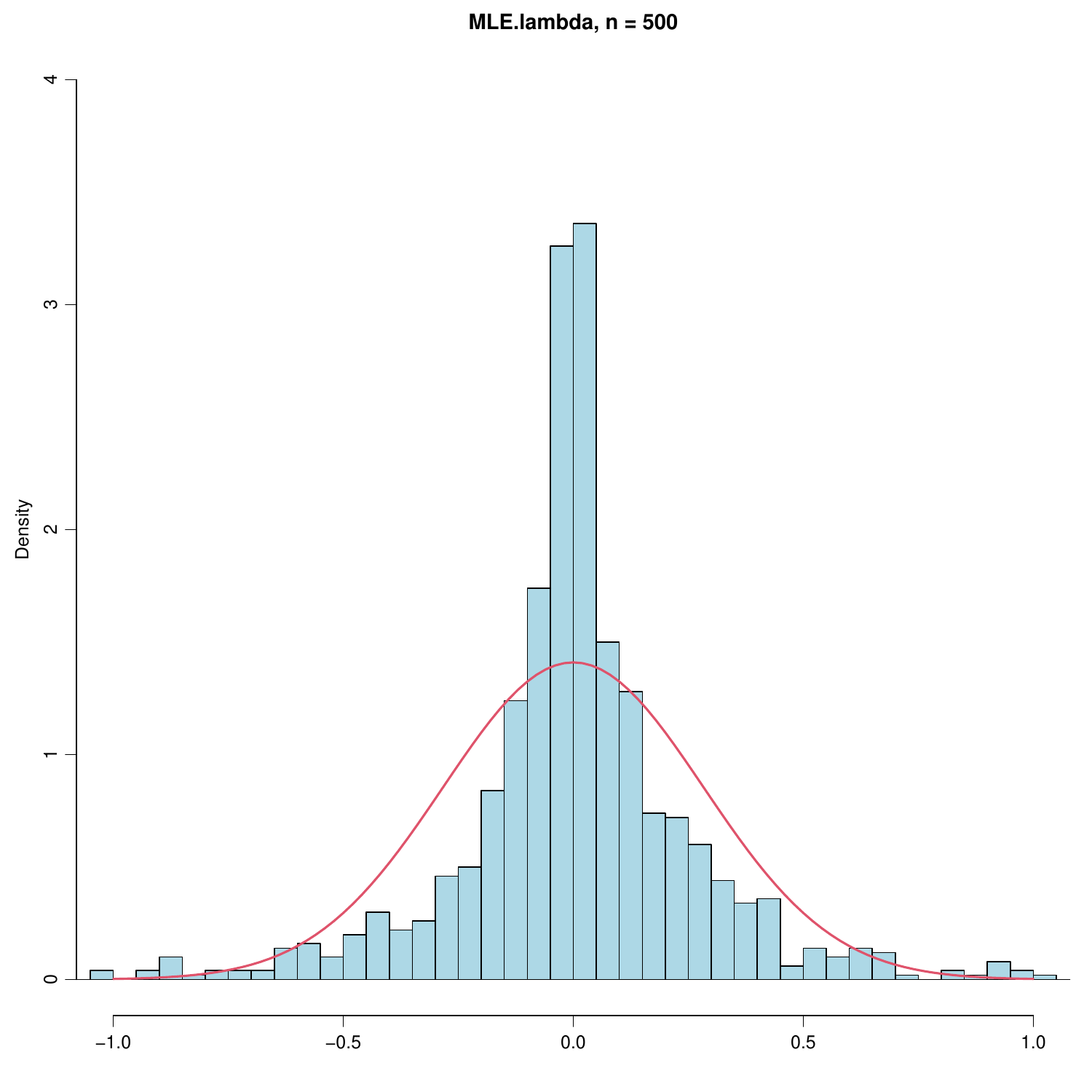}{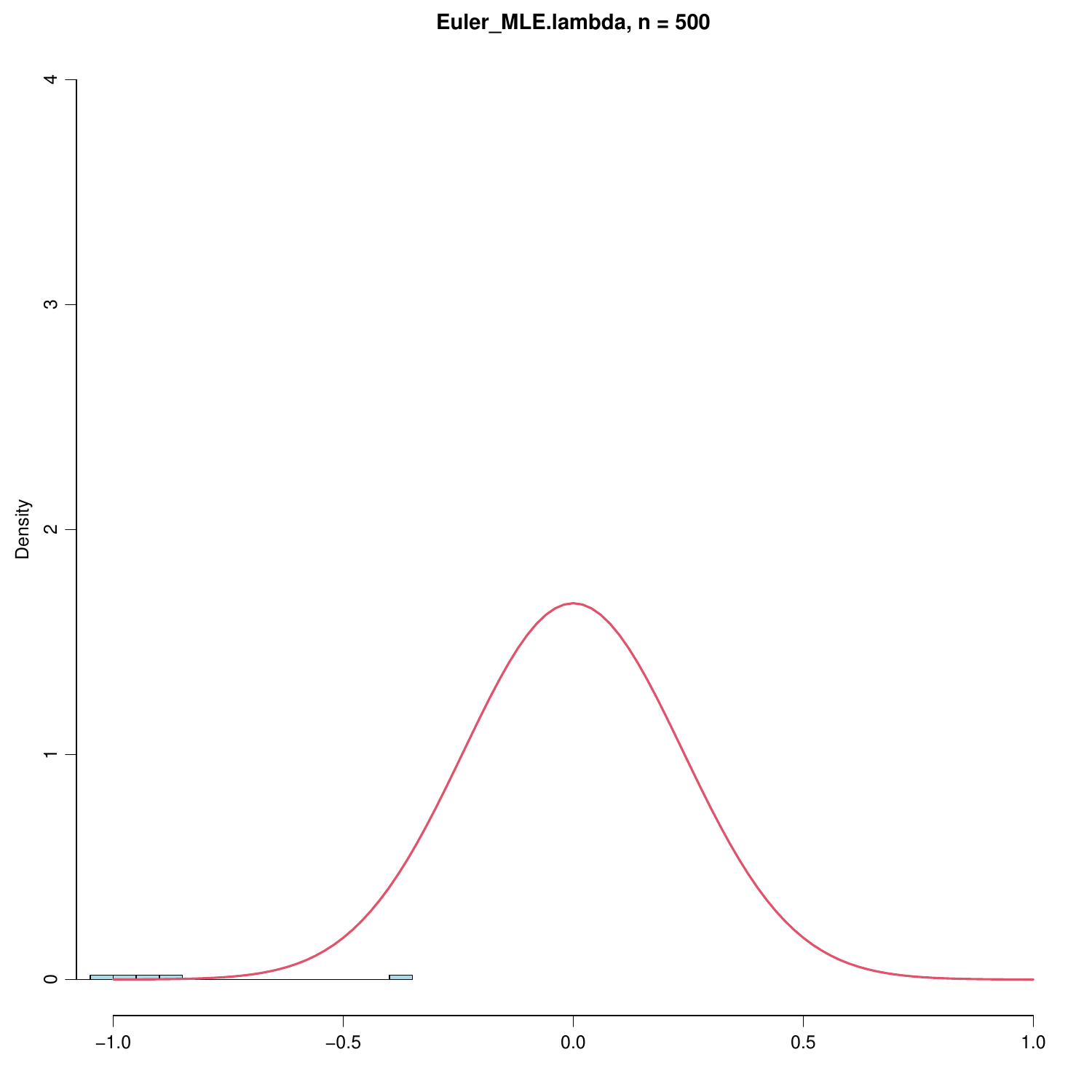}{$n=500$}\hfill
  \CellPair{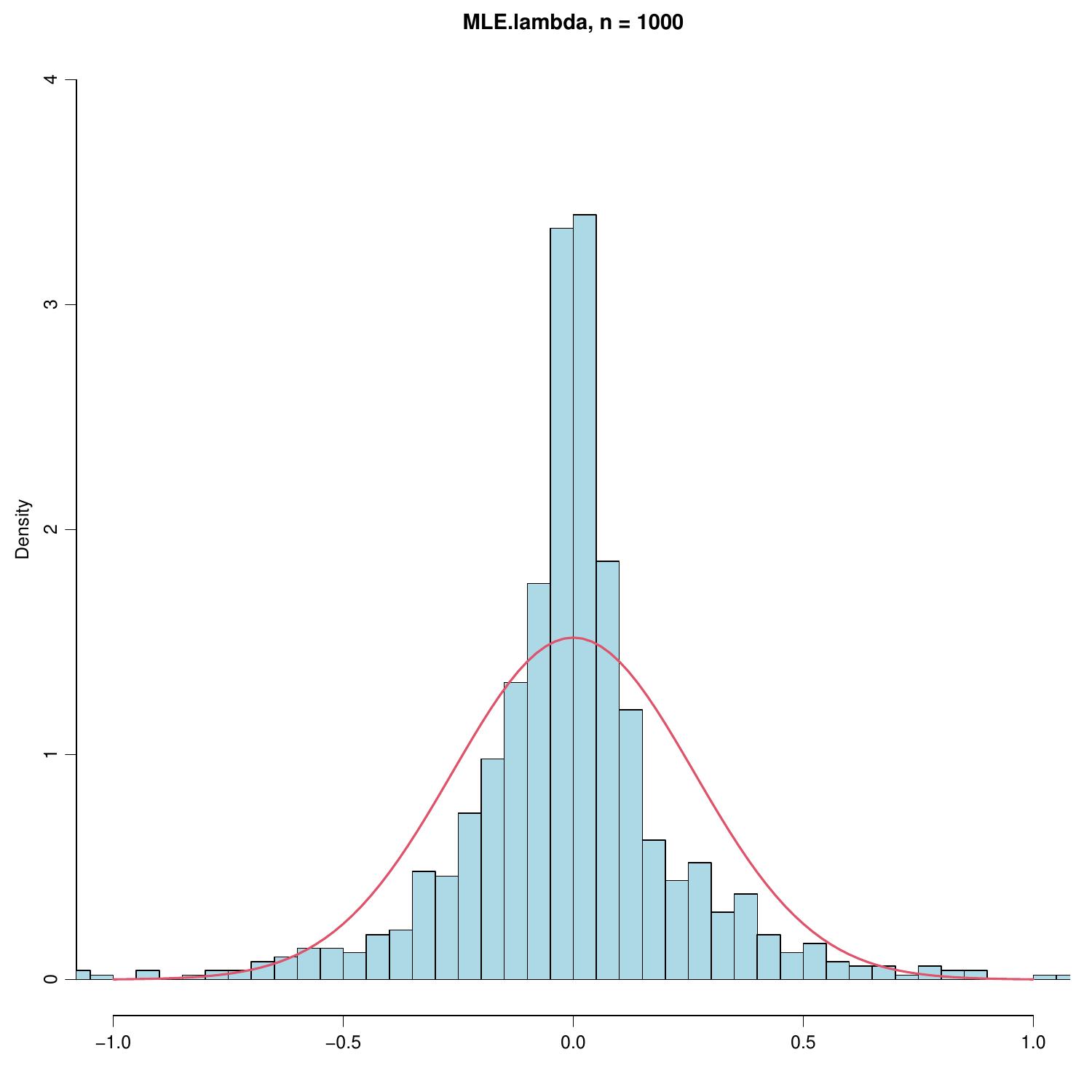}{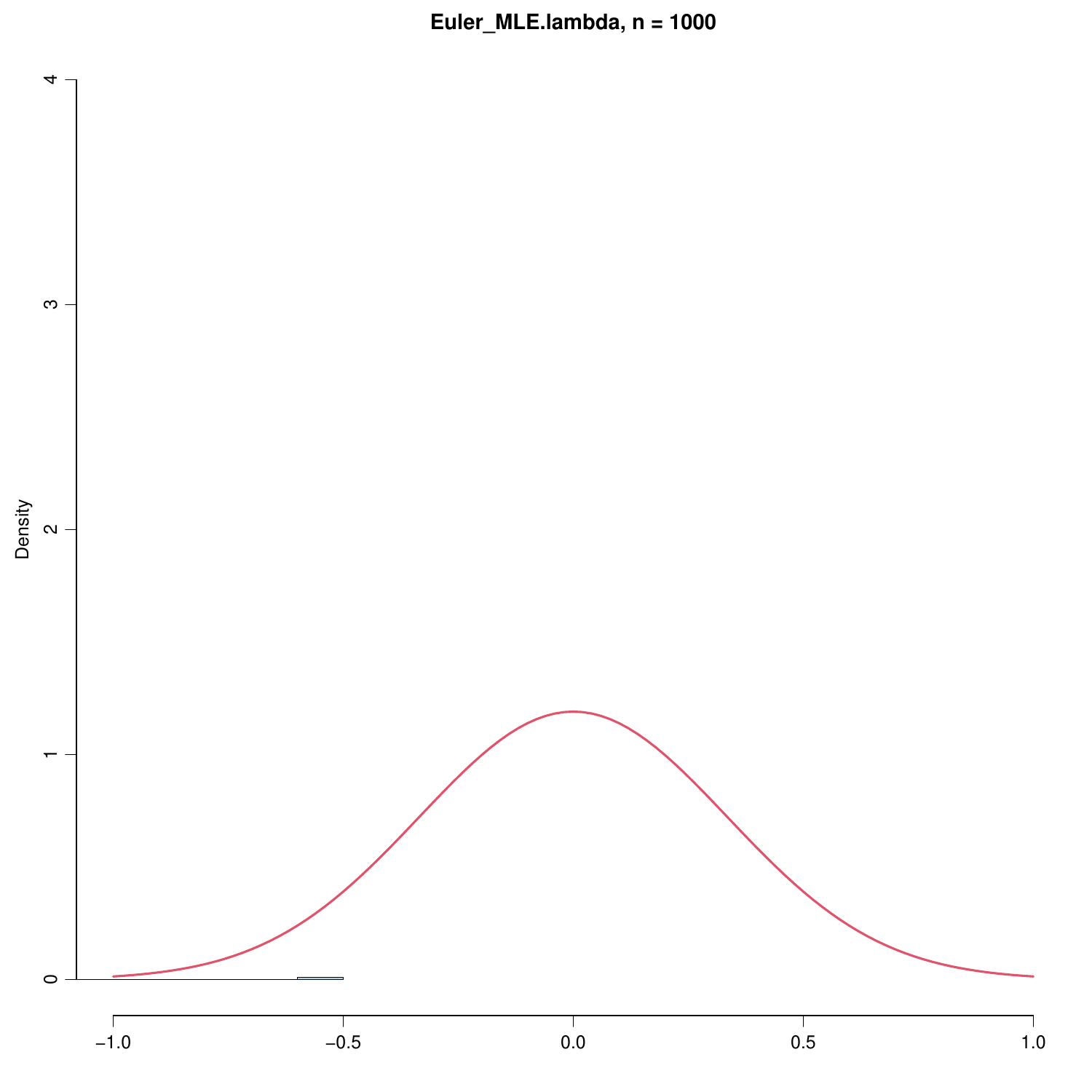}{$n=1000$}\hfill
  \CellPair{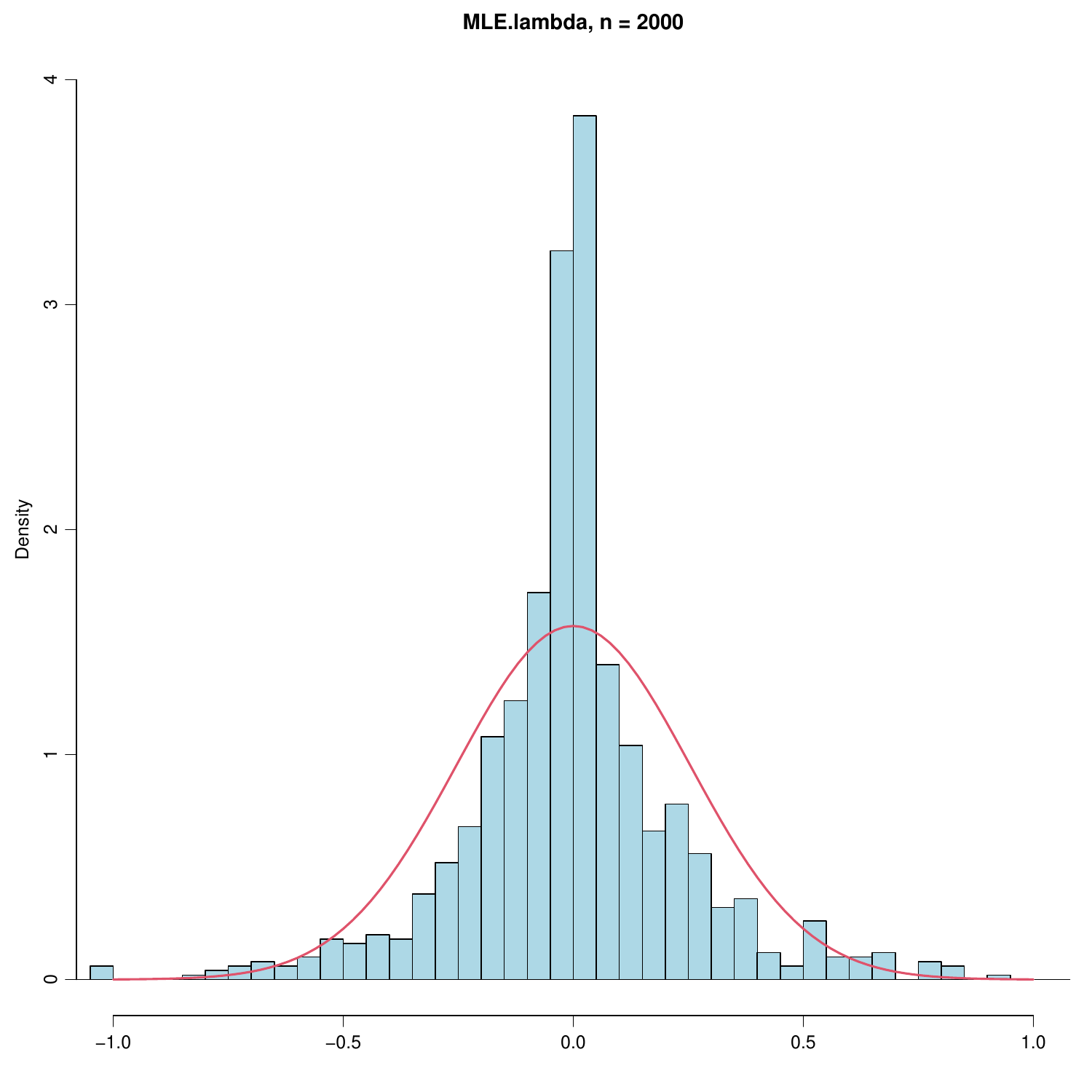}{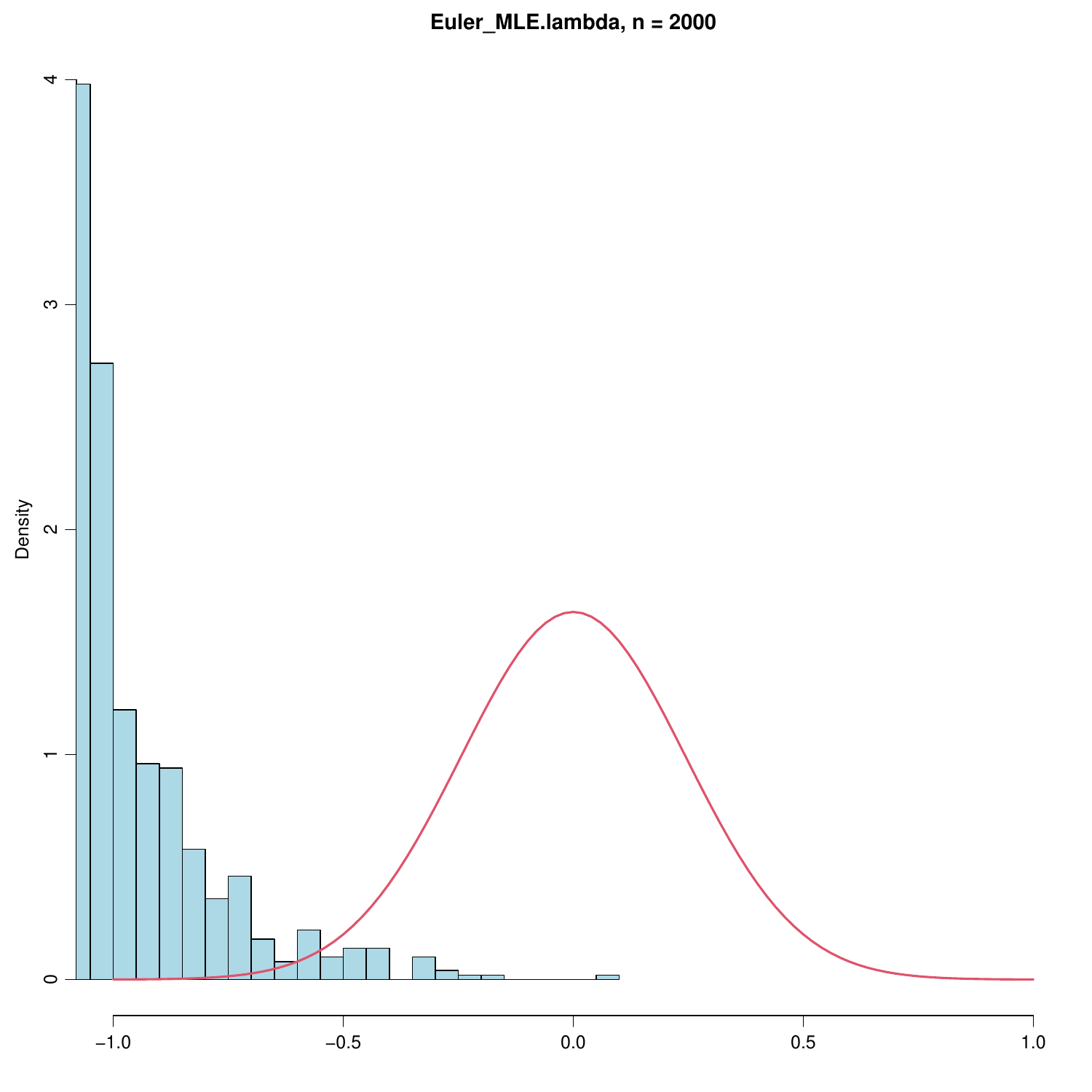}{$n=2000$}
  
  \vspace{5mm}
  
  \rowtitle{$\mu$}
  \CellPair{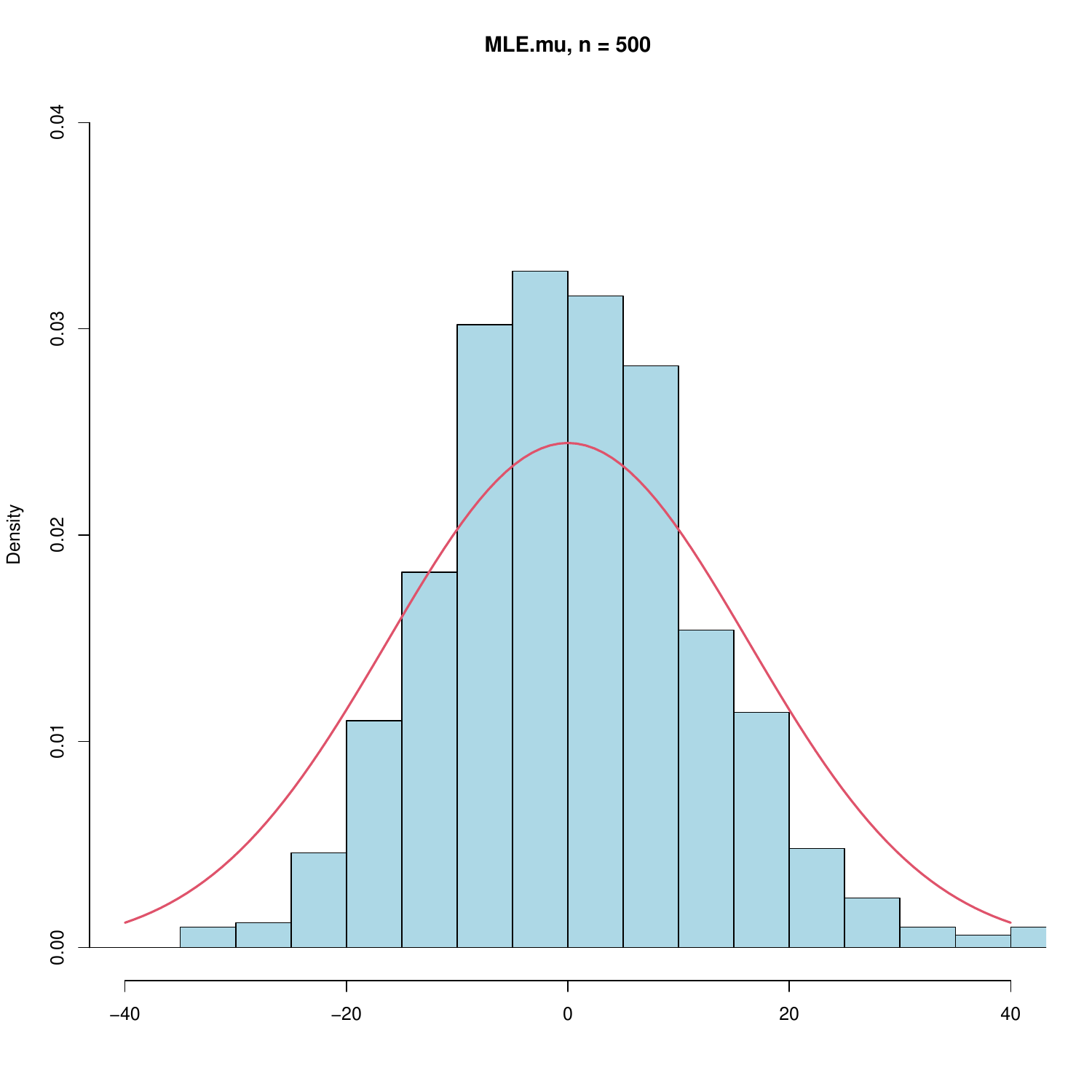}{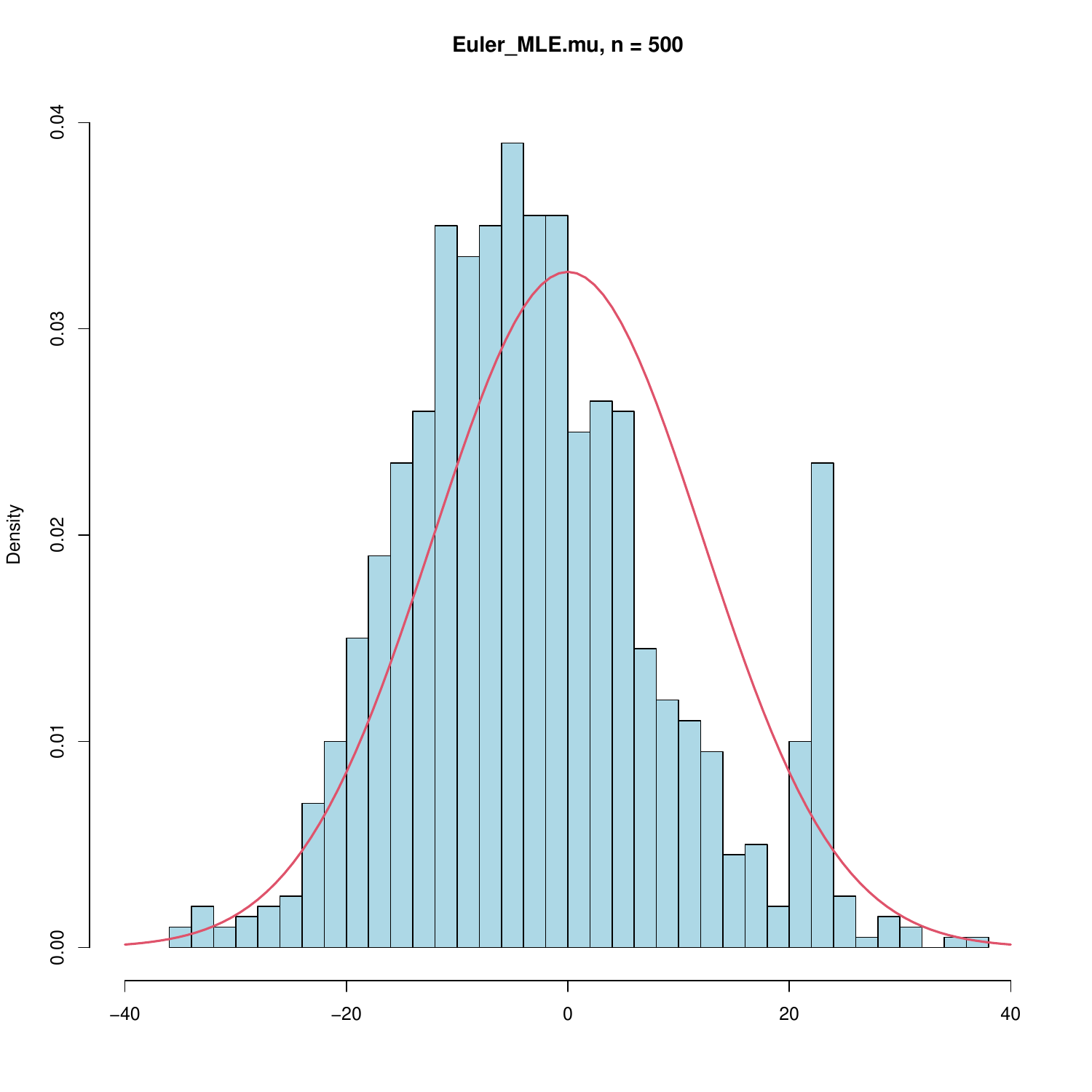}{$n=500$}\hfill
  \CellPair{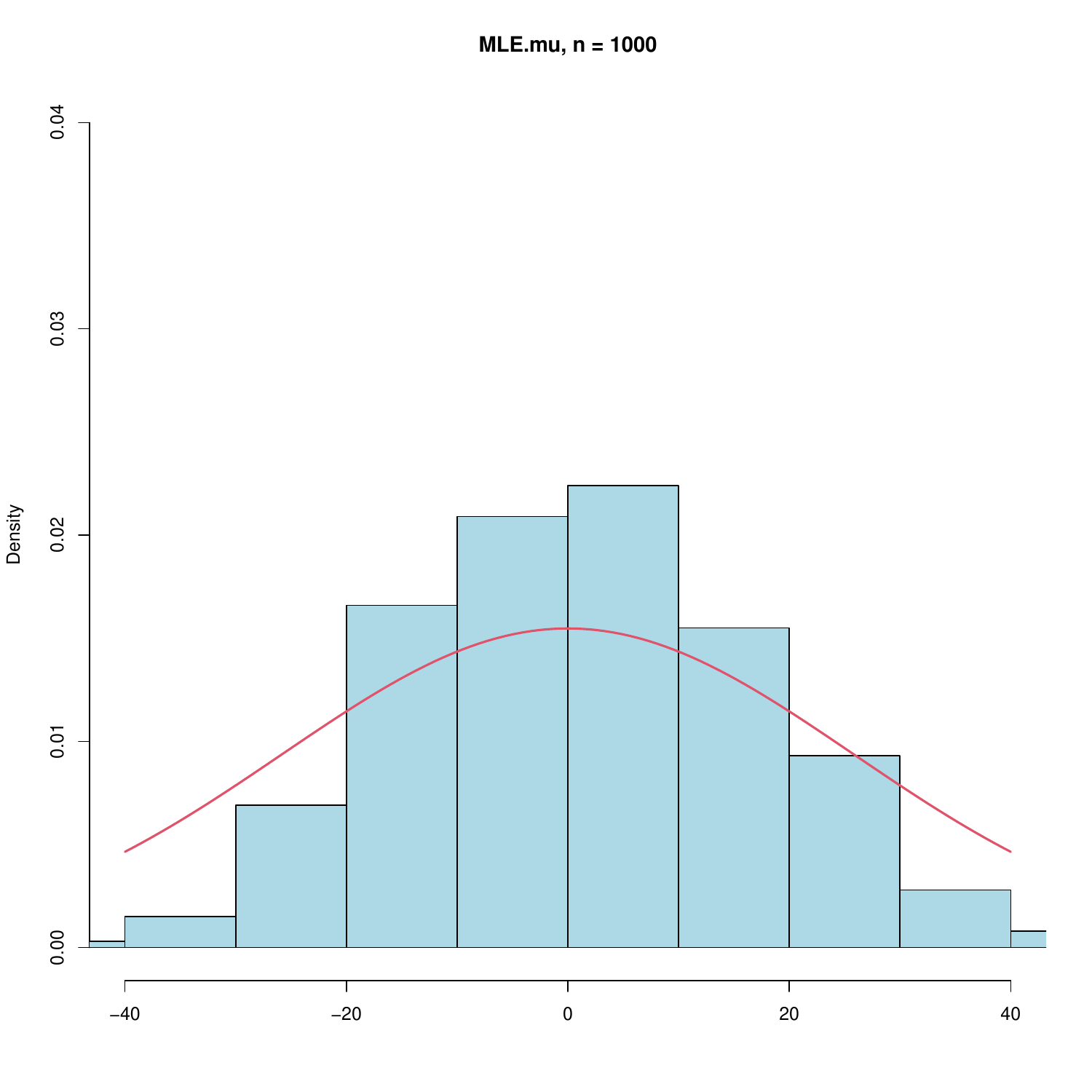}{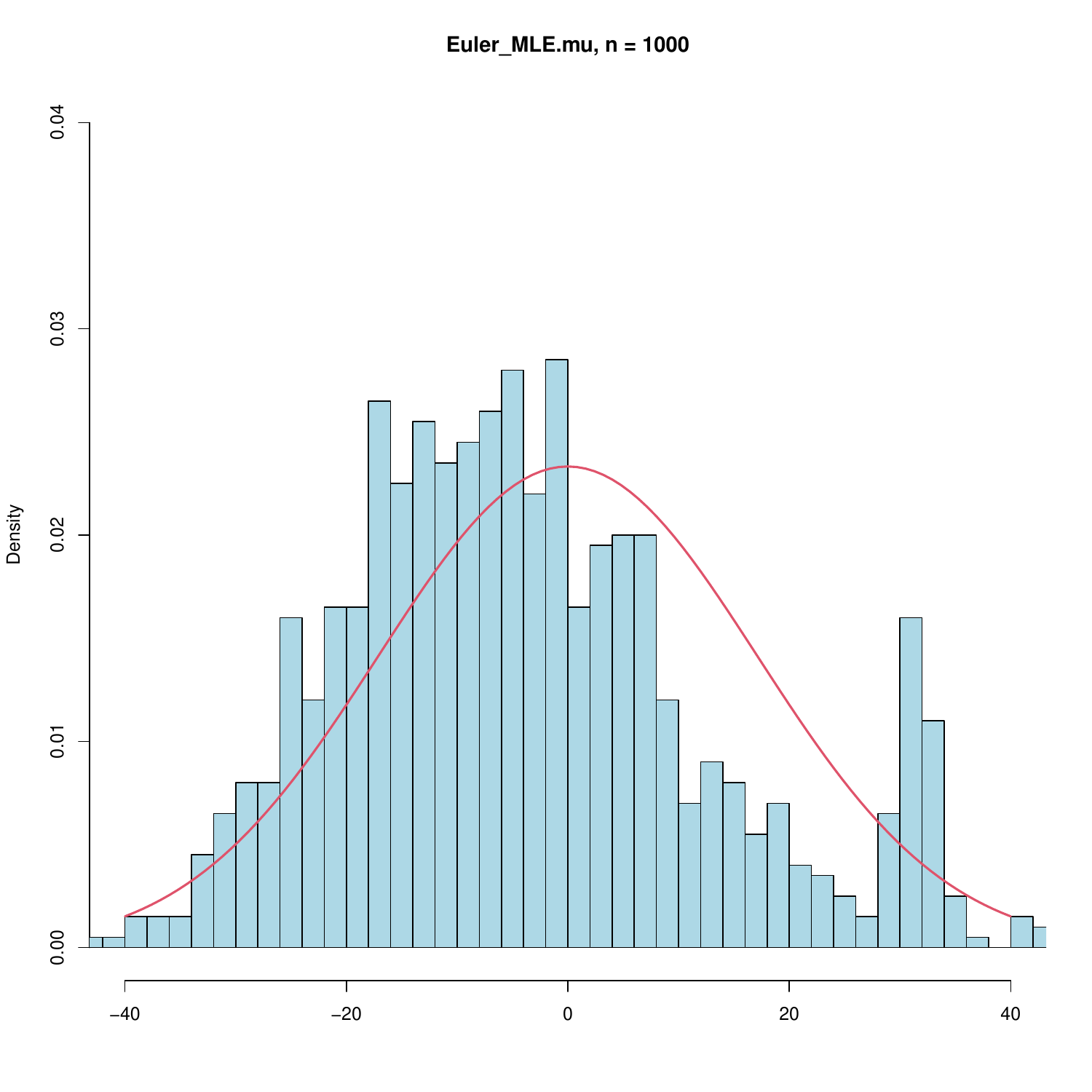}{$n=1000$}\hfill
  \CellPair{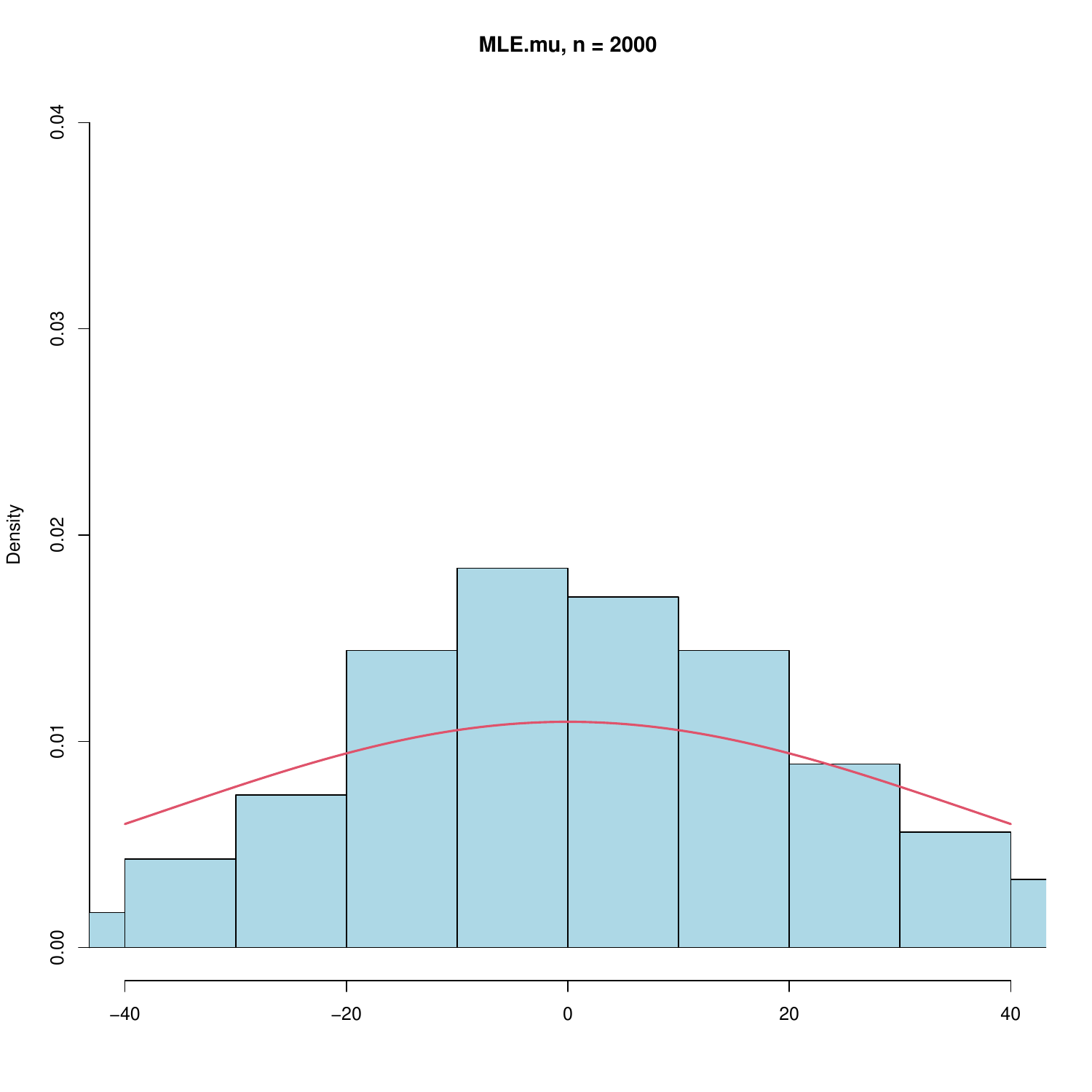}{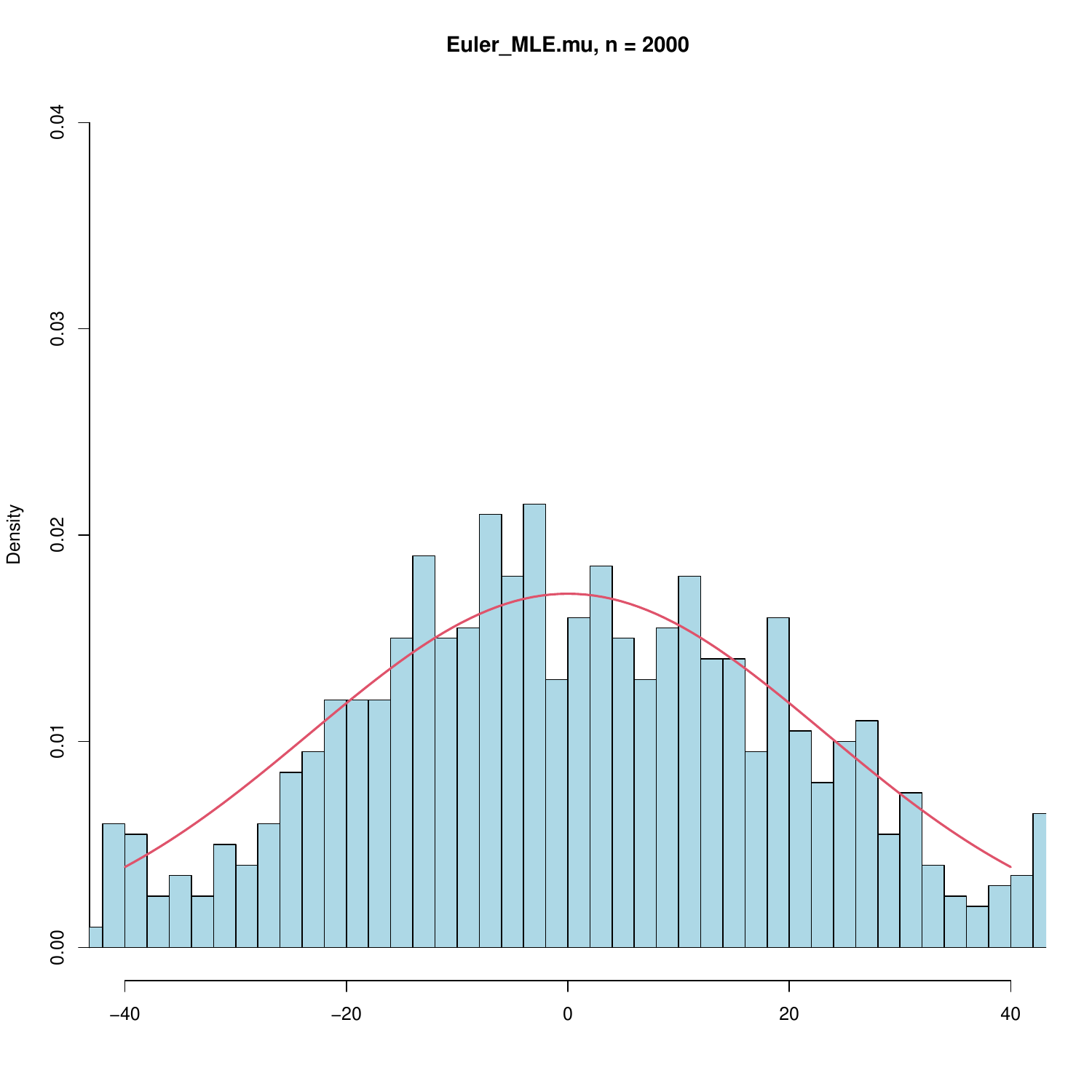}{$n=2000$}

  \vspace{5mm}
  
  \rowtitle{$\alpha$}
  \CellPair{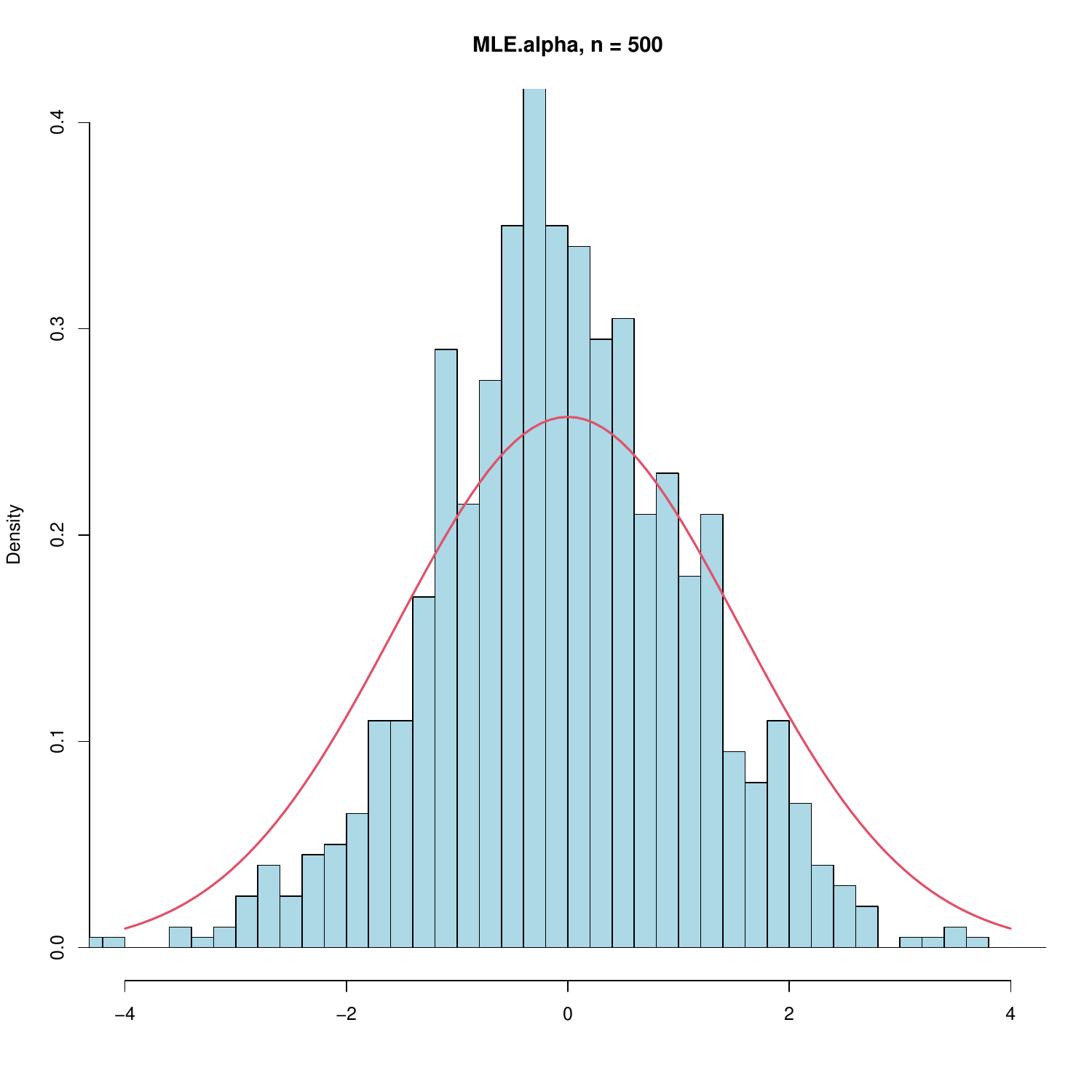}{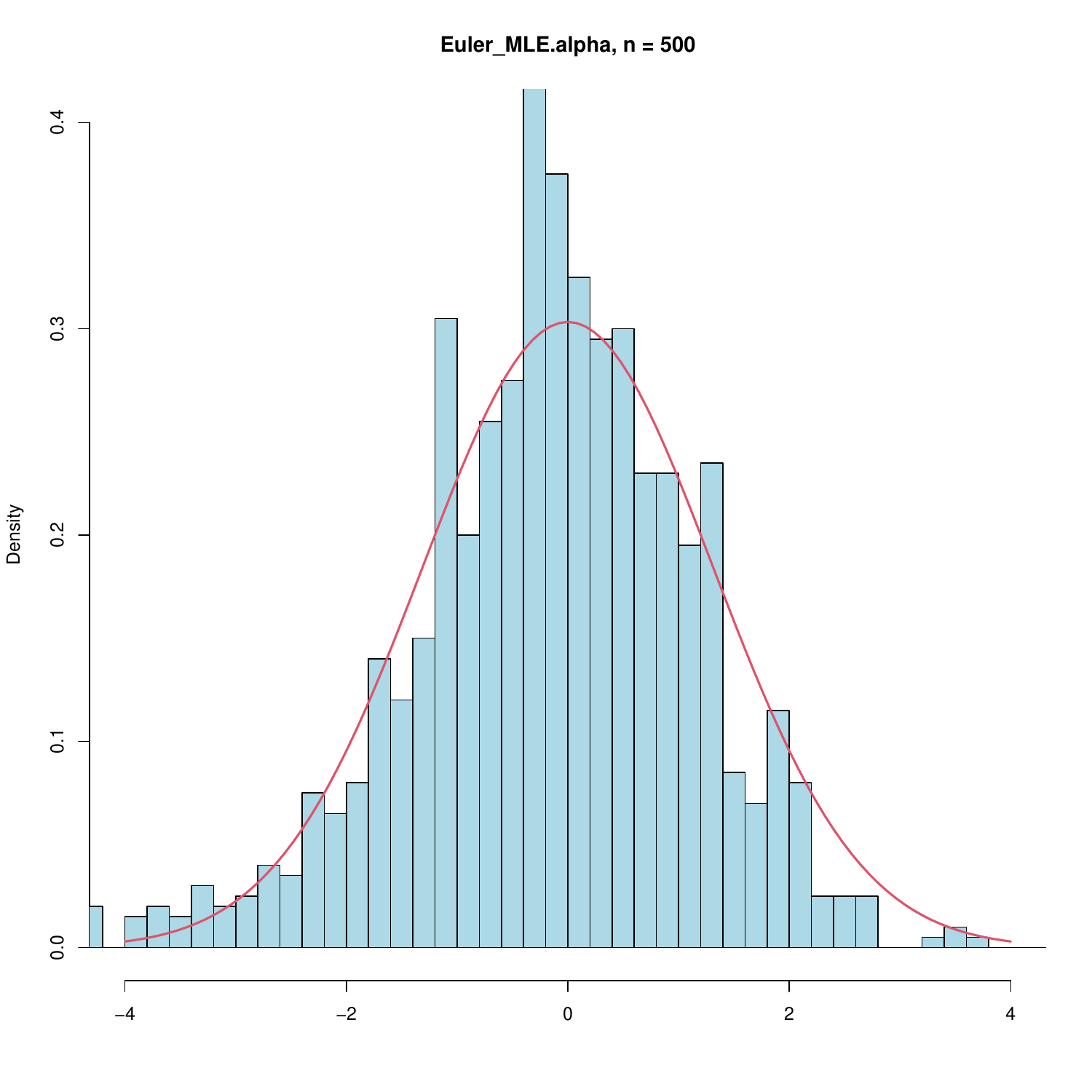}{$n=500$}\hfill
  \CellPair{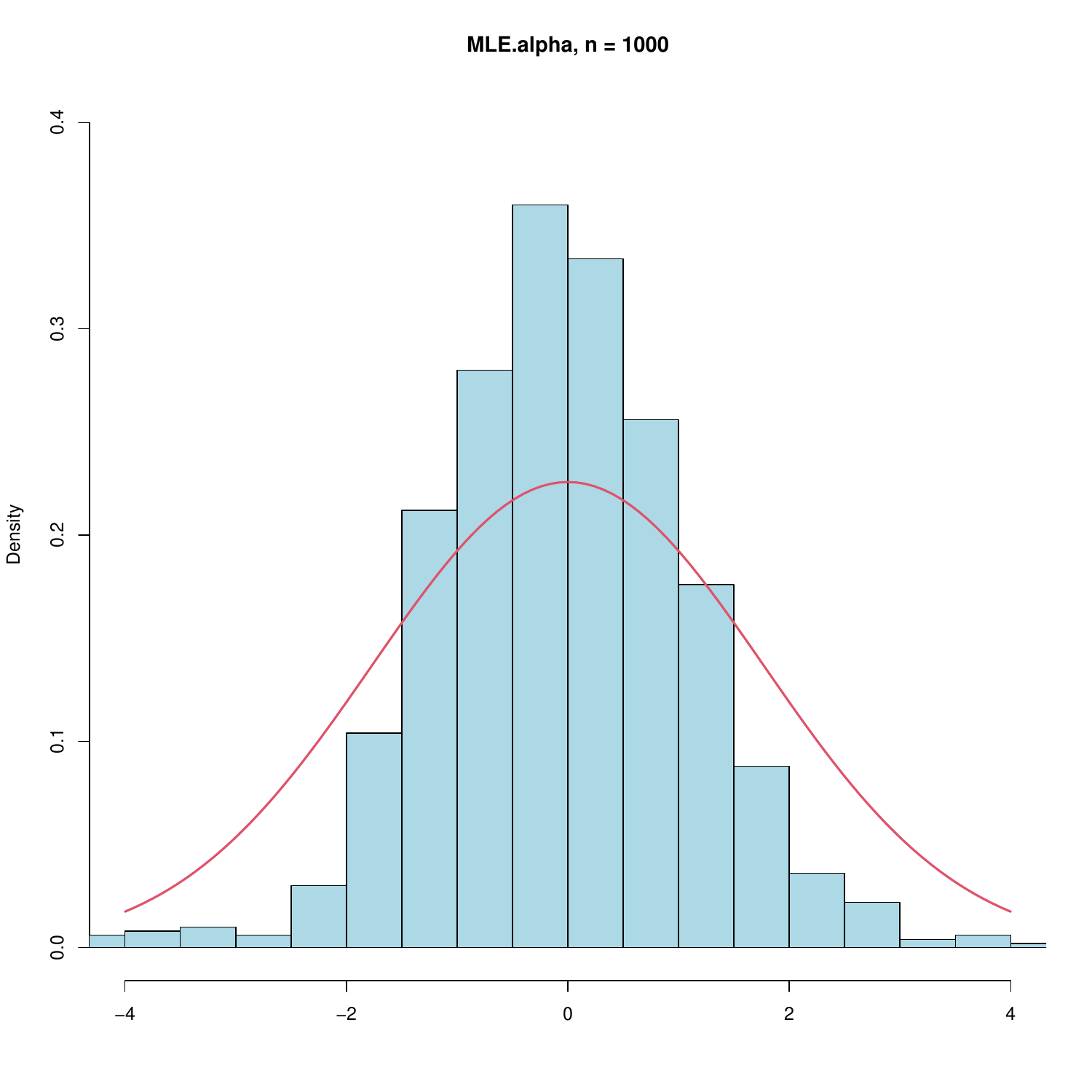}{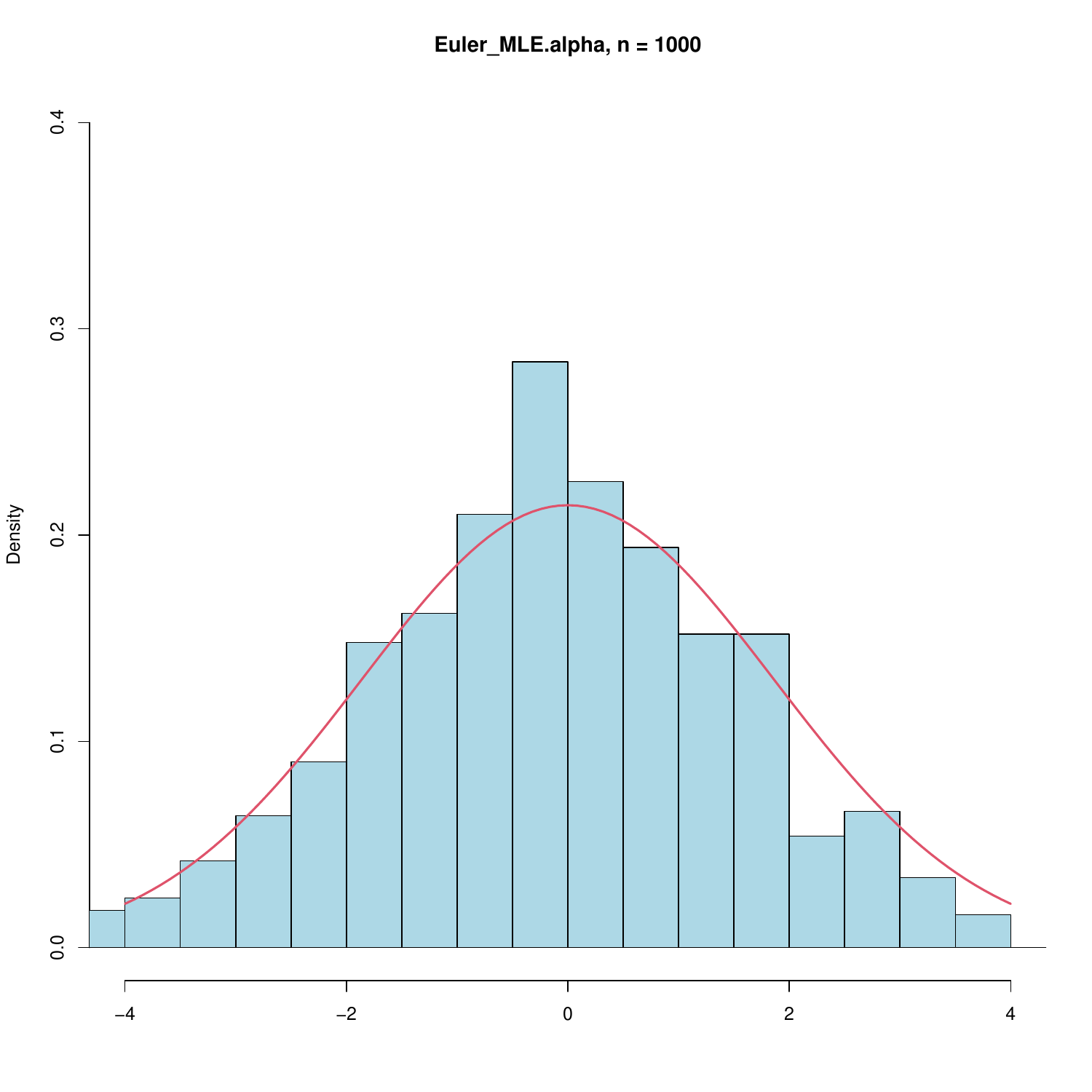}{$n=1000$}\hfill
  \CellPair{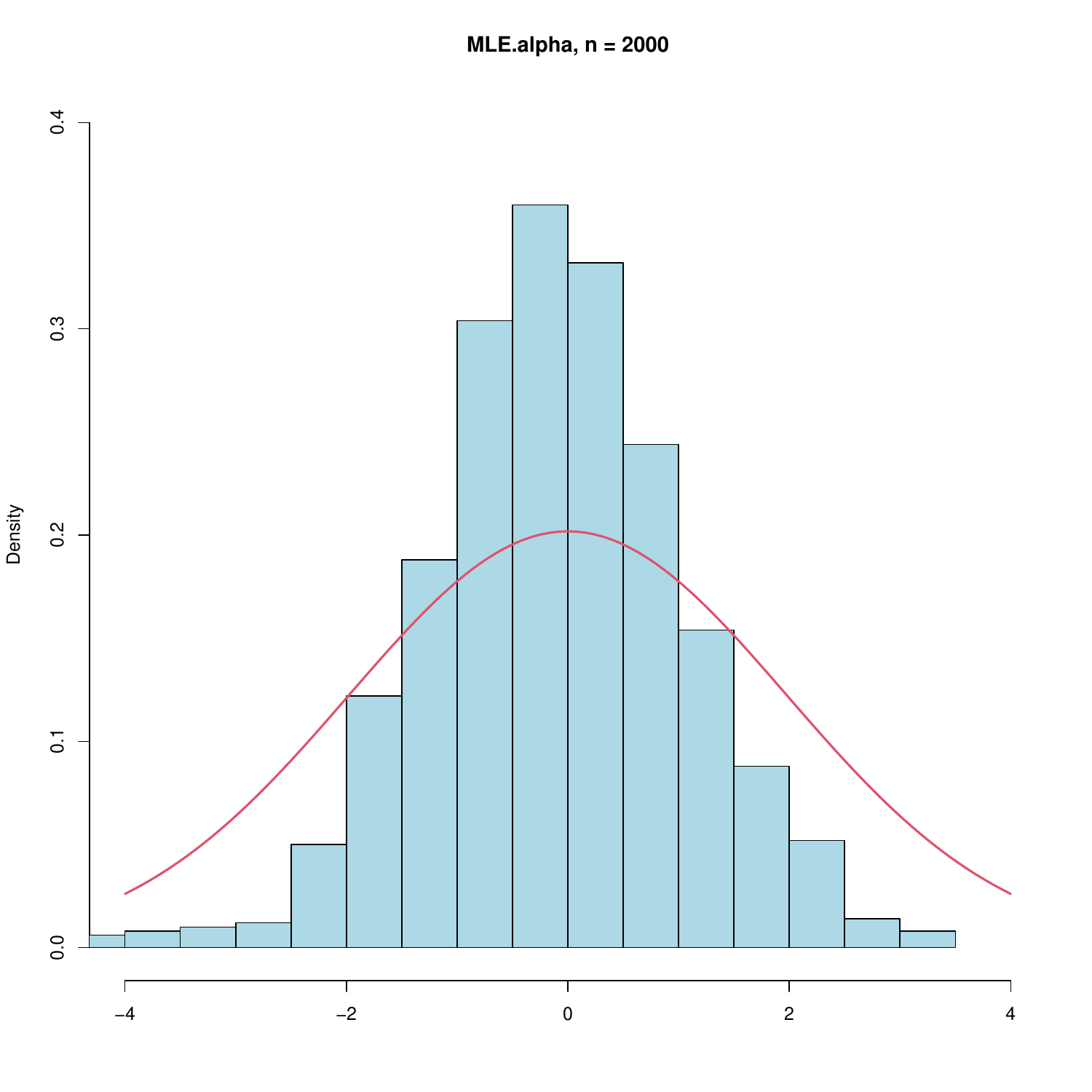}{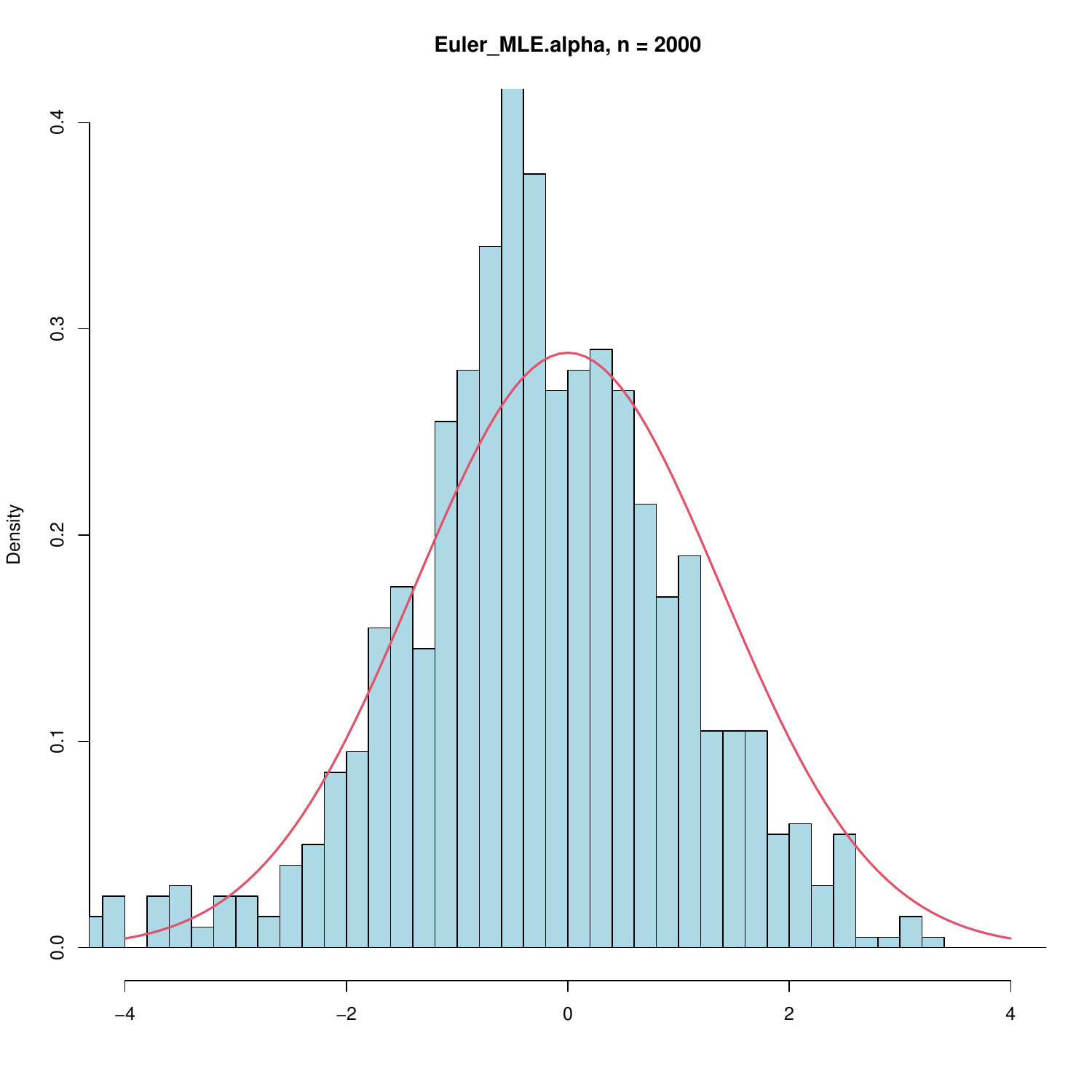}{$n=2000$}

  \vspace{5mm}
  
  \rowtitle{$\sigma$}
  \CellPair{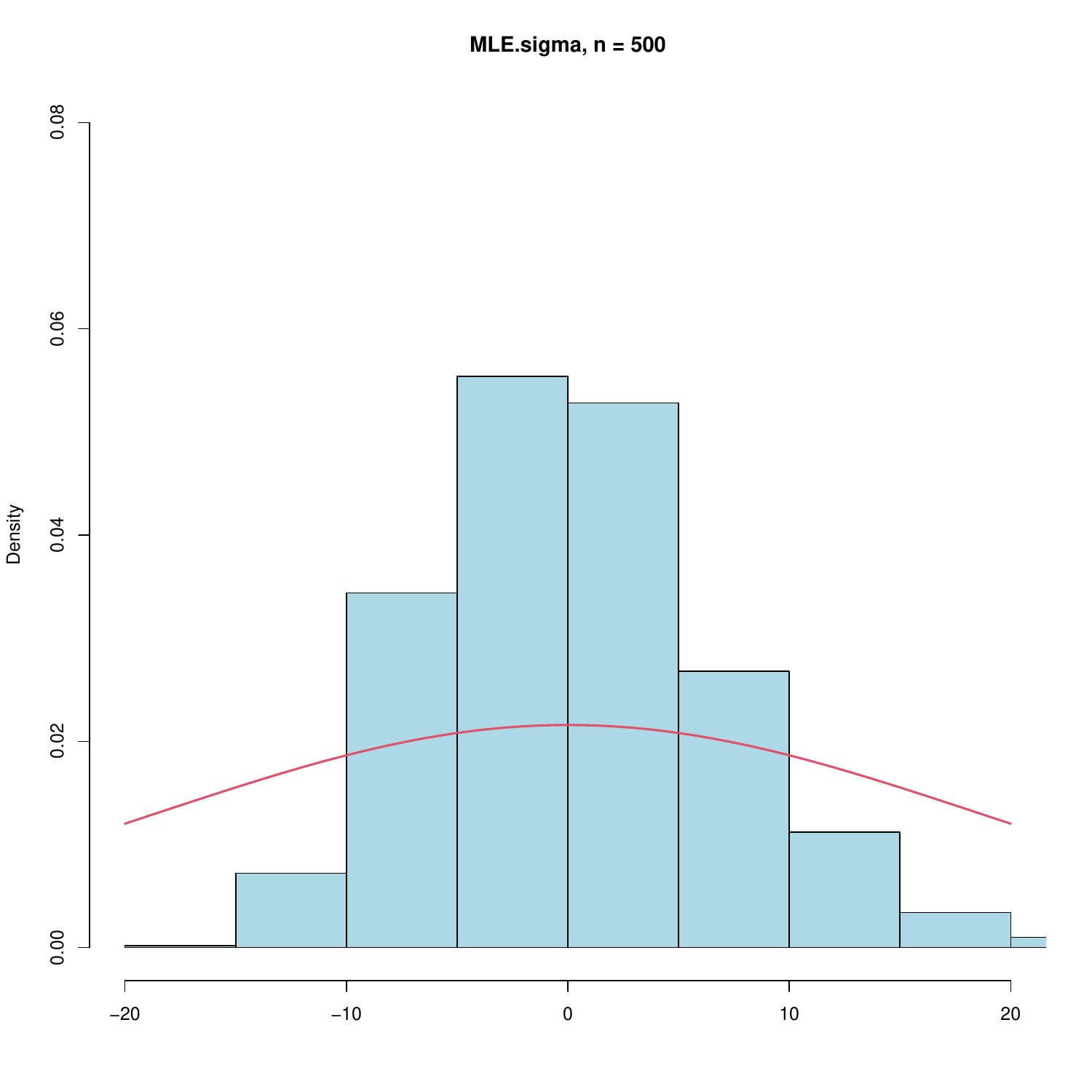}{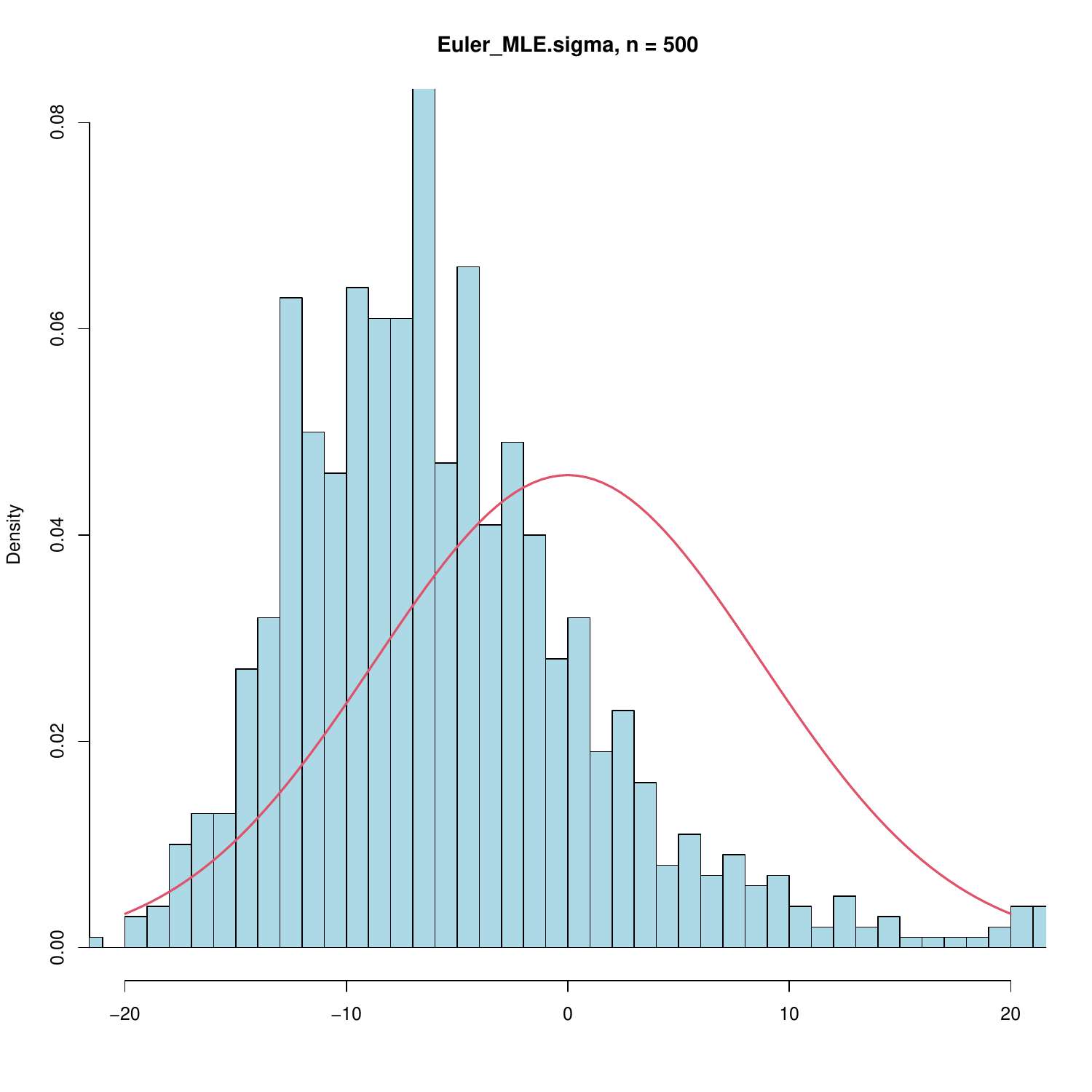}{$n=500$}\hfill
  \CellPair{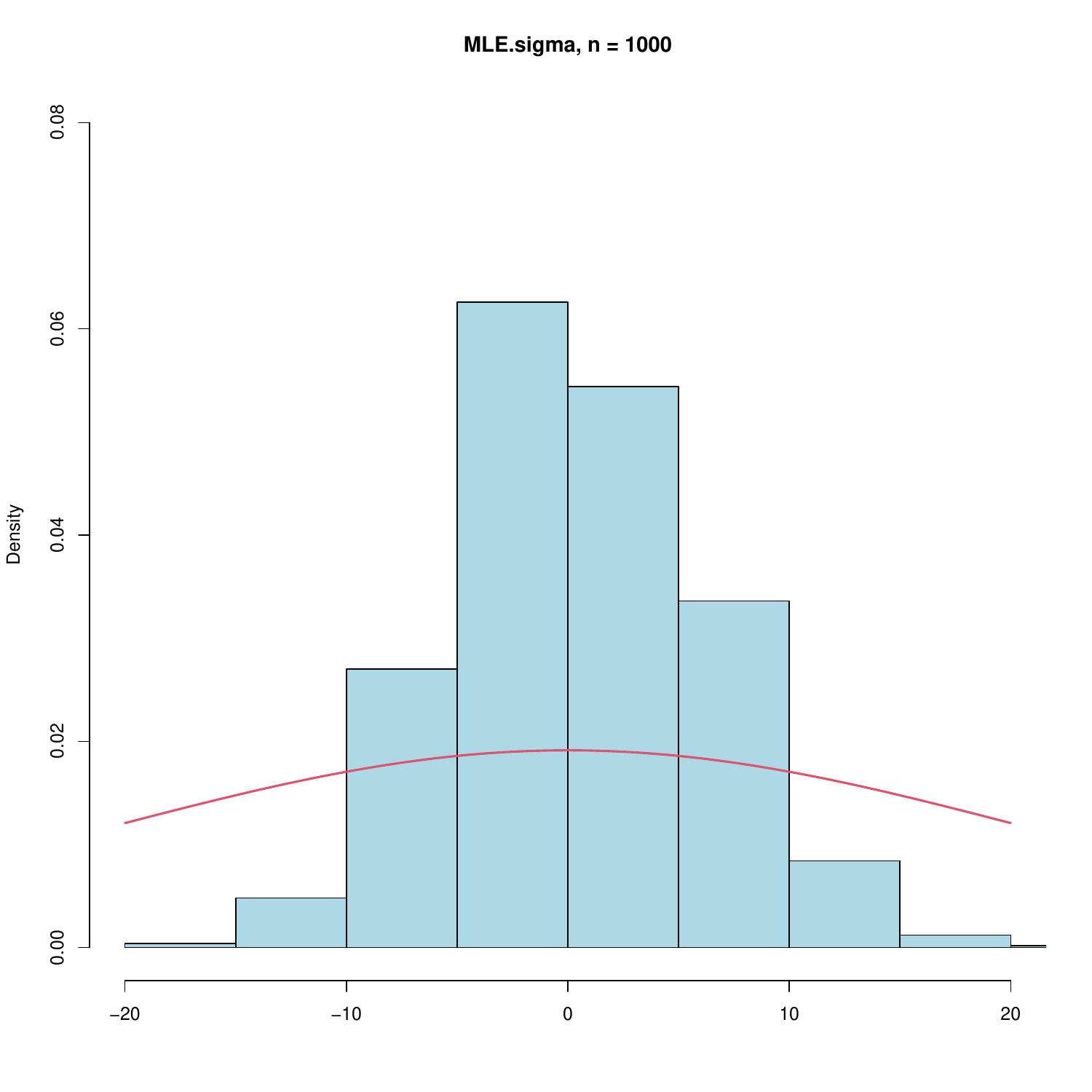}{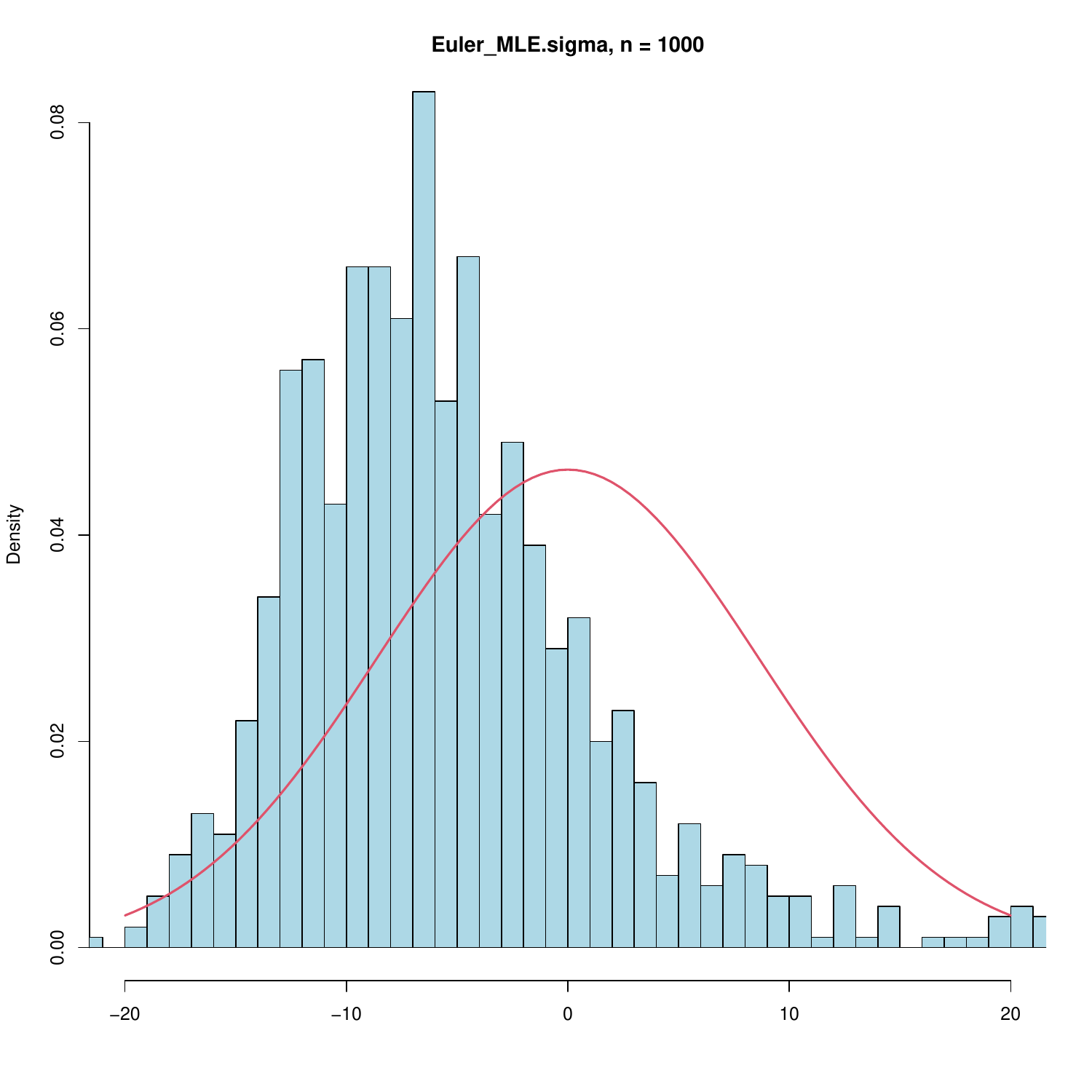}{$n=1000$}\hfill
  \CellPair{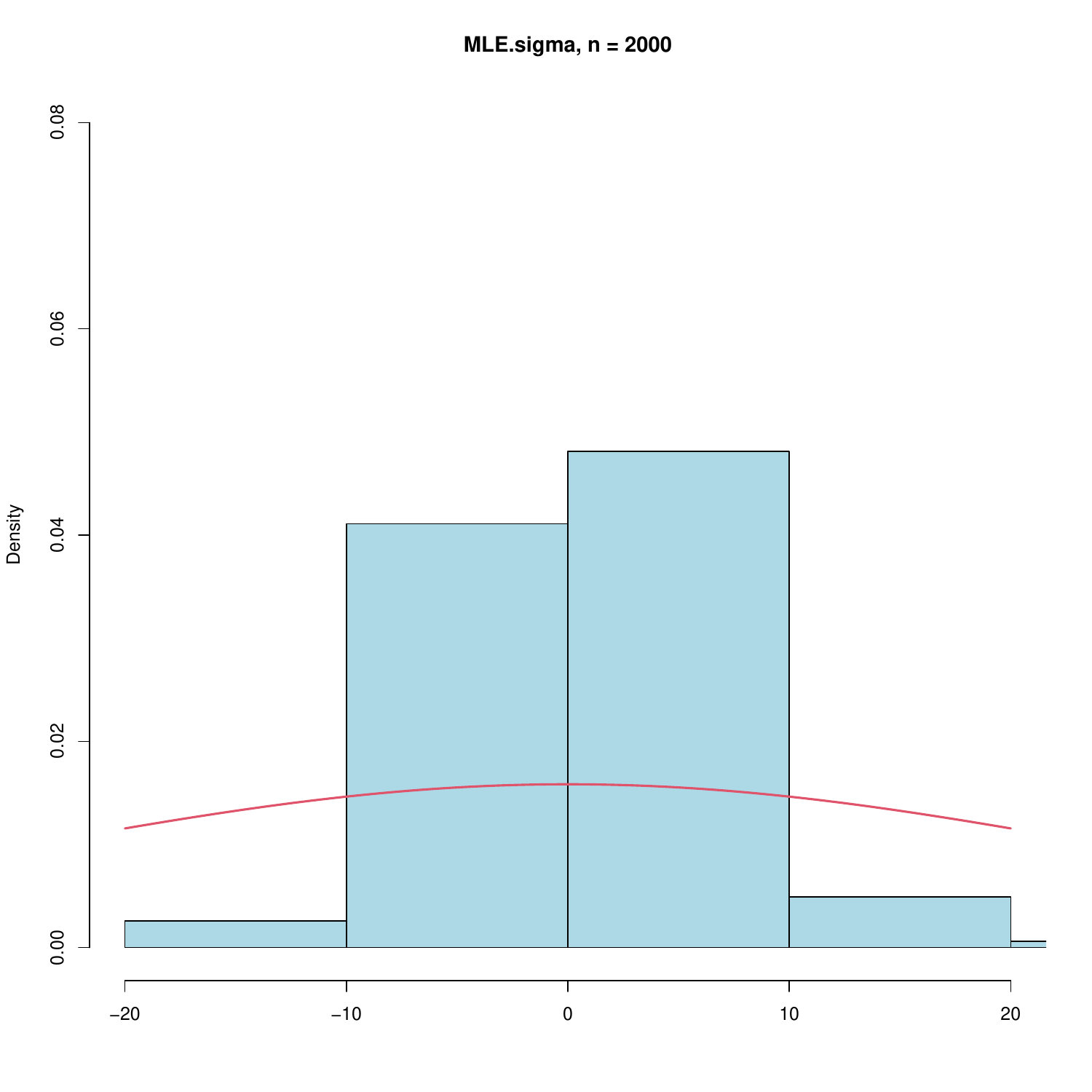}{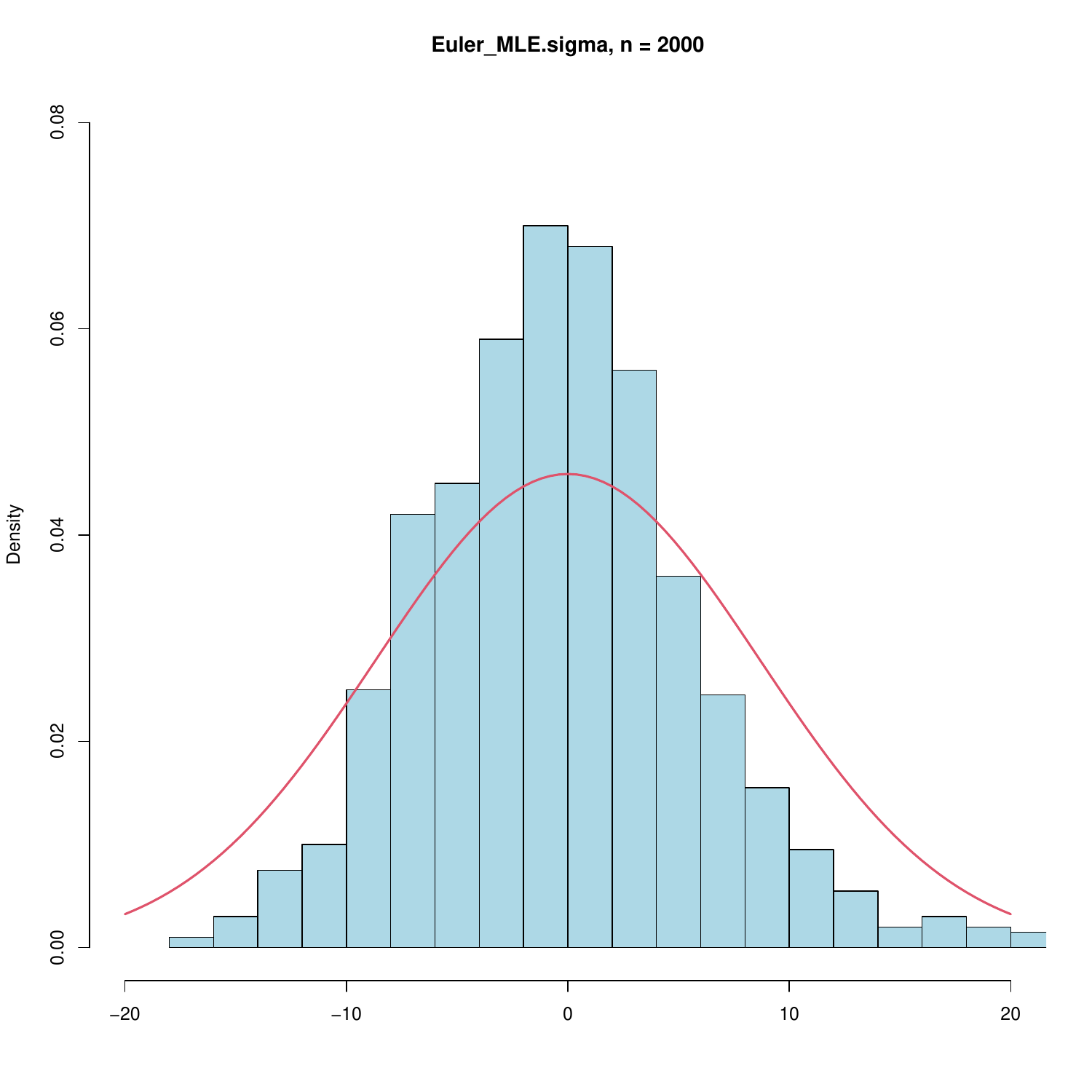}{$n=2000$}

  \vspace{5mm}

  \rowtitle{$\beta$}
  \CellPair{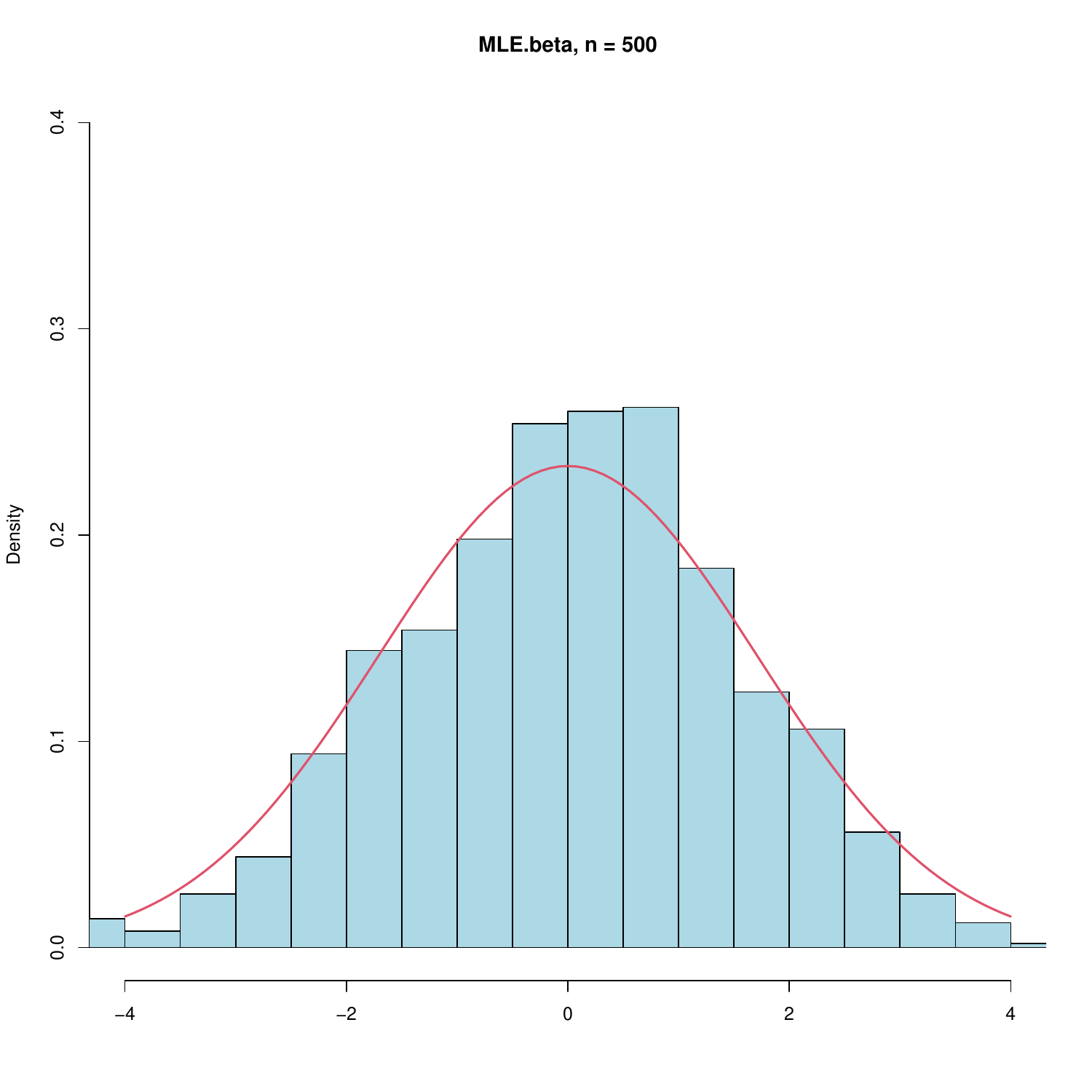}{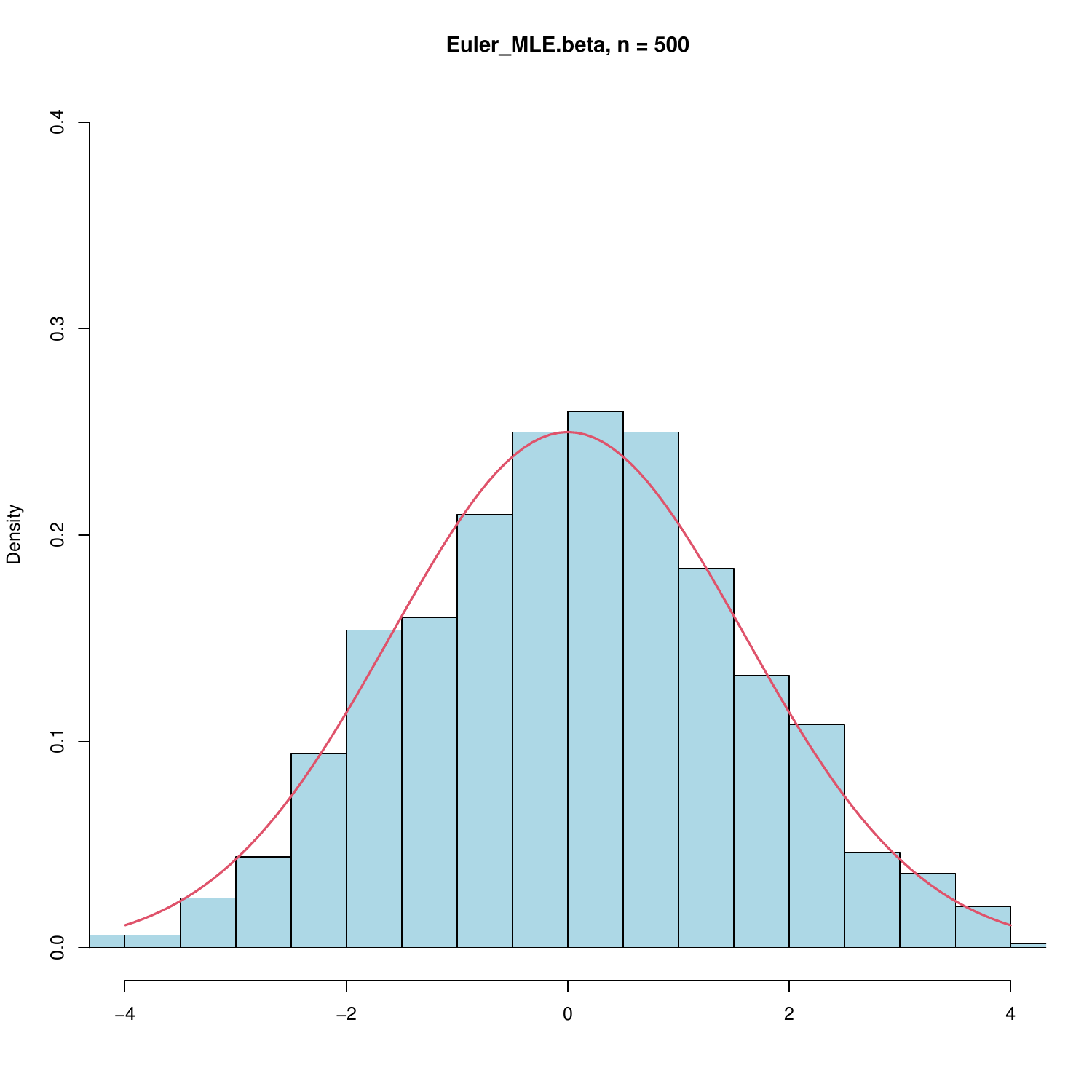}{$n=500$}\hfill
  \CellPair{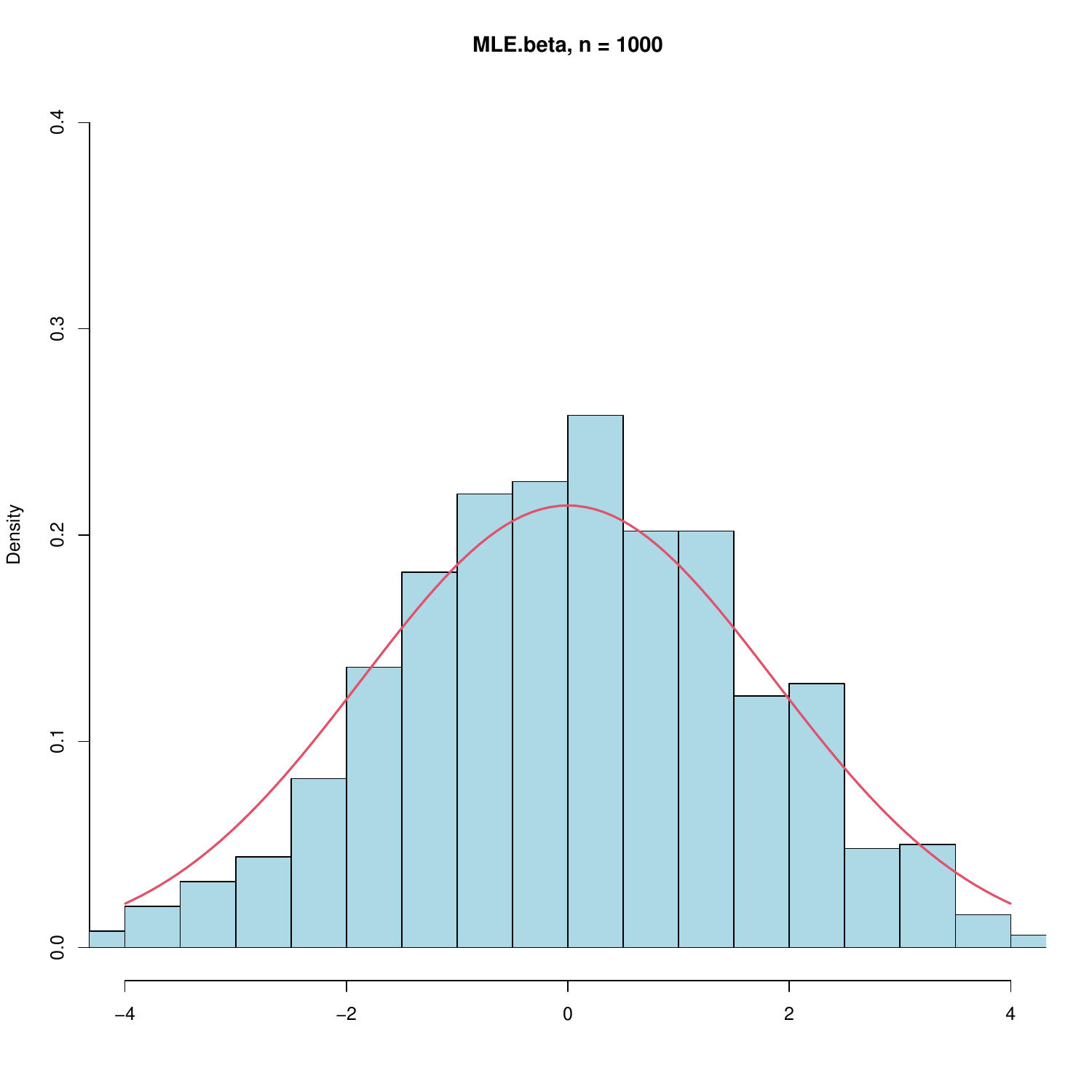}{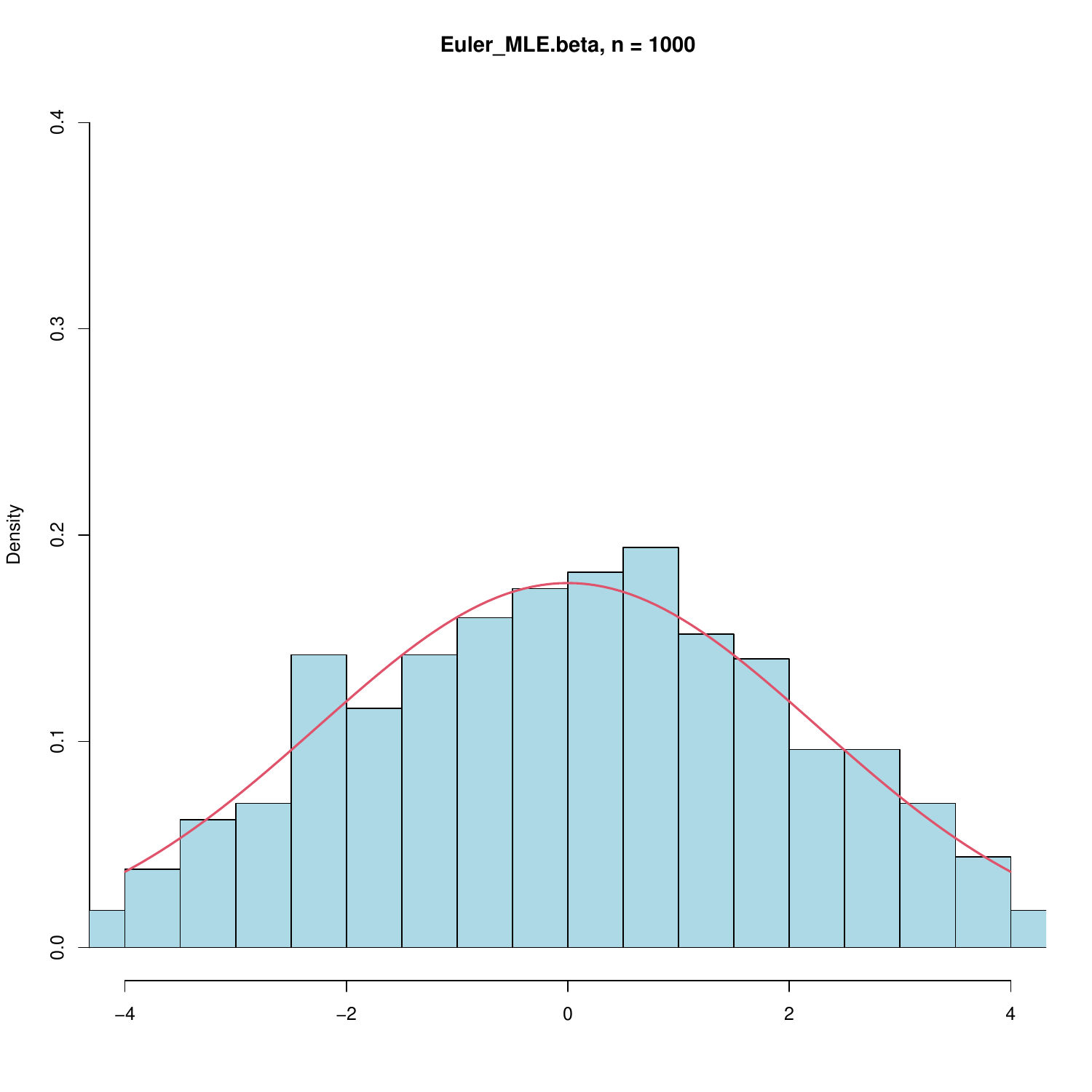}{$n=1000$}\hfill
  \CellPair{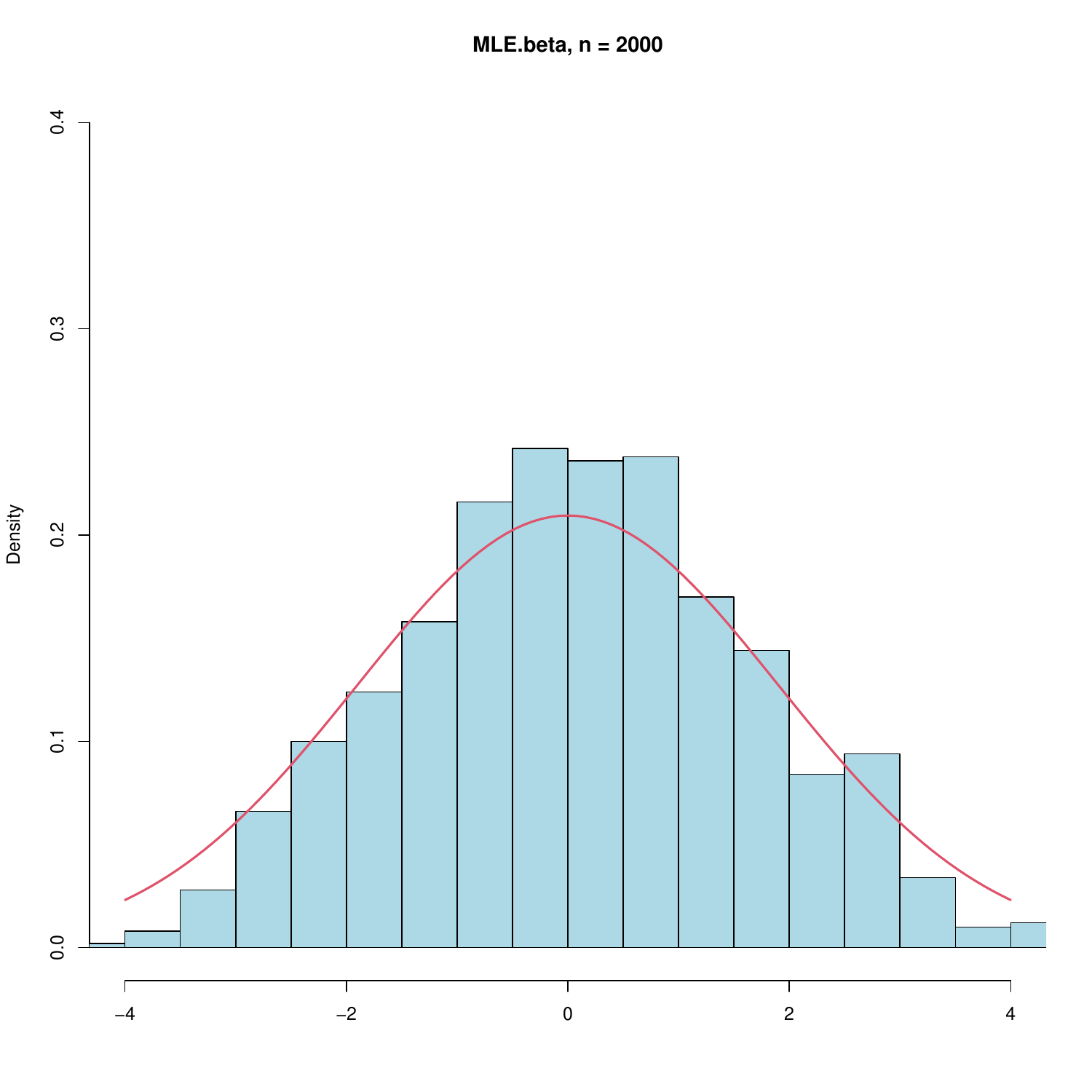}{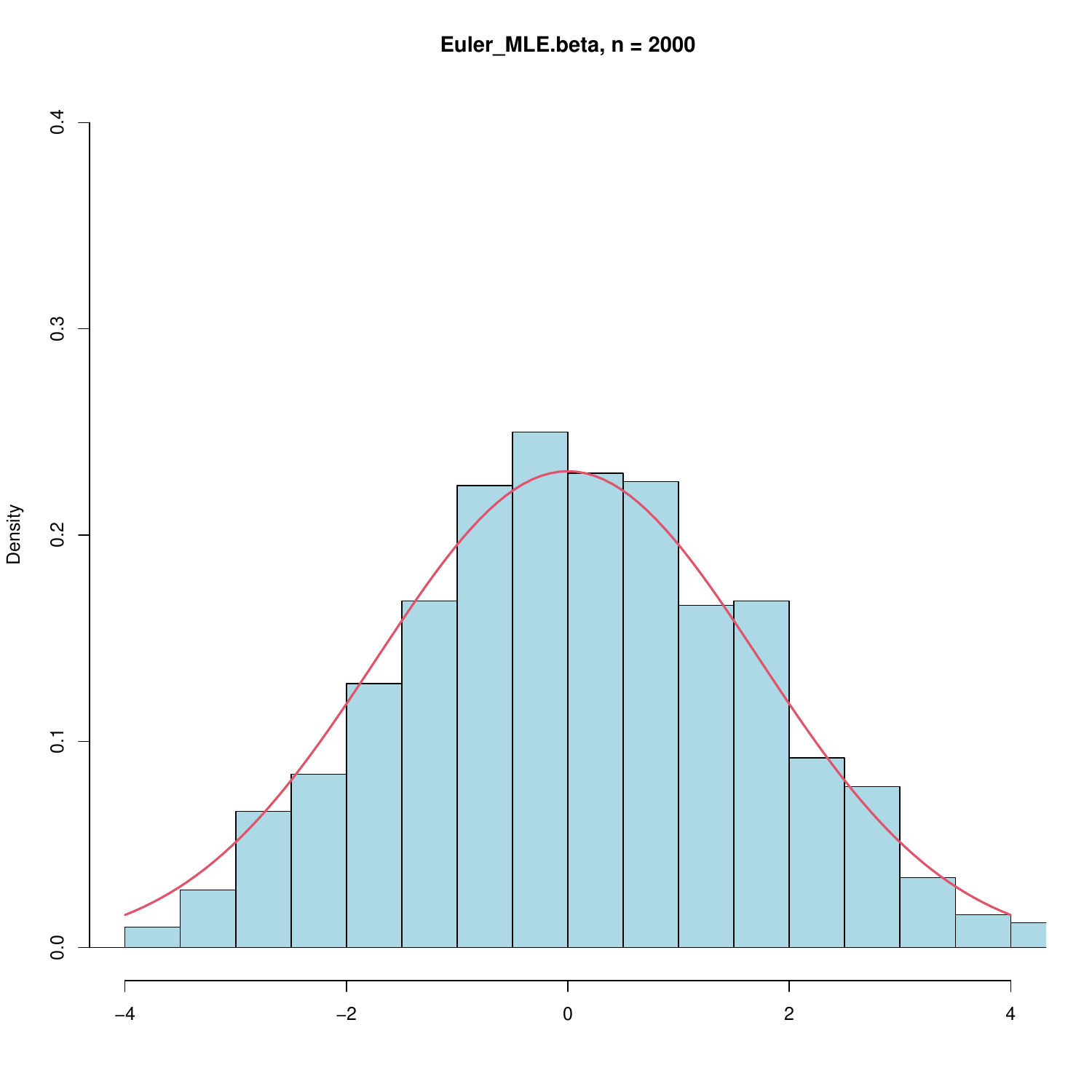}{$n=2000$}
  
  \caption{Comparison between histograms of the normalized MLE and Euler-QMLE with $\alpha=1.01$, $(\lambda,\mu,\sigma,\beta)=(1,2,5,0.5)$.}
  \label{fig:param-by-n_mle-vs-euler-1.01}
\end{figure}


\begin{figure}[t]
  \centering
  \captionsetup[subfigure]{justification=centering}
  \newcommand{\colw}{0.32\linewidth} 

  \newcommand{\rowtitle}[1]{\par\medskip\textbf{#1}\par\smallskip}

  \rowtitle{$\lam$}
  \CellPair{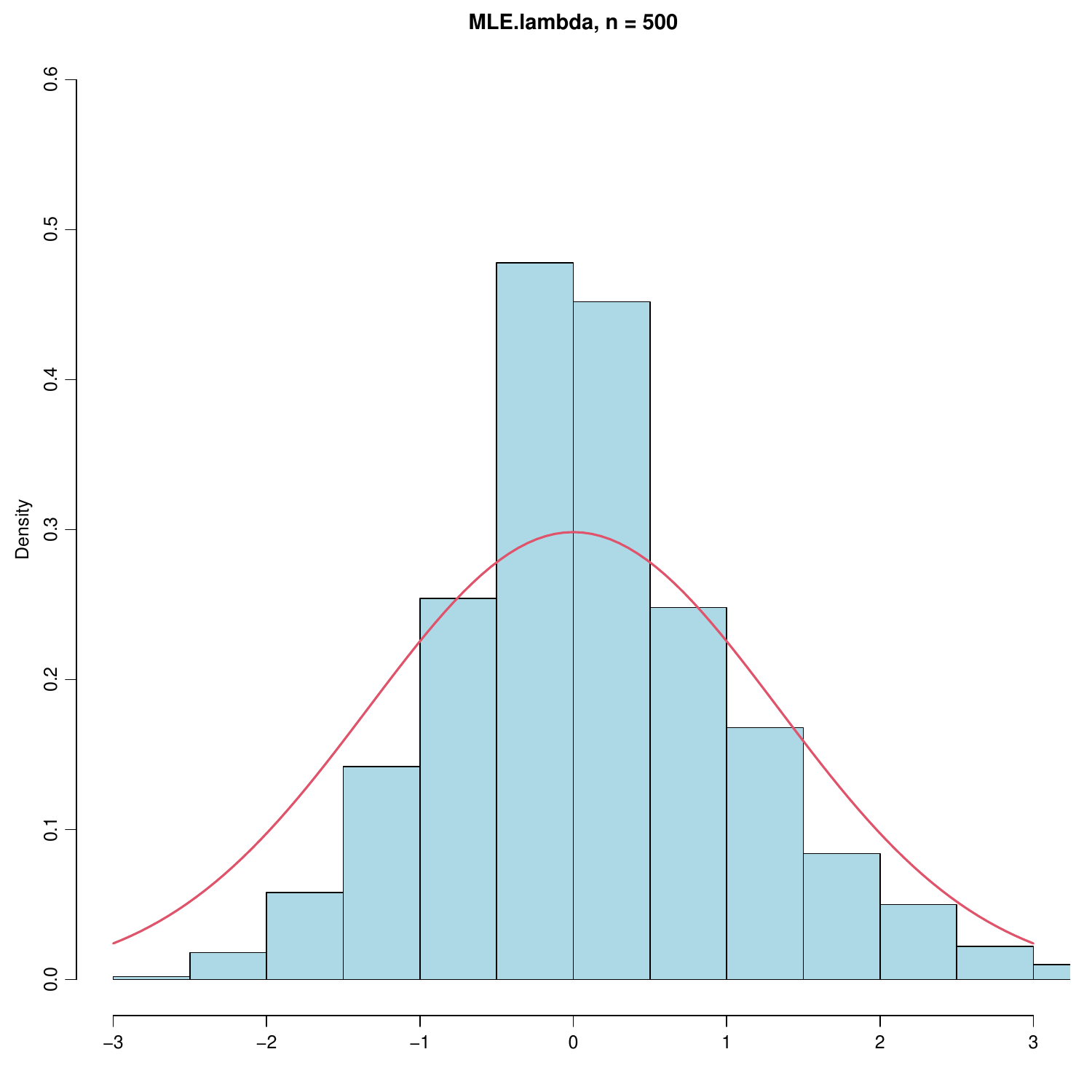}{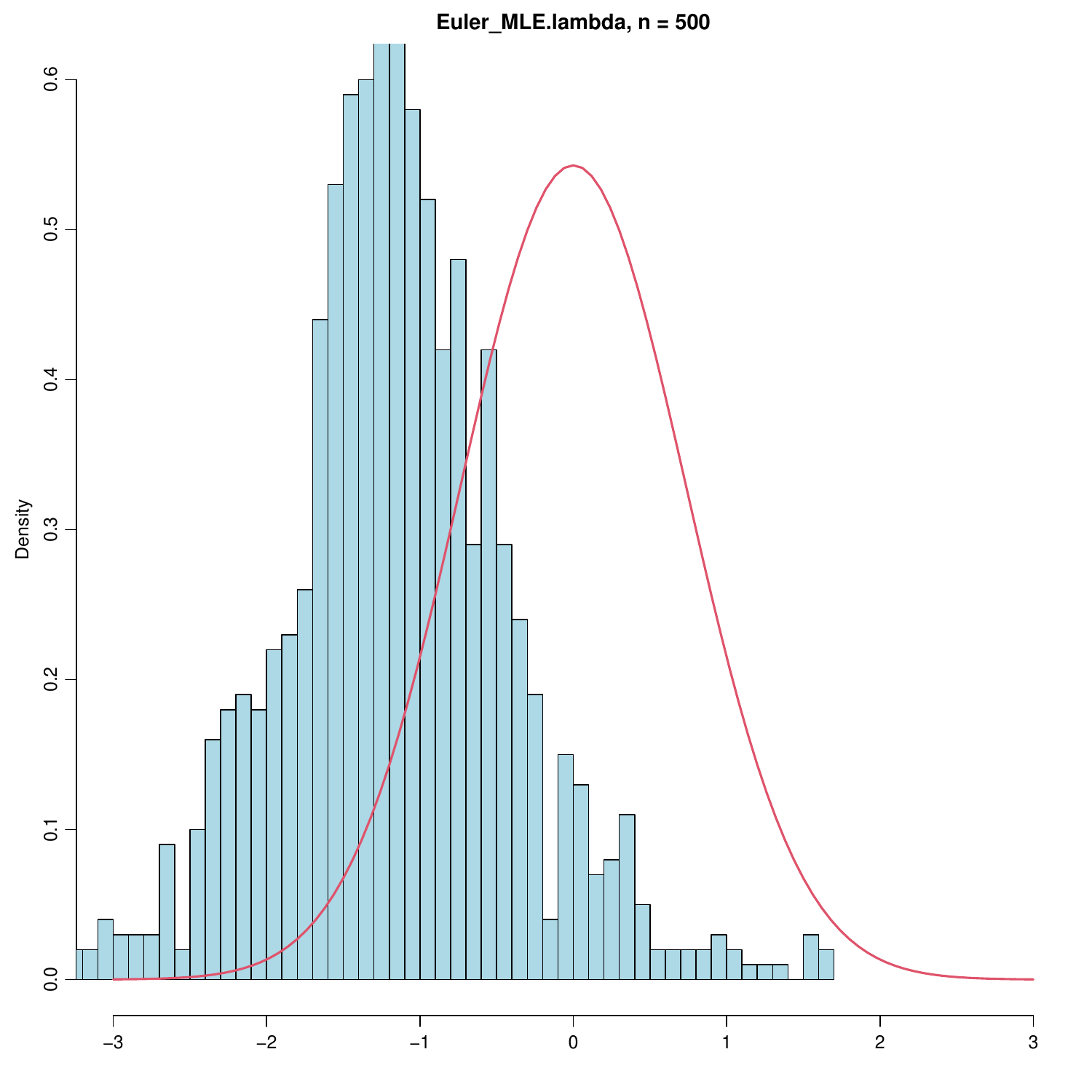}{$n=500$}\hfill
  \CellPair{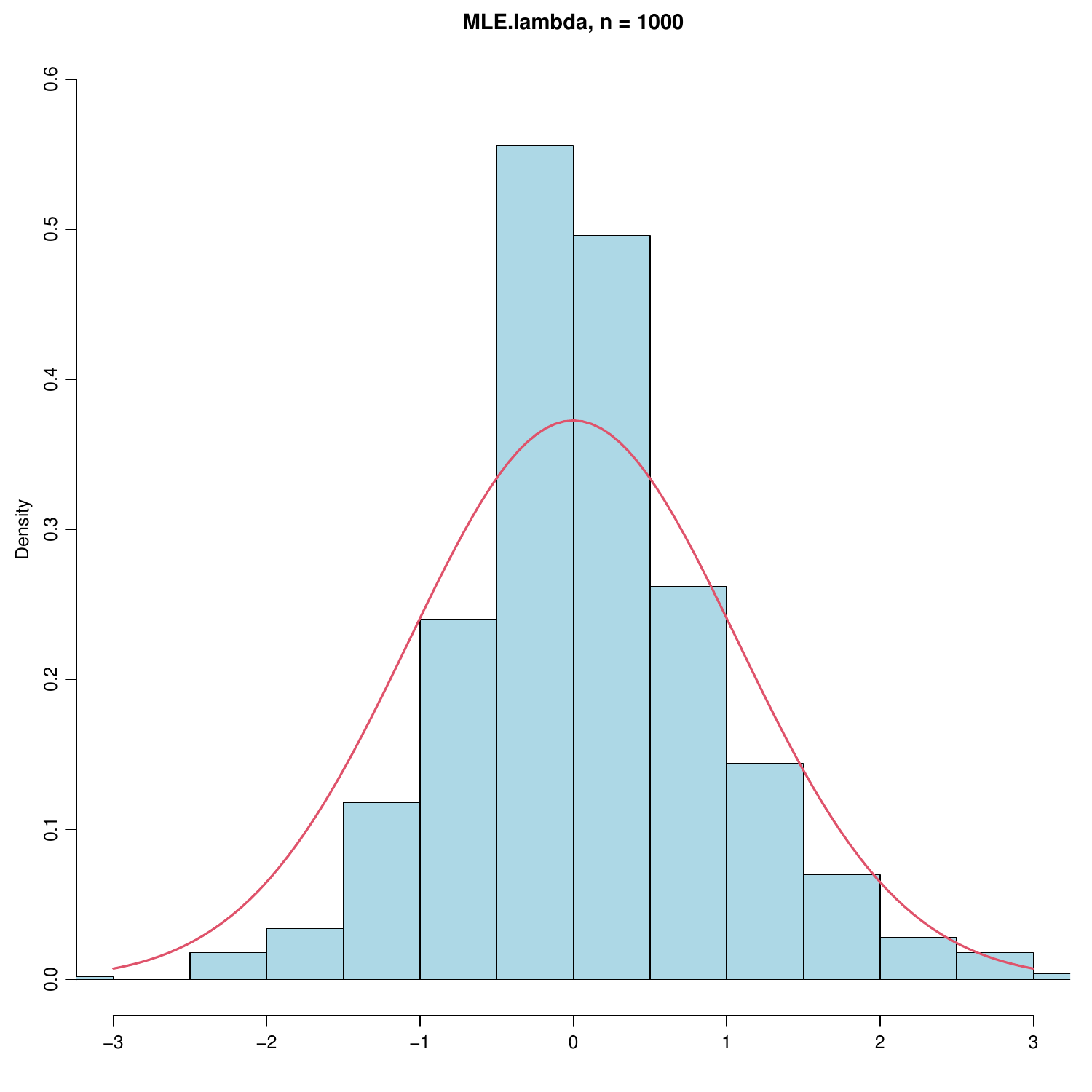}{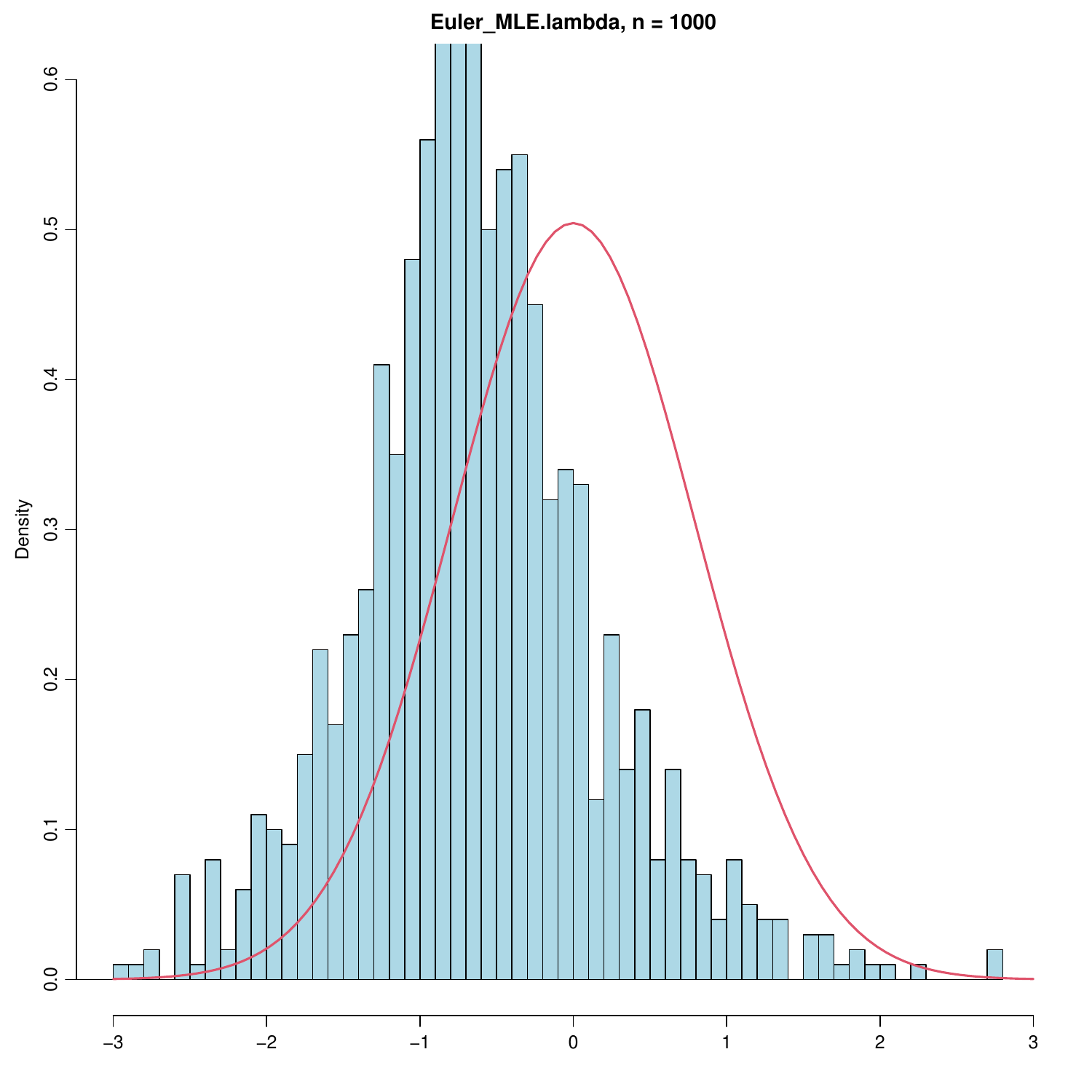}{$n=1000$}\hfill
  \CellPair{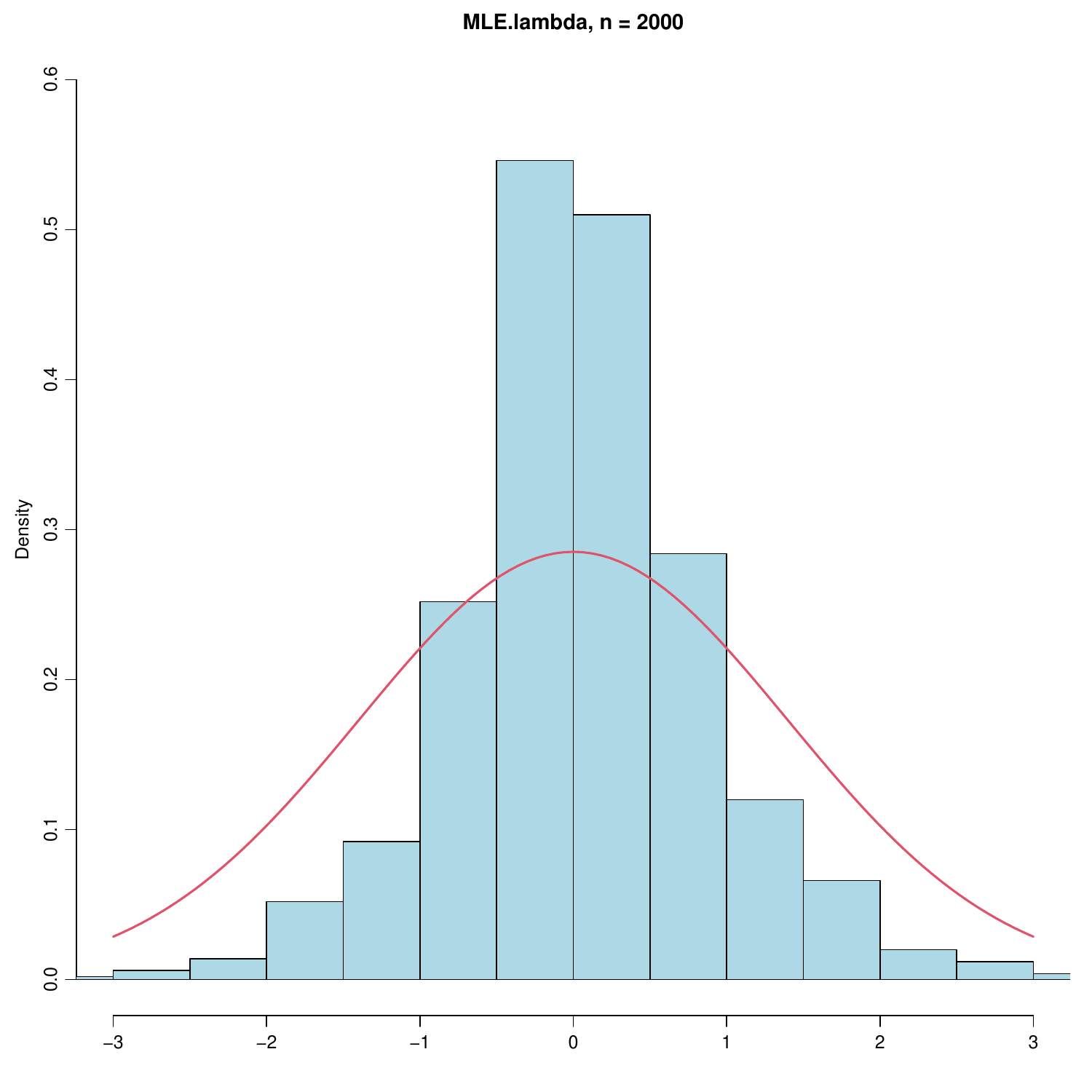}{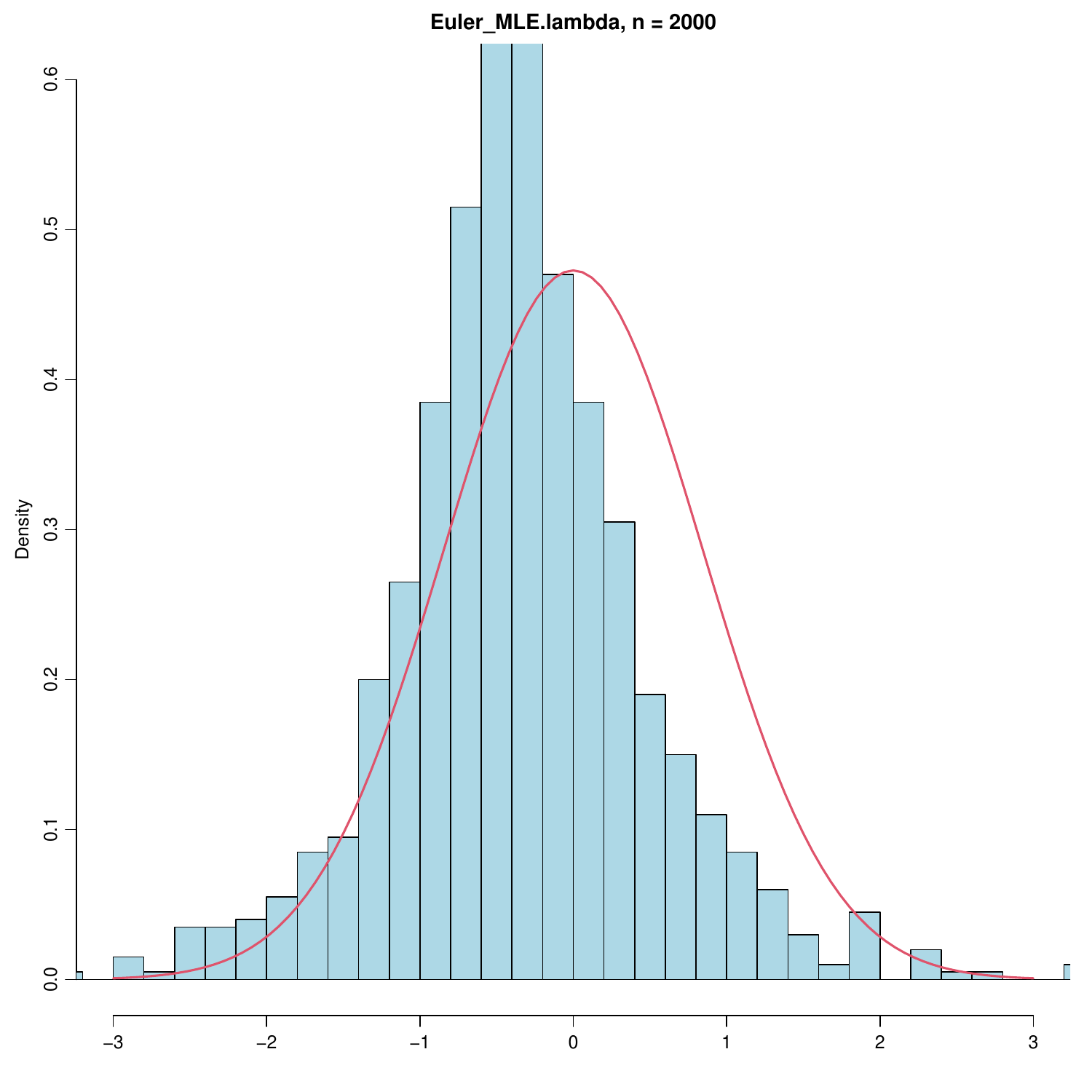}{$n=2000$}
  
  \vspace{5mm}
  
  \rowtitle{$\mu$}
  \CellPair{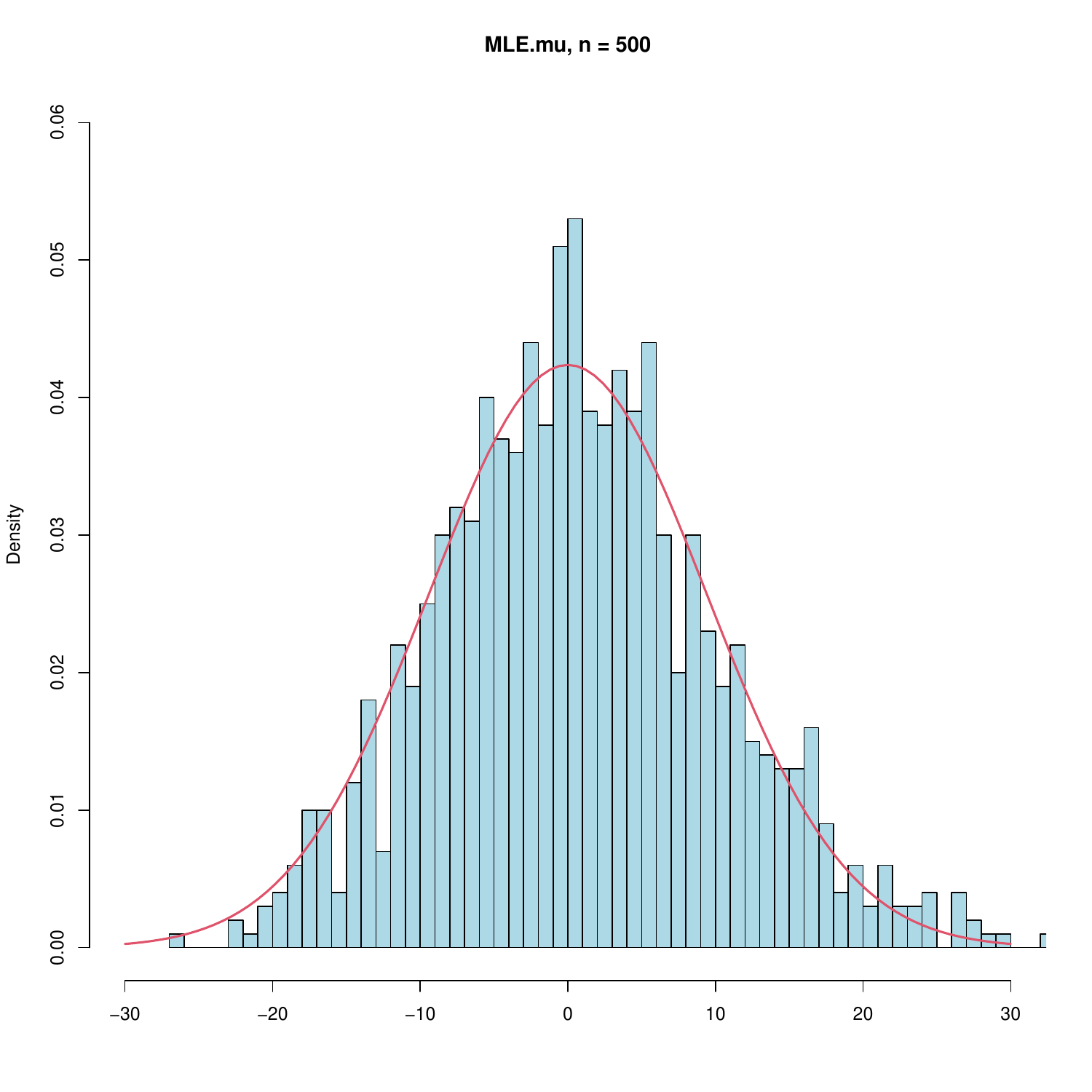}{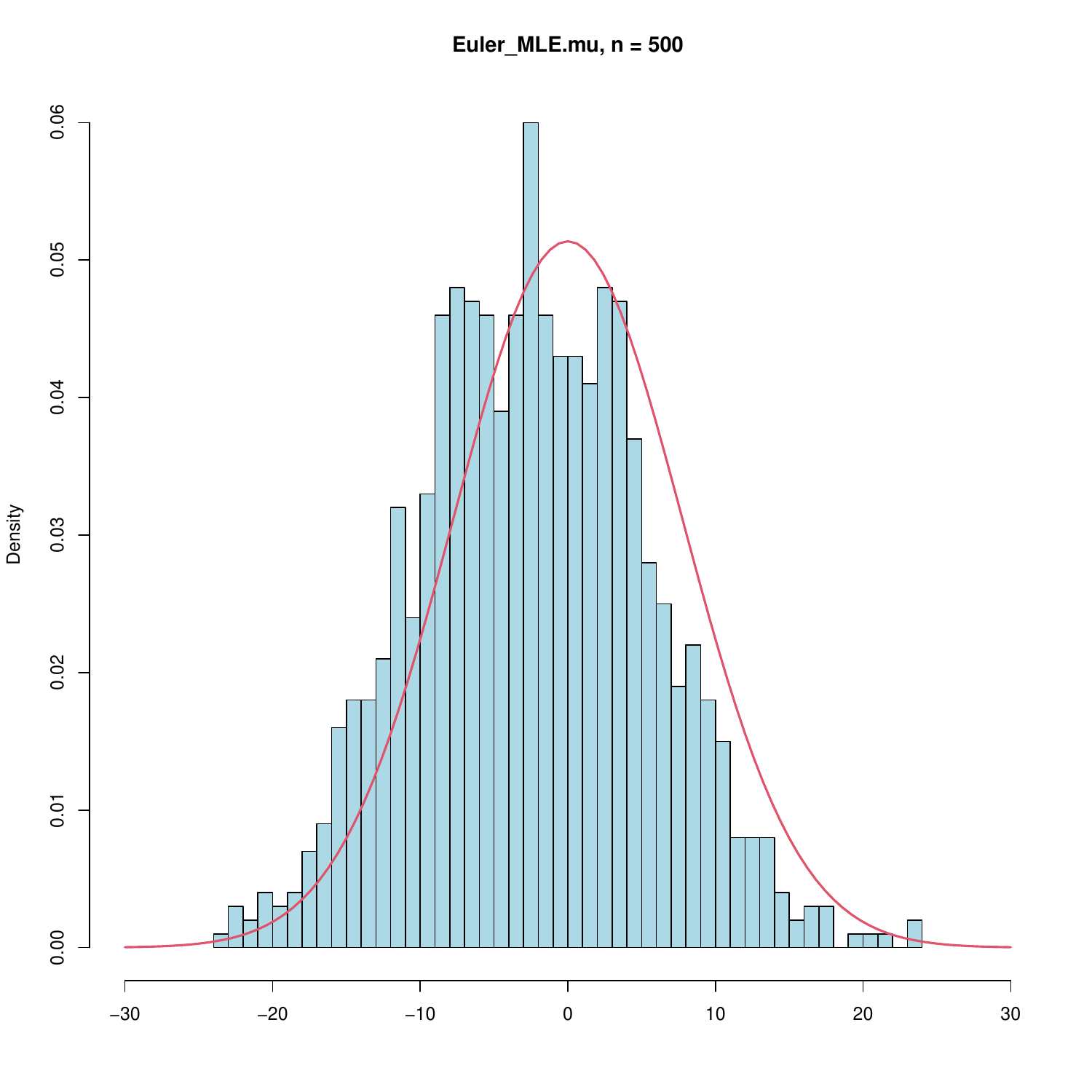}{$n=500$}\hfill
  \CellPair{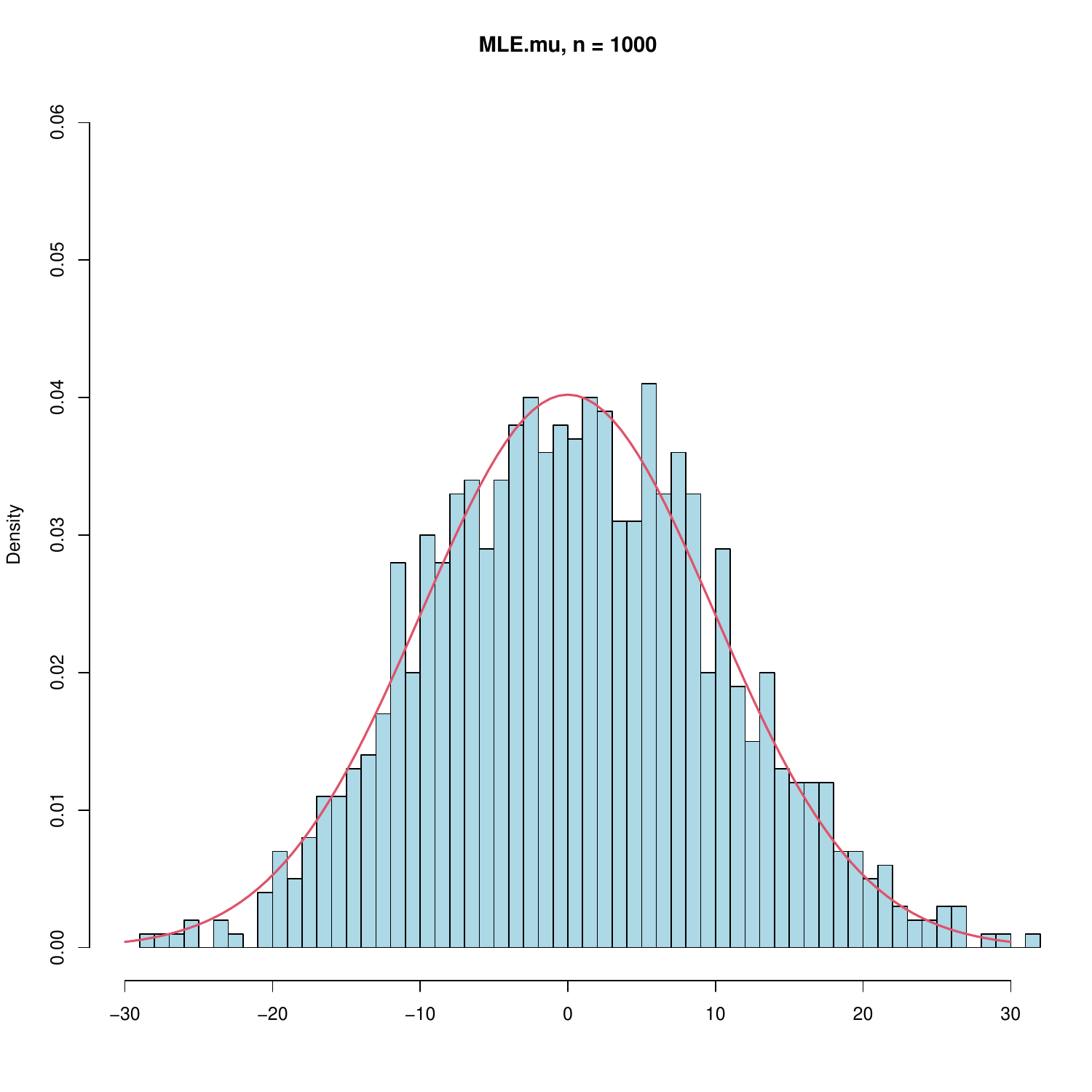}{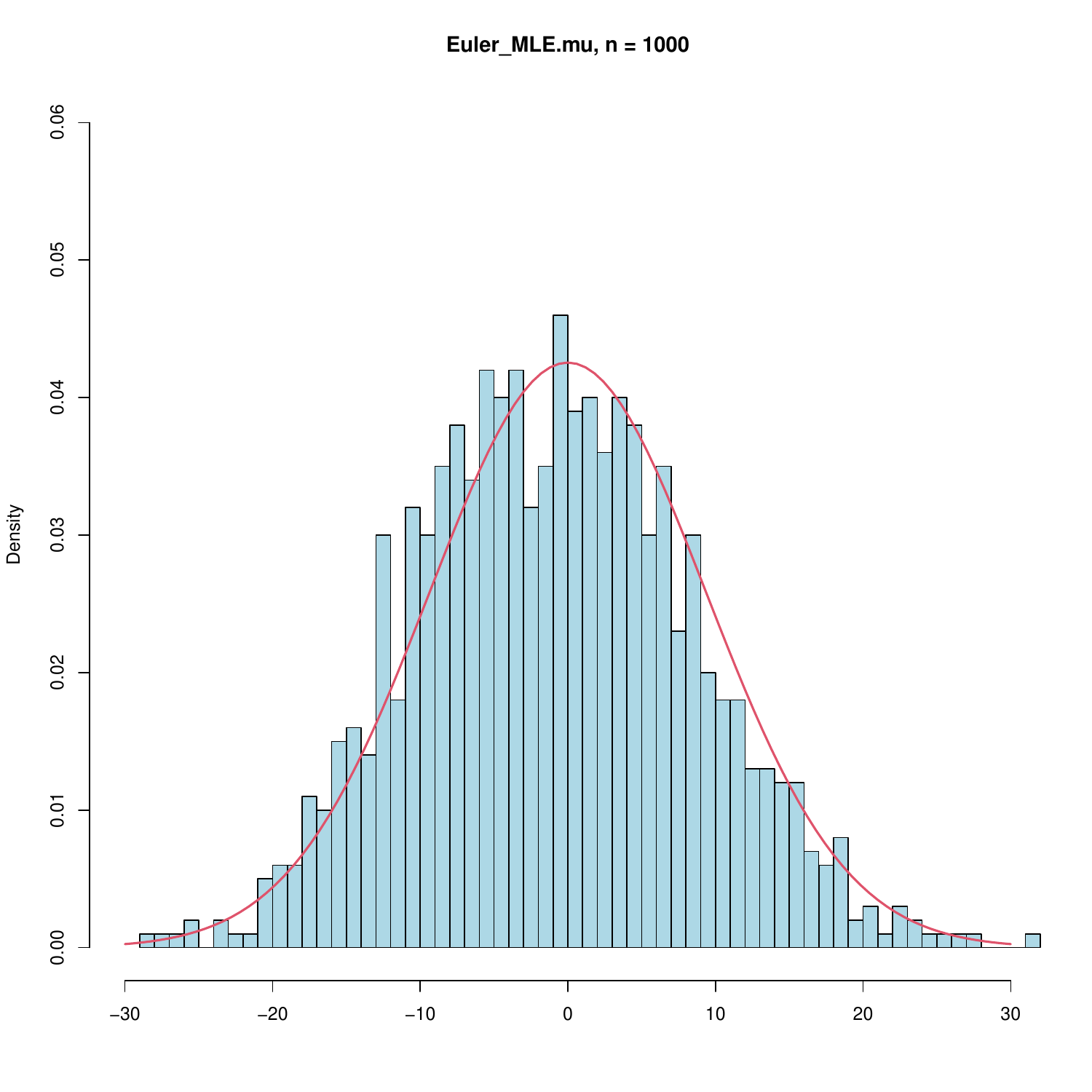}{$n=1000$}\hfill
  \CellPair{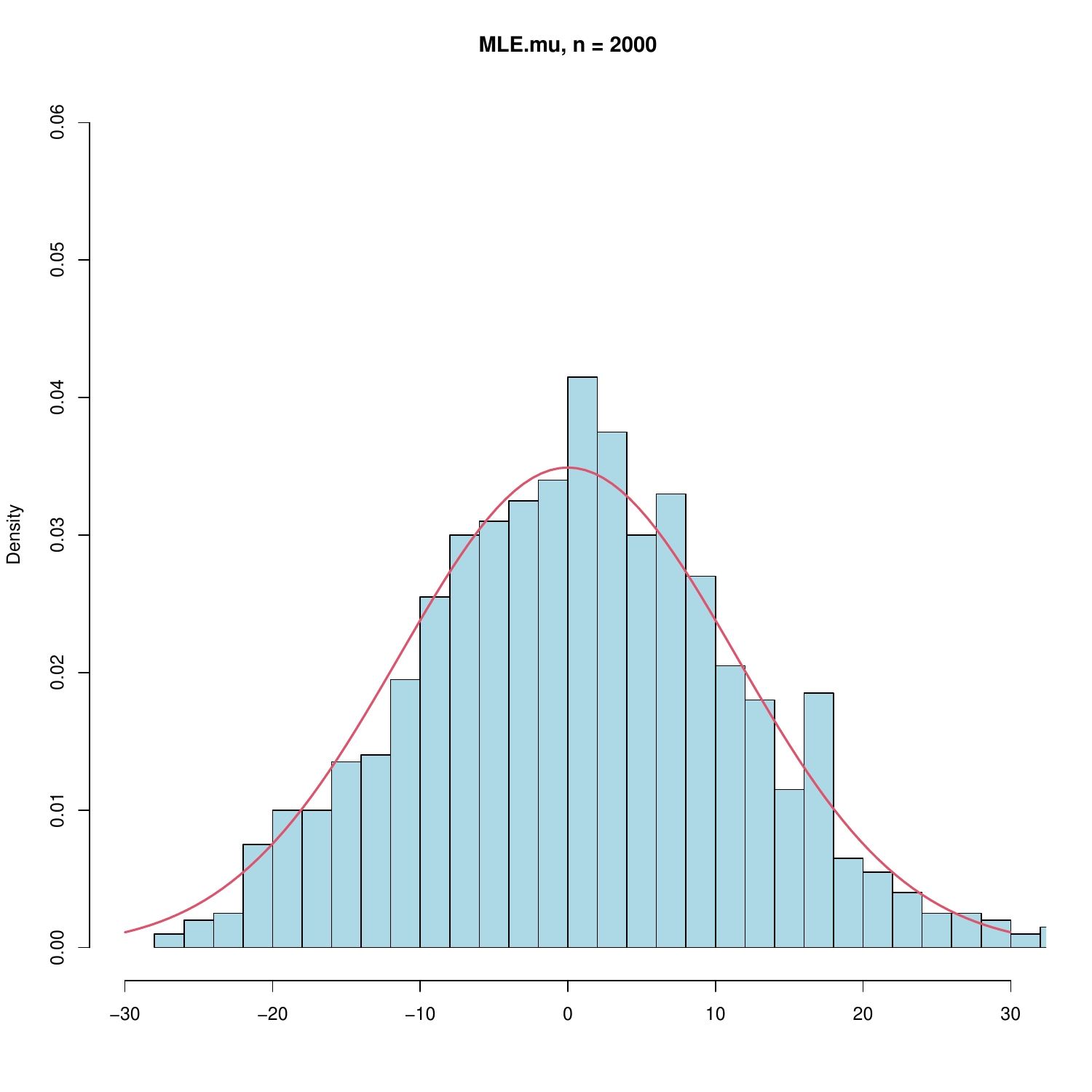}{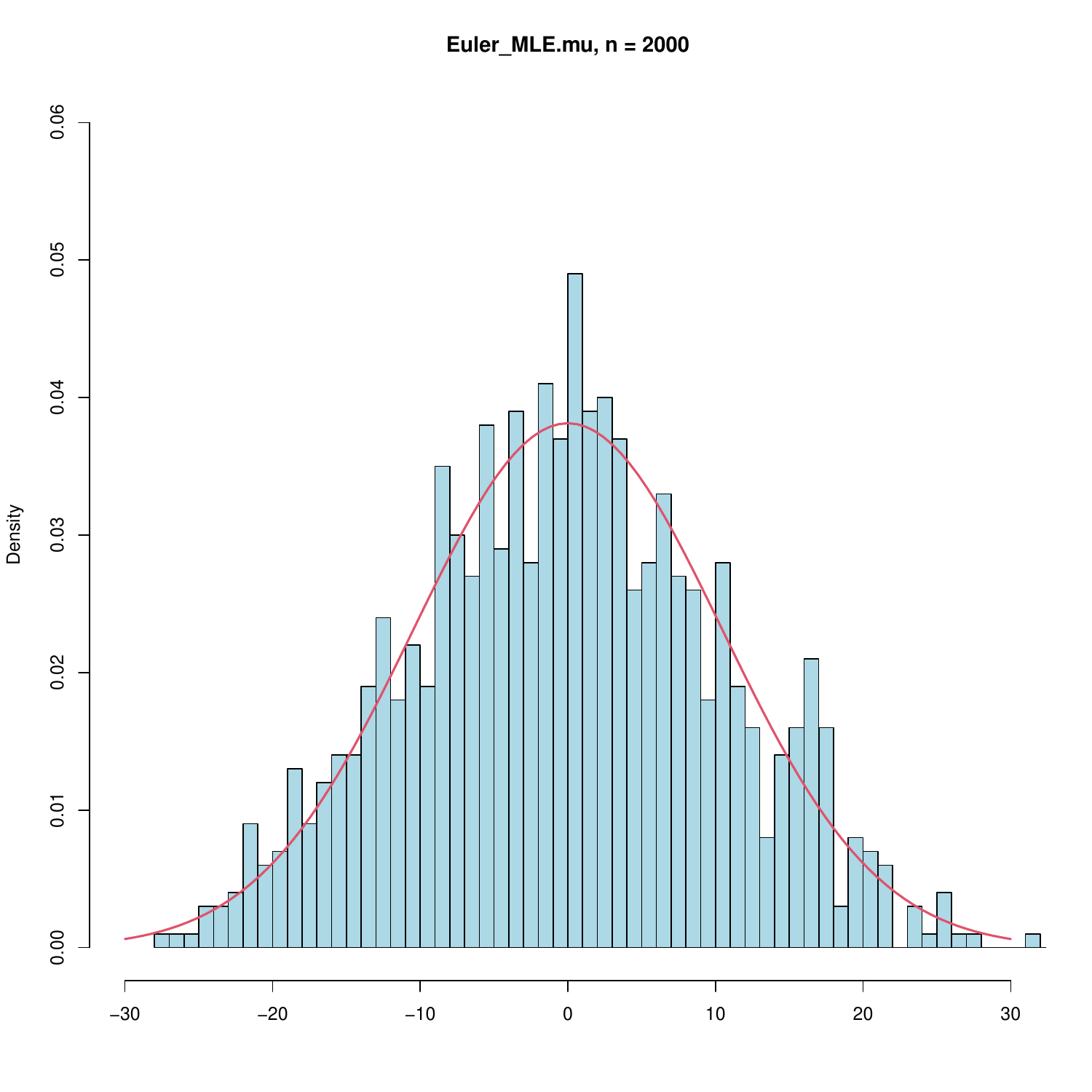}{$n=2000$}

  \vspace{5mm}
  
  \rowtitle{$\al$}
  \CellPair{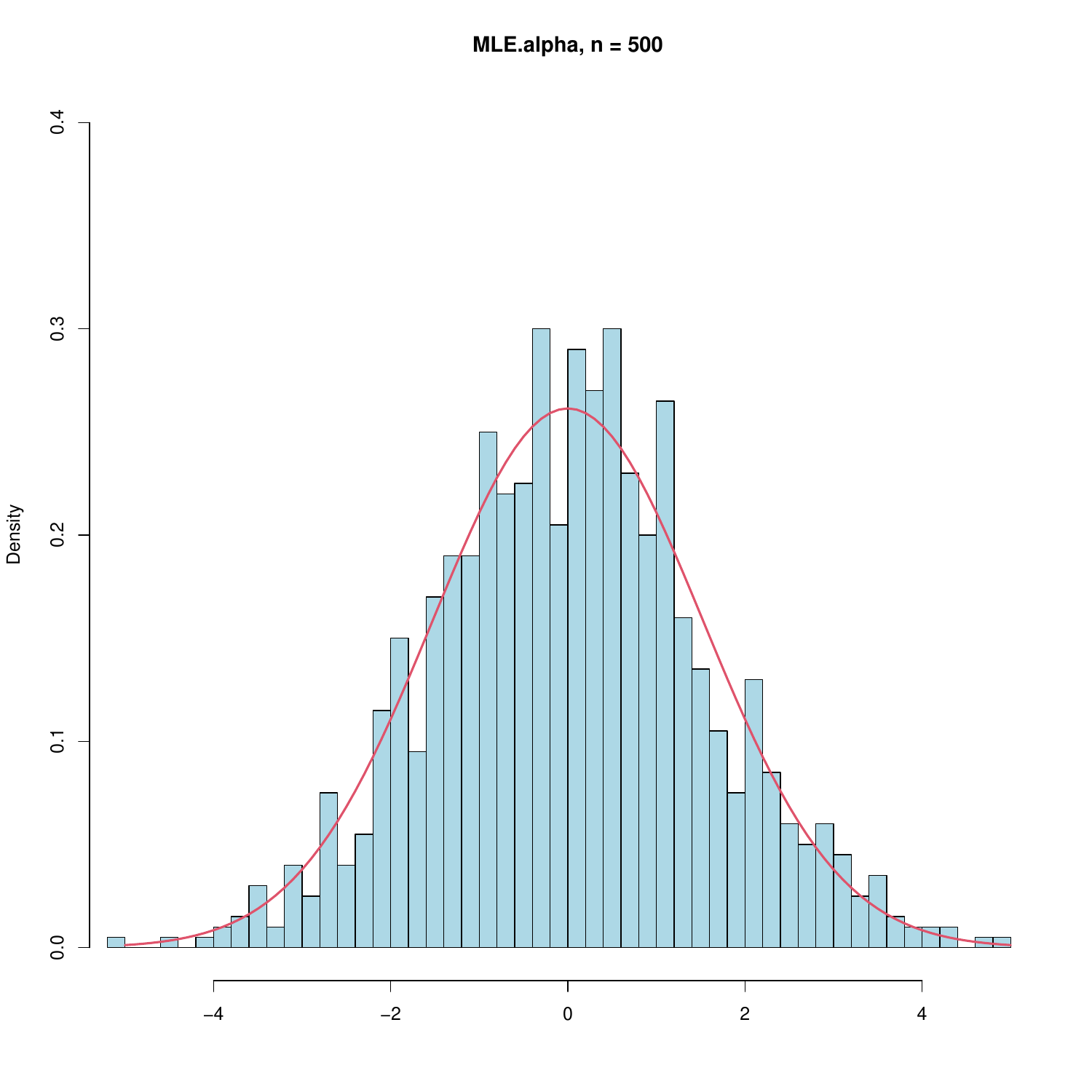}{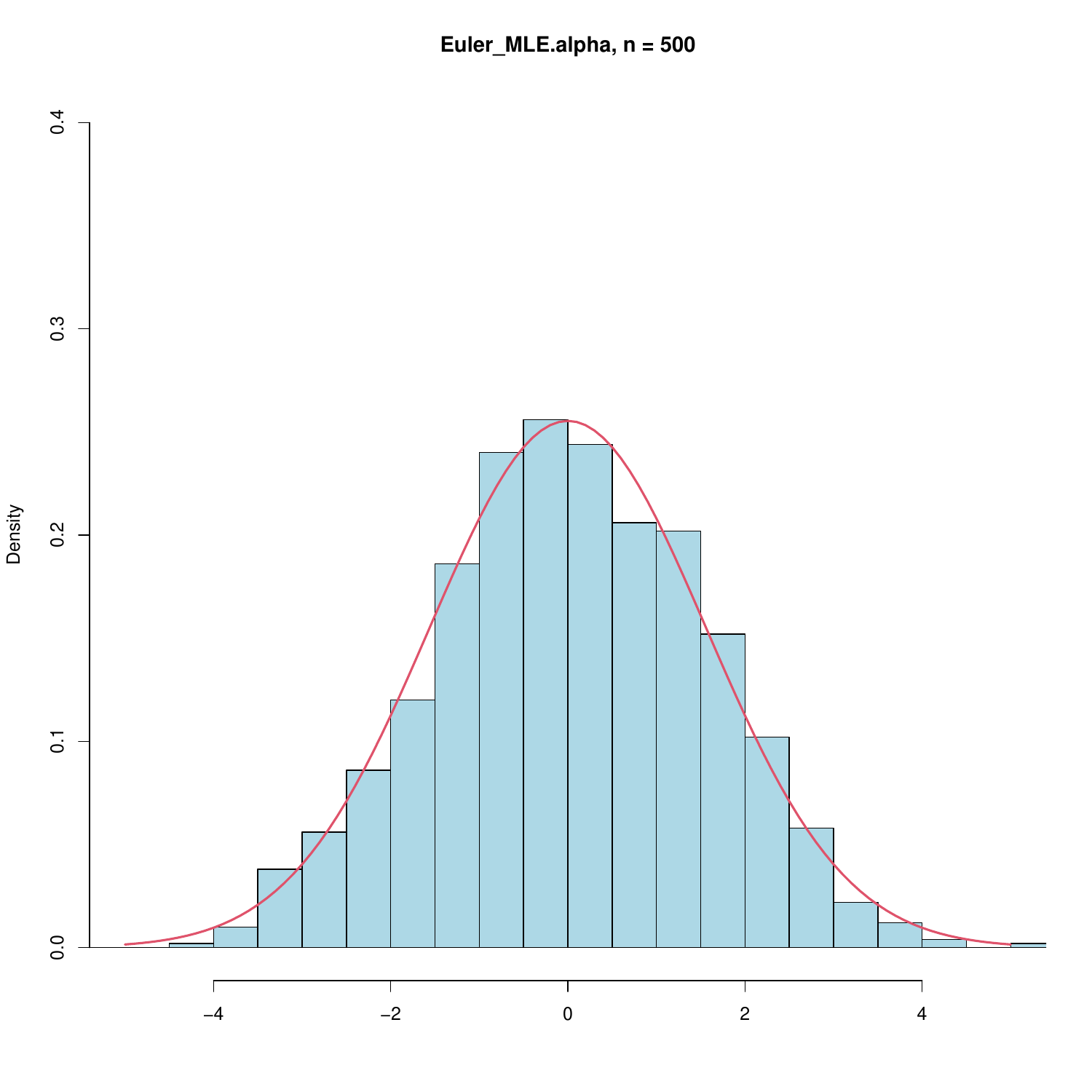}{$n=500$}\hfill
  \CellPair{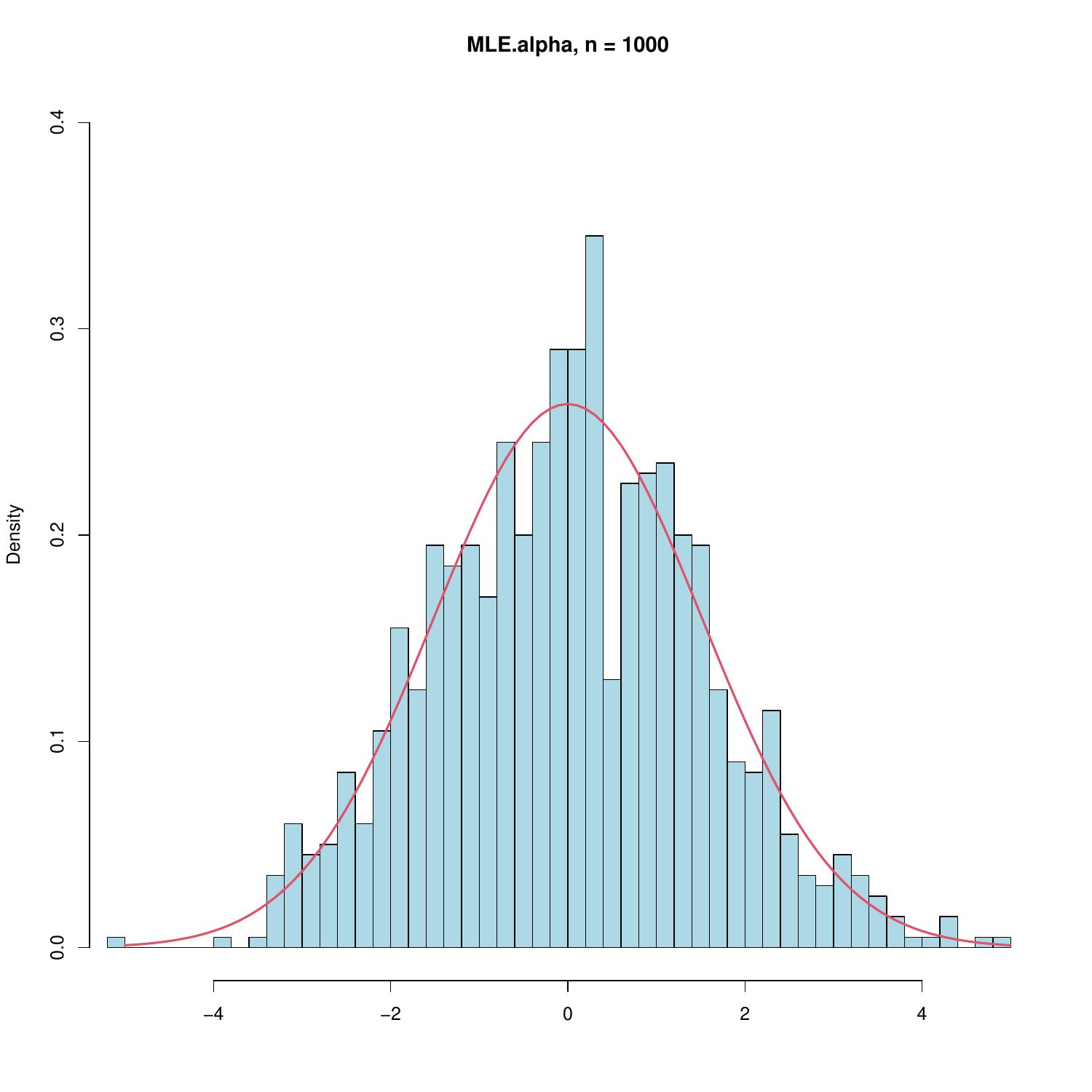}{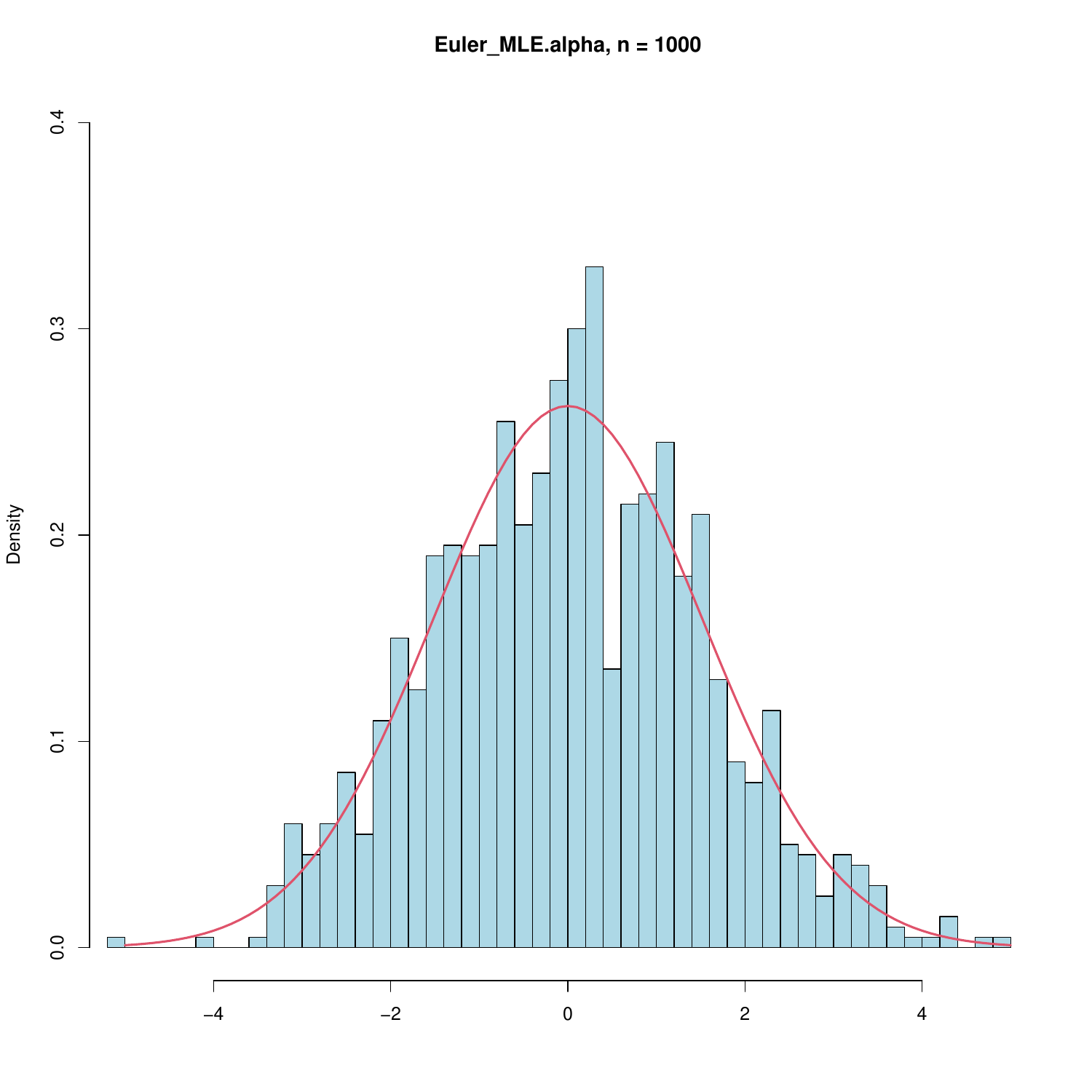}{$n=1000$}\hfill
  \CellPair{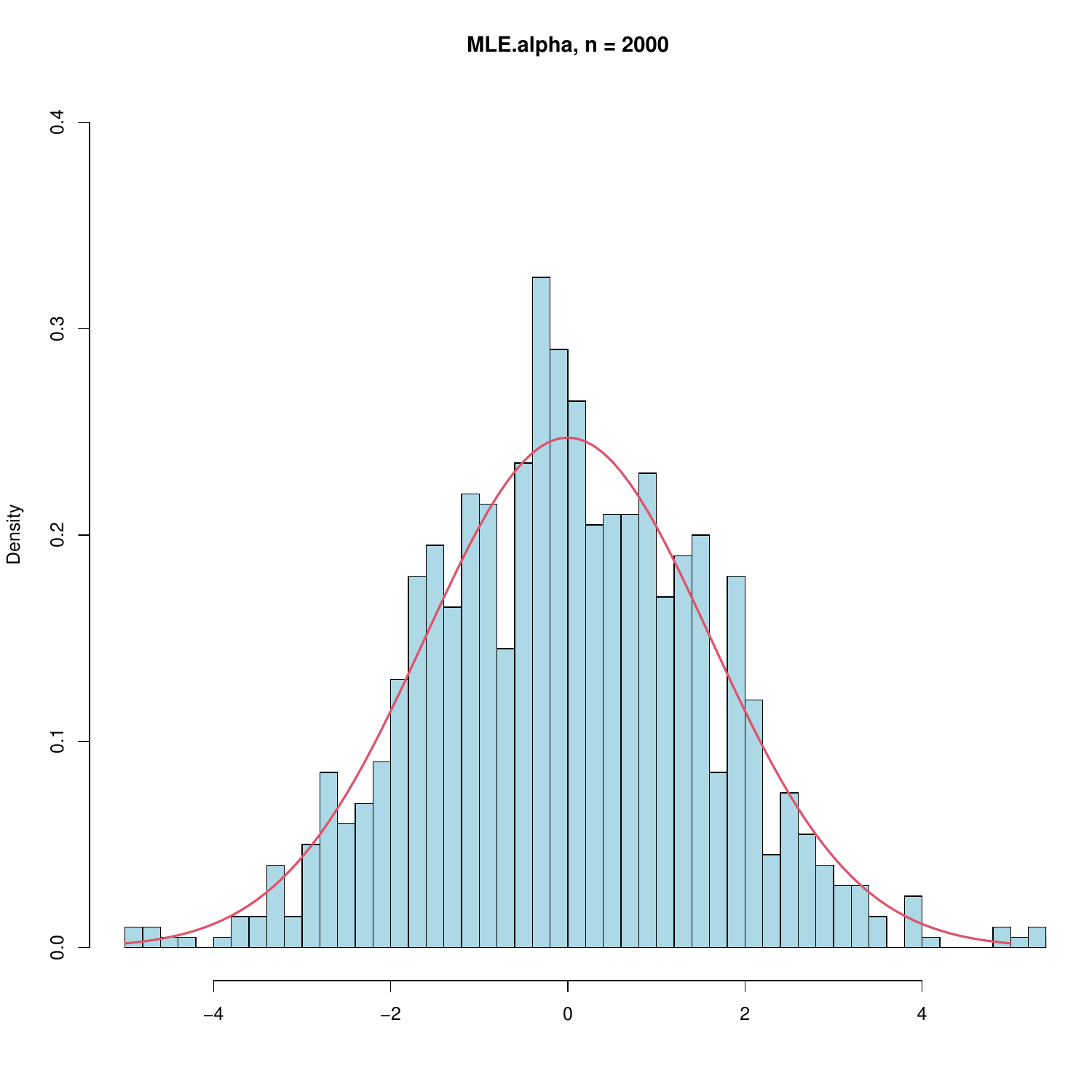}{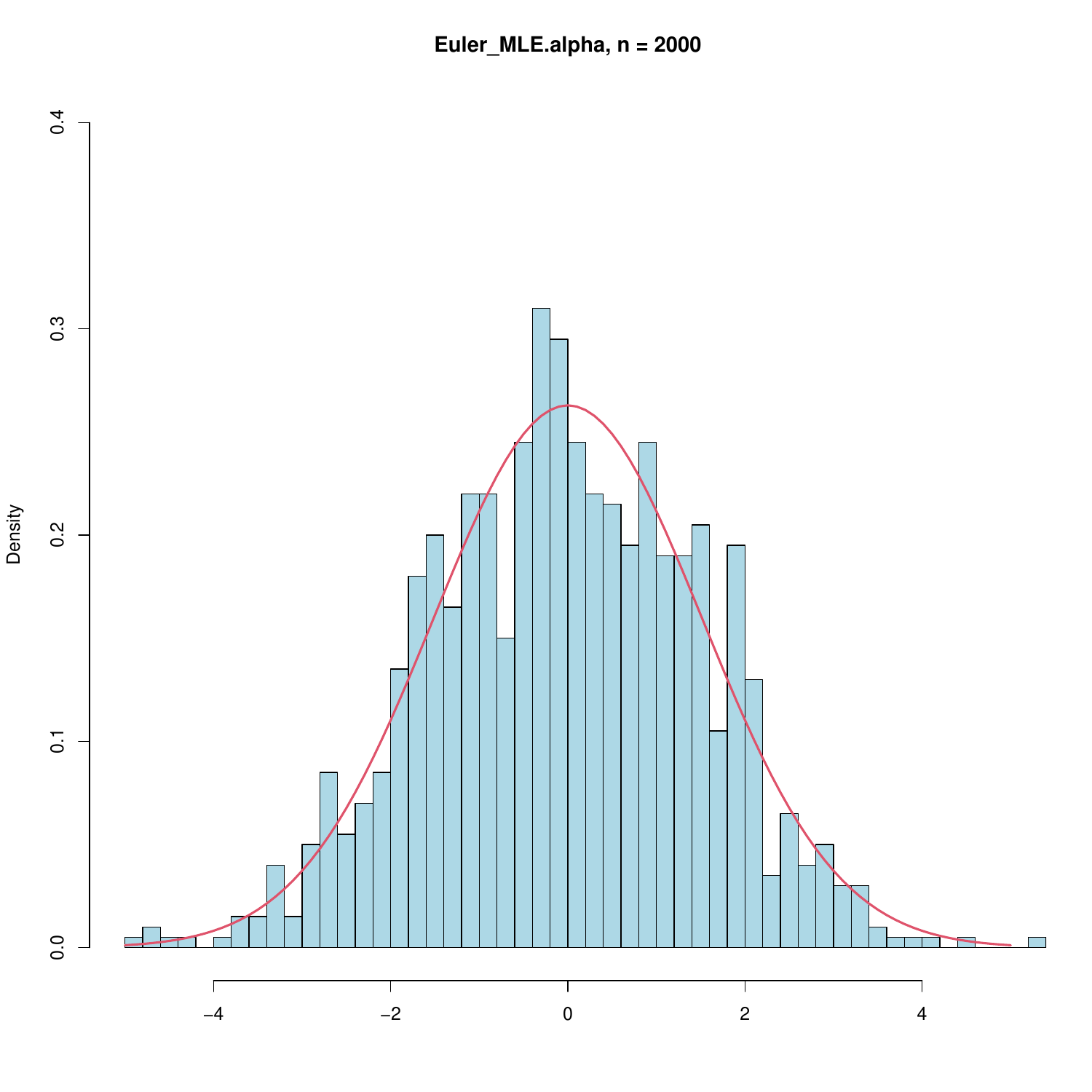}{$n=2000$}

  \vspace{5mm}
  
  \rowtitle{$\sig$}
  \CellPair{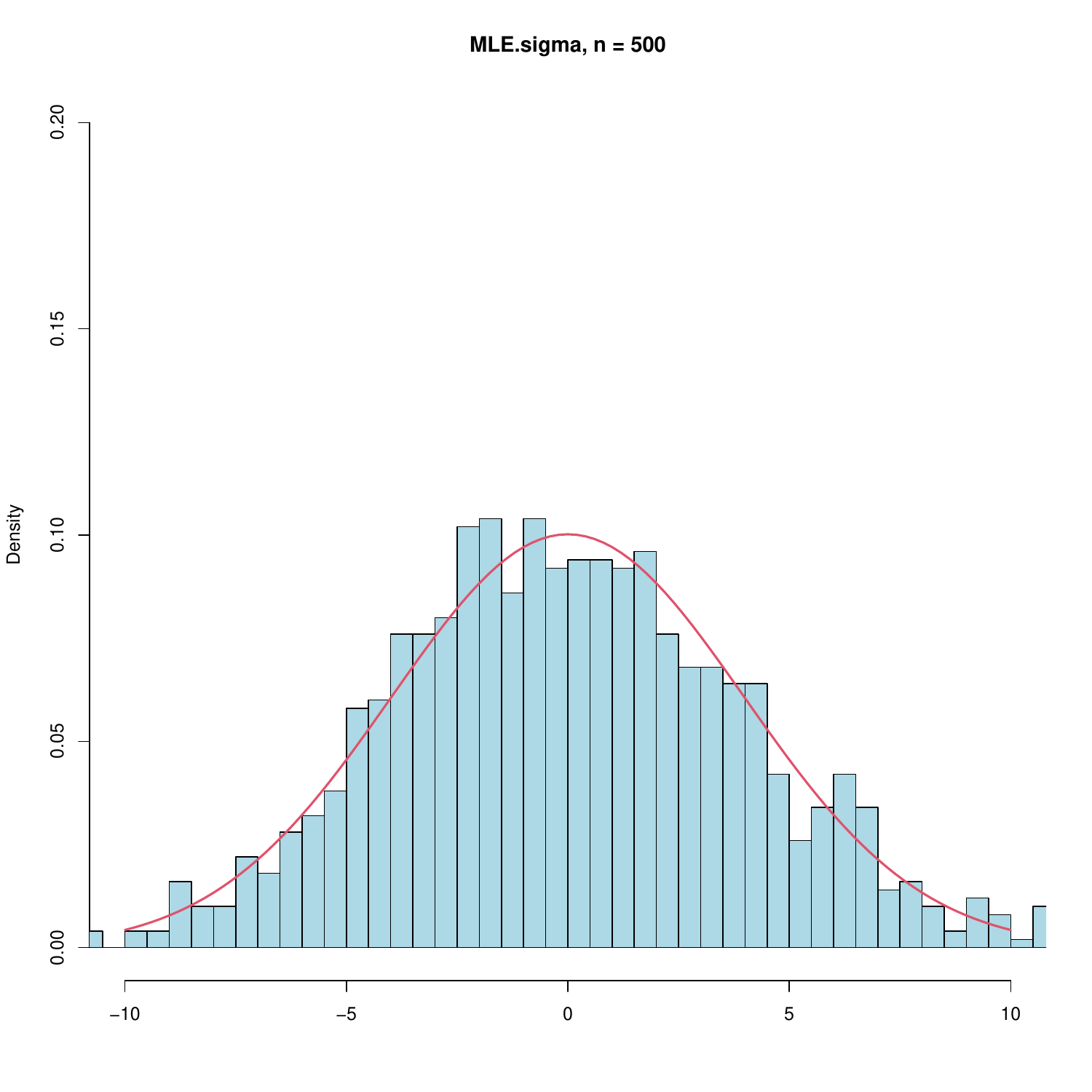}{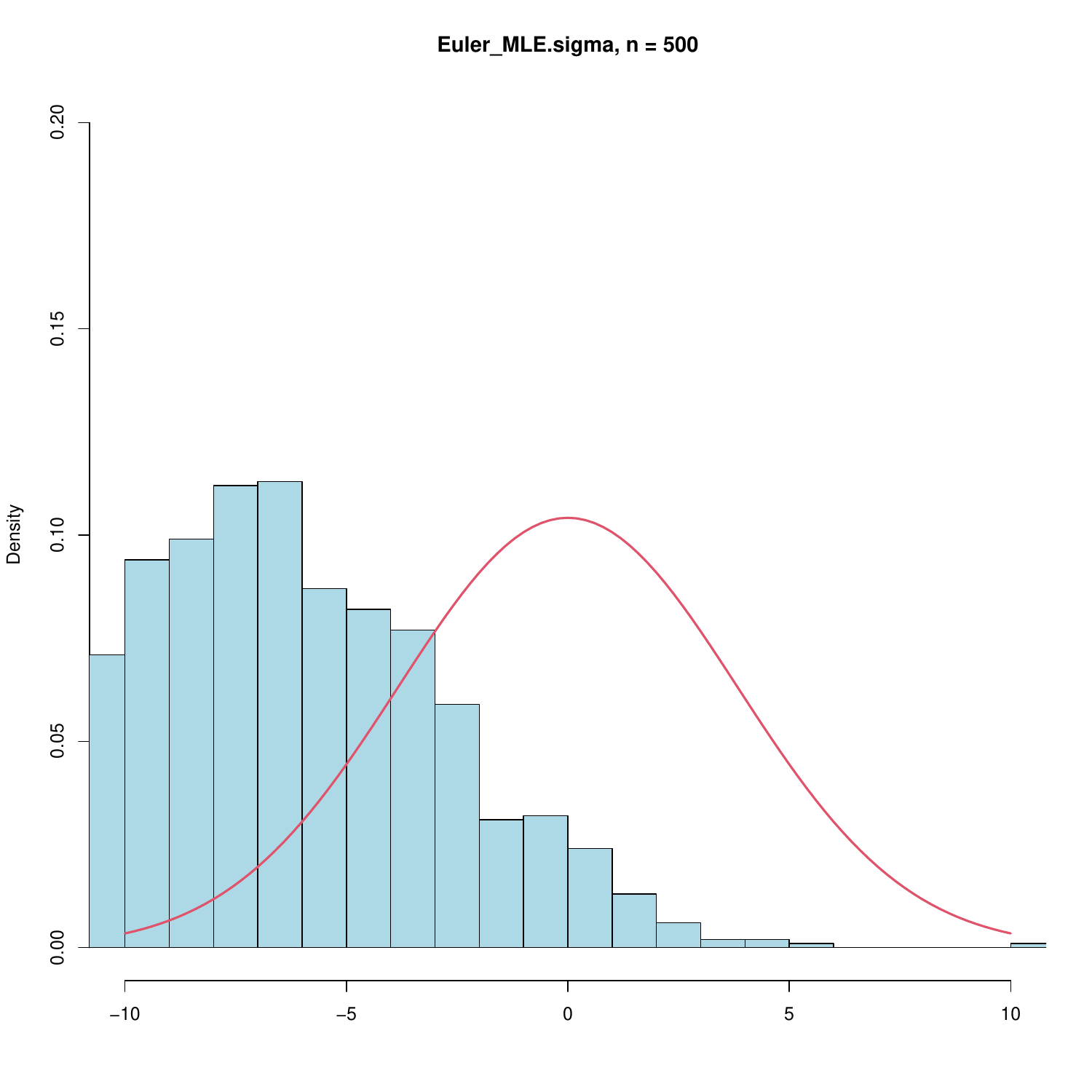}{$n=500$}\hfill
  \CellPair{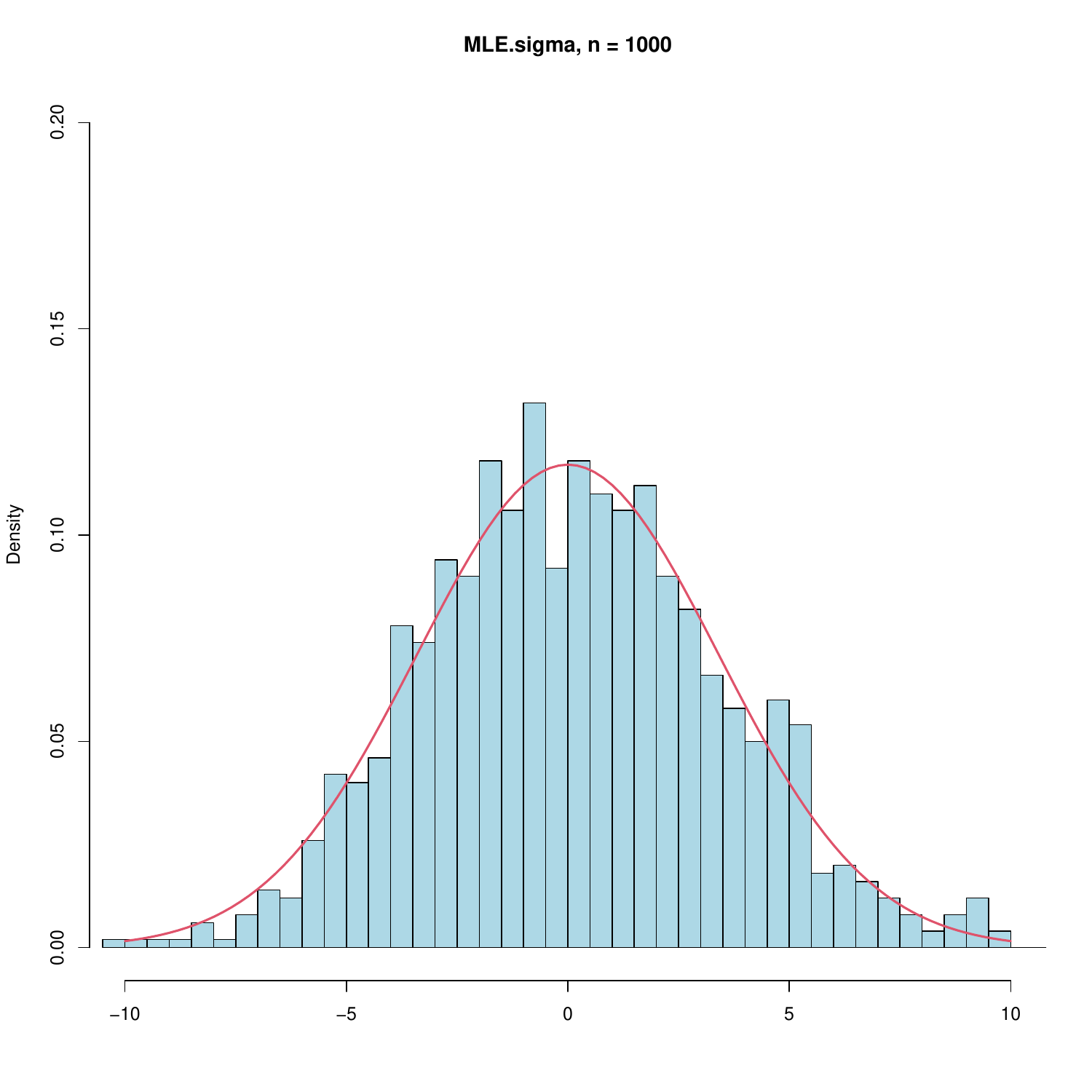}{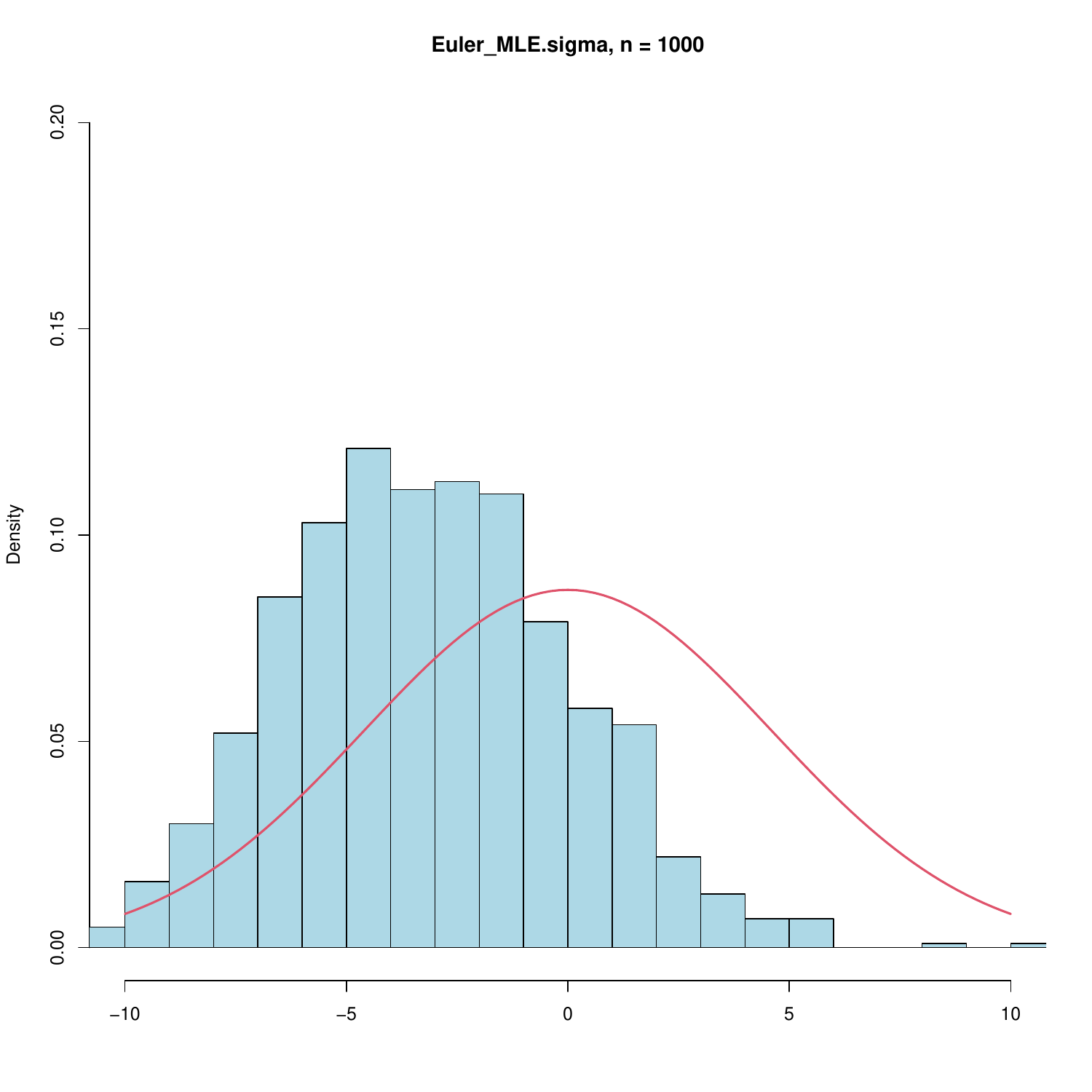}{$n=1000$}\hfill
  \CellPair{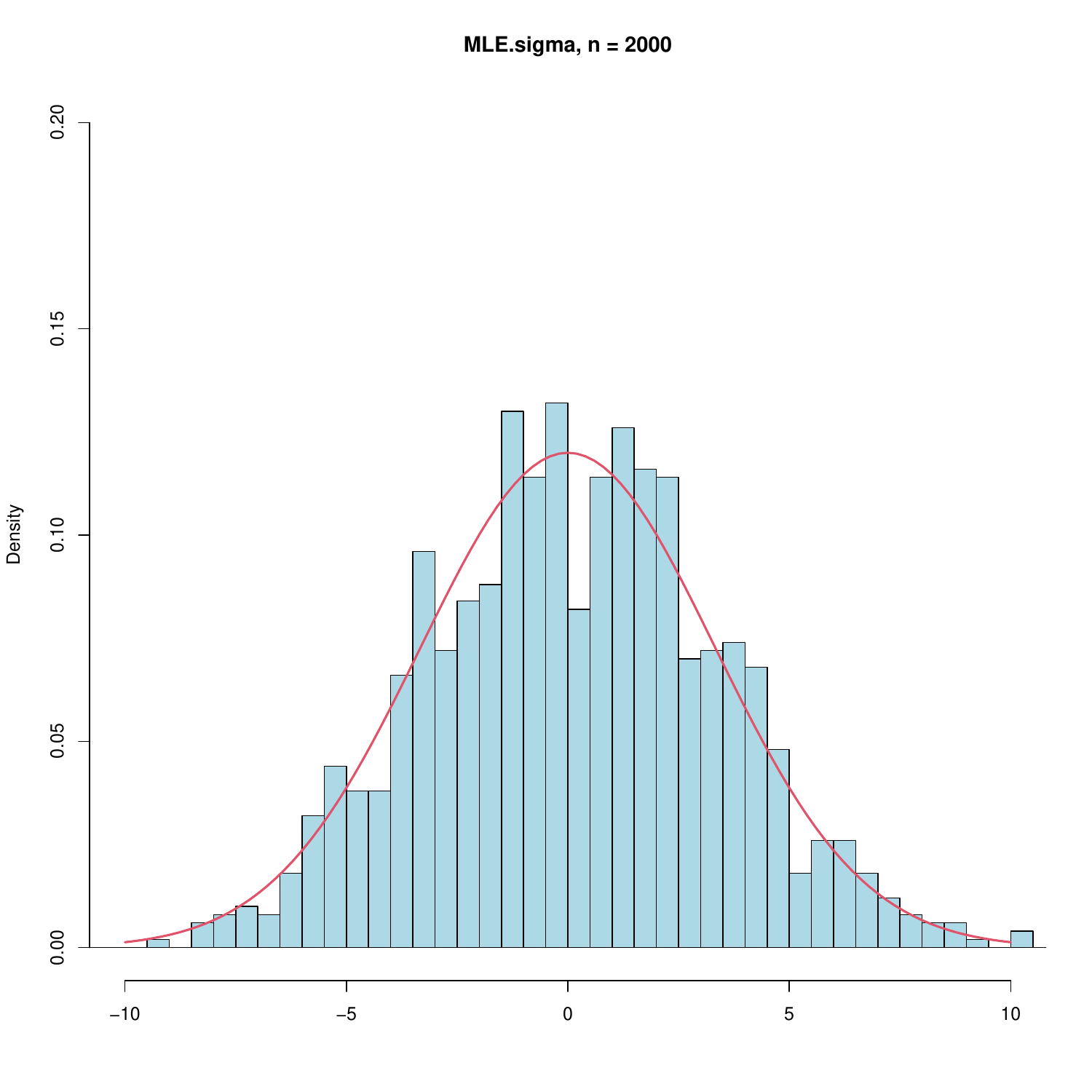}{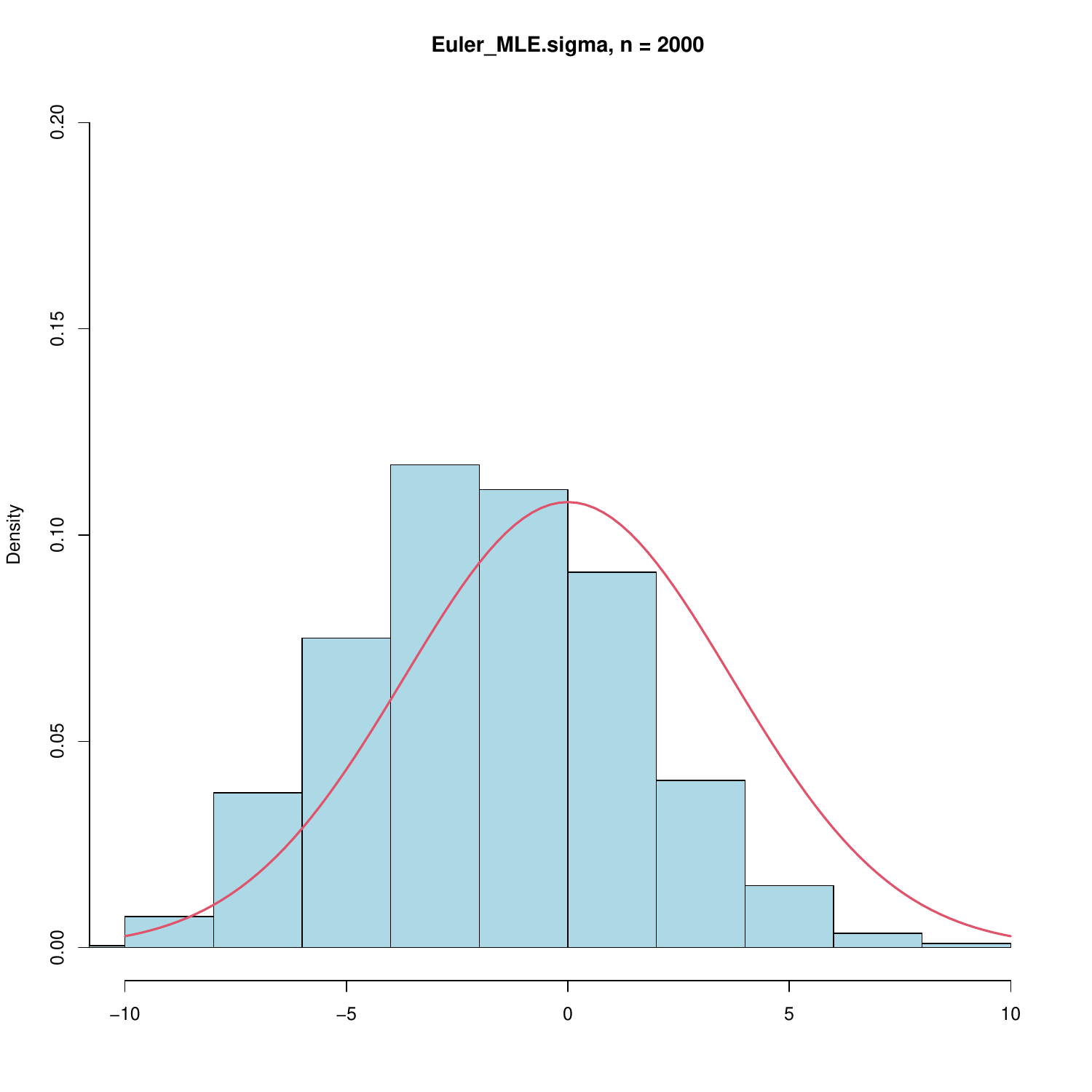}{$n=2000$}

  \vspace{5mm}

  \rowtitle{$\be$}
  \CellPair{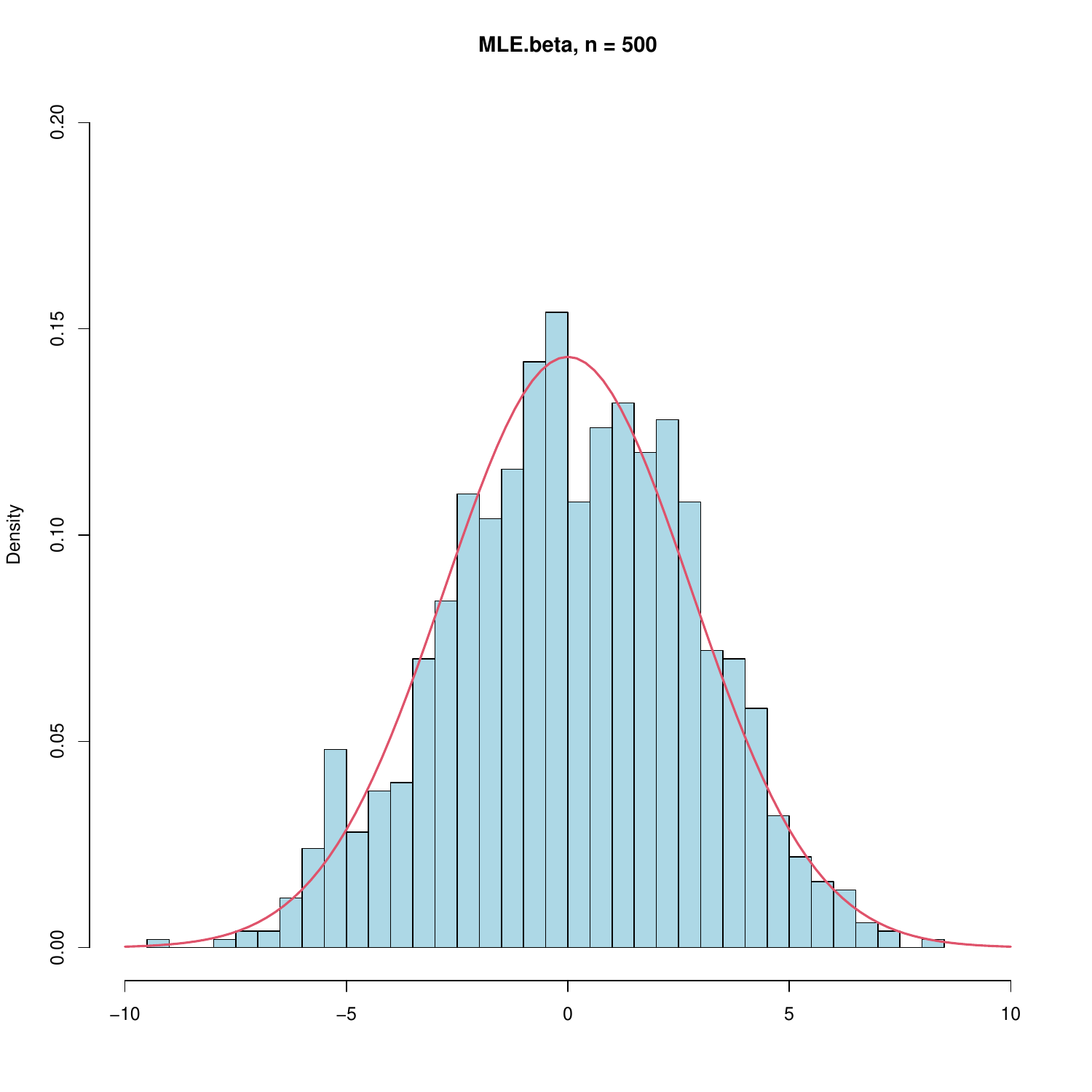}{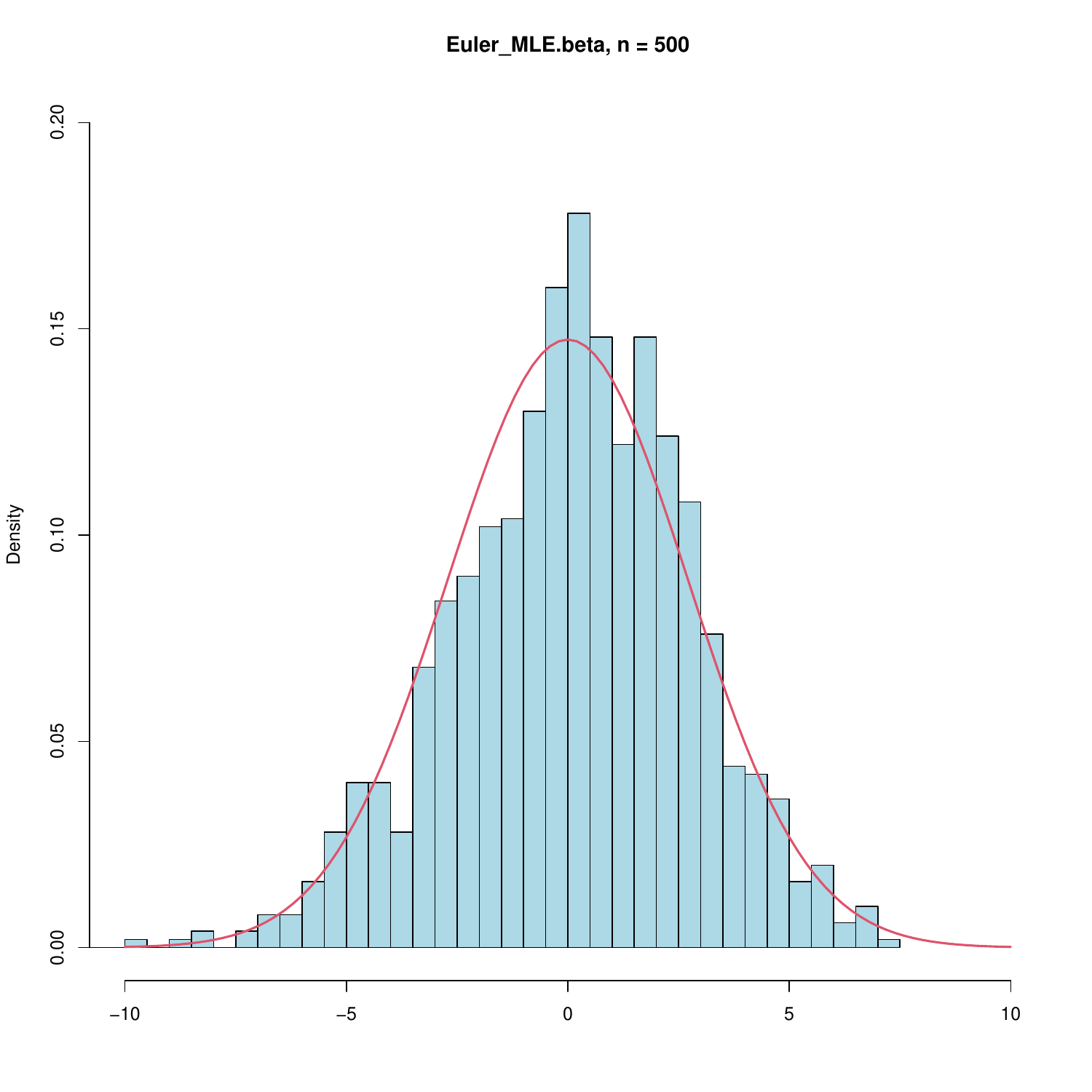}{$n=500$}\hfill
  \CellPair{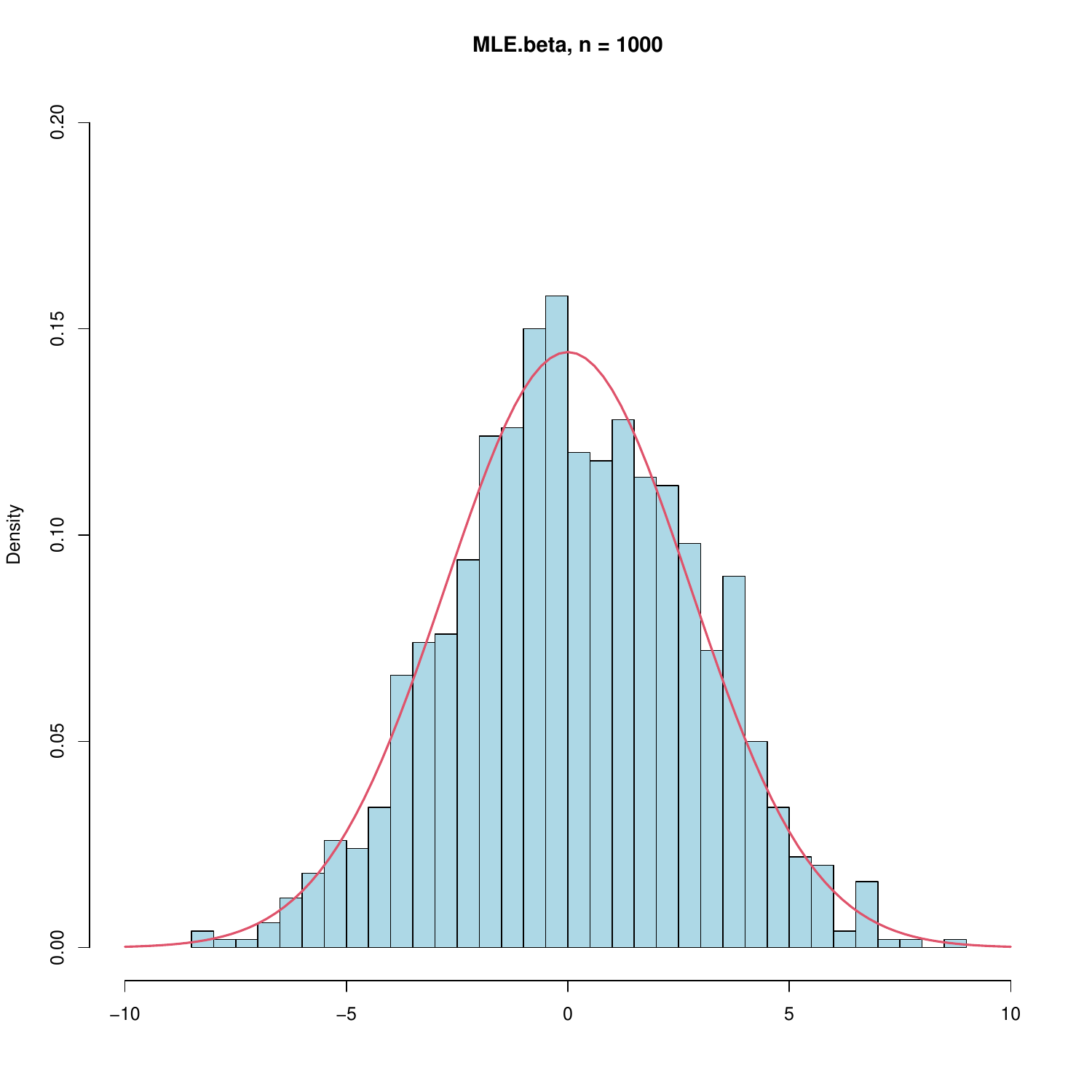}{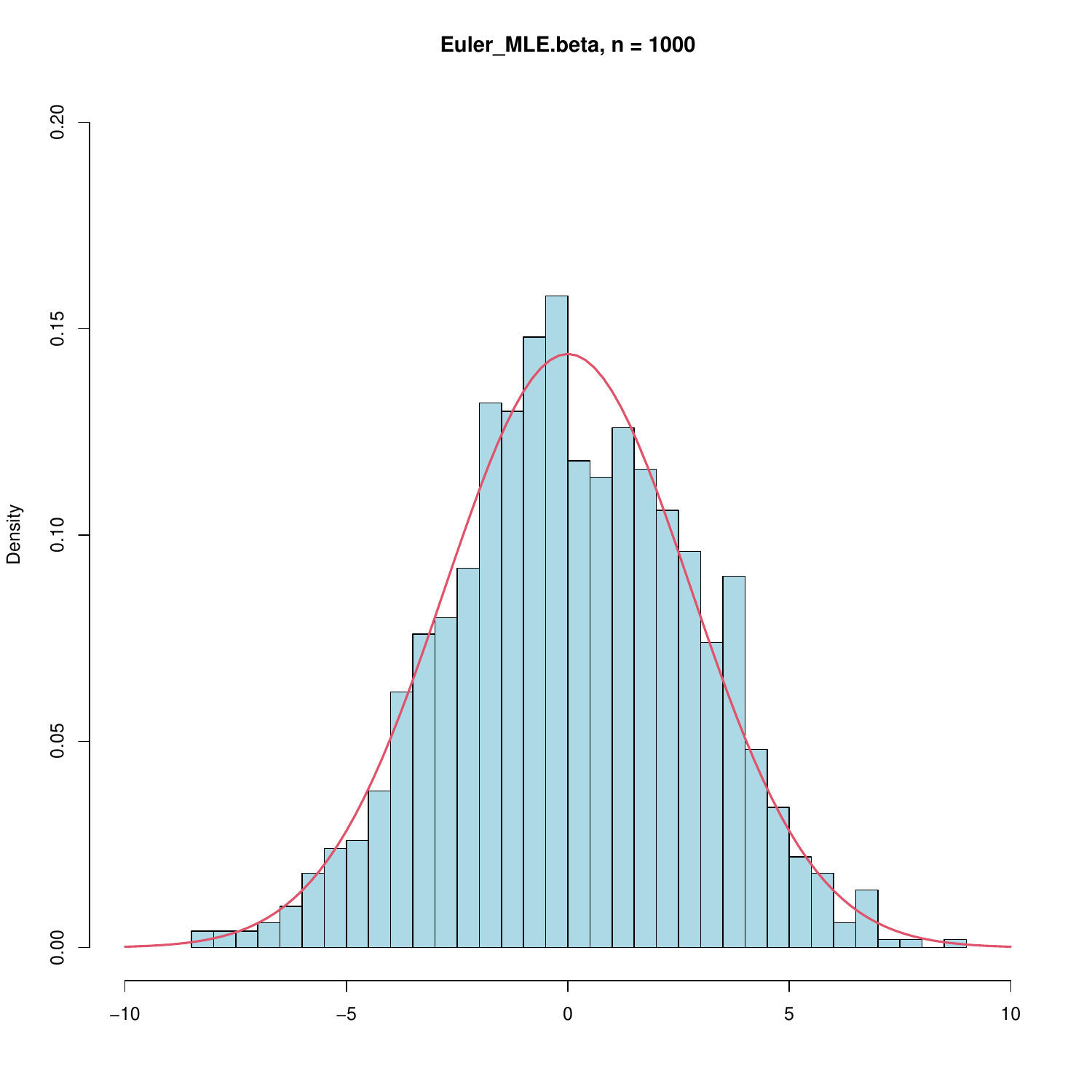}{$n=1000$}\hfill
  \CellPair{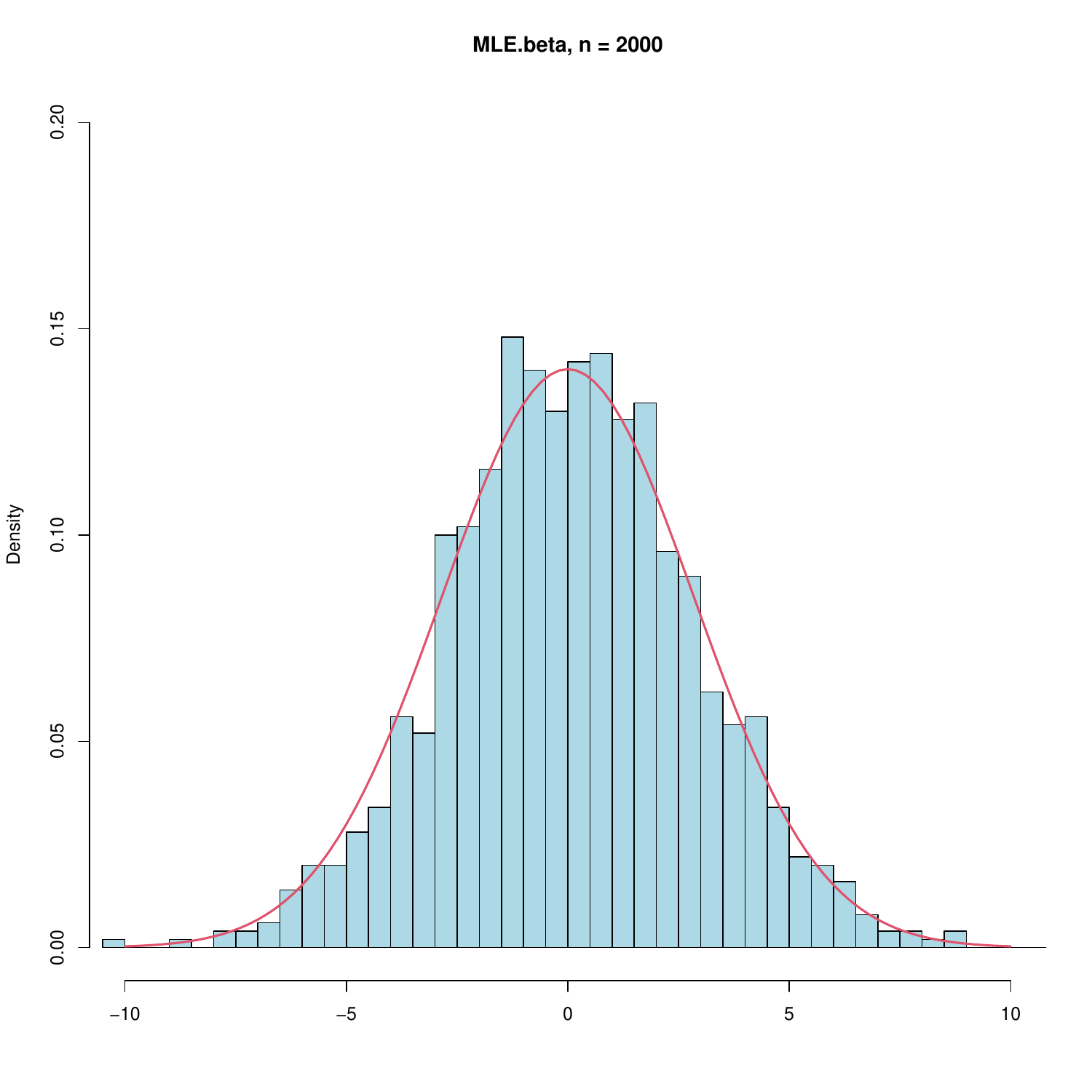}{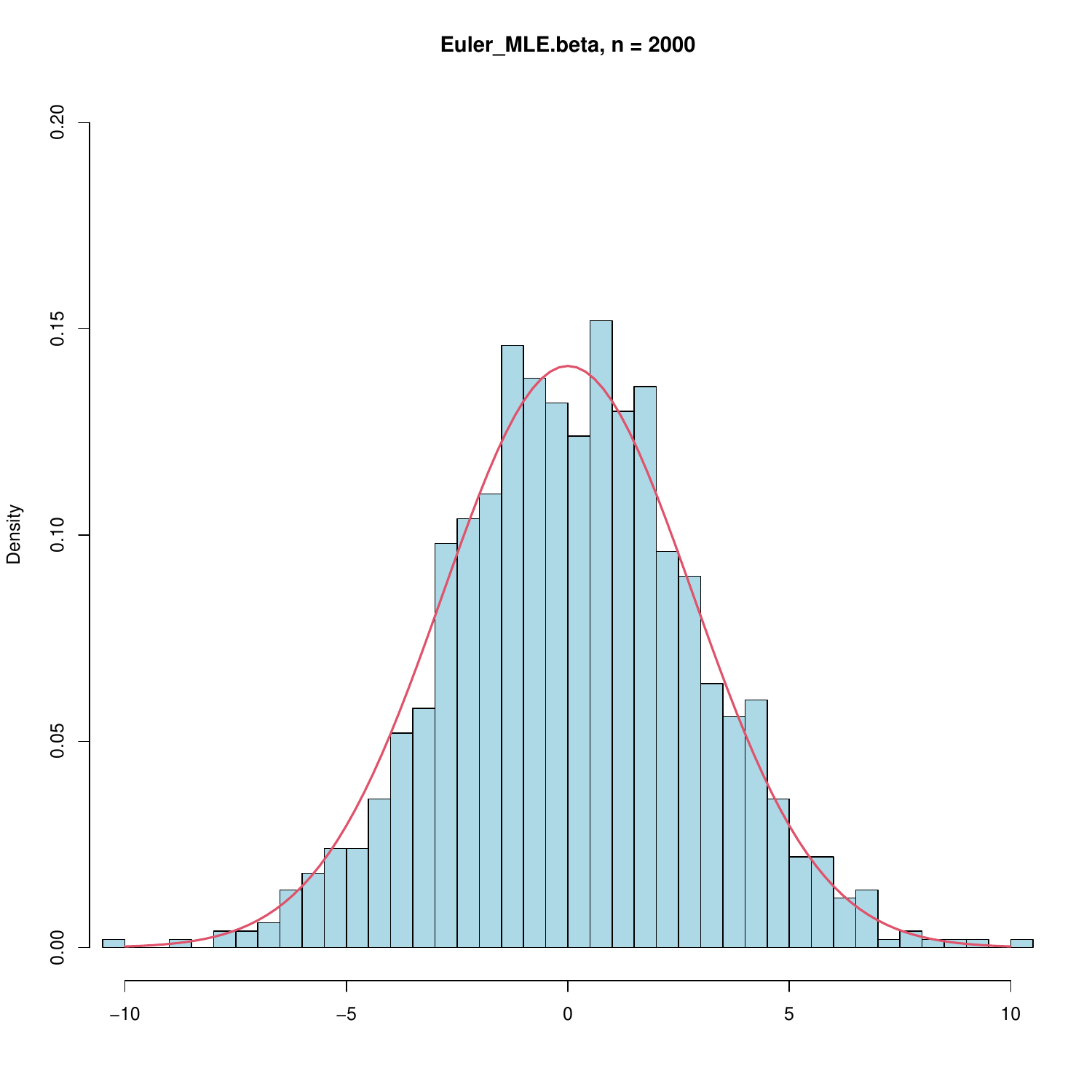}{$n=2000$}
  
  \caption{Comparison between histograms of the normalized MLE and Euler-QMLE with $\alpha=1.5$, $(\lambda,\mu,\sigma,\beta)=(1,2,5,0.5)$.}
  \label{fig:param-by-n_mle-vs-euler-1.5}
\end{figure}


\begin{figure}[t]
  \centering
  \captionsetup[subfigure]{justification=centering}
  \newcommand{\colw}{0.32\linewidth} 

  \newcommand{\rowtitle}[1]{\par\medskip\textbf{ #1}\par\smallskip}

  \rowtitle{$\lam$}
  \CellPair{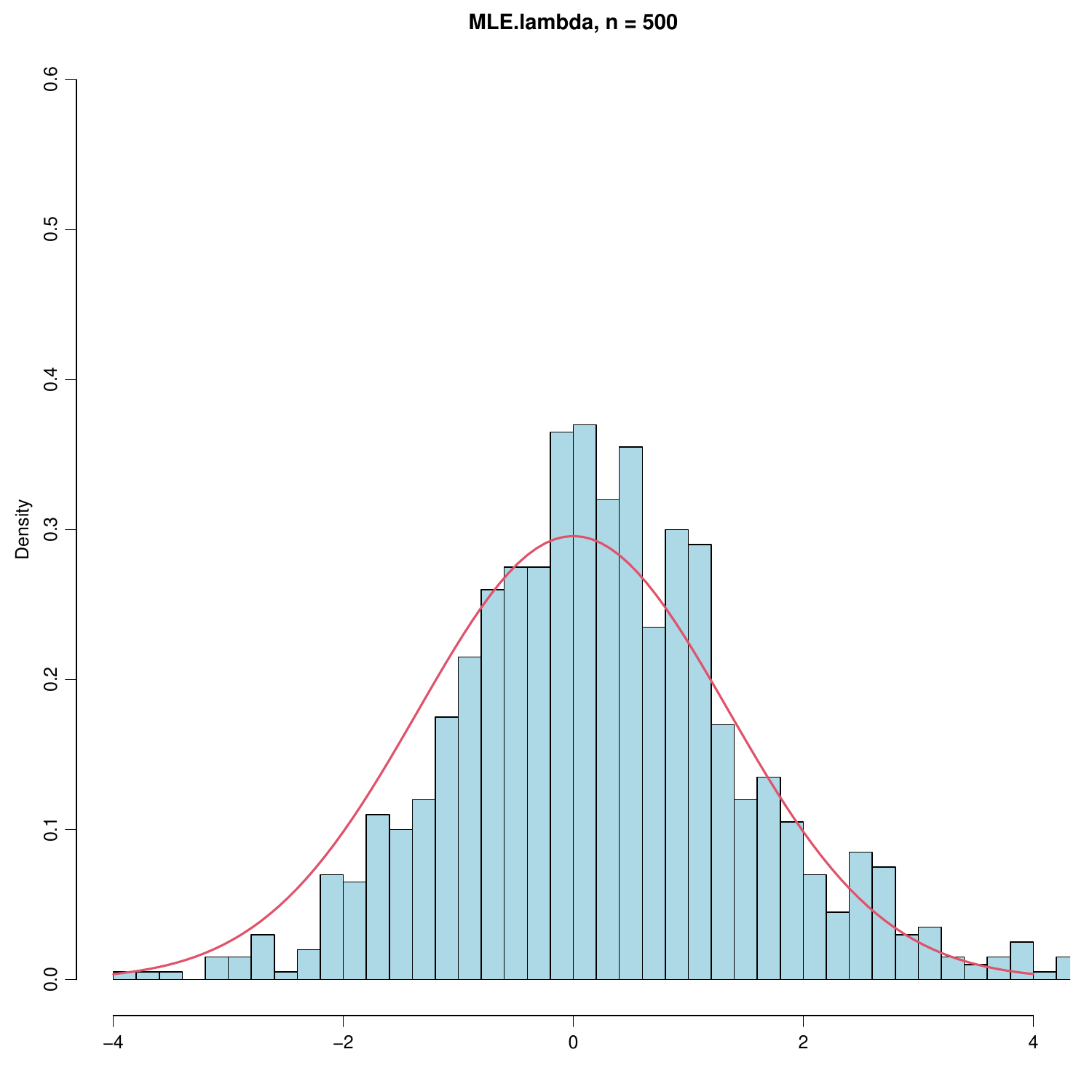}{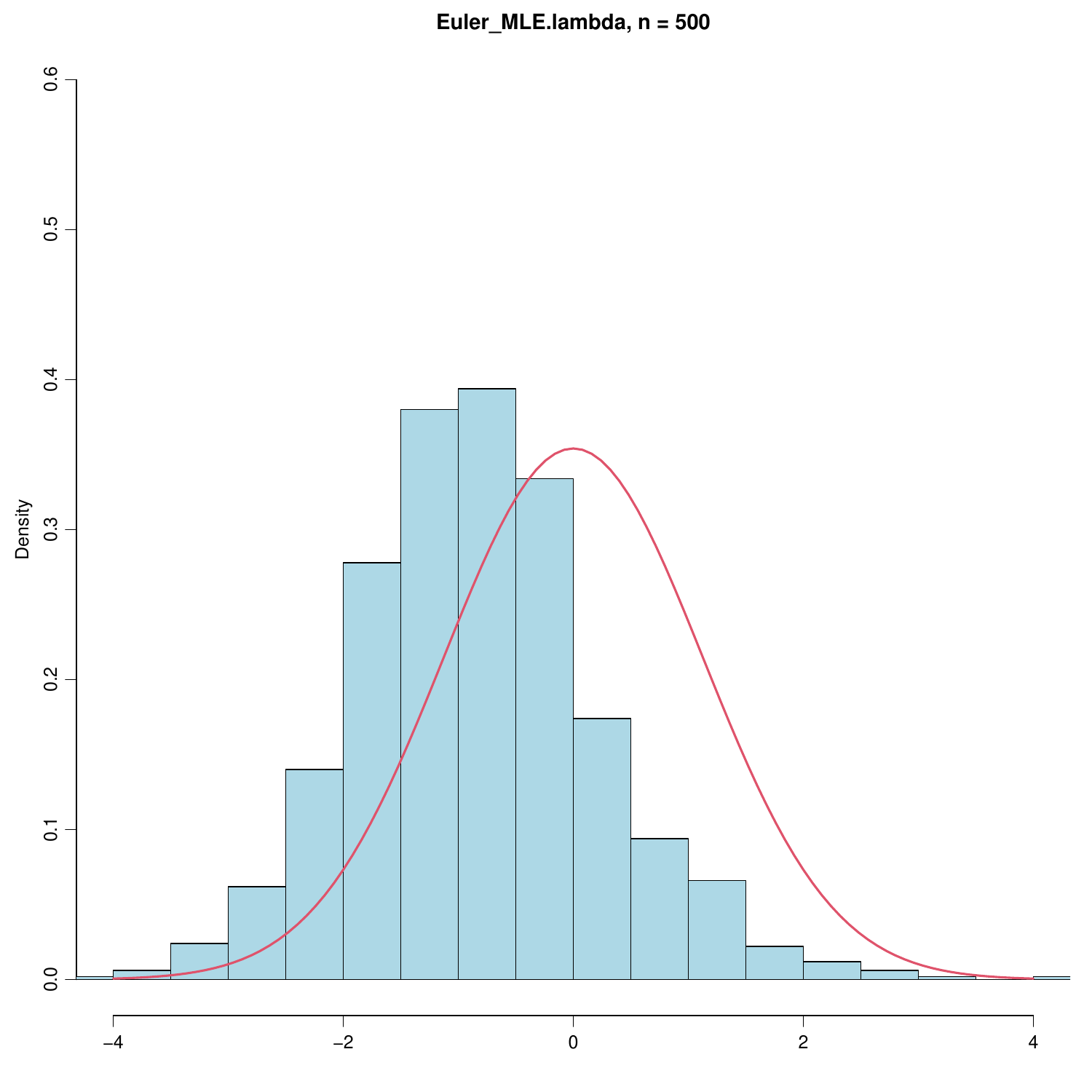}{$n=500$}\hfill
  \CellPair{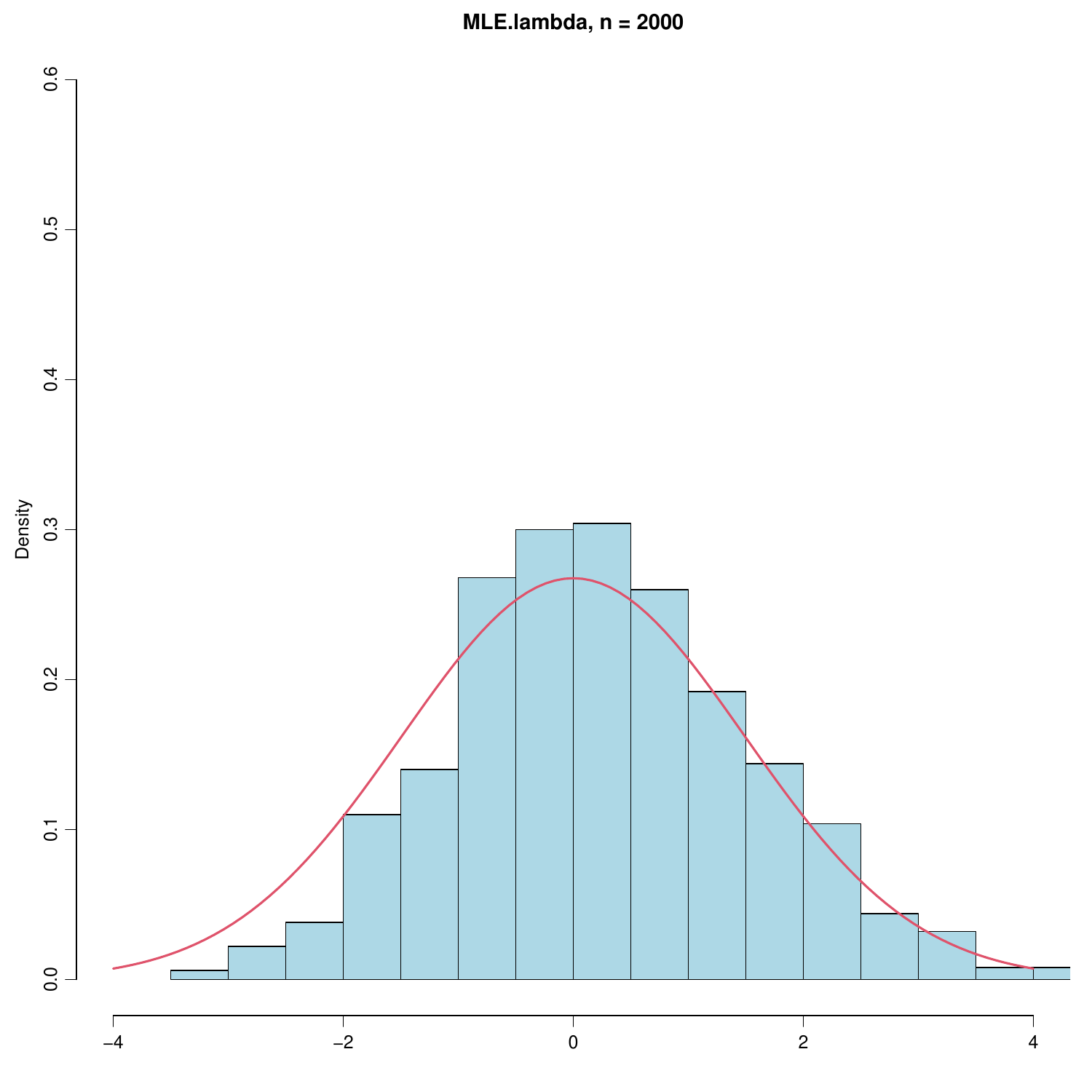}{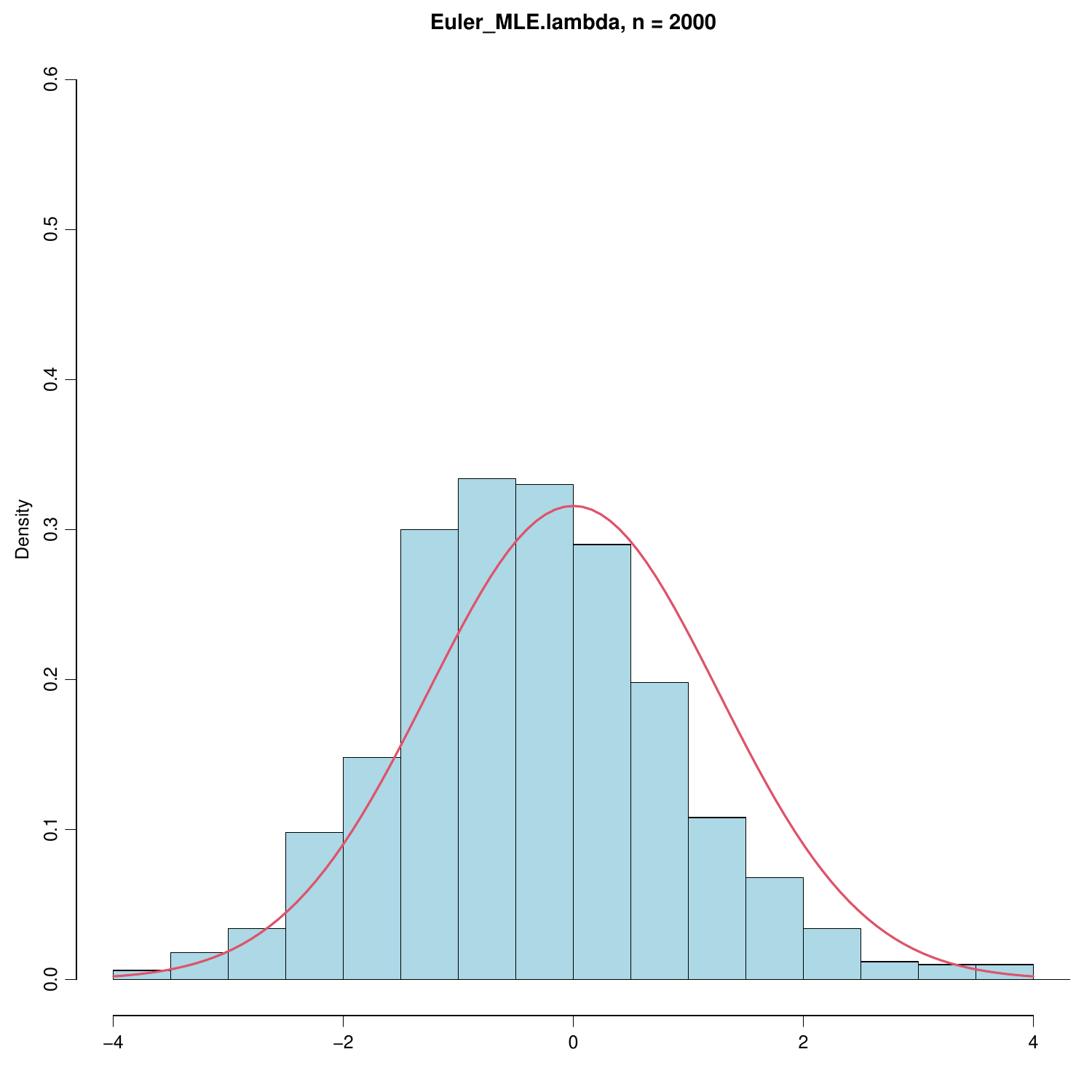}{$n=1000$}\hfill
  \CellPair{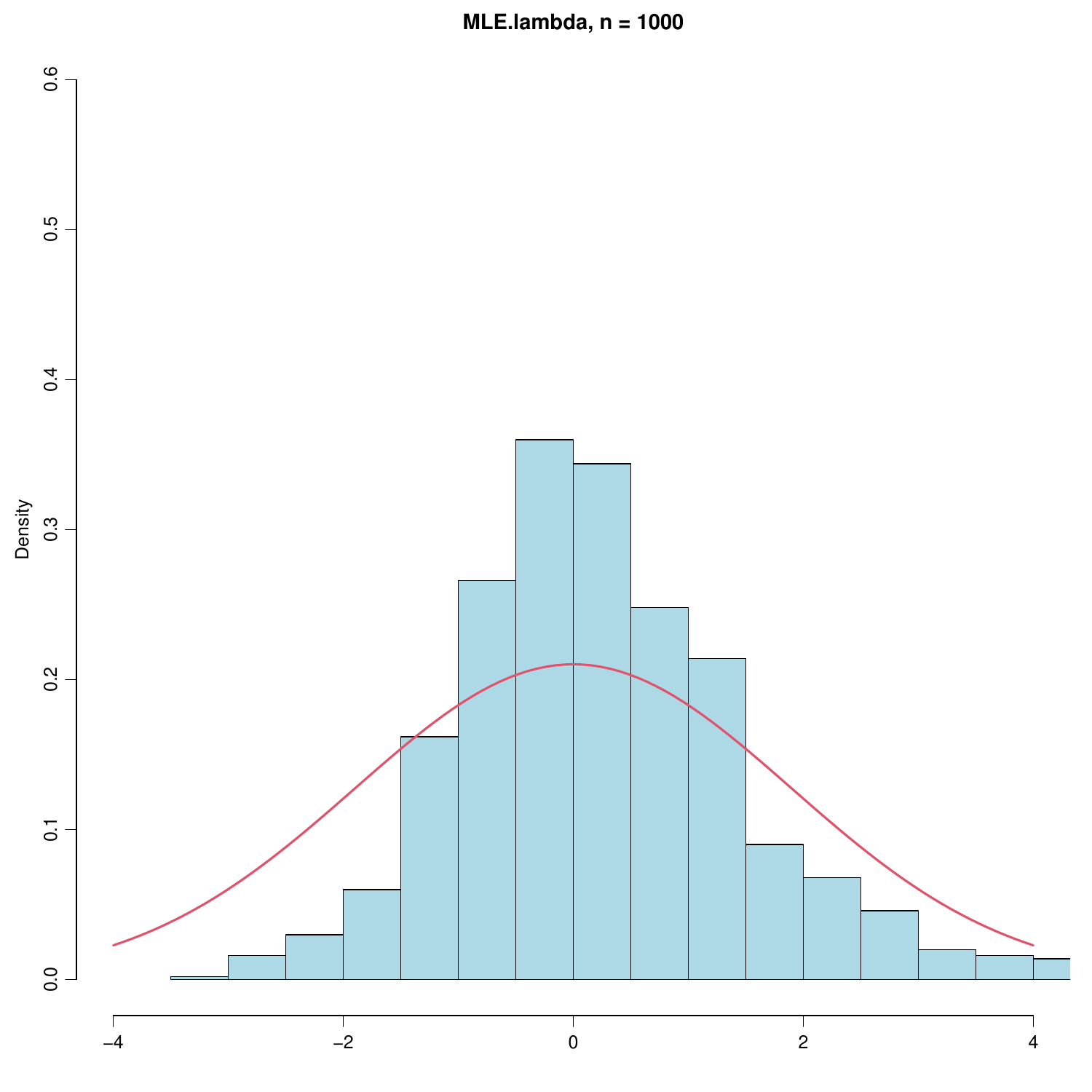}{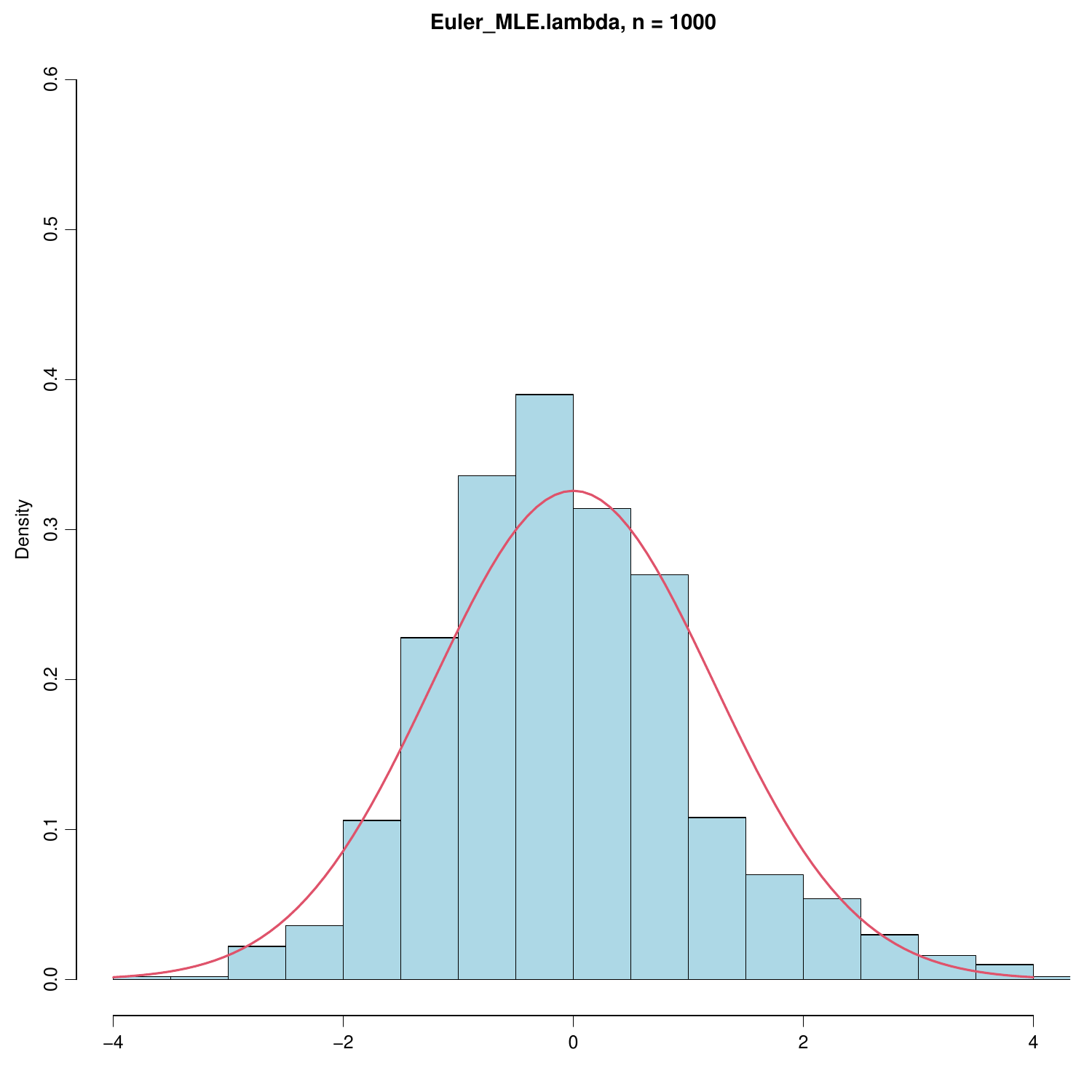}{$n=2000$}
  
  \vspace{5mm}
  
  \rowtitle{$\mu$}
  \CellPair{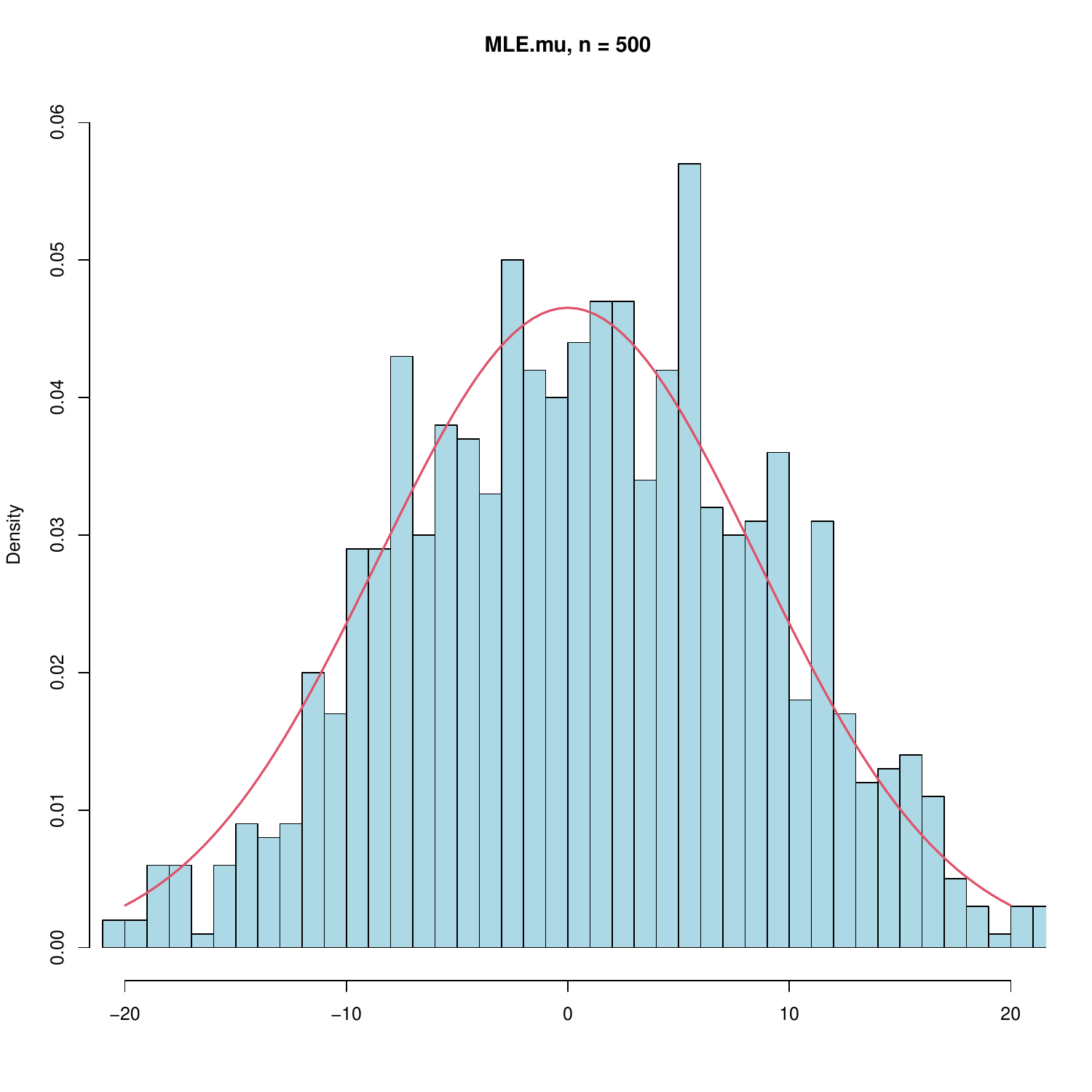}{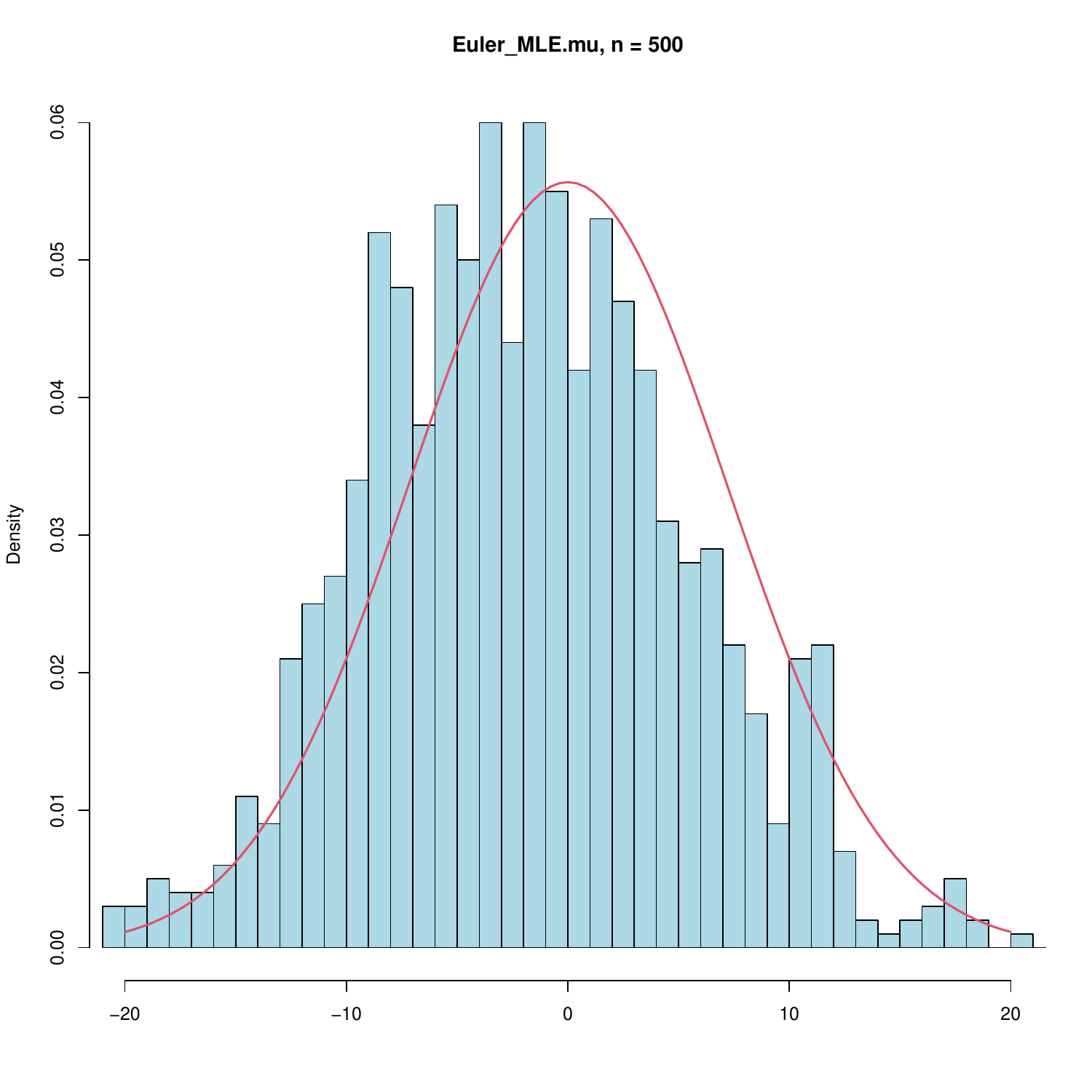}{$n=500$}\hfill
  \CellPair{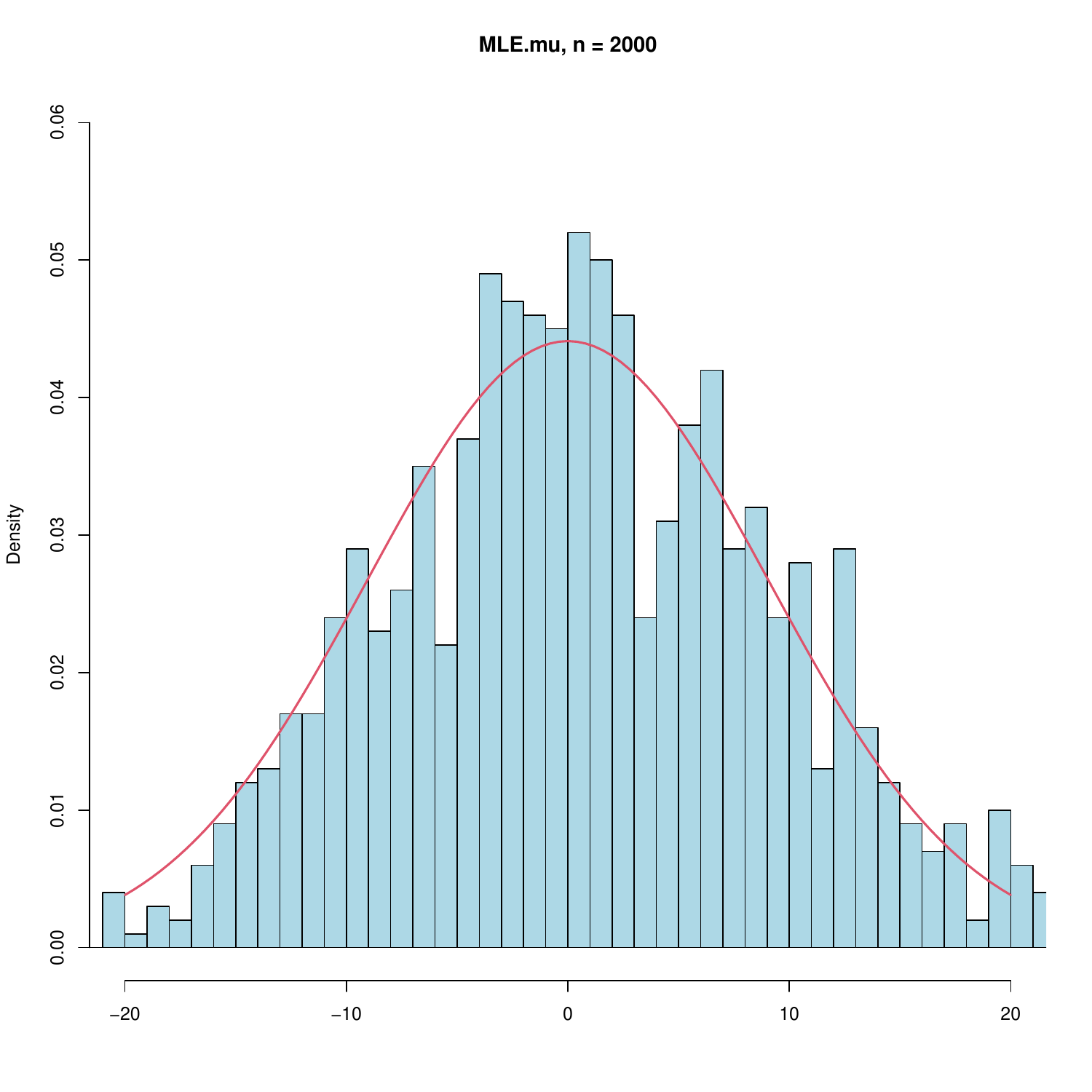}{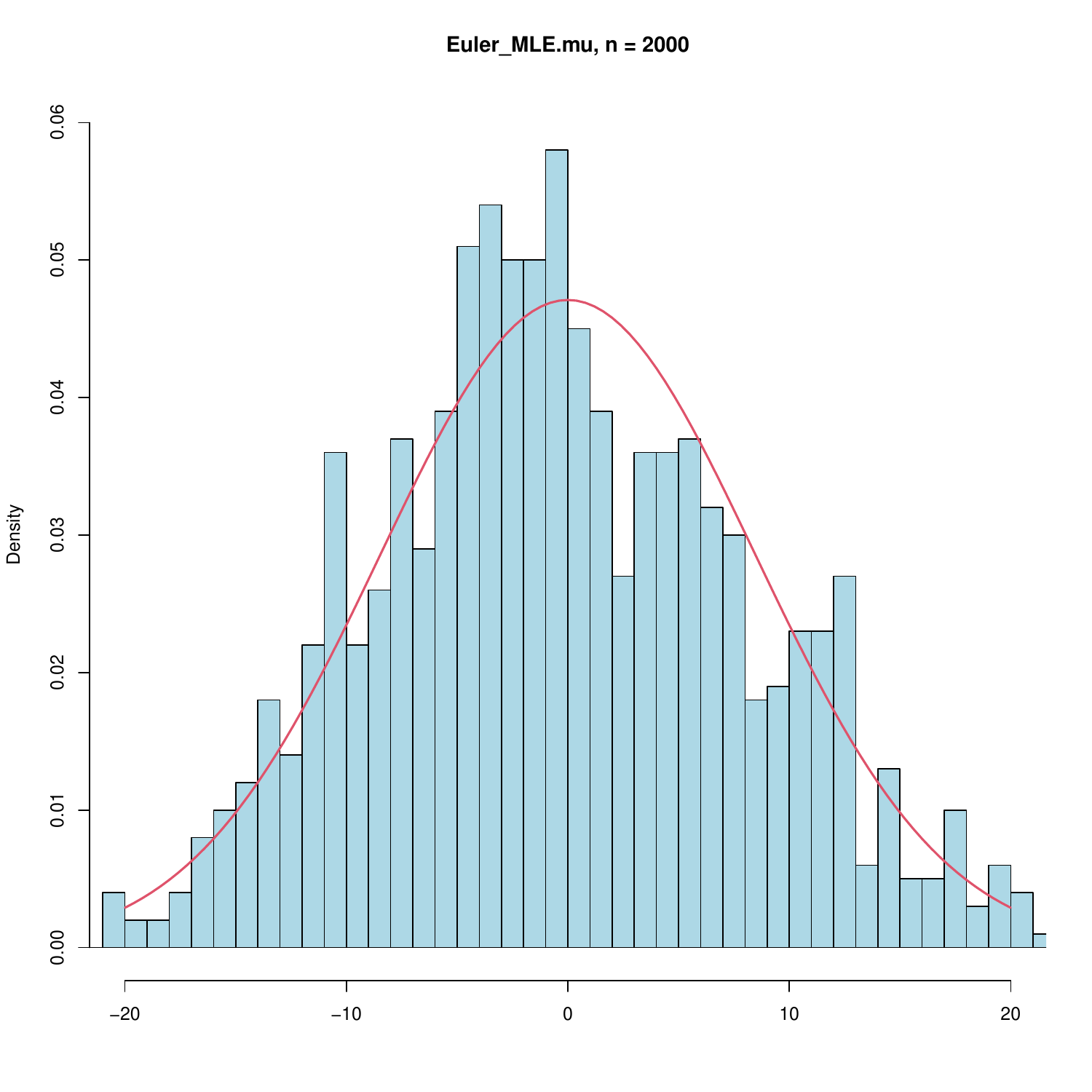}{$n=1000$}\hfill
  \CellPair{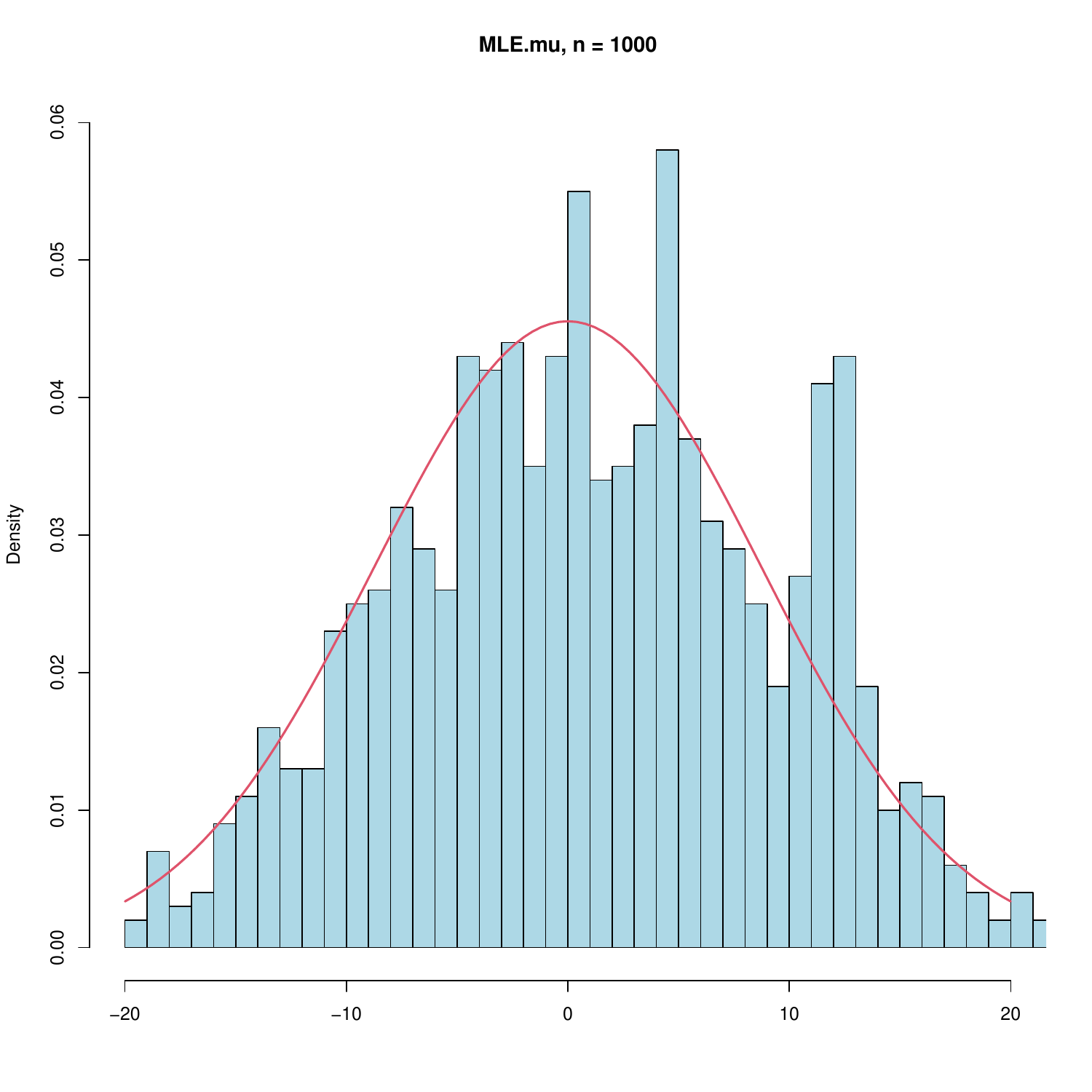}{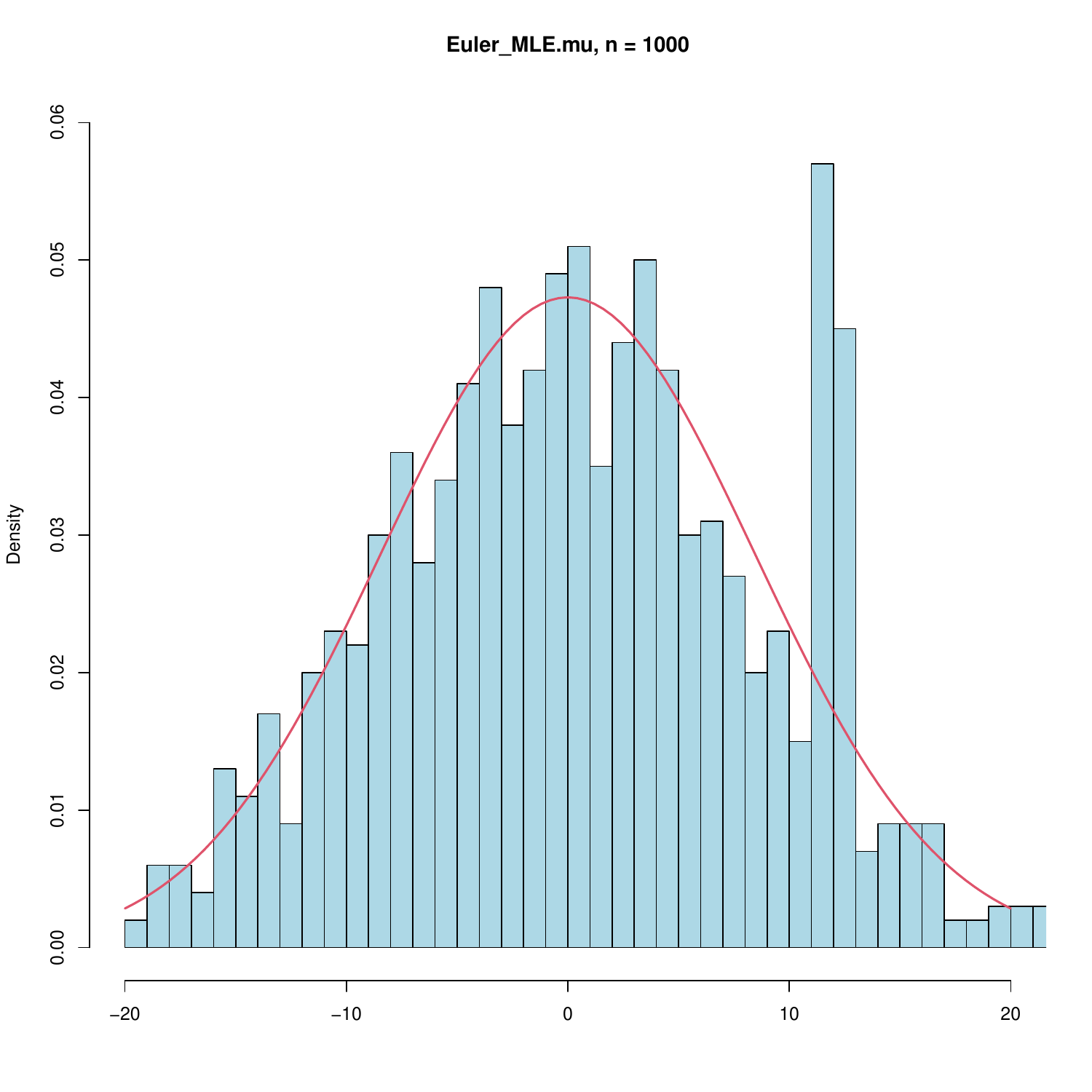}{$n=2000$}

  \vspace{5mm}
  
  \rowtitle{$\al$}
  \CellPair{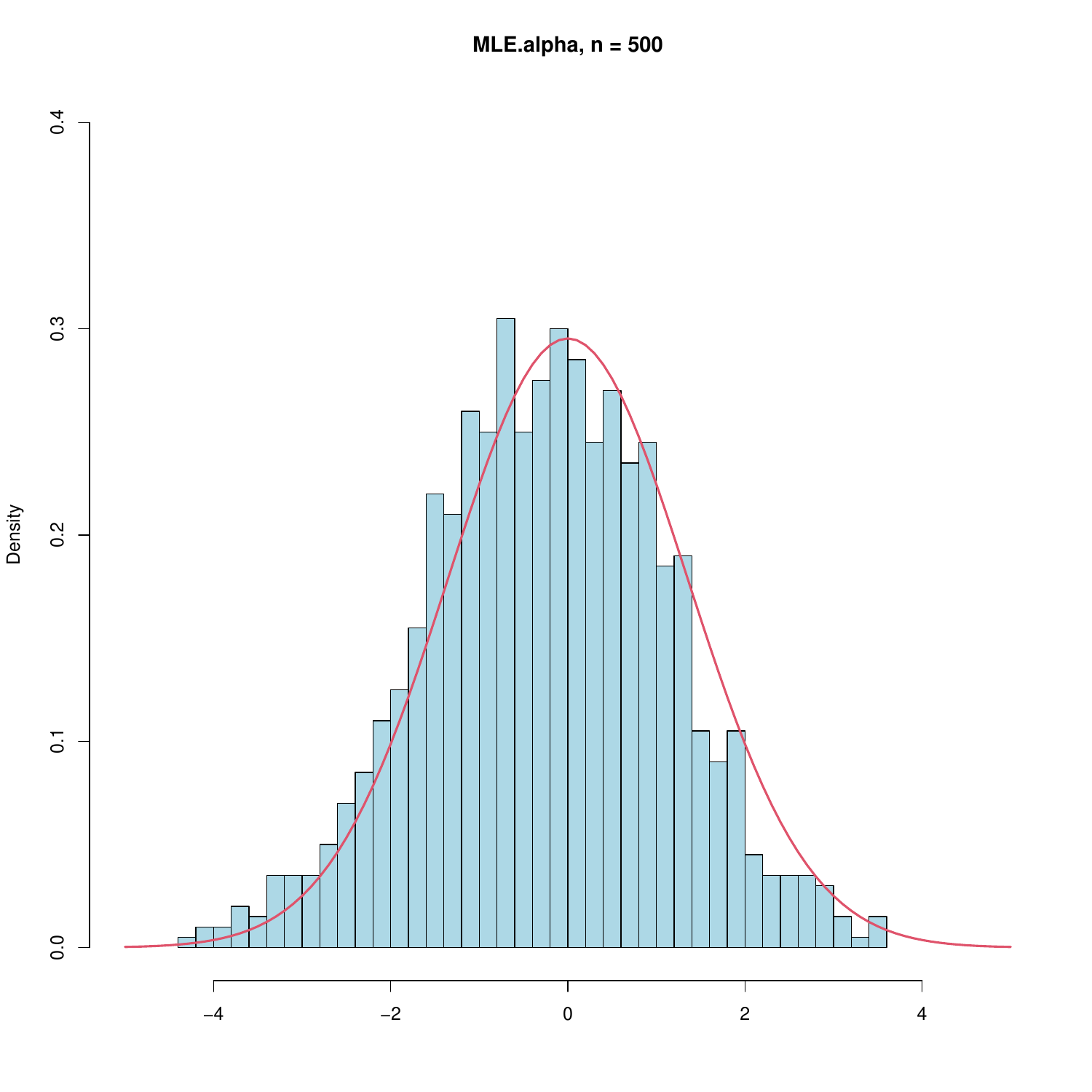}{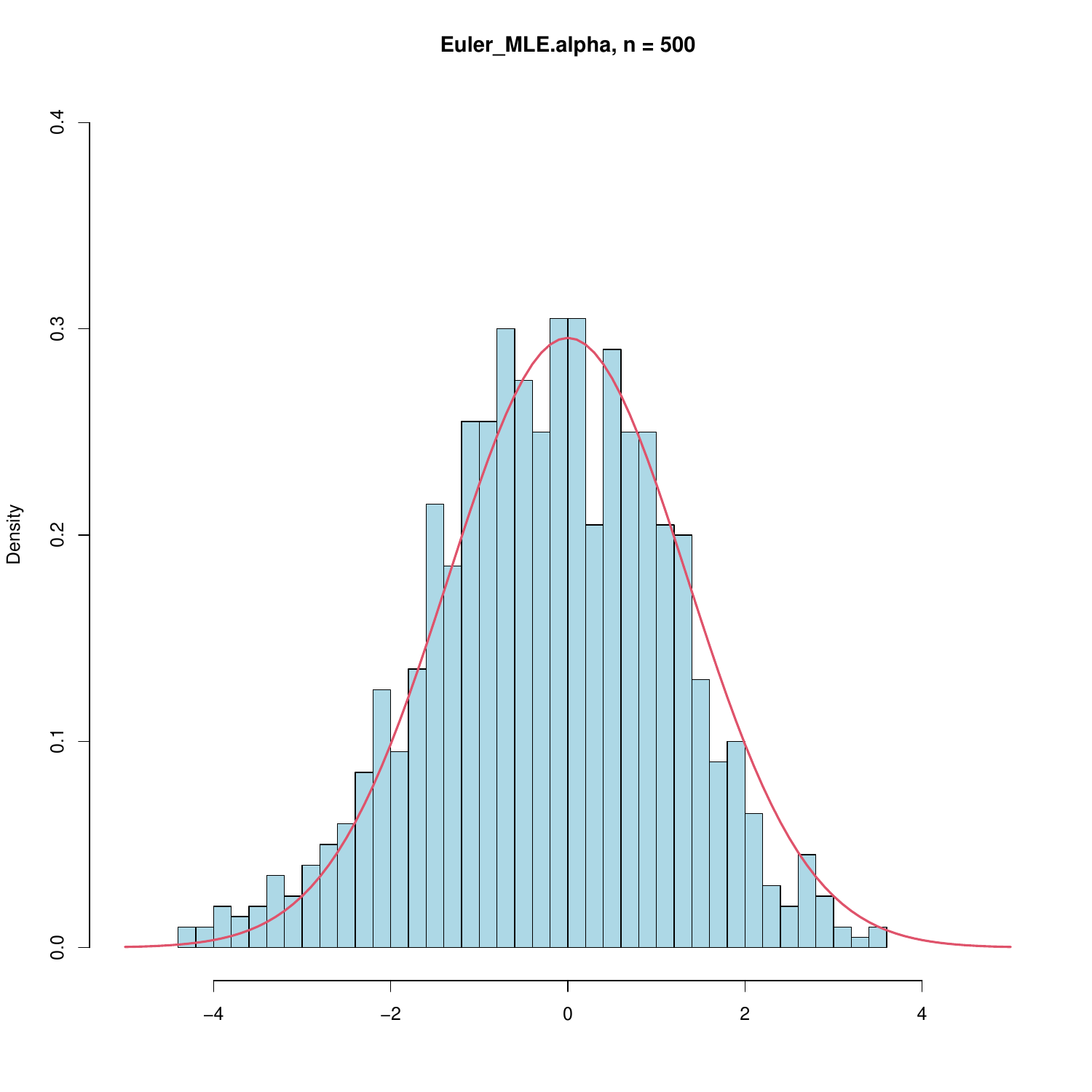}{$n=500$}\hfill
  \CellPair{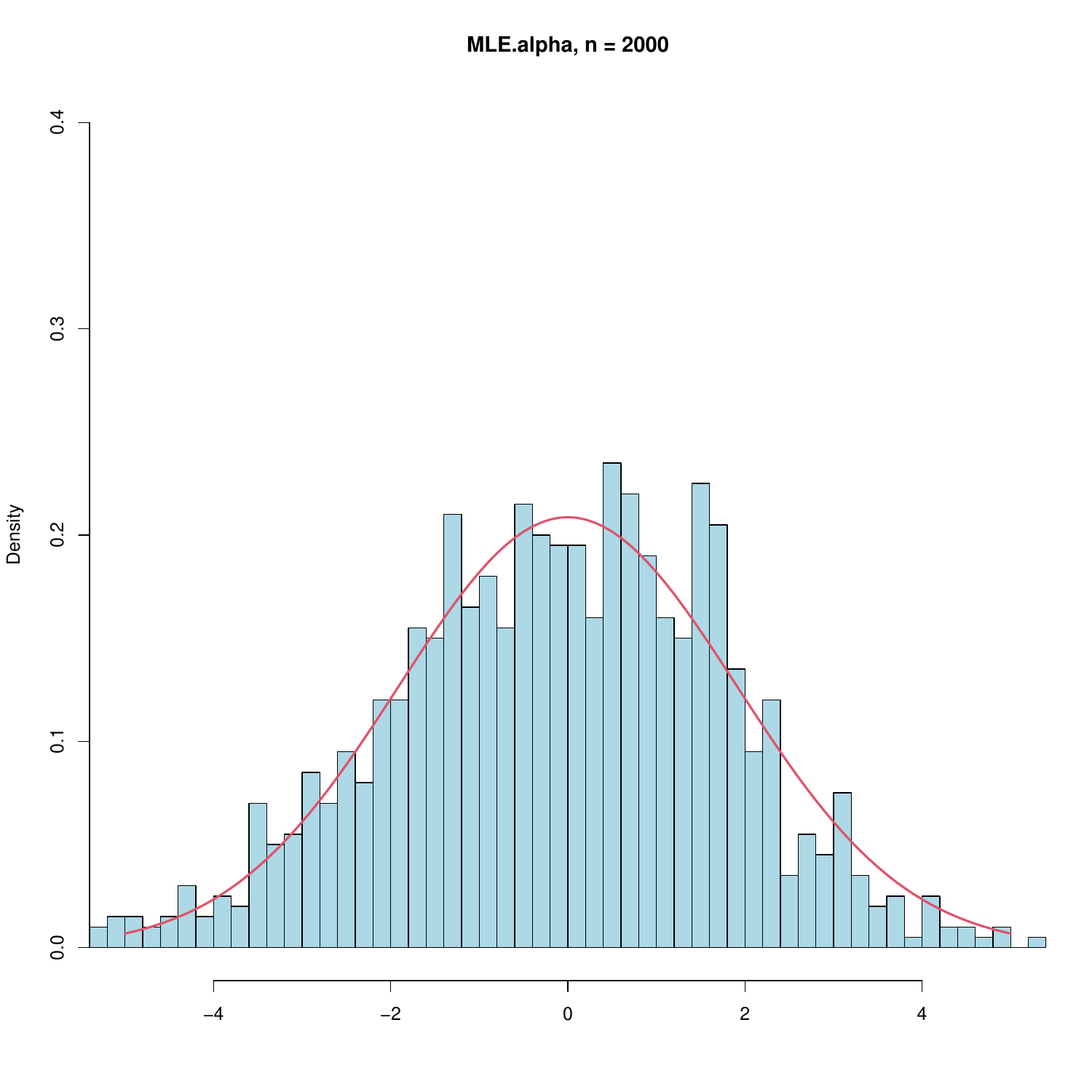}{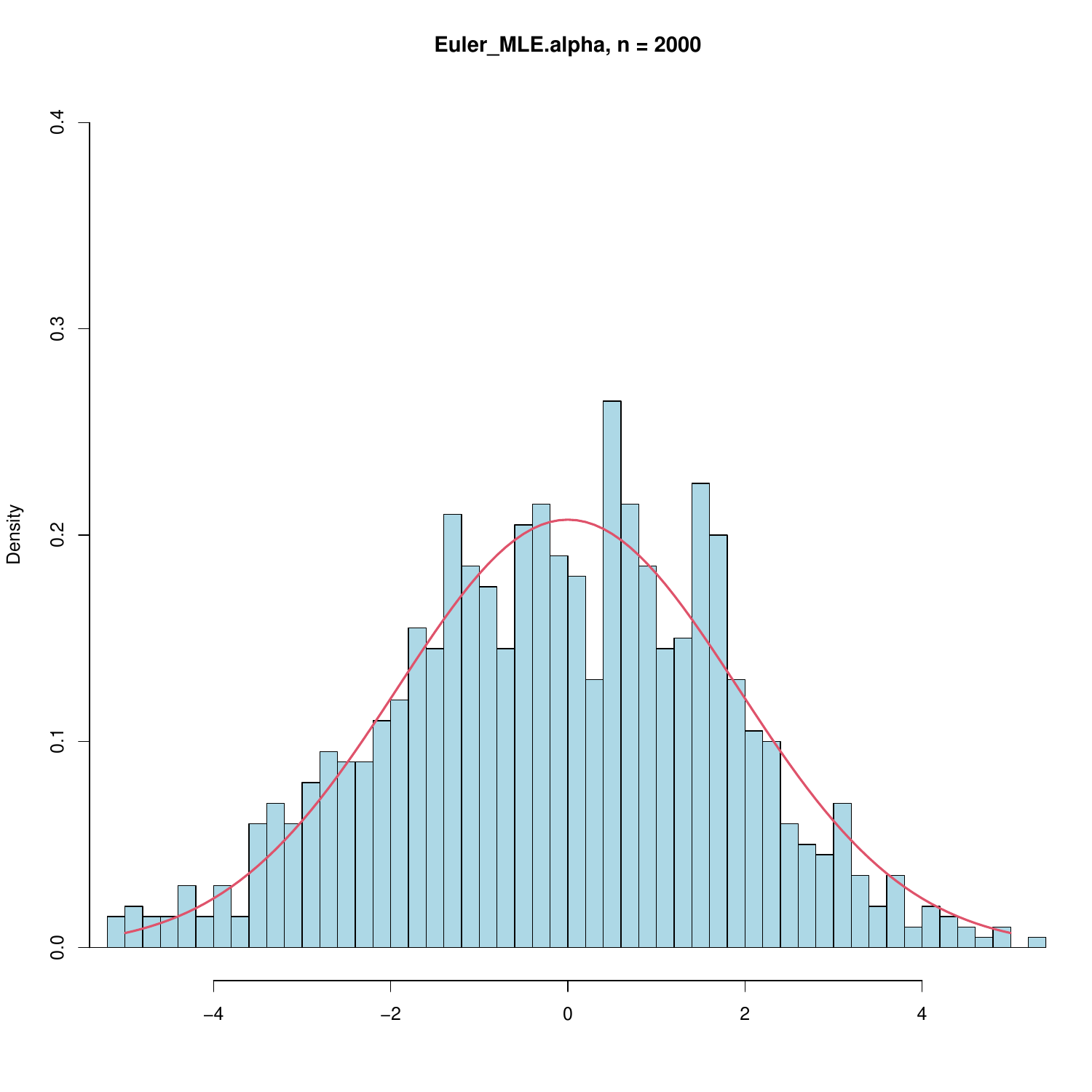}{$n=1000$}\hfill
  \CellPair{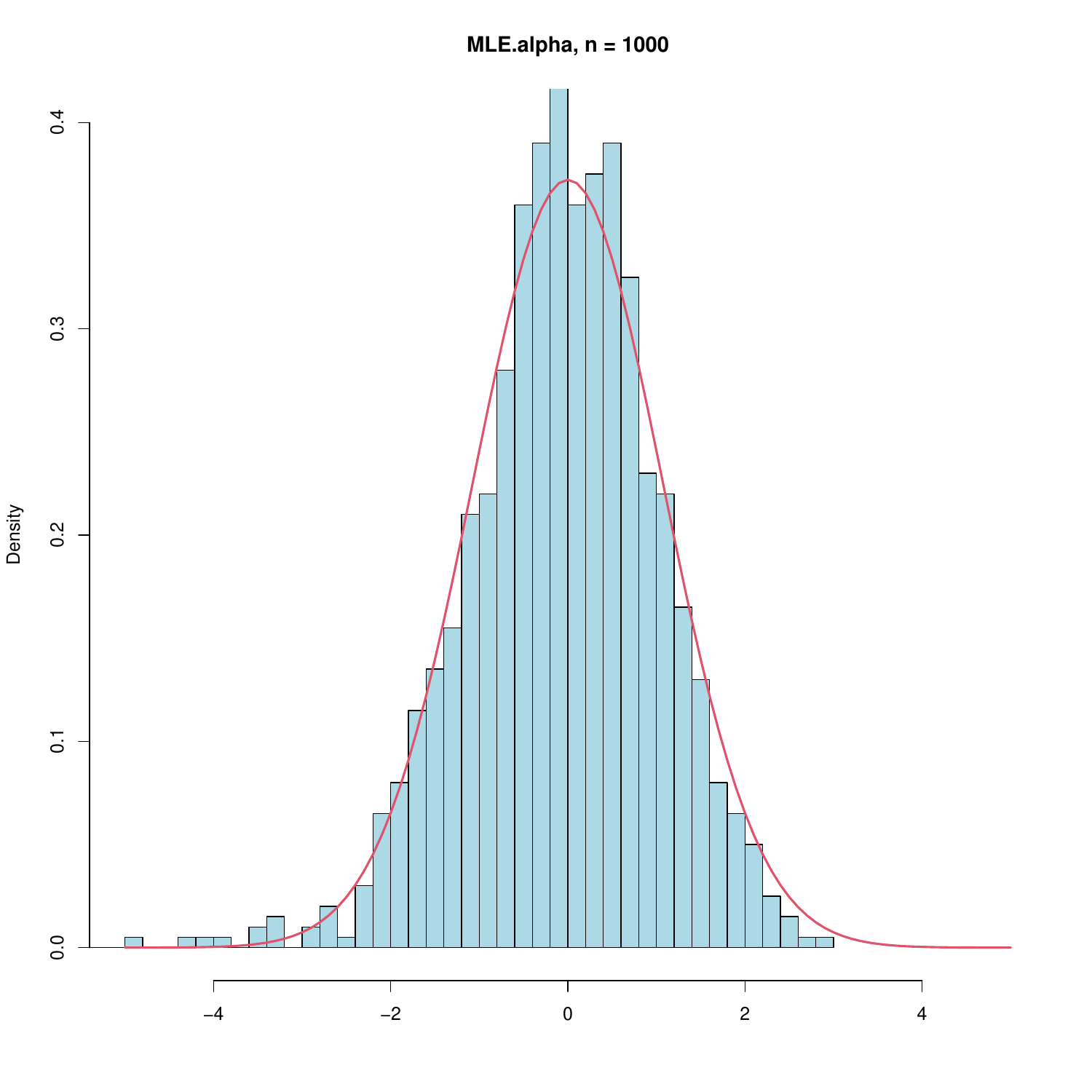}{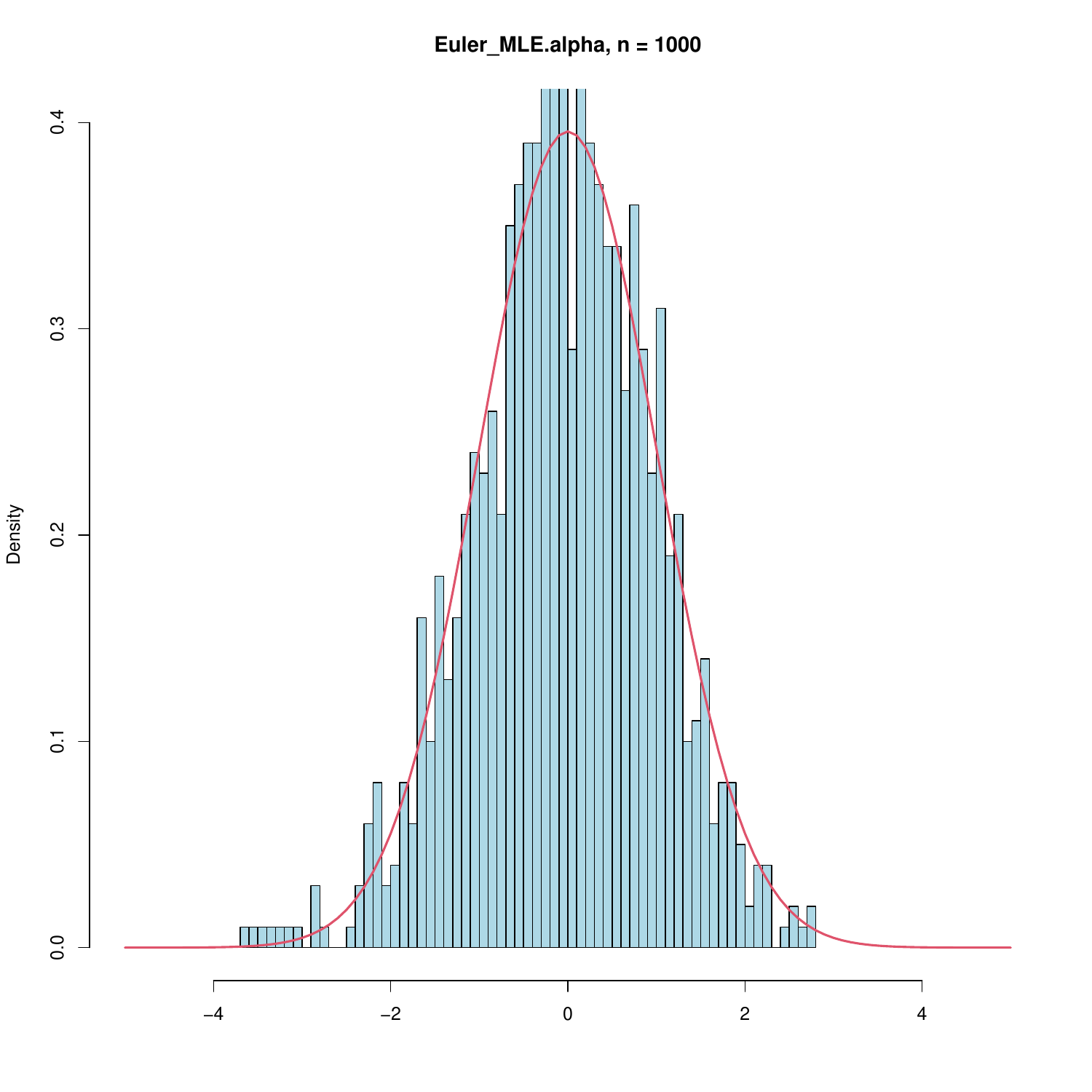}{$n=2000$}

  \vspace{5mm}
  
  \rowtitle{$\sig$}
  \CellPair{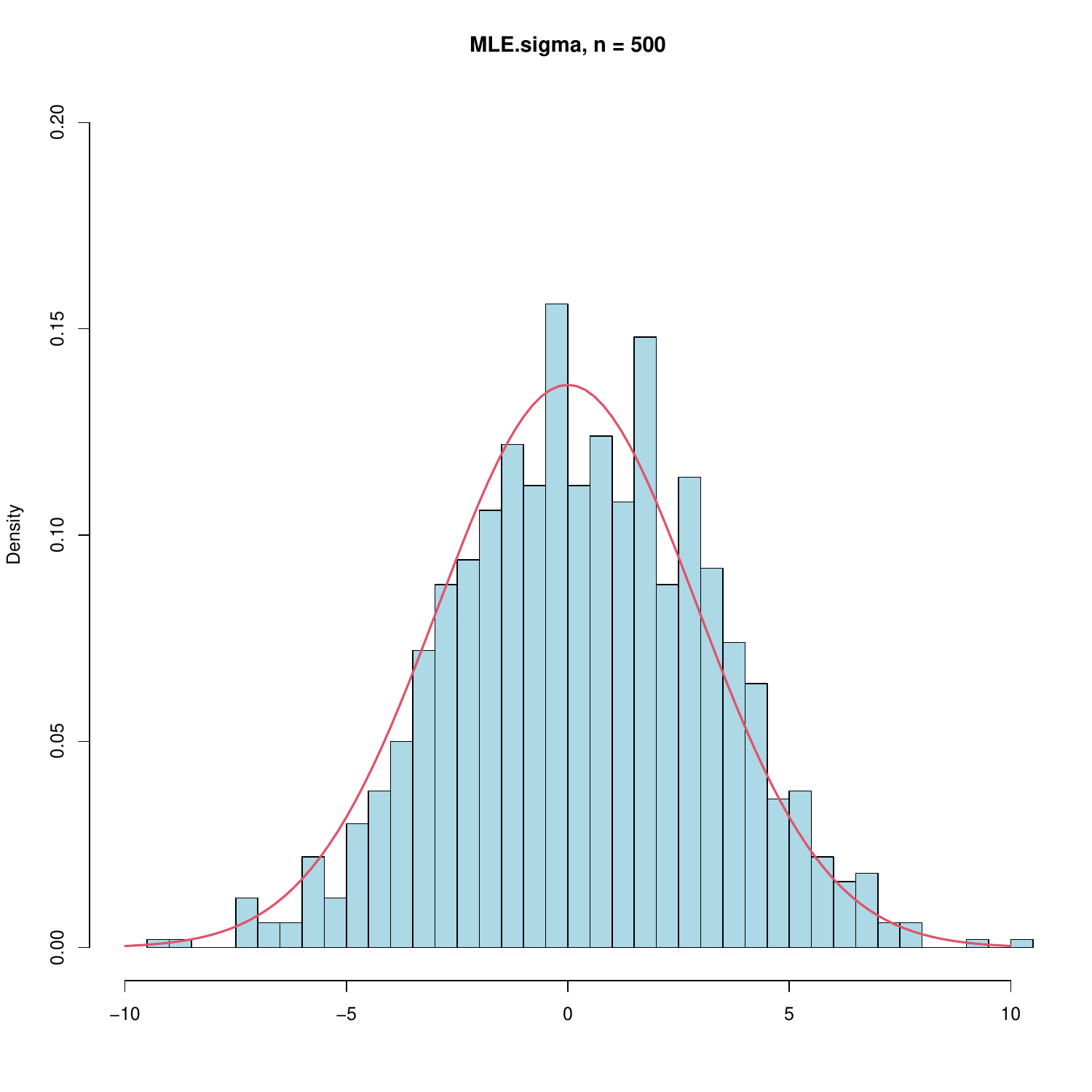}{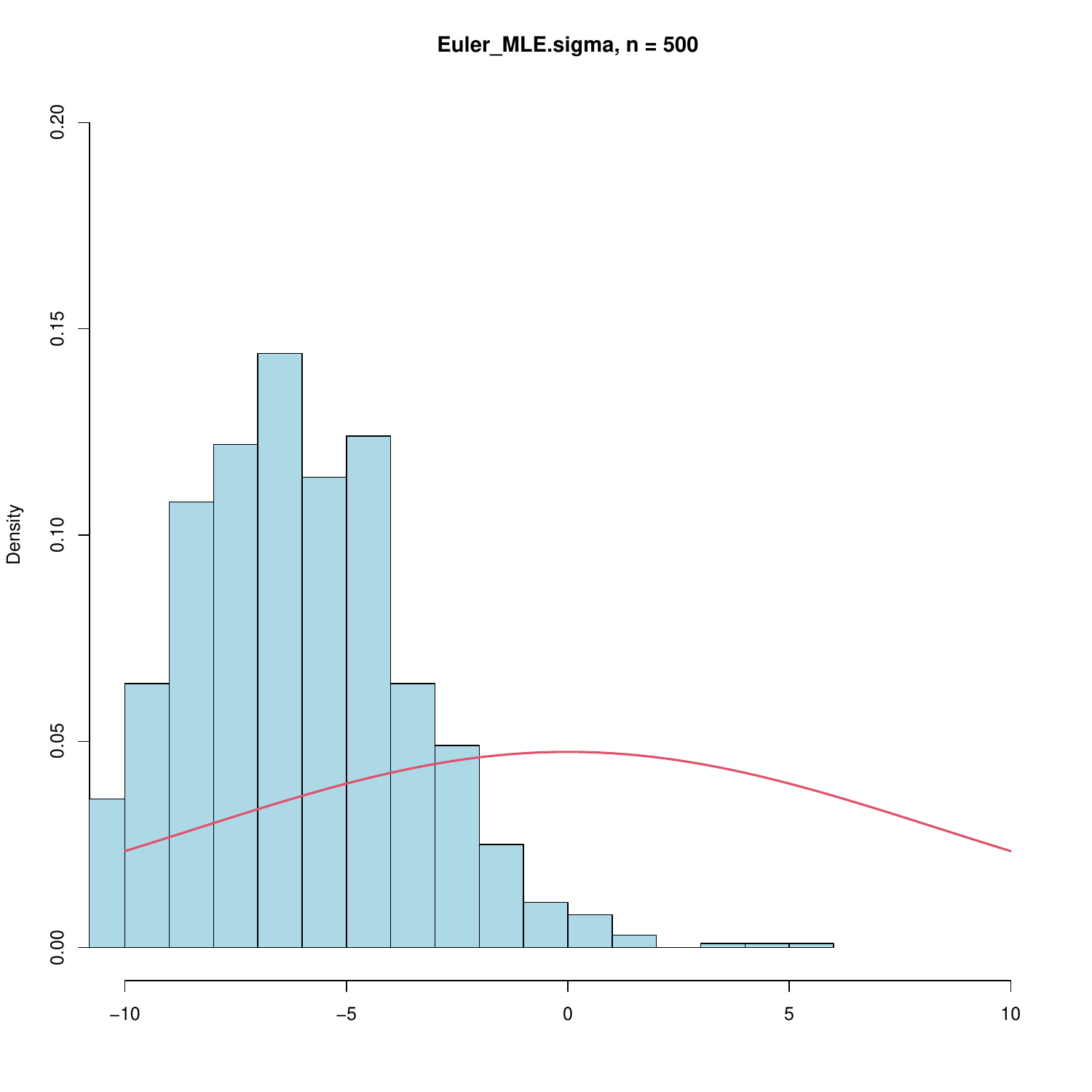}{$n=500$}\hfill
  \CellPair{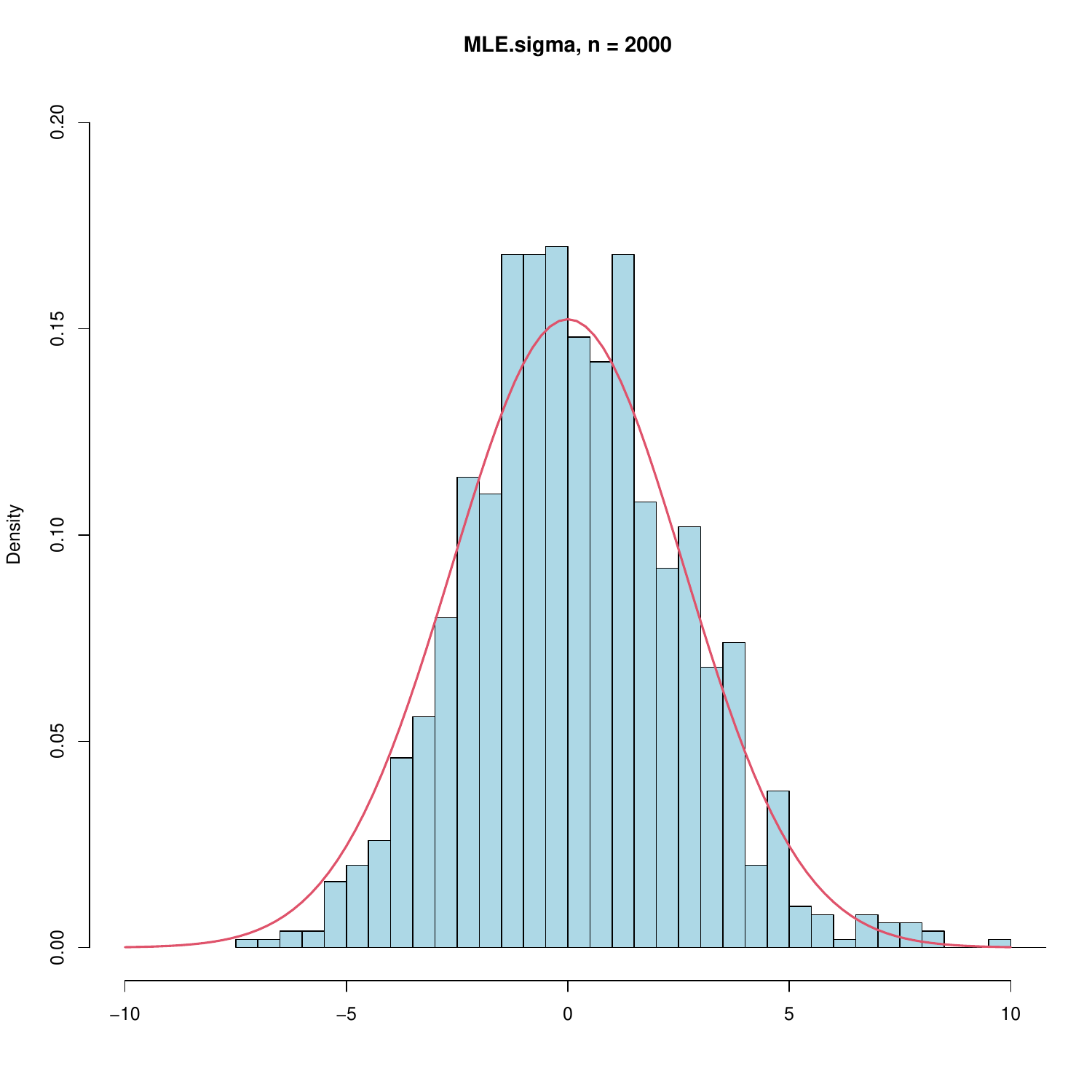}{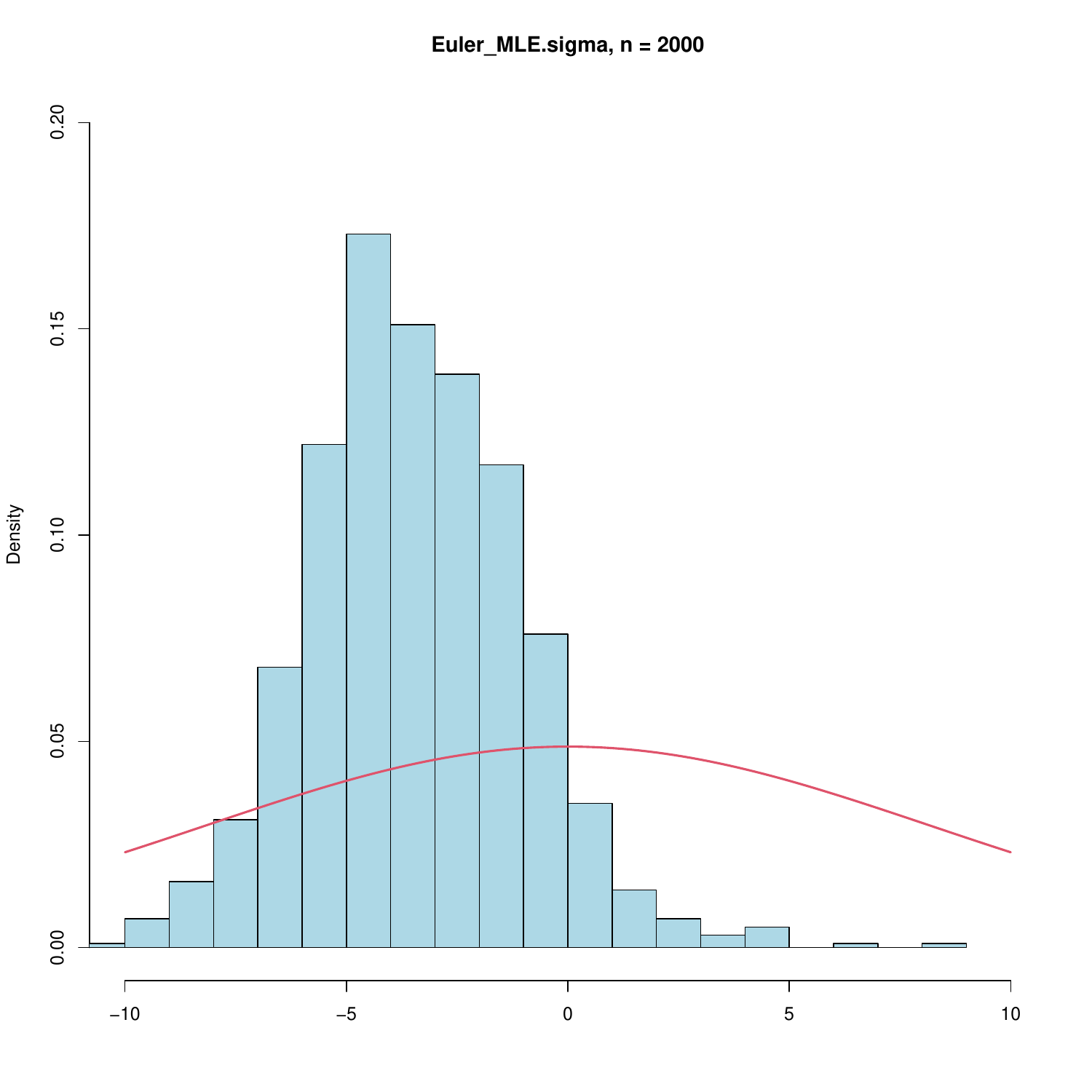}{$n=1000$}\hfill
  \CellPair{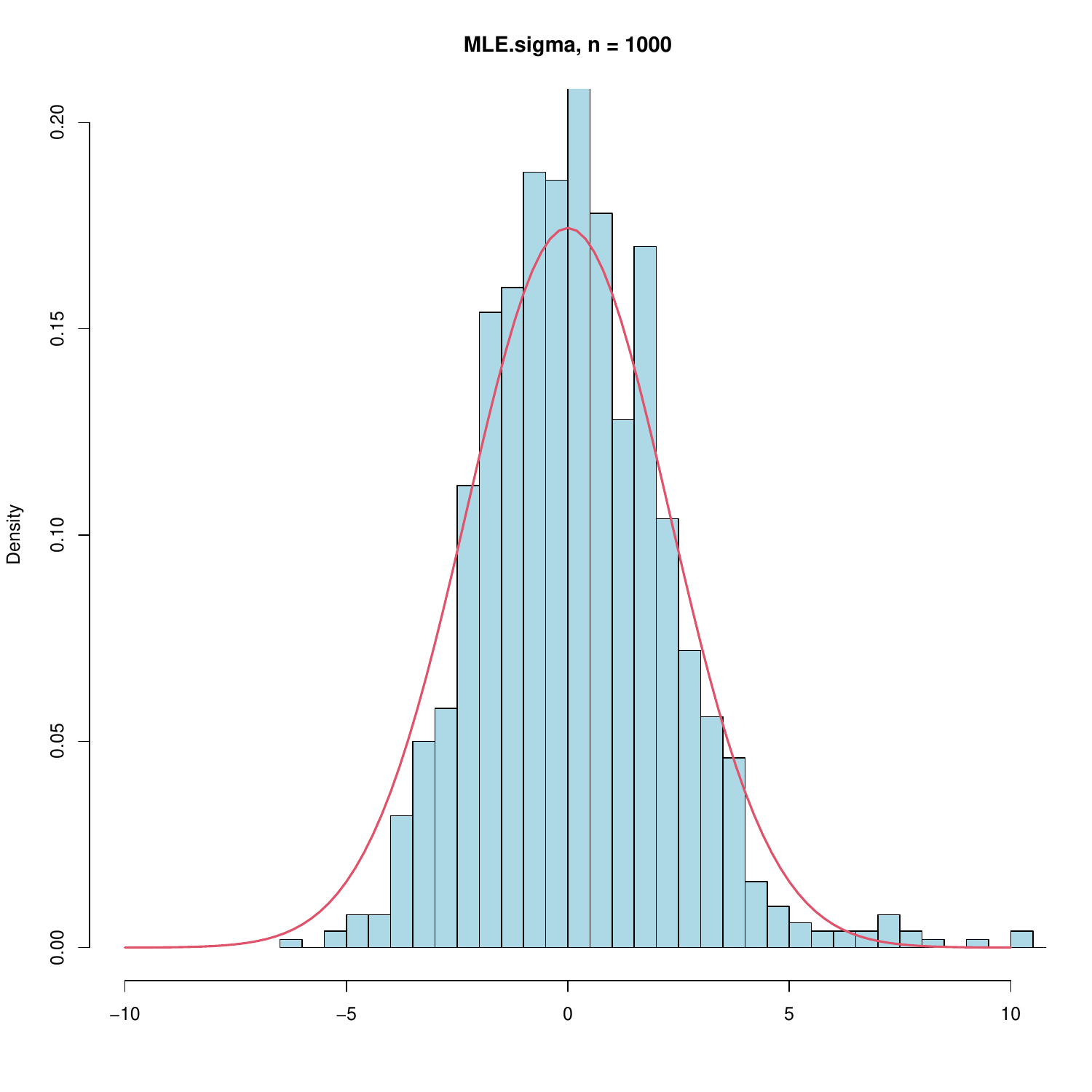}{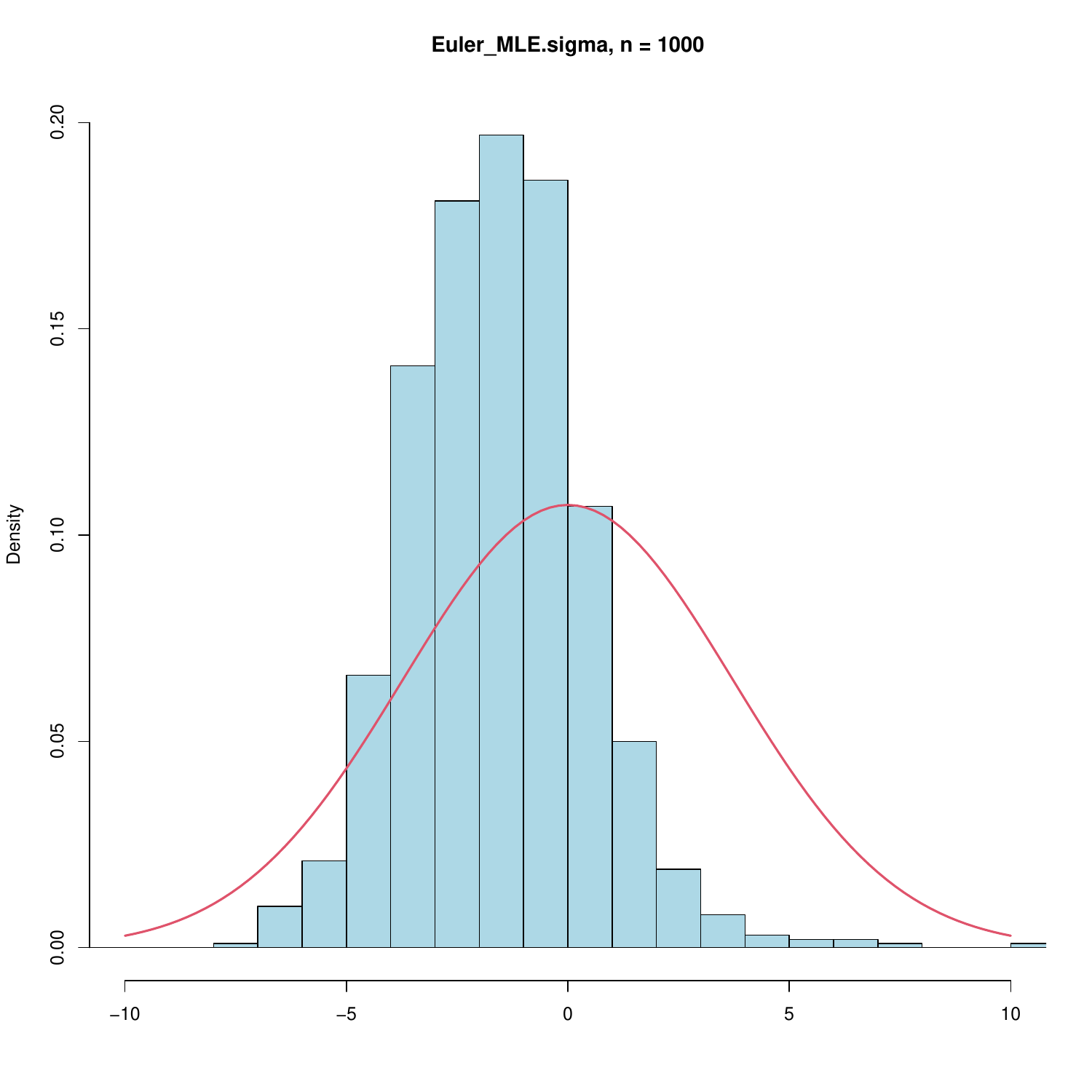}{$n=2000$}

  \vspace{5mm}

  \rowtitle{$\be$}
  \CellPair{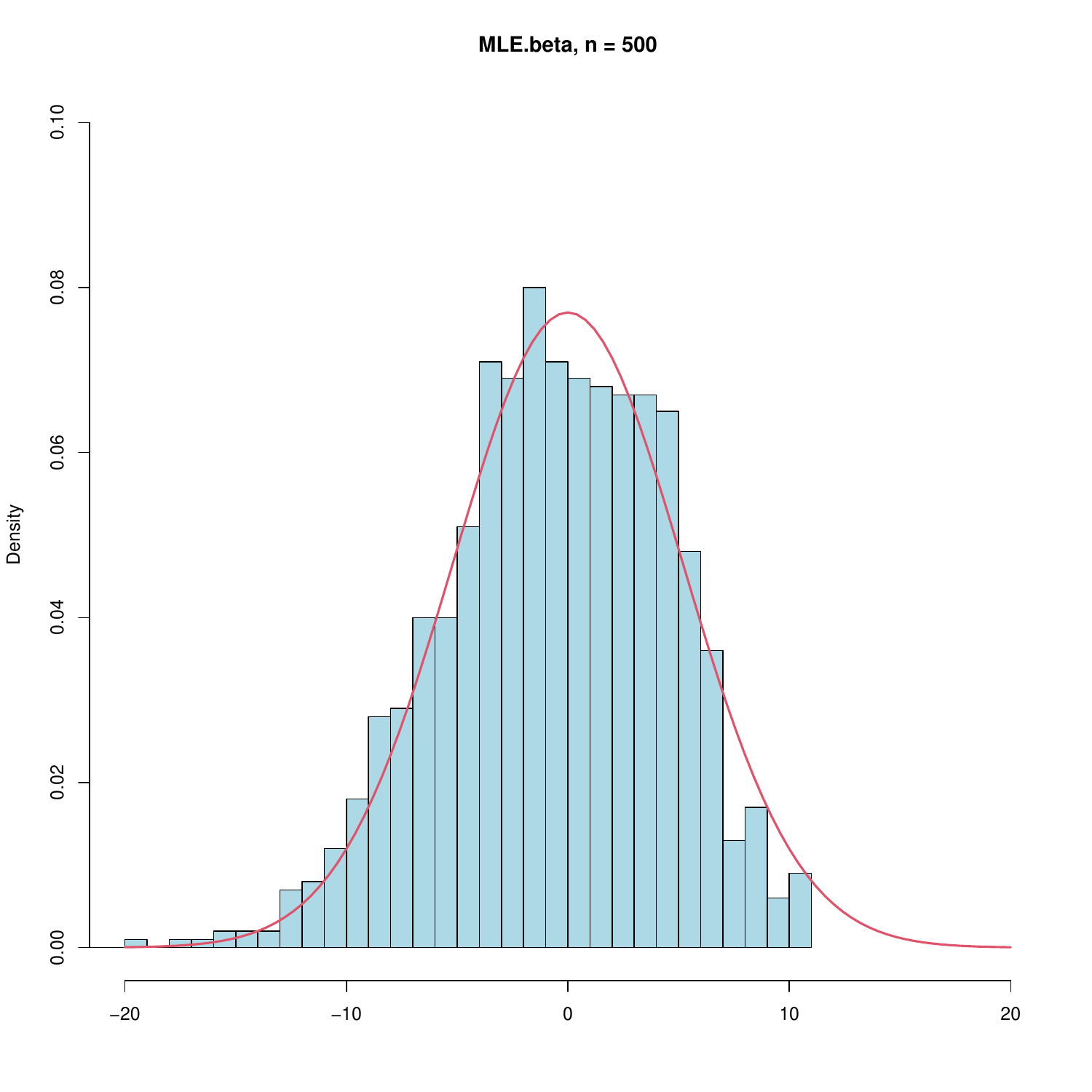}{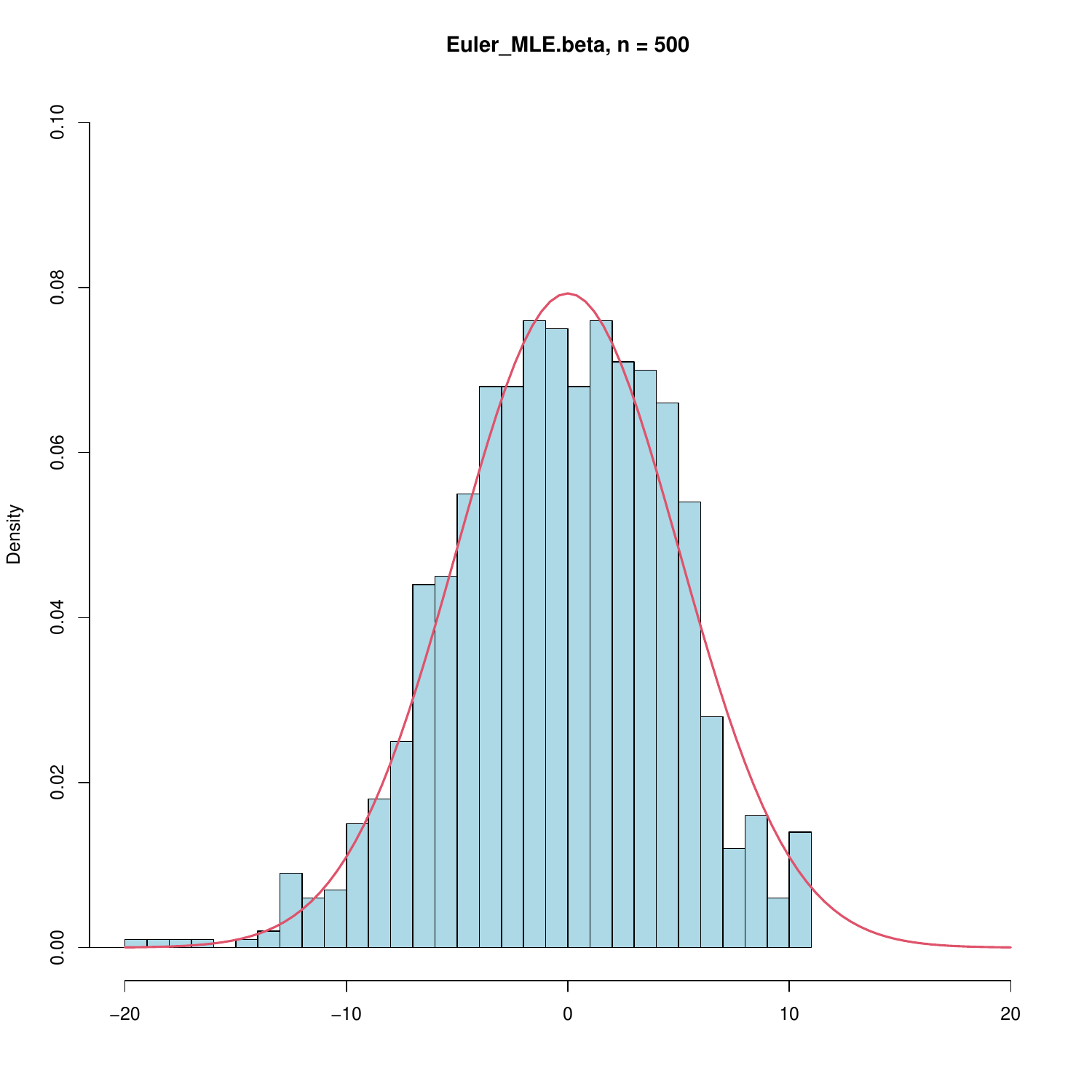}{$n=500$}\hfill
  \CellPair{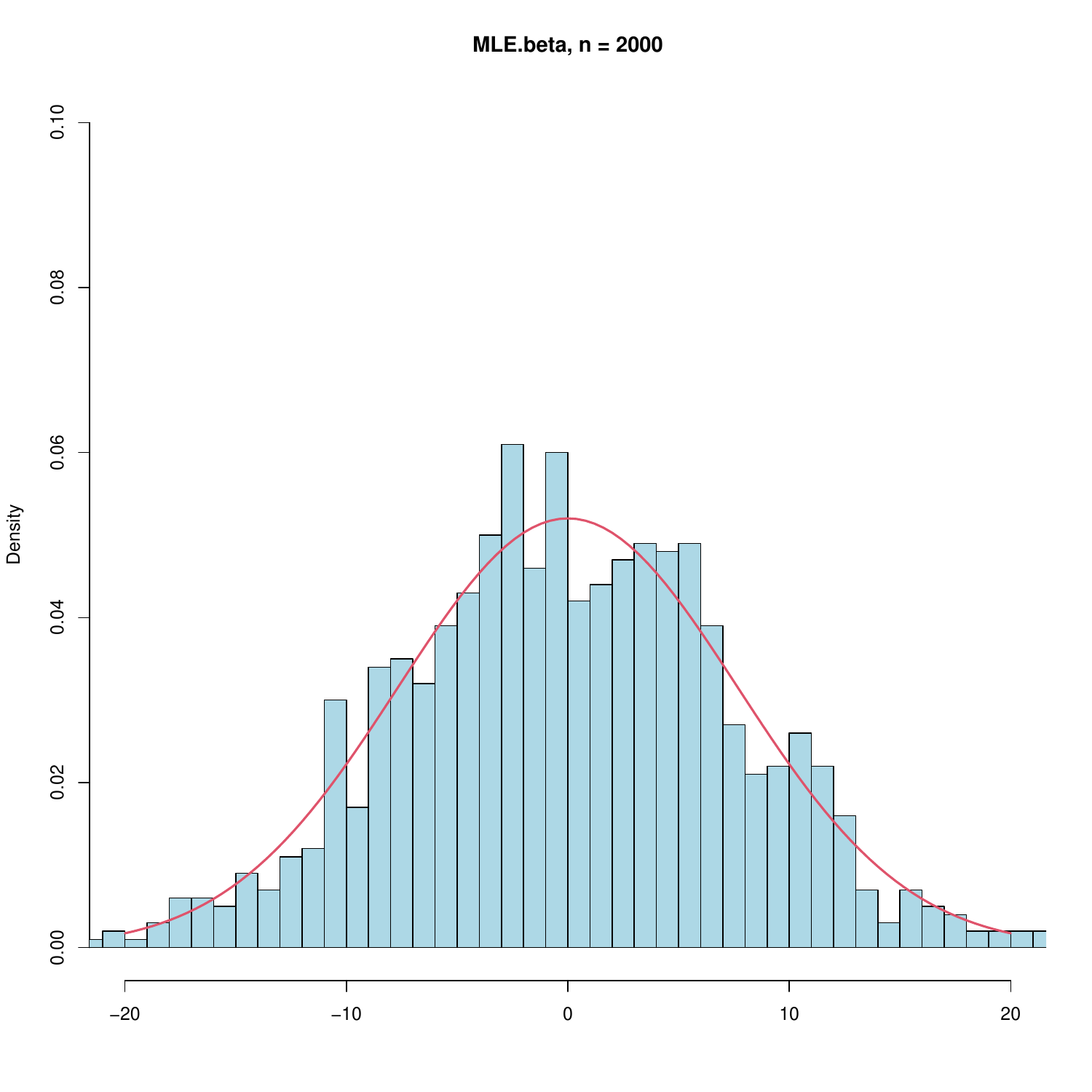}{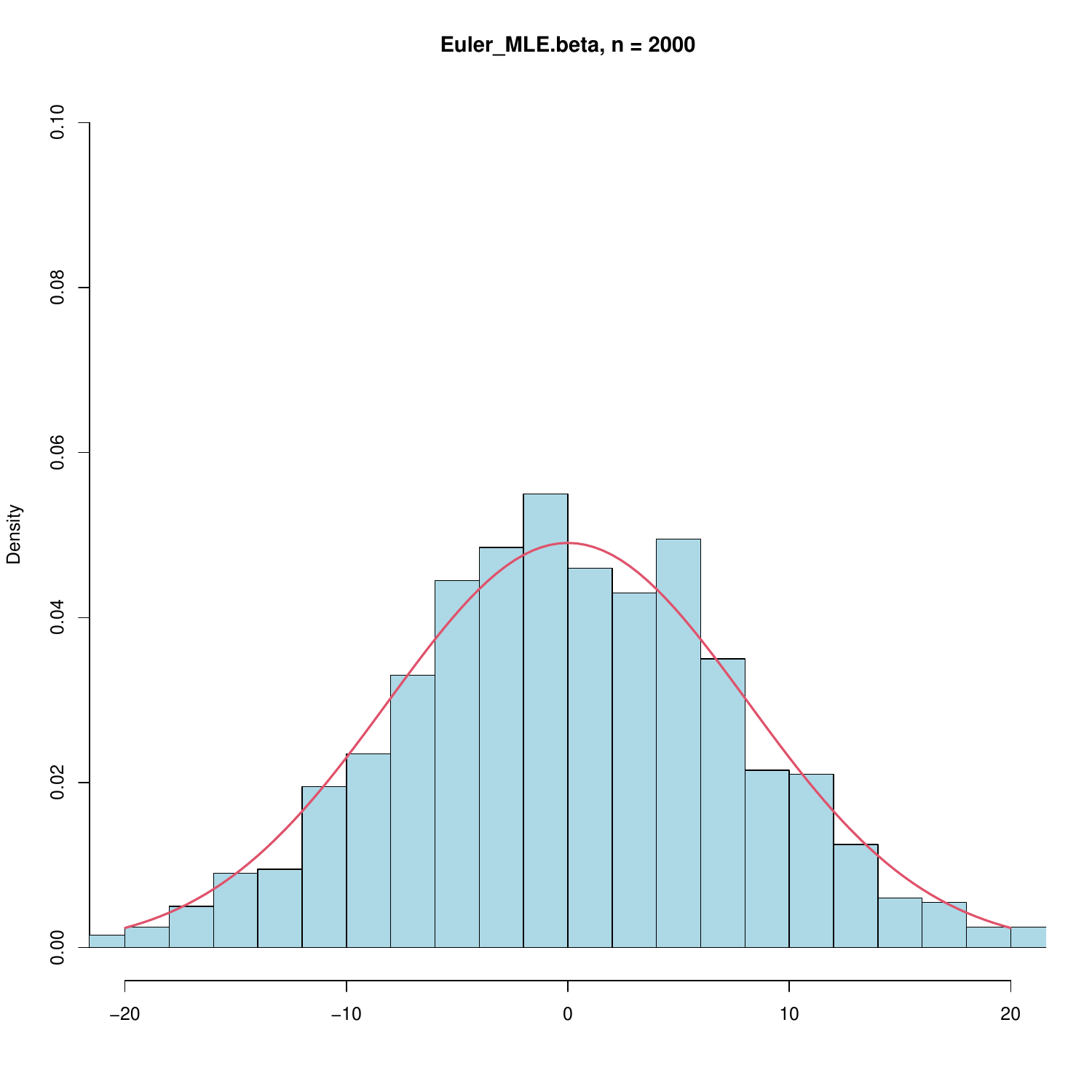}{$n=1000$}\hfill
  \CellPair{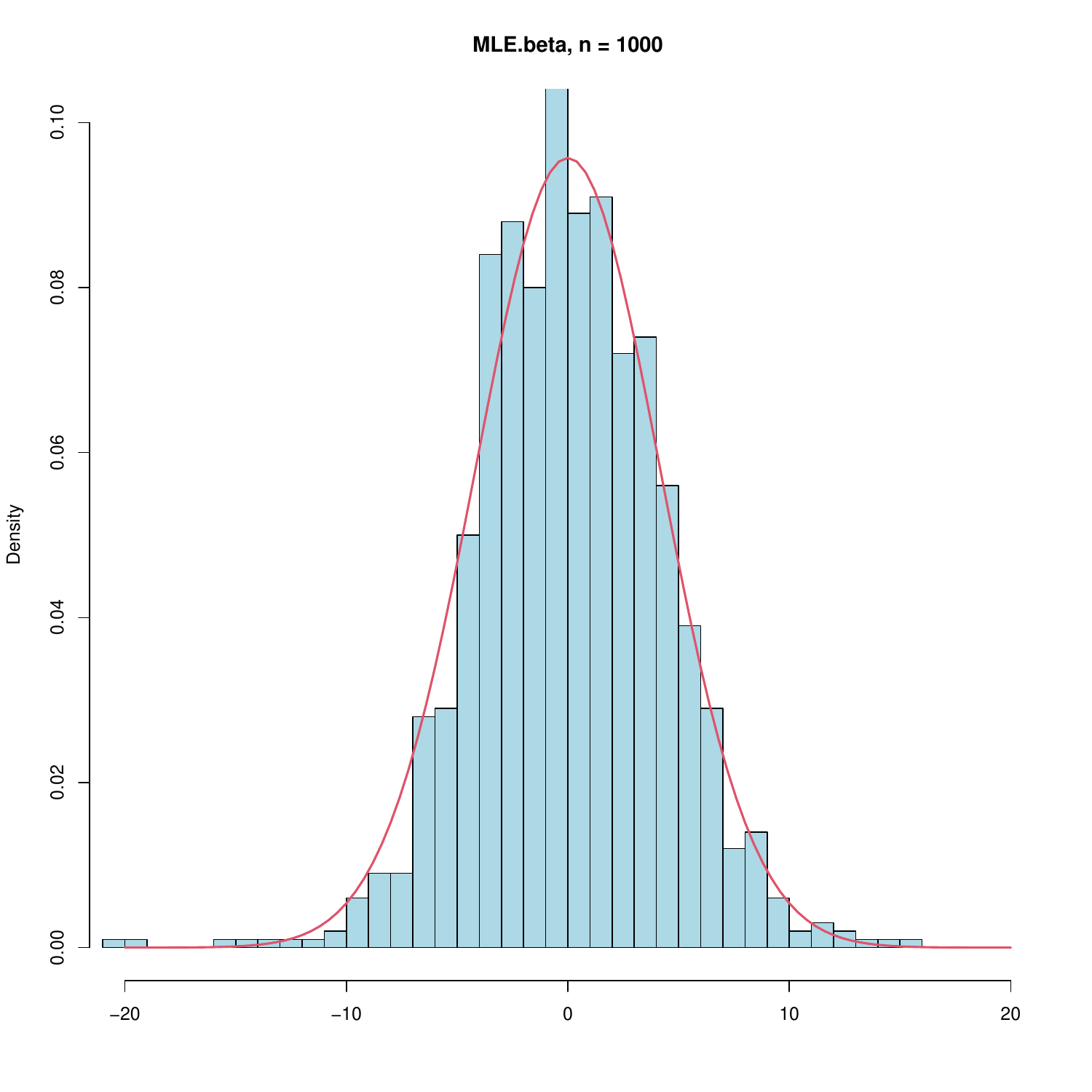}{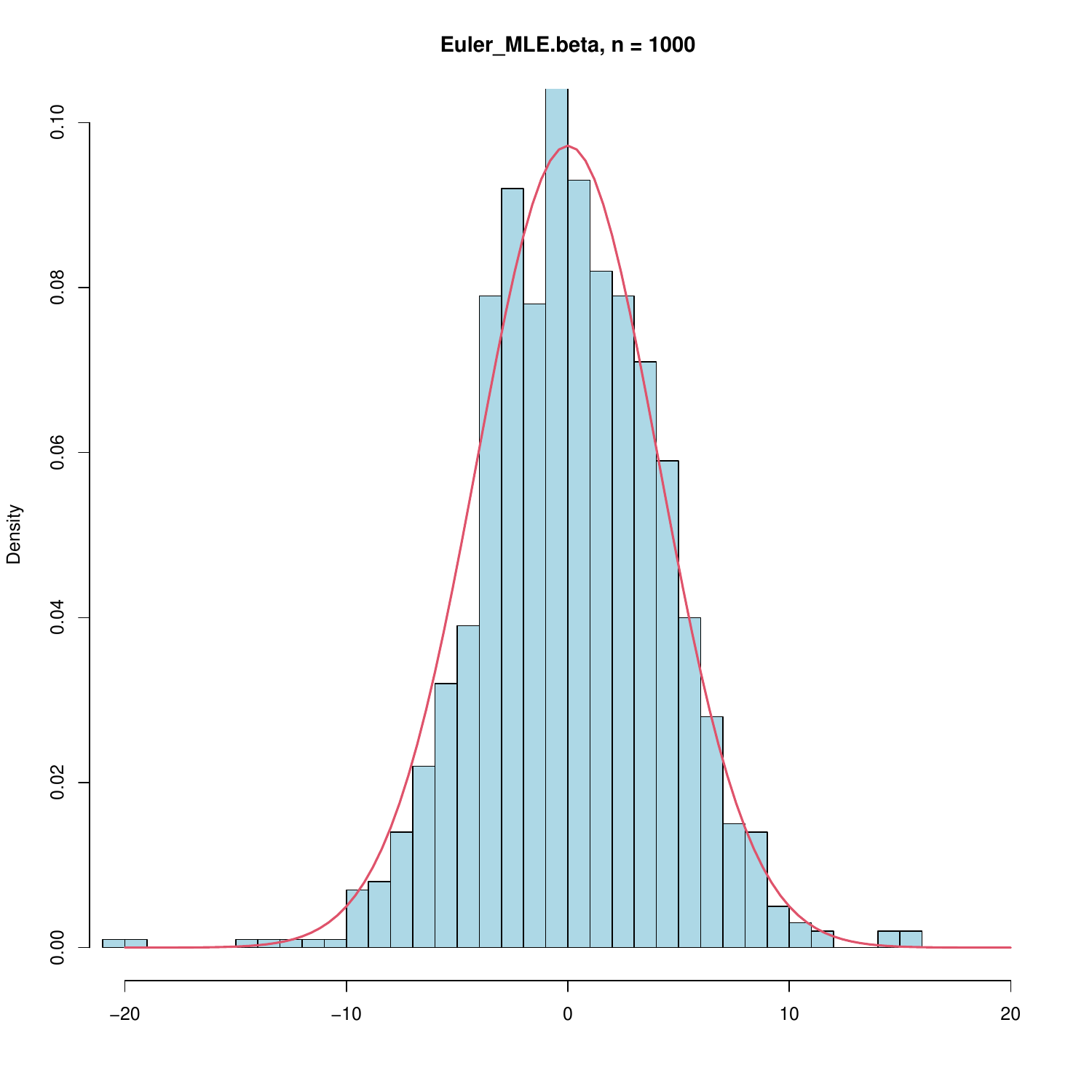}{$n=2000$}
  
  \caption{Comparison between histograms of the normalized MLE and Euler-QMLE with $\alpha=1.8$, $(\lambda,\mu,\sigma,\beta)=(1,2,5,0.5)$.}
  \label{fig:param-by-n_mle-vs-euler-1.8}
\end{figure}

\section{Modeling time scale}
\label{hm:sec_time.scale}


With a minor modification to the parametrization, we can incorporate the model time scale as a statistical parameter and derive the corresponding asymptotic properties.
The attempt to model the time scale has been previously made for a multidimensional ergodic diffusion process in \cite{EguMas19}. However, as the scaling factor depends on $\alpha$, the resulting asymptotic properties essentially differ from the framework in \cite{EguMas19}.

By Lemma \ref{hm:ssou_inc} and \eqref{hm:standard.Z}, we have for each $h,\tau>0$,
\begin{align}
    \tau^{-1/\al}J_{h \tau}
    &\sim S_\al^0\left(
    \beta, h^{1/\al}, \beta (h^{1/\al}-h) t_\al
    \right) \ast \del_{h\beta (1-\tau^{1-1/\al}) t_\al}
    \nn\\
    &\sim J_h + \left(\beta (1-\tau^{1-1/\al}) t_\al \right) h.
\end{align}
This shows that the process
\begin{equation}
    \tilde{J}_t := \tau^{-1/\al}J_{t \tau}
    -\left(\beta (1-\tau^{1-1/\al}) t_\al \right) t,\qquad t\in[0,1],
\end{equation}
defines a {\lp} which is distributionally equivalent to the original {\lp} $(J_t)_{t\in[0,1]}$; in particular, $\tilde{J}_t \sim S_\al^0(\beta,t^{1/\al},0)$.
Hence, by changing the time scale from $t$ to $t\tau$ with $\tau=T$, we rewrite the original model \eqref{hm:ssou_sde} as
\begin{align}\label{hm:ssou_sde_ts_pre1}
    dY_{t\tau} &= \left\{\left( \mu \tau
    + \beta\sig (\tau^{1/\al}-\tau) t_\al \right) - \lam \tau Y_{t\tau} \right\}dt
    + \sig \tau^{1/\al}d\tilde{J}_{t},\qquad t\in[0,1].
\end{align}
Let $\tilde{Y}_t:=Y_{t\tau}$.
To make the scale parameters identifiable, 
we set $\sig=1$:
\begin{align}
    d\tilde{Y}_{t} &= \left\{\left( \mu \tau
    + \beta (\tau^{1/\al}-\tau) t_\al \right) - \lam \tau \tilde{Y}_{t} \right\}dt
    + \tau^{1/\al}d\tilde{J}_{t},\qquad t\in[0,1].
\end{align}
Then, we formally get the same model as in \eqref{hm:ssou_sde}, but now with the terminal time $T=1$ and the (partial) parametrization change 
\begin{equation}
    (\lam,\mu,\al,\sig(=1),\beta) \mapsto \tilde{\theta}:=(\tilde{\lam},\tilde{\mu},\tilde{\al},\tilde{\sig},\tilde{\beta})
\end{equation}
caused by the time-scale change from $t$ to $t\tau$, where
\begin{equation}\label{hm:time.scale.para.trans}
(\tilde{\lam},\tilde{\mu},\tilde{\al},\tilde{\sig},\tilde{\beta}) := 
    \left(\lam \tau,\mu \tau
    + \beta (\tau^{1/\al}-\tau) t_\al,\al,\tau^{1/\al},\beta\right).
\end{equation}
Thus, we arrive at
\begin{align}\label{hm:ssou_sde_ts}
    d\tilde{Y}_{t} &= ( \tilde{\mu} - \tilde{\lam}\tilde{Y}_{t}) dt
    + \tilde{\sig} d\tilde{J}_{t},\qquad t\in[0,1],
\end{align}
with the available data being now $(\tilde{Y}_{s_j})_{j=0}^{n}$ for $s_j:=j/n$, hence $h=1/n$.
Note that $(\tilde{Y}_{s_j})_{j=0}^{n}$ is equivalent in law to $(Y_{t_j})_{j=0}^{n}$ with $t_j=j\tau/n$.

By the construction, estimation of $\tilde{\theta}$ of \eqref{hm:ssou_sde_ts} is equivalent to that of $\theta$ of \eqref{hm:ssou_sde}. Yet, in the former, we have the parameter $\tau$, which is interpretable as the model-time scale.
Although we set $\sig=1$, the time-scale parameter $\tau$ is instead unknown.
The above observations show that in the present case, where the noise is skewed, we need an additional drift adjustment compared with the symmetric noise case discussed in \cite[Remark 2.2(2)]{Mas23}.

For example, we could consider the following stepwise estimation procedure for \eqref{hm:ssou_sde_ts}:
\begin{itemize}
    \item First, we compute the moment estimator $(\hat{\al}_n,\hat{\sig}_n,\hat{\beta}_n)$ as before (with $T=1$), and then return $(\hat{\al}_n,\hat{\tau}_n:=\hat{\sig}_n^{\aes},\hat{\beta}_n)$ as an estimate of $(\al,\tau,\beta)$;
    \item Then, letting 
    \begin{equation}
        \hat{\mathfrak{m}}(\mu):=\mu \,\hat{\tau}_n
    + \bes (\hat{\tau}_n^{1/\aes}-\hat{\tau}_n)\tan\frac{\aes\pi}{2},
    \end{equation}
    we estimate $(\lam,\mu)$ as before by the Euler-QMLE:
    \begin{align}
        (\lam,\mu) &\mapsto 
        \sumj \log \left( \frac{1}{(\hat{\tau}_n h)^{1/\aes}}
        \phi_{\aes,\bes} \left(
        \frac{\D_j\tilde{Y} - (\hat{\mathfrak{m}}(\mu)-\lam\hat{\tau}_n)h}{(\hat{\tau}_n h)^{1/\aes}}
        \right) \right)
        \nn\\
        &= \hat{C}_n + \sumj \log \phi_{\aes,\bes} \left(
        \frac{\D_j\tilde{Y} - (\hat{\mathfrak{m}}(\mu)-\lam\hat{\tau}_n)h}{(\hat{\tau}_n h)^{1/\aes}}
        \right),
    \end{align}
    where $\hat{C}_n$ denotes a random variable not depending on the parameter.
\end{itemize}
In practice, we can also proceed with numerical joint optimization of $\tilde{\theta} \mapsto \mbbh_n(\tilde{\theta})$ with $h=1/n$ by using the estimate $(\hat{\al}_n,\hat{\tau}_n^{1/\aes},\hat{\beta}_n)$ as a partial initial value for numerical search of $(\tilde{\al},\tilde{\sig},\tilde{\beta})$. We can, in principle, apply the delta method based on \eqref{hm:time.scale.para.trans} to get the asymptotic behavior of $\tilde{\theta}_n$.

The simulation design is as follows:
\begin{itemize}
\item The terminal sampling time $T=1$.
\item Initial value of stochastic process $Y_0=0$.
\item True values: $\lam =1,\ \mu=2,\ \al \in \left\{1.0, 1.01, 1.5, 1.8\right\},\ \sig=5,\ \be=0.5$, with initial value $Y_0 = 0$; just for reference, we also ran the case $\al=1.0$.
\item Sample size: $n=500,1000,2000$.
\item The degree of the moment: $q=0.2$.
\item The number of the Monte Carlo simulations: $L=1000$.
\end{itemize}


The numerical results are given in Tables \ref{tab:euler-only-1.0-T1-time} to \ref{tab:euler-only-1.8-T1-time-case2}, and the associated histograms are given in Figures \ref{fig:hist-all-alpha-1.0} to \ref{fig:hist-all-alpha-1.8}. As in Section \ref{hm:sec_sim}, the proposed estimation procedure exhibits favorable performance, thereby validating our approach.

\begin{table}[H]
\centering
\scriptsize
\setlength{\tabcolsep}{4pt}
\resizebox{\linewidth}{!}{
\begin{tabular}{c|c|c|c|c|c|c}
\hline
$n$
 & $\hat\lambda_n$
 & $\hat\mu_n$
 & $\hat\alpha_n$
 & $\hat\sigma_n$
 & $\hat\beta_n$
 & Time (s) \\
\hline
500
 & 1.0343
 & 2.0074
 & 1.0004
 & 5.1805
 & 0.4971
 & 47.44 \\
 & (0.2604)
 & (0.3362)
 & (0.0627)
 & (2.1137)
 & (0.0792)
 &  \\
\hline
1000
 & 1.0183
 & 2.0006
 & 0.9985
 & 5.1580
 & 0.4991
 & 100.32 \\
 & (0.2065)
 & (0.2441)
 & (0.0411)
 & (1.9028)
 & (0.0553)
 &     \\
\hline
2000
 & 1.0186
 & 1.9966
 & 1.0004
 & 5.0628
 & 0.4984
 & 208.66 \\
 & (0.1651)
 & (0.2092)
 & (0.0289)
 & (1.1549)
 & (0.0448)
 &     \\
\hline
\end{tabular}}
\caption{
Euler-QMLE results.
True values: $\alpha=1.0$, $(\lambda,\mu,\sigma,\beta)=(1,2,5,0.5)$, $T=1$.
Numbers in parentheses denote standard deviations.
}
\label{tab:euler-only-1.0-T1-time}
\end{table}

\begin{table}[H]
\centering
\scriptsize
\setlength{\tabcolsep}{4pt}
\resizebox{\linewidth}{!}{
\begin{tabular}{c|c|c|c|c|c|c}
\hline
$n$
 & $\hat\lambda_n$
 & $\hat\mu_n$
 & $\hat\alpha_n$
 & $\hat\sigma_n$
 & $\hat\beta_n$
 & Time (s) \\
\hline
500
 & 1.0291
 & 1.7402
 & 1.0071
 & 5.1814
 & 0.4987
 & 21.79 \\
 & (0.2563)
 & (26.7190)
 & (0.0544)
 & (1.7035)
 & (0.0756)
 &     \\
\hline
1000
 & 1.0190
 & 1.3783
 & 1.0091
 & 5.1215
 & 0.4990
 & 51.57 \\
 & (0.2014)
 & (24.0290)
 & (0.0389)
 & (1.4539)
 & (0.0564)
 &     \\
\hline
2000
 & 1.0063
 & -3.1829
 & 1.0088
 & 5.0974
 & 0.4975
 & 100.12 \\
 & (0.1574)
 & (133.0130)
 & (0.0280)
 & (1.0914)
 & (0.0481)
 &     \\
\hline
\end{tabular}}
\caption{
Euler-QMLE results.
True values: $\alpha=1.01$, $(\lambda,\mu,\sigma,\beta)=(1,2,5,0.5)$, $T=1$.
Numbers in parentheses denote standard deviations.
}
\label{tab:euler-only-1.01-T1-time-case2}
\end{table}

\begin{table}[H]
\centering
\scriptsize
\setlength{\tabcolsep}{4pt}
\resizebox{\linewidth}{!}{
\begin{tabular}{c|c|c|c|c|c|c}
\hline
$n$
 & $\hat\lambda_n$
 & $\hat\mu_n$
 & $\hat\alpha_n$
 & $\hat\sigma_n$
 & $\hat\beta_n$
 & Time (s) \\
\hline
500
 & 1.1655
 & 2.2354
 & 1.5013
 & 5.0233
 & 0.5037
 & 47.44 \\
 & (0.4842)
 & (0.8531)
 & (0.0690)
 & (1.0114)
 & (0.1202)
 &     \\
\hline
1000
 & 1.1061
 & 2.1418
 & 1.5022
 & 5.0055
 & 0.5028
 & 45.99 \\
 & (0.3991)
 & (0.7769)
 & (0.0492)
 & (0.8098)
 & (0.0902)
 &     \\
\hline
2000
 & 1.0446
 & 2.0343
 & 1.5002
 & 5.0223
 & 0.5002
 & 110.51 \\
 & (0.3207)
 & (0.6998)
 & (0.0352)
 & (0.6666)
 & (0.0630)
 &     \\
\hline
\end{tabular}}
\caption{
Euler-QMLE results.
True values: $\alpha=1.5$, $(\lambda,\mu,\sigma,\beta)=(1,2,5,0.5)$, $T=1$.
Numbers in parentheses denote standard deviations.
}
\label{tab:euler-only-1.5-T1-time}
\end{table}

\begin{table}[H]
\centering
\scriptsize
\setlength{\tabcolsep}{4pt}
\resizebox{\linewidth}{!}{
\begin{tabular}{c|c|c|c|c|c|c}
\hline
$n$
 & $\hat\lambda_n$
 & $\hat\mu_n$
 & $\hat\alpha_n$
 & $\hat\sigma_n$
 & $\hat\beta_n$
 & Time (s) \\
\hline
500
 & 1.3679
 & 2.5346
 & 1.7908
 & 5.0233
 & 0.4743
 & 57.43 \\
 & (0.7387)
 & (1.3331)
 & (0.0594)
 & (1.0114)
 & (0.2393)
 &     \\
\hline
1000
 & 1.3119
 & 2.4109
 & 1.7976
 & 5.0055
 & 0.4920
 & 48.43 \\
 & (0.7232)
 & (1.2824)
 & (0.0431)
 & (0.8098)
 & (0.1780)
 &     \\
\hline
2000
 & 1.1912
 & 2.2093
 & 1.7997
 & 5.0223
 & 0.4969
 & 111.08 \\
 & (0.6568)
 & (1.2636)
 & (0.0318)
 & (0.6666)
 & (0.1280)
 &     \\
\hline
\end{tabular}}
\caption{
Euler-QMLE results.
True values: $\alpha=1.8$, $(\lambda,\mu,\sigma,\beta)=(1,2,5,0.5)$, $T=1$.
Numbers in parentheses denote standard deviations.
}
\label{tab:euler-only-1.8-T1-time-case2}
\end{table}


\AllHistFigure{1.0}
\AllHistFigure{1.01}
\AllHistFigure{1.5}
\AllHistFigure{1.8}



\bigskip

\noindent
\textbf{Acknowledgements.} 
The first author (EK) would like to thank JGMI of Kyushu University for their support. This work was partially supported by JST CREST Grant Number JPMJCR2115 and JSPS KAKENHI Grant Numbers 23K22410 and 24K21516, Japan (HM).

\bigskip


\appendix
\section{Proof of Theorem \ref{thm:main_result}}
\label{sec:main_result_proof}

\subsection{Outline}
\label{hm:sec_euler.outline}

To briefly capture the flow of the proof before going into details, we outline the strategy for deriving the local asymptotics for the log-likelihood function $\ell_n(\theta)$. The proof is based on a series of error estimates and approximations of Riemann integrals combined with the general results presented in Sweeting \cite[Theorems 1 and 2]{Swe80} and \cite{Swe83}.

Specifically, we will complete the proof of Theorem \ref{thm:main_result} through verifying the following key steps:
\begin{enumerate}[label=(\alph*)]
    \item Stochastic expansion of the normalized Euler-QMLE $\tes$;
    \item Estimating the gap between the quasi-log-likelihoods $\mbbh_n(\theta)$ and the genuine log-likelihoods $\ell_n(\theta)$;
    \item Joint convergence of the quai-score vector and quasi-observed-information matrix associated with $\mbbh_n(\theta)$;
    \item Asymptotically maximal concentration of the good local maximum $\tes$.
\end{enumerate}
In the rest of this subsection, we briefly describe the points of the above four steps.

\medskip

\subsubsection{Stochastic expansion of a normalized Euler-QMLE}~
\label{hm:sec_proof_se}

In this step, we will verify the conditions C1 and C2 in \cite{Swe80} for the Euler-type quasi-likelihood $\mbbh_n(\theta)$ with the normalizing matrix $\vp_n(\theta)$ given by \eqref{hm:def_vp}. 
We need to introduce some notation. Let 
\begin{align}
        \D_n(\theta) &\coloneqq \vp_n(\theta)^\top \p_\theta \mbbh_n(\theta), \nn\\
        \mci_n(\theta) &\coloneqq -\vp_n(\theta)^\top \p_\theta^2 \mbbh_n(\theta) \vp_n(\theta).
\end{align}
Let, for $c>0$ and $\theta\in\Theta$,
\begin{align}
\mathfrak{R}_n(c;\theta) &\coloneqq \big\{\theta' \in \Theta : |\varphi_n(\theta)^{-1}(\theta' - \theta)| \leq c\big\}.
\label{hm:R.set_def}
\end{align}
Given a matrix-valued function $\mathcal{G}=(\mathcal{G}_{kl})_{k,l} : \Theta \to \mbbr^{5} \otimes \mbbr^{5}$, we will write
\begin{equation}
    \mathcal{G}(\theta^{(1)},\dots,\theta^{(5)}) = \left[\mathcal{G}_{kl}(\theta^{(k)})\right]_{k,l=1}^{5}
\end{equation}
for $\theta^{(1)},\dots,\theta^{(5)}\in\Theta$; that is, its $k$th-row entries are evaluated at $\theta^{(k)}$. 
Let $I_k$ denote the $k \times k$-identity matrix.
Then, verification of the conditions C1 and C2 amounts to showing that
\begin{equation}\label{condi:obs_conv}
-\varphi_n(\theta)^{\top} \p_{\theta}^2 \mbbh_n(\theta) \varphi_n(\theta) \cip_u \mci(\theta)
\end{equation}
for some $P_{\theta}$-a.s. positive definite matrix $\mci(\theta)$, and that, for any $c>0$,
\begin{equation}
  \sup_{\theta' \in \mathfrak{R}_n(c;\theta)} \left|\varphi_n(\theta')^{-1} \varphi_n(\theta) - I_5\right| \rightarrow_{u} 0,
\label{condi:rate_mat}  
\end{equation}
\begin{equation}
\sup_{\theta^{(1)},\dots,\theta^{(5)}\in \mathfrak{R}_n(c;\theta)} \left|\varphi_n(\theta)^{\top} \left( \p_{\theta}^2\mbbh_n(\theta^{(1)},\dots,\theta^{(5)}) - \p_{\theta}^2 \mbbh_n(\theta) \right) \varphi_n(\theta)\right| \cip_{u}0.
\label{condi:obs_conti}    
\end{equation}
Under these convergences, by \cite[Theorem 2]{Swe80}, with $\pr$-probability tending to $1$, there exists a local maxima $\tes$ of $\mbbh_n(\theta)$ for which
    \begin{equation}\label{hm:outline-1}
        \vp_n(\theta)^{-1} (\tes-\theta) = \mci_n(\theta)^{-1} \D_n(\theta) + o_{u,p}(1).
    \end{equation} 
At this stage, we have not specified the asymptotic distribution of $(\mci_n(\theta),\D_n(\theta))$, hence that of $\vp_n(\theta)^{-1} (\tes-\theta)$ either.

\begin{rem}
    On the one hand, we can apply \cite[Theorem 2]{Swe80} even when $\mbbh_n(\theta)$ is not an exact log-likelihood as in the present case. On the other hand, we cannot apply \cite[Theorem 1]{Swe80}, which concludes the joint convergence \eqref{hm:joint.conv_Del-I}, since the proof of the theorem essentially requires that $\mbbh_n(\theta)$ is the genuine log-likelihood.
    It is worth mentioning that we could use an existing stable central limit theorem for $\D_n(\theta)$ to identify its asymptotic mixed-normal distribution; we refer to \cite{Jac97} for details.
\end{rem}

\medskip

\subsubsection{Estimating gap between quasi and true likelihoods}~


We define the gap between $\ell_n(\theta)$ and $\mbbh_n(\theta)$ as
\begin{equation}
    \del_n(\theta) := \ell_n(\theta) - \mbbh_n(\theta).
\end{equation}
We will show that the Hessian of $\del_n(\theta)$ is asymptotically negligible uniformly in $\theta$ in the following sense: for each $c>0$,
\begin{equation}
    \left| \vp_n(\theta)^\top \p_\theta^2 \del_n(\theta) \vp_n(\theta) \right| \cip_u 0,
\label{condi:gap_conv1}
\end{equation}
\begin{equation}
    \sup_{\theta^{(1)},\dots,\theta^{(5)}\in \mathfrak{R}_n(c;\theta)} \left| \vp_n(\theta)^\top \left( \p_\theta^2 \del_n(\theta^{(1)},\dots,\theta^{(5)}) - \p_\theta^2 \del_n(\theta)\right) \vp_n(\theta) \right| \cip_u 0.
    \label{condi:gap_conv2}
\end{equation}
In particular, it follows that
\begin{equation}\label{hm:outline-6}
\mci_n(\theta) = \mci_n^\star(\theta) + o_{u,p}(1).
\end{equation}

\medskip

\subsubsection{Joint convergence of score and observed information}~
    
    The uniform asymptotic negligibility \eqref{condi:gap_conv1} and \eqref{condi:gap_conv2} imply that C1 and C2 in \cite{Swe80} hold also for the exact log-likelihood $\ell_n(\theta)$. 
    Then, by \cite[Theorems 1 and 2]{Swe80},
    \begin{align}
        &\vp_n(\theta)^{-1} (\tes^\star-\theta) = \mci_n^\star(\theta)^{-1} \D_n^\star(\theta) + o_{u,p}(1),
        \label{hm:outline-4}\\
        &\left(\mci_n^\star(\theta), \D_n^\star(\theta)\right) 
        \cil_u \big(\mci(\theta), \mci(\theta)^{1/2}\eta\big)
    \end{align}
    for some local maximum $\tes^\star$ of $\ell_n(\theta)$, 
    where $\eta \sim N_5(0,I_5)$ is a random variable independent of $\mci(\theta)$ and defined on an extended filtered probability space of the original one; in particular, we have
    \begin{equation}
        \vp_n(\theta)^{-1} (\tes^\star-\theta) \cil_u \mci(\theta)^{-1/2}\eta
        \overset{\pr}{\sim} MN\left(0,\,\mci(\theta)^{-1}\right),
    \end{equation}
    with $\overset{\pr}{\sim}$ denoting the distributional equivalence under $\pr$. 
    In addition, we will show that
    \begin{equation}\label{condi:gap_partial}
    \left| \vp_n(\theta)^\top \p_\theta \del_n(\theta) \right| 
    = \left| \D_n(\theta) - \D_n^\star(\theta)\right| \cip_u 0.
    \end{equation}
    With this, together with \eqref{hm:outline-6}, we can deduce
    \begin{equation}\label{hm:outline-5}
        \left(\mci_n(\theta), \D_n(\theta)\right) \cil_u \big(\mci(\theta), \mci(\theta)^{1/2}\eta\big).
    \end{equation}
    Combining \eqref{hm:outline-1} and \eqref{hm:outline-5} gives
    \begin{equation}
        \hat{\mci}_n^{1/2}\vp_n(\theta)^{-1} (\tes-\theta) \cil_u N_5(0,I_5)
    \end{equation}
    for any statistics $\hat{\mci}_n$ satisfying that $\hat{\mci}_n \cip_u \mci(\theta)$.   

\medskip

\subsubsection{Asymptotically maximal concentration of good local maximum $\tes$}~
\label{hm:sec_proof_amc}

Having verified C1 and C2 in \cite{Swe80}, the claim automatically follows from \cite[Theorem 2.1]{Swe83}.

\medskip

Building on the observations in Section \ref{hm:sec_proof_se} to \ref{hm:sec_proof_amc}, 
the proof of Theorem \ref{thm:main_result} is complete if we prove \eqref{condi:obs_conv}, \eqref{condi:rate_mat}, \eqref{condi:obs_conti}, \eqref{condi:gap_conv1}, \eqref{condi:gap_conv2}, and \eqref{condi:gap_partial}.

\subsection{Convergence and positive definiteness of observed information}
\label{hm:conv.oim}

For convenience, we introduce the following abbreviations.
Let $\phi \coloneqq \phi_{\alpha,\beta}$ and, for $y\in\mathbb{R}$,
\begin{align}
& \phi_{(a)}(y) \coloneqq \partial_a \phi(y),\qquad
  \psi(y) \coloneqq \frac{\phi_{(y)}(y)}{\phi(y)}, \qquad f(y) \coloneqq \frac{\phi_{(\alpha)}(y)}{\phi(y)},\\
& g(y) \coloneqq \frac{\phi_{(\beta)}(y)}{\phi(y)},\qquad \zeta'(y) \coloneqq \partial_y \zeta(y) \quad\text{for} \quad \zeta =f,g,\psi.
\end{align}
Recall the notation $\ep_j(\theta)$ given in \eqref{hm:def_E.ep}, the residual term constructed from the Euler approximation. For each $j$, let
\begin{equation}
\ep_j \coloneqq \ep_j(\theta),\qquad
\ep_{(a),j} \coloneqq \partial_a \ep_j(\theta),\qquad
\ep_{(a,b),j} \coloneqq \partial_a\partial_b \ep_j(\theta),
\end{equation}
with $a,b\in\{\lambda,\mu,\alpha,\sigma,\beta\}$.
For any $\zeta\in\{f,g,\psi\}$, let
\begin{equation}
\zeta_j \coloneqq \zeta(\ep_j),\qquad
\zeta'_j \coloneqq \partial_y \zeta(\ep_j),\qquad
\zeta_{(a),j} \coloneqq \partial_a \zeta(\ep_j(\theta)),\qquad
\zeta_{(a,b),j} \coloneqq \partial_a\partial_b \zeta(\ep_j(\theta)).
\end{equation}

Below, Sections \ref{sec:part_deri} and \ref{sec:empirical_func} provide some preliminaries and technical lemmas, which will be used in the subsequent Section \ref{sec:conv_obs}. 

\subsubsection{Partial derivatives of $\mbbh_n(\theta)$}\label{sec:part_deri}

In this section, we summarize the explicit forms of the first- and second-order partial derivatives of $\mbbh_n(\theta)$ to be used in the subsequent discussion.

The first-order derivatives of $\mbbh_n(\theta)$ are given as follows:
\begin{align}
\p_{\lam} \mbbh_n(\theta)&=\sumj \psi_j\ep_{(\lam),j},\\
\p_{\mu} \mbbh_n(\theta)&=\sumj \psi_j\ep_{(\mu),j},\\
\p_{\al} \mbbh_n(\theta)&=\sumj ( -\al^{-2}\overline{l} + f_j + \psi_j \ep_{(\al),j}),\\
\p_{\sig} \mbbh_n(\theta)&=\sumj ( -\sig^{-1} + \psi_j \ep_{(\sig),j}),\\
\p_{\be} \mbbh_n(\theta)&=\sumj ( g_j + \psi_j \ep_{(\be),j}),
\end{align}
where
\begin{align}\label{ep_1dev}
\ep_{(\lam),j}&= \sig^{-1} h^{1-1/\al}Y_{t_{j-1}},\quad \ep_{(\mu),j}= -\sig^{-1} h^{1-1/\al},\quad \ep_{(\be),j}=-\xi_h(\al),\\
\ep_{(\al),j}&= \p_{\al} \left( \frac{\Delta_j Y -(-\lam Y_{t_{j-1}} + \mu)h -\be \sig (h^{1/\al} - h)t_{\al}}{\sig h^{1/\al}}\right)\\
&= \frac{\Delta_jY - (-\lam Y_{t_{j-1}} -\mu)h}{\sig}(\p_{\al}h^{-1/\al}) - \be \p_{\al} \left ( t_{\al}(1-h^{1-1/\al}) \right )\\
&= (\ep_j(\theta) + \be \xi_h(\al)) h^{1/\al} (\p_{\al} h^{-1/\al}) - \be (\p_{\al}\xi_h(\al))\\
&= -\al^{-2} \overline{l} (\ep_j(\theta) + \be \xi_h(\al)) - \be (\p_{\al} \xi_h(\al)),\\
\ep_{(\sig),j}&=-\frac{\Delta_jY - (-\lam Y_{t_{j-1}})h}{\sig^2 h^{1/\al}}\\
&= -\sig^{-1} ( \ep_j(\theta) + \be \xi(\al)).
\end{align}

Next, we note the following expressions:
\begin{align}
\ep_{(\lam,\lam),j}&= \ep_{(\lam,\mu),j} =  \ep_{(\lam,\be),j}=\ep_{(\mu,\mu),j}= \ep_{(\mu,\be),j}=\ep_{(\sig,\be)} =\ep_{(\be,\be),j}=0,\\
\ep_{(\lam,\al),j} &= \p_{\al} (\sig^{-1} h^{1-1/\al} Y_{t_{j-1}})= \sig^{-1} Y_{t_{j-1}} (\p_{\al}h^{1-1/\al})=-\al^{-2}\sig^{-1} Y_{t_{j-1}}h^{1-1/\al} \overline{l},\\
\ep_{(\lam,\sig),j} &= \p_{\sig}(\sig^{-1}h^{1-1/\al}Y_{t_{j-1}}) = -\sig^{-2}Y_{t_{j-1}}h^{1-1/\al},\\
\ep_{(\mu,\al),j}&= -\sig^{-1} (\p_{\al} h^{1-1/\al}) = \al^{-2} \sig^{-1} h^{1-1/\al} \overline{l},\\
\ep_{(\mu,\sig),j} &= -h^{1-1/\al} (\p_{\sig} \sig^{-1})= h^{1-1/\al} \sig^{-2},
\nn\\
\ep_{(\al,\al),j}
&= 2\al^{-3} \overline{l} (\ep_j + \be \xi_h(\al)) + \al^{-4} \overline{l}^2 (\ep_j + \be \xi_h(\al)) -\be \p_{\al}^2 \xi_h(\al),
\nn\\
\ep_{(\al,\sig),j}
&= \al^{-2} \sig^{-1} \overline{l} (\ep_j + \be \xi_h(\al)),\\
\ep_{(\al,\be),j}&= -\p_{\al} \left\{ (1-h^{1-1/\al}) t_{\al} \right \}= -\p_{\al}\xi_h(\al),\\
\ep_{(\sig,\sig),j}
&= 2 \sig^{-2}(\ep_j + \be \xi_h(\al)).
\end{align}
The second-order derivatives are given as follows:
\begin{align}
\p_{\lam}^2 \mbbh_n(\theta) &=  \sumj \ep_{(\lam),j}^2 \psi'_j,\label{deriv:lam_lam}\\
\p_{\mu}^2 \mbbh_n(\theta) &= \sumj \ep_{(\mu),j}^2 \psi'_j,\\
\p_{\al}^2 \mbbh_n(\theta) &= \sumj (2\al^{-3}\overline{l} + f_{(\al),j} + f'_j \ep_{(\al),j} + ((\p_{\al}\psi_j) + \psi'_j\ep_{(\al),j})\ep_{(\al),j}+ \psi_j \ep_{(\al,\al),j}),\\
\p_{\sig}^2 \mbbh_n(\theta) &= \sumj ( \sig^{-2} + \psi'_j\ep_{(\sig),j}^2 + \psi_j \ep_{(\sig,\sig)}),\\
\p_{\be}^2 \mbbh_n(\theta) &= \sumj ( g_{(\be),j} + g'_j\ep_{(\be),j} + (\psi_{(\be),j} + \psi'_j\ep_{(\be),j})\ep_{(\be),j} + \psi_j \ep_{(\be,\be),j} ),\\
\p_{\lam} \p_{\mu} \mbbh_n(\theta) &= \sumj \psi'_j \ep_{(\lam),j} \ep_{(\mu),j},\\
\p_{\lam} \p_{\al} \mbbh_n(\theta) &= \sumj \{ (\psi_{(\al),j} + \psi'_j\ep_{(\al),j})h^{1-1/\al} + \psi_j (\p_{\al}h^{1-1/\al}) \}\sig^{-1}Y_{t_{j-1}},\\
\p_{\lam} \p_{\sig} \mbbh_n(\theta) &= \sumj ( \psi'_j\ep_{(\lam),j} \ep_{(\sig),j} + \psi_j \ep_{(\lam,\sig),j}),\\
\p_{\lam} \p_{\be} \mbbh_n(\theta) &= \sumj \ep_{(\lam),j}( \psi_{(\be),j} + \psi'_j\ep_{(\be),j} ),\\
\p_{\mu} \p_{\al} \mbbh_n(\theta) &= \sumj \{ (\psi_{(\al),j} + \psi'_j \ep_{(\al),j})\ep_{(\mu),j} + \psi_j \ep_{(\mu,\al),j}\},\\
\p_{\mu} \p_{\sig} \mbbh_n(\theta) &= \sumj ( \psi'_j\ep_{(\mu),j} \ep_{(\sig),j} + \psi_j \ep_{(\mu,\sig),j}),\\
\p_{\mu} \p_{\be} \mbbh_n(\theta) &= \sumj \ep_{(\mu),j} (\psi_{(\be),j} + \psi'_j \ep_{(\be),j}),\\
\p_{\al} \p_{\sig} \mbbh_n(\theta) &= \sumj ( f'_j\ep_{(\sig),j} + \psi'_j\ep_{(\al),j}\ep_{(\sig),j} + \psi_j \ep_{(\al,\sig),j} ),\\
\p_{\al} \p_{\be} \mbbh_n(\theta) &= \sumj \{ (g'_j +\psi'_j \ep_{(\be),j})\ep_{(\al),j} + g_{(\al),j} + \psi_{(\al),j}\ep_{(\be),j} + \psi_j \ep_{(\al,\be),j} \},\\
\p_{\sig} \p_{\be} \mbbh_n(\theta) &= \sumj ( g'_j\ep_{(\sig),j} + \psi'_j \ep_{(\sig),j} \ep_{(\be),j} + \psi_j\ep_{(\sig,\be)}).
\end{align}

\subsubsection{Localization}
\label{hm:sec_localization}

We will use the localization to estimate the error terms.
Denote by $\mu_{\al,\beta}(dt,dz)$ the Poisson random measure associated with $J$ under $\pr$. 
Recall \eqref{hm:Theta_closure}. For any $\ve>0$, there is a constant $M=M(\ve)>0$ for which the event
\begin{equation}
    \Omega_{\ve,M}(\al,\beta)\coloneqq\left\{
    \omega\in\Omega :\, 
    \mu_{\al,\beta}\left((0,T],\{z\in\mbbr\setminus\{0\}:\,|z|>M\}\right)=0
    \right\} \in \mcf
\end{equation}
satisfies that $\inf_\theta\pr[\Omega_{\ve,M}(\al,\beta)]>1-\ep$.
On $\Omega_{\ve,M}(\al,\beta)$, $J$ equals the process with bounded jumps:
\begin{equation}
 J_t(M)\coloneqq J_t-\sum_{0<s\le t}\D J_s I(|\D J_s|>M).
\end{equation}
Then, we may and do suppose that
\begin{equation}
    \E\left[\sup_{t\le T}|J_t|^K\right]\lesssim_u 1 
    \quad\text{and}\quad \E\left[|J_h|^2\right]\lesssim_u h
\end{equation}
for every $K>0$.
Here and in what follows, the notation ``$|f_n(\theta)|\lesssim_u 1$'' for random function $f_n(\theta)$ means that ``$\sup_\theta |f_n(\theta)|\lesssim 1$'', where $a_n \lesssim b_n$ denotes that $a_n \le C b_n$ a.s. for a universal positive constant $C$. 
A standard application of the Gronwall inequality leads to the estimate $\E\left[\sup_{t\le T}|Y_t|^K\right]\lesssim_u 1$ for any $K>0$, in particular $\sup_{t\le T}|Y_t|=O_p(1)$.
We also have for $t\in[t_{j-1},t_j]$,
\begin{align}
    \E\left[|Y_t-Y_{t_{j-1}}|^2\right]
    &\lesssim h^2 + \E\left[|(e^{-\lam h}-1)Y_{t_{j-1}}|^2\right]
    + \E\left[\left| \int_{t_{j-1}}^{t_j}e^{-\lam(t_j-s)}\sig dJ_s \right|^2\right]
    \nn\\
    &\lesssim_u h^2 + h^2 + h \lesssim h.
\end{align}
We will apply the above properties to estimate the stochastic orders of the error (residual) terms.

\subsubsection{Asymptotic behavior of empirical functional}
\label{sec:empirical_func}

To establish the convergence of $\mci_n(\theta)$, we need to study the asymptotic behavior of
\begin{equation}\label{conv:info}
\frac{1}{n} \sum_{j=1}^n F(\ep_j(\theta)) Y_{t_{j-1}}^k,
\qquad k=0,1,2,
\end{equation}
where $F$ is a sufficiently smooth function such that for each $l\ge 1$ and $K>0$,
\begin{equation}\label{hm:F_log.growth}
    \limsup_{|y|\to\infty} \left(\frac{|F(y)|}{1+(\log(1+|y|))^K} + |\p_y^l F(y)| \right) < \infty.
\end{equation}
It will be given in Corollary \ref{hm:cor_aux.cip}.

Observe that (recall \eqref{hm:def_ep0} for the definition of $\ep_{0,j}(\theta)$)
\begin{align}
\ep_j(\theta) - \ep_{0,j}(\theta)
= - h^{-1/\al} \sig^{-1} \lambda \int_{t_{j-1}}^{t_{j}} (Y_s - Y_{t_{j-1}})ds.
\end{align}
Let
\begin{align}
\qquad \overline{Y^k} \coloneqq \frac{1}{T} \int_{0}^{T} Y_t^{k} dt
\end{align}
for $k\ge 0$. 

\begin{lem}\label{lem:conv_LLN}
Under \eqref{hm:F_log.growth}, for any $k\ge 0$,
\begin{equation}
\frac{1}{n} \sumj F(\ep_{0,j}(\theta)) Y_{t_{j-1}}^k \cip_u \E[F(\ep_{0,1}(\theta))] \, 
\overline{Y^k}.
\end{equation}
\end{lem}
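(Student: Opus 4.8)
The plan is to peel off the conditional mean of $F(\ep_{0,j}(\theta))$ given the past and to handle the residual as a sum of martingale differences. First I would fix $\ve>0$ and, via the localization of Section~\ref{hm:sec_localization}, reduce to an event of $\pr$-probability at least $1-\ve$ uniformly in $\theta$ on which $\E[\sup_{t\le T}|Y_t|^K]\lesssim_u 1$ for every $K>0$ and $\E[|Y_t-Y_{t_{j-1}}|^2]\lesssim_u h$ for $t\in[t_{j-1},t_j]$ are available. Writing $m_F(\theta):=\E[F(\ep_{0,1}(\theta))]$, I would then decompose
\begin{align}
\frac1n\sumj F(\ep_{0,j}(\theta))\,Y_{t_{j-1}}^k
&=m_F(\theta)\cdot\frac1n\sumj Y_{t_{j-1}}^k
+\frac1n\sumj\big(F(\ep_{0,j}(\theta))-m_F(\theta)\big)Y_{t_{j-1}}^k\\
&=:A_n(\theta)+R_n(\theta).
\end{align}
Two facts feed both terms: by \eqref{hm:def_ep0}, Lemma~\ref{hm:ssou_inc}, and \eqref{hm:standard.Z}, each $\ep_{0,j}(\theta)$ is $\sigma(\D_jJ)$-measurable with law $S_\al^0(\beta,1,0)$, hence independent of $\mcf_{t_{j-1}}$ and satisfying $\E[F(\ep_{0,j}(\theta))\mid\mcf_{t_{j-1}}]=m_F(\theta)$; and by \eqref{hm:F_log.growth} combined with the tail estimate $\sup_{x}(1+|x|)^{\al+1}|\phi_{\al,\beta}(x)|<\infty$ from Section~\ref{hm:sec_preliminaries} (uniform over the compact set $\overline{\Theta}$, since $\inf_{\overline{\Theta}}\al>1$), the second moments $\E[|F(\ep_{0,1}(\theta))|^2]$ are finite and bounded uniformly in $\theta\in\overline{\Theta}$; in particular $\sup_\theta|m_F(\theta)|<\infty$.

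For $A_n(\theta)$ I would note that $\frac1n\sumj Y_{t_{j-1}}^k=\frac1T\sumj Y_{t_{j-1}}^k h$ is a left Riemann sum for $\overline{Y^k}$ and estimate, using $|a^k-b^k|\le k\max(|a|,|b|)^{k-1}|a-b|$, the Cauchy--Schwarz inequality, and the two moment bounds,
\begin{equation}
\E\left|\frac1n\sumj Y_{t_{j-1}}^k-\overline{Y^k}\right|
\le\frac1T\sumj\int_{t_{j-1}}^{t_j}\E\big|Y_{t_{j-1}}^k-Y_t^k\big|\,dt
\lesssim_u h^{1/2}\longrightarrow 0
\end{equation}
uniformly in $\theta$, so $A_n(\theta)\cip_u m_F(\theta)\,\overline{Y^k}=\E[F(\ep_{0,1}(\theta))]\,\overline{Y^k}$. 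For $R_n(\theta)$, I would set $\zeta_{n,j}:=(F(\ep_{0,j}(\theta))-m_F(\theta))Y_{t_{j-1}}^k$; since $Y_{t_{j-1}}^k$ is $\mcf_{t_{j-1}}$-measurable while $\ep_{0,j}(\theta)$ is independent of $\mcf_{t_{j-1}}$, the array $(\zeta_{n,j})_j$ is a martingale difference for $(\mcf_{t_j})_j$, so the cross terms vanish and, using the same independence once more,
\begin{equation}
\E[R_n(\theta)^2]=\frac1{n^2}\sumj\E\big[(F(\ep_{0,1}(\theta))-m_F(\theta))^2\big]\,\E\big[Y_{t_{j-1}}^{2k}\big]\lesssim_u\frac1n\longrightarrow 0
\end{equation}
uniformly in $\theta$ (here $Y_{t_{j-1}}^{2k}=|Y_{t_{j-1}}|^{2k}$ with $\E[|Y_{t_{j-1}}|^{2k}]\lesssim_u1$), whence $R_n(\theta)\cip_u0$. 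Adding the two contributions gives the claim.

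The step I expect to demand the most care is not any single estimate but the uniformity in $\theta$: one must verify that every $\lesssim_u$-constant above is genuinely independent of $\theta\in\overline{\Theta}$, which comes down to the uniform-over-$\overline{\Theta}$ form of the tail and smoothness bounds for $\phi_{\al,\beta}$ (compactness of $\overline{\Theta}$ and continuity of those bounds in $(\al,\beta)$) and to the fact that the localization threshold $M=M(\ve)$ of Section~\ref{hm:sec_localization} can be chosen uniformly in $\theta$. The remainder is a routine martingale $L^2$ bound together with a Riemann-sum approximation.
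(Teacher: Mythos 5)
Your proposal is correct and follows essentially the same route as the paper: the same decomposition into $\E[F(\ep_{0,1}(\theta))]\cdot\frac1n\sumj Y_{t_{j-1}}^k$ plus a martingale-difference remainder (the paper kills the remainder via the Burkholder inequality under localization, you via the equivalent direct $L^2$ orthogonality computation), followed by the same Riemann-sum approximation of $\overline{Y^k}$ using $\E[|Y_t-Y_{t_{j-1}}|^2]\lesssim_u h$ and Cauchy--Schwarz. The uniformity and integrability caveats you flag are handled in the paper exactly as you anticipate, through the localization of Section \ref{hm:sec_localization} and the uniform tail bounds on $\phi_{\al,\beta}$.
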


\begin{proof}
We only need to consider $k\ge 1$, for the case of $k=0$ is trivial.
Let $\chi_j(\theta) \coloneqq (F(\ep_{0,j}(\theta))- \E[F(\ep_{0,j}(\theta))])Y_{t_{j-1}}^k$, which defines a martingale array with respect to $(\mcf_{t_j})_{j=0}^{n}$ for each $n$. 
Then,
\begin{align}
    \frac{1}{n} \sumj F(\ep_{0,j}(\theta)) Y_{t_{j-1}}^k
    &= \frac{1}{n} \sumj \chi_j(\theta) + 
    E_\theta[F(\ep_{0,1}(\theta))]\, \frac{1}{n} \sumj Y_{t_{j-1}}^k
\end{align}
Under the localization (Section \ref{hm:sec_localization}), the first term on the right-hand side is $O_{u,p}(n^{-1/2})$ thanks to the Burkholder inequality, hence negligible.
It remains to show that
\begin{equation}
\frac{1}{n} \sumj Y_{t_{j-1}}^k \cip_u \overline{Y^k}.
\end{equation}

For each $\omega\in\Omega$, $t\mapsto Y_t(\omega)^{k-1}$ is {\cadlag} and hence Riemann integrable. 
We have
\begin{align}
\left|\frac{1}{n} \sumj Y_{t_{j-1}}^k - \overline{Y^k} \right|
&\leq \frac{1}{n} \sumj \frac1h \int_{t_{j-1}}^{t_j} |Y_{t_{j-1}}^k - Y_t^k| dt\\
&\lesssim \frac{1}{n} \sumj \frac1h \int_{t_{j-1}}^{t_j} \left( 1 + |Y_t| + |Y_{t_{j-1}}|\right)^C |Y_t - Y_{t_{j-1}}|dt.
\end{align}
Taking the expectation and then using the Cauchy-Schwarz inequality under the localization, the rightmost side in the last display turns out to be $o_{u,p}(1)$. 
The proof is complete.
\end{proof}

\begin{cor}\label{hm:cor_aux.cip}
Under \eqref{hm:F_log.growth}, for any $k>0$,
\begin{equation}
\frac{1}{n} \sumj F(\ep_{j}(\theta)) Y_{t_{j-1}}^k \cip_u \E[F(\ep_{0,1}(\theta))] \, 
\overline{Y^k}.
\end{equation}
\end{cor}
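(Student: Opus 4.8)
The plan is to reduce everything to Lemma~\ref{lem:conv_LLN}: since that lemma already gives the asserted convergence with $\ep_{0,j}(\theta)$ in place of $\ep_{j}(\theta)$, it suffices to prove that replacing $\ep_{0,j}(\theta)$ by $\ep_{j}(\theta)$ inside the empirical average costs only a $\cip_u 0$ term, i.e.
\[
D_n(\theta)\coloneqq\frac1n\sumj\bigl(F(\ep_j(\theta))-F(\ep_{0,j}(\theta))\bigr)Y_{t_{j-1}}^{k}\cip_u 0 ,
\]
and then invoke the triangle inequality together with Lemma~\ref{lem:conv_LLN}. (The argument below works verbatim for every $k\ge 0$ with the convention $Y^0\equiv1$, hence in particular for the stated range $k>0$.)

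First I would use the growth condition \eqref{hm:F_log.growth} with $l=1$: because $\p_yF$ is continuous and bounded outside a compact set, it is bounded on all of $\mbbr$, and — this is the one point requiring some care — the bound is uniform in $\theta$, since in the applications $F$ is one of $\psi_{\al,\be}, f_{\al,\be}, g_{\al,\be}$ or a $y$-derivative of one of them, whose uniform-in-$(\al,\be)$ estimates follow from the density bounds in Section~\ref{hm:sec_preliminaries}. Thus $|F(a)-F(b)|\le C|a-b|$ for all $a,b\in\mbbr$ with $C$ independent of $\theta$, whence
\[
|D_n(\theta)|\le C\,\frac1n\sumj|\ep_j(\theta)-\ep_{0,j}(\theta)|\,|Y_{t_{j-1}}|^{k}.
\]
Then I would substitute the exact identity recorded in Section~\ref{sec:empirical_func}, namely $\ep_j(\theta)-\ep_{0,j}(\theta)=-h^{-1/\al}\sig^{-1}\lam\int_{t_{j-1}}^{t_j}(Y_s-Y_{t_{j-1}})\,ds$, and bound the integral by $\int_{t_{j-1}}^{t_j}|Y_s-Y_{t_{j-1}}|\,ds$ to obtain, uniformly in $\theta$,
\[
|D_n(\theta)|\lesssim_u h^{1-1/\al}\,\frac1n\sumj\Bigl(\frac1h\int_{t_{j-1}}^{t_j}|Y_s-Y_{t_{j-1}}|\,ds\Bigr)|Y_{t_{j-1}}|^{k}.
\]

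Finally I would take expectations under the localization of Section~\ref{hm:sec_localization}, on whose event $\Omega_{\ve,M}(\al,\be)$ we have $\E[\sup_{t\le T}|Y_t|^{K}]\lesssim_u1$ for all $K>0$ and $\E[|Y_s-Y_{t_{j-1}}|^2]\lesssim_u h$ for $s\in[t_{j-1},t_j]$. By Jensen's inequality and Cauchy--Schwarz, $\E\bigl[(\tfrac1h\int_{t_{j-1}}^{t_j}|Y_s-Y_{t_{j-1}}|\,ds)\,|Y_{t_{j-1}}|^{k}\bigr]\lesssim_u h^{1/2}$, so that $\E[|D_n(\theta)|\,;\,\Omega_{\ve,M}(\al,\be)]\lesssim_u h^{3/2-1/\al}\to0$, the exponent being positive since $\al\in(1,2)$. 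Markov's inequality combined with $\sup_\theta\pr[\Omega_{\ve,M}(\al,\be)^c]<\ve$ then gives $\sup_\theta\pr[|D_n(\theta)|>\ve]\to0$ for every $\ve>0$, i.e. $D_n(\theta)\cip_u0$, which completes the reduction. I do not expect a genuine obstacle: the proof is essentially a single global Lipschitz bound for $F$ together with the small-time estimate $\E[|Y_s-Y_{t_{j-1}}|^2]\lesssim h$ already established in the localization step, and the only thing needing attention is the uniformity in $\theta$ of the various constants, handled throughout by the $\lesssim_u$ bookkeeping and the passage to the localized event.
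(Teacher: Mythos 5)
Your proposal is correct and follows essentially the same route as the paper's proof: a global Lipschitz bound on $F$ from \eqref{hm:F_log.growth}, the identity $\ep_j(\theta)-\ep_{0,j}(\theta)=-h^{-1/\al}\sig^{-1}\lam\int_{t_{j-1}}^{t_j}(Y_s-Y_{t_{j-1}})\,ds$, the localization of Section \ref{hm:sec_localization}, and reduction to Lemma \ref{lem:conv_LLN}. The only (immaterial) difference is that the paper substitutes the explicit solution \eqref{eq:ssOU} to split the increment into drift and jump parts and obtain the rate $O_{u,p}(h)$, whereas you apply Cauchy--Schwarz with $\E[|Y_s-Y_{t_{j-1}}|^2]\lesssim_u h$ directly and get the weaker but still sufficient rate $O_{u}(h^{3/2-1/\al})$.
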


\begin{proof}
By the expression \eqref{eq:ssOU} and since $\p_\ep F(\ep)$ is bounded, we have
\begin{align}
    & \left| \frac{1}{n} \sumj F(\ep_{j}(\theta)) Y_{t_{j-1}}^k - \frac{1}{n} \sumj F(\ep_{0,j}(\theta)) Y_{t_{j-1}}^k \right| \nn\\
    &\lesssim \frac1n \sumj |Y_{t_{j-1}}|^k h^{-1/\al} \int_{t_{j-1}}^{t_j} |Y_s - Y_{t_{j-1}}| ds \nn\\
    &\lesssim \frac1n \sumj |Y_{t_{j-1}}|^k \frac1h \int_{t_{j-1}}^{t_j} 
    \left( h^{2-1/\al}(1+|Y_{t_{j-1}}|) + h\,
    \left|h^{-1/\al}\int_{t_{j-1}}^s e^{-\lam (t_j-s)}dJ_s\right|
    \right) ds
\end{align}
Through the localization argument, the rightmost side turns out to equal
\begin{equation}
    O_{u,p}(h^{2-1/\al} \vee h)=O_{u,p}(h)=o_{u,p}(1).
\end{equation}
This, along with Lemmas \ref{lem:conv_LLN}, establishes the claim.
\end{proof}

\subsubsection{Convergence}
\label{sec:conv_obs}

The objective of this section is to prove \eqref{condi:obs_conv}. Let us recall the order of the components of the parameter $\theta = (\lam,\mu,\al,\sig, \be)$.
Write $\p_{\theta}^2 \mbbh_n(\theta)$ as
\begin{align}
\p_{\theta}^2 \mbbh_n(\theta) 
&= \begin{pmatrix}
H_{11,n}(\theta)& H_{12,n}(\theta) \\
H_{12,n}^{\top}(\theta) & H_{22,n}(\theta)
\end{pmatrix},
\end{align}
where $H_{11,n}(\theta) \in \mbbr^2 \otimes \mbbr^2, H_{12,n}(\theta) \in \mbbr^2 \otimes \mbbr^3$, and $H_{22,n}(\theta) \in \mbbr^3 \otimes \mbbr^3$. 
Recalling the form \eqref{hm:def_vp} of $\vp_n(\theta)$, we have
\begin{align}
\mathcal{I}_n(\theta)
&= -\begin{pmatrix}
r_n^{-2}H_{11,n}(\theta) & \text{Sym.} \\
\ds{\frac{1}{\sqrt{n}r_n \xi_h(\al)} \tilde{\varphi}_n(\theta)^{\top} H_{12,n}^{\top}(\theta)} & \ds{\frac{1}{n \xi_h(\al)^2} \tilde{\varphi}_n(\theta)^{\top} H_{22,n}(\theta)\tilde{\varphi}_n(\theta)}
\end{pmatrix}.
\end{align}
Below, we will demonstrate convergence and positive definiteness of the limit.


Let us recall the notation $\xi_h(\al) = (1-h^{1-1/\al})t_{\al}$. To handle the second derivatives of $\mbbh_n(\theta)$, we prove the following lemma.

\begin{lem}\label{lem:xi_asymp}
The following properties hold for $h\to 0$ (The primes therein stand for the partial derivatives with respect to $\al$).
\begin{enumerate}
  \item Asymptotic behavior of $\xi_h(\al)$:
  \begin{equation}
  \xi_h(\alpha) =
  \begin{cases}
    t_{\al} + O(h^{1 - 1/\alpha}) & (\alpha > 1) \\
    -h^{1 - 1/\alpha} t_{\al} + O(1) & (\alpha < 1) \\
    -\dfrac{2}{\pi} \, \overline{l} & (\alpha = 1)
  \end{cases}
  \end{equation}
  hence
  \begin{equation}
  \lim_{h \rightarrow 0} \xi_h(\alpha) =
  \begin{cases}
    t_{\al} & (\alpha > 1) \\
    -\infty & (\alpha \leq 1)
  \end{cases}
  \end{equation}

  \item First-order derivative:
\begin{equation}
  \partial_\alpha \xi_h(\alpha) =
  \begin{cases}
    t_{\al}' + O(h^{1 - 1/\alpha} \, \overline{l}) & (\alpha > 1) \\
    h^{1 - 1/\alpha} \, \overline{l} \, \alpha^{-2} t_{\al} + O(h^{1 - 1/\alpha}) & (\alpha < 1) \\
    \infty & (\alpha = 1)
  \end{cases}
  \end{equation}
  hence
  \begin{equation}
  \lim_{h \rightarrow 0} \partial_\alpha \xi_h(\alpha) =
  \begin{cases}
    \dfrac{\pi}{2} \left( \cos \dfrac{\alpha \pi}{2} \right)^{-2} & (\alpha > 1) \\
    \infty & (\alpha \leq 1)
  \end{cases}
  \end{equation}
  
  \item Logarithmic derivative:
  \begin{equation}
  \lim_{h \rightarrow 0} \frac{\partial_\alpha \xi_h(\alpha)}{\xi_h(\alpha)} =
  \begin{cases}
    \dfrac{\pi}{2} \left( \cos \dfrac{\alpha \pi}{2} \right)^{-2}  t_{\al}^{-1}, & (\alpha > 1) \\
    -\infty. & (\alpha < 1)
  \end{cases}
  \end{equation}
  \item Second derivative:
  \begin{equation}
  \partial_\alpha^2 \xi_h(\alpha) =
  \begin{cases}
     t_{\al}'' + O(h^{1 - 1/\alpha} \, \overline{l}^2), & (\alpha > 1) \\
    h^{1 - 1/\alpha} \, \overline{l}^2 \, \alpha^{-4} t_{\al} + O(h^{1 - 1/\alpha} \overline{l}), & (\alpha < 1) \\
    \infty, & (\alpha = 1)
  \end{cases}
  \end{equation}
  and
  \begin{equation}
  \lim_{h \rightarrow 0} \p_\alpha^2 \xi_h(\alpha) =
  \begin{cases}
    \dfrac{\pi^2}{2} t_{\al} \left( \cos \dfrac{\alpha \pi}{2} \right)^{-2}, & (\alpha > 1) \\
    \infty. & (\alpha \leq 1)
  \end{cases}
  \end{equation}
\end{enumerate}
\end{lem}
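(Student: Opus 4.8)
The plan is to treat each of the four items as a routine but careful asymptotic expansion of the elementary function $\xi_h(\al)=(1-h^{1-1/\al})t_\al$ around $h=0$, keeping track of the crucial fact that the exponent $1-1/\al$ changes sign at $\al=1$ and degenerates (the power becomes $h^0=1$) exactly there, which is where all the singular behaviour comes from. I would first record the one elementary limit underlying everything: writing $h^{1-1/\al}=\exp\{(1-1/\al)\log h\}=\exp\{-(1-1/\al)\overline l\}$, we have $h^{1-1/\al}\to 0$ for $\al>1$, $h^{1-1/\al}\to\infty$ for $\al<1$, and $h^{1-1/\al}=1$ identically for $\al=1$. Likewise $\p_\al h^{1-1/\al}=h^{1-1/\al}\cdot\al^{-2}\log h=-\al^{-2}\overline l\,h^{1-1/\al}$, and $\p_\al^2 h^{1-1/\al}=h^{1-1/\al}(\al^{-4}\overline l^2-2\al^{-3}\overline l)$; these three formulas, together with the smoothness and non-vanishing of $\al\mapsto t_\al=\tan(\al\pi/2)$ on $(1,2)$ (and on $(0,1)$), generate every claim.

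For item (1), I would just substitute: for $\al>1$, $\xi_h(\al)=t_\al-t_\al h^{1-1/\al}=t_\al+O(h^{1-1/\al})$, and since $t_\al>0$ is bounded away from $0$ and $\infty$ on compact subsets of $(1,2)$ while $h^{1-1/\al}\to0$, the limit is $t_\al$; for $\al<1$ the dominant term is $-t_\al h^{1-1/\al}$ which blows up (with sign: $t_\al>0$ on $(0,1)$ too, so $\xi_h\to-\infty$); for $\al=1$ we use the paper's own limiting relation $\lim_{\al\to1}t_\al(|u|^{1-\al}-1)=(2/\pi)\log|u|$ — equivalently, $t_\al(h^{1-1/\al}-1)\to(2/\pi)(1/\al^2)\log h\big|_{\al=1}$ via l'Hôpital, giving $\xi_h(1)=-(2/\pi)\overline l+o(\overline l)$, i.e. the stated $-\tfrac{2}{\pi}\overline l$ up to lower-order terms. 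Items (2) and (4) are obtained by differentiating $\xi_h(\al)=t_\al-t_\al h^{1-1/\al}$ under the sign once and twice, using the product rule and the three displayed derivative formulas for $h^{1-1/\al}$; the leading term for $\al>1$ is $t_\al'$ (resp. $t_\al''$) with error $O(h^{1-1/\al}\overline l)$ (resp. $O(h^{1-1/\al}\overline l^2)$) because each extra $\al$-derivative on $h^{1-1/\al}$ costs a factor $\overline l$; for $\al<1$ the leading term picks up the blow-up of $h^{1-1/\al}$ with the accompanying powers of $\overline l$, and at $\al=1$ the derivative of the $-(2/\pi)\overline l$ behaviour (which itself diverges) forces $+\infty$. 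The explicit constants $\tfrac\pi2(\cos\tfrac{\al\pi}{2})^{-2}$ and $\tfrac{\pi^2}{2}t_\al(\cos\tfrac{\al\pi}{2})^{-2}$ are simply $t_\al'$ and $t_\al''$ written out, using $t_\al'=\tfrac\pi2\sec^2(\al\pi/2)$ and $t_\al''=\tfrac{\pi^2}{2}\sec^2(\al\pi/2)\tan(\al\pi/2)$. Item (3) is then the quotient of (2) and (1): for $\al>1$ both numerator and denominator converge to finite nonzero limits, giving $t_\al'/t_\al$; for $\al<1$ the numerator and denominator share the factor $h^{1-1/\al}$, which cancels, leaving a ratio of the form $(\overline l\,\al^{-2}t_\al)/(-t_\al)=-\al^{-2}\overline l\to-\infty$.

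I do not anticipate a genuine obstacle — the statement is an exercise in Taylor/asymptotic bookkeeping — but the one place to be careful is the $\al=1$ column, where the naive substitution $h^{1-1/\al}=1$ makes $\xi_h$ look constant ($=0$) and all the information sits in how the $\al\ne1$ expansions degenerate as $\al\to1$; there one must either invoke the continuity of Nolan's parametrization already established in the excerpt (the relation $\lim_{\al\to1}t_\al(|u|^{1-\al}-1)=(2/\pi)\log|u|$) or do the l'Hôpital computation directly on $t_\al(h^{1-1/\al}-1)$ as $\al\to1$. A secondary point worth a sentence is that all the $O(\cdot)$ and limit statements are meant uniformly on compact $\al$-intervals bounded away from $1$ (for the $\al>1$ and $\al<1$ rows), which is automatic since $t_\al$, $t_\al'$, $t_\al''$ are continuous and bounded there and $h^{1-1/\al}=e^{-(1-1/\al)\overline l}$ converges uniformly on such intervals. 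With these two caveats handled, each of the twelve displayed formulas follows by direct substitution and differentiation.
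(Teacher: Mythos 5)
Your proposal follows exactly the route the paper intends --- its own proof is the single sentence ``elementary calculus together with the Taylor expansions in $h^{1-1/\al}$'' --- and for the rows the paper actually relies on ($\al>1$ throughout, and $\al<1$ in items (1)--(3)) your computation is correct: the three derivative formulas for $h^{1-1/\al}$ plus the product rule yield each displayed expansion and limit. Three slips worth fixing, none of which affects the stated orders: $t_\al=\tan(\al\pi/2)$ is \emph{negative} on $(1,2)$, not positive (harmless, since for $\al>1$ only boundedness of $t_\al$ away from $0$ and $\infty$ is used); the subleading term of $\p_\al^2 h^{1-1/\al}$ is $+2\al^{-3}\overline{l}\,h^{1-1/\al}$, not $-2\al^{-3}\overline{l}\,h^{1-1/\al}$, because $\log h=-\overline{l}$; and your intermediate l'H\^opital value $(2/\pi)\al^{-2}\log h$ for $\lim_{\al\to1}t_\al(h^{1-1/\al}-1)$ has the wrong sign (the correct limit is $(2/\pi)\overline{l}$), which is precisely what makes your final answer $\xi_h(1)=-(2/\pi)\overline{l}$ come out right.

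The one place where your argument as written does not reach the claim is the $\al=1$ column of items (2) and (4). ``The derivative of the $-(2/\pi)\overline{l}$ behaviour forces $+\infty$'' is not an argument: $-(2/\pi)\overline{l}$ is the \emph{value} of $\xi_h$ at $\al=1$ and says nothing about its $\al$-derivative there. The naive product rule gives $\p_\al\xi_h=t_\al'(1-h^{1-1/\al})+t_\al\al^{-2}\overline{l}\,h^{1-1/\al}$, which at $\al=1$ is an indeterminate combination whose leading $(\al-1)^{-1}$ contributions cancel; one must expand to second order in $\ep=\al-1$, e.g.\ $\xi_h(1+\ep)=-\tfrac{2}{\pi}\overline{l}+\tfrac{\ep}{\pi}\left(2\overline{l}+\overline{l}^2\right)+O(\ep^2)$, to conclude $\p_\al\xi_h(1)=\pi^{-1}(2\overline{l}+\overline{l}^2)\to\infty$, and similarly for $\p_\al^2\xi_h(1)$. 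Since $\overline{\Theta}$ excludes $\al\le1$, this is immaterial for the paper's main results, but it is the only entry your proof does not actually establish. (Incidentally, the same bookkeeping shows that for $\al<1$ the leading term of $\p_\al^2\xi_h$ is $-t_\al\al^{-4}\overline{l}^2h^{1-1/\al}$ with $t_\al>0$, so $\p_\al^2\xi_h\to-\infty$ there; the sign in the statement's $\al<1$ row of item (4) should be read accordingly, and your noncommittal phrasing ``picks up the blow-up with the accompanying powers of $\overline{l}$'' glosses over this.)
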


\begin{proof}
Each expansion follows from elementary calculus 
together with the Taylor expansions in $h^{1 - 1/\alpha}$. 
\end{proof}


Let $\zeta_1 \coloneqq \zeta(\ep_{0,1}(\theta))$ for $\zeta = \psi,f,g$ and 
\begin{equation}
\overline{Y}_n \coloneqq \frac{1}{n} \sumj Y_{t_{j-1}} 
\qquad 
\text{(Then, $\overline{Y}_n \cip \overline{Y^1}$)}.    
\end{equation}
By Corollary \ref{hm:cor_aux.cip}, 
Lemma \ref{lem:xi_asymp}, and the law of large numbers (for i.i.d. random variables), the following convergences hold:
\begin{align}
R_{1,n} &\coloneq \frac{1}{n \xi_h(\al)} \sumj \left( (\p_{\al} \xi_h(\al)) \be \psi_j - f_j \right)\psi_j\\
& \cip_u \frac{1}{t_{\al}} \E \left [ \left( \frac{\pi \be}{2 \left(\cos \frac{\al \pi}{2}\right)^2}\psi_1 - f_1 \right) \psi_1\right] \eqqcolon R_1, 
\\
R_{2,n} &\coloneq \frac{-1}{n \xi_h(\al)} \sumj (\ep_j(\theta) + \be \xi_h(\al)) \psi_j^2,\\
&\cip_u \frac{-1}{t_{\al}} \E \left[ \left(\ep_{0,1}(\theta) + \be t_{\al} \right) \psi_1^2 \right],\\
&= \frac{-1}{t_{\al}} \E \left[ \left( \be t_{\al} \psi_1 + \ep_{0,1}(\theta) \psi_1 + 1 \right) \psi_1\right] \eqqcolon R_2,
\\
R_{3,n} &\coloneq \frac{1}{n \xi_h(\al)} \sumj ( \xi_h(\al) \psi_j - g_j)\psi_j \cip_u \frac{1}{t_{\al}} \E\left[ \left( t_{\al} \psi_1 - g_1 \right)\psi_1\right] \eqqcolon R_3,
\\
Q_{1,n} &\coloneq \frac{1}{n \xi_h(\al)^2} \sumj (\be (\p_{\al} \xi_h(\al)) \psi_j - f_j)^2\\
&= \frac{1}{t_{\al}^2} \E \left[ \left( \frac{\pi \be}{2 \left( \cos \frac{\al \pi}{2} \right)^2} \psi_1 - f_1 \right)^2  \right] \eqqcolon Q_1,
\\
Q_{2,n} &\coloneq \frac{1}{n \xi_h(\al)^2} \sumj (\be ( \p_{\al} \xi_h(\al)) \psi_j - f_j)( (\ep_j(\theta) + \be \xi_h(\al))\psi_j + 1)\\
&\cip_u \frac{1}{ t_{\al}^2} \E \left[ \left( \frac{\pi \be}{2 \left(\cos \frac{\al \pi}{2}\right)^2}\psi_1 - f_1 \right) \left(\left( \ep_{0,1}(\theta) + \be t_{\al} \right)\psi_1 + 1 \right)   \right] \eqqcolon Q_2,
\\
Q_{3,n} &\coloneq \frac{1}{n \xi_h(\al)^2} \sumj ( (\ep_j(\theta) + \be \xi_h(\al))\psi_j + 1 )^2\\
&\cip_u \frac{1}{t_{\al}^2} \E\left[\left( \left( \ep_{0,1}(\theta) + \be t_{\al}\right) \psi_1  + 1 \right)^2  \right] 
\eqqcolon Q_3,
\\
Q_{4,n} &\coloneq \frac{1}{n \xi_h(\al)} \sumj (\xi_h(\al)\psi_j - g_j)(\be (\p_{\al} \xi_h(\al)) \psi_j - f_j)\\
&\cip_u \E \left[ \left( \left( \frac{\pi \be}{2 \left(\cos \frac{\al \pi}{2}\right)^2 } \right) \psi_1 -f_1 \right) \left(t_{\al}\psi_1 - g_1 \right) \right] 
\eqqcolon Q_4,
\\
Q_{5,n} &\coloneq \frac{1}{n \xi_h(\al)} \sumj ((\ep_j(\theta) + \be \xi_h(\al))\psi_j + 1)( \xi_h(\al) \psi_j - g_j)\\
&\cip_u \frac{1}{t_{\al}^2}\E \left[ \left(\left( \ep_{0,1}(\theta) + \be t_{\al}\right)\psi_1 + 1 \right) \left(t_{\al} \psi_1 - g_1 \right) \right] 
\eqqcolon Q_5,
\\
Q_{6,n} &\coloneq \frac{1}{n \xi_h(\al)^2} \sumj (\xi_h(\al) \psi_j - g_j)^2 \cip_u \frac{1}{t_{\al}^2} \E\left[ \left( t_{\al} \psi_1 - g_1 \right)^2 \right] 
\eqqcolon Q_6. 
\end{align}

\begin{lem}\label{lem:2deriv_conv}
We have the following asymptotic properties.
\begin{align}
-\frac{1}{r_n^2}\p_{\lam}^2 \mbbh_n(\theta) &= \sig^{-2} \frac{1}{n} \sumj Y_{t_{j-1}}^2 \psi_j^2 + O_{u,p}\left( \frac{1}{\sqrt {n}}\right),\\
-\frac{1}{r_n^2} \p_{\lam} \p_{\mu} \mbbh_n(\theta) &= -\sig^{-2} \frac{1}{n} \sumj Y_{t_{j-1}} \psi_j^2 + O_{u,p}\left( \frac{1}{\sqrt {n}}\right),\\
-\frac{1}{r_n^2} \sumj \p_{\mu}^2 \mbbh_n(\theta) &= \sig^{-2} \frac{1}{n} \sumj \psi_j^2 + O_{u,p}\left( \frac{1}{\sqrt {n}}\right),\\
\frac{-1}{\sqrt{n}r_n \xi_h(\al)} \p_{\lam}\p_{\al} \mbbh_n(\theta)
&= -\sig^{-1} R_1 \overline{Y}_n + \al^{-2} \sig^{-1} \overline{l} R_2 \overline{Y}_n + O_{u,p}\left(\frac{\overline{l}}{\sqrt{n}} \right),\\
\frac{-1}{\sqrt{n}r_n \xi_h(\al)} \p_{\al} \p_{\sig} \mbbh_n(\theta) &= \sig^{-2} R_2 \overline{Y}_n + O_{u,p} \left( \frac{1}{\sqrt{n}}\right),\\
\frac{-1}{\sqrt{n} r_n \xi_h(\al)} \p_{\lam} \p_{\be} \mbbh_n(\theta)&= -\sig^{-1} R_3 \overline{Y}_n + O_{u,p}\left(\frac{1}{\sqrt{n}}\right),\\
\frac{-1}{\sqrt{n}r_n \xi_h(\al)} \p_{\mu} \p_{\al} \mbbh_n(\theta)
&= \sig^{-1} R_{1,n} -\al^{-2} \sig^{-1} \overline{l}R_{2,n} + O_{u,p}\left(\frac{\overline{l}}{\sqrt{n}}\right),\\
\frac{-1}{\sqrt{n}r_n \xi_h(\al)} \p_{\mu} \p_{\sig} \mbbh_n(\theta)&= -\sig^{-2} R_{2,n} + O_{u,p}\left( \frac{1}{\sqrt{n}}\right),\\
\frac{-1}{\sqrt{n} r_n \xi_h(\al)} \p_{\mu} \p_{\be} \mbbh_n
&= \sig^{-1} R_{3,n},\\
-\frac{1}{n \xi_h(\al)^2} \p_{\al}^2 \mbbh_n(\theta) &= Q_{1,n} + 2\al^{-2} \overline{l} Q_{2,n} + \al^{-4} \overline{l}^2 Q_{3,n} + O_{u,p} \left( \frac{\overline{l}^2}{\sqrt{n}}\right),\\
-\frac{1}{n \xi_h(\al)^2} \p_{\al} \p_{\sig} \mbbh_n(\theta) 
&= \sig^{-1} Q_{2,n} + \al^{-2} \sig^{-1} Q_{3,n} \overline{l} + O_{u,p}\left( \frac{\overline{l}}{\sqrt{n}}\right),\\
-\frac{1}{n \xi_h(\al)^2} \p_{\sig}^2 \mbbh_n(\theta)
&=\sig^{-2} Q_{3,n} + O_{u,p}\left(\frac{1}{\sqrt{n}}\right),\\
-\frac{1}{n \xi_h(\al)} \p_{\al} \p_{\be} \mbbh_n(\theta)
&= Q_{4,n} + \al^{-2} \overline{l} Q_{5,n} + O_{u,p}\left( \frac{\overline{l}}{\sqrt{n}}\right),\\
-\frac{1}{n \xi_h(\al)^2} \p_{\sig} \p_{\be} \mbbh_n(\theta)
&=  \sig^{-1} Q_{5,n} + O_{u,p}\left(\frac{1}{\sqrt{n}}\right),\\
-\frac{1}{n \xi_h(\al)^2} \p_{\be}^2 \mbbh_n(\theta)
&= Q_{6,n} + O_{u,p}\left(\frac{1}{\sqrt{n}}\right).
\end{align}
\end{lem}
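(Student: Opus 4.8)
The plan is as follows. The proof is a direct, if lengthy, computation: each of the fifteen identities is obtained by substituting into the explicit second-derivative formulas of Section~\ref{sec:part_deri} and sorting the resulting terms by their order in the two divergent scales $h^{1-1/\al}$ and $\overline{l}=\log(1/h)$. First I would note that $\varphi_n(\theta)$ imposes exactly three scaling regimes: the $(\lam,\mu)$-block gets divided by $r_n^2 = n h^{2(1-1/\al)}$, the $(\lam,\mu)\times(\al,\sig,\be)$ cross-block by $\sqrt{n}\,r_n\,\xi_h(\al) = n h^{1-1/\al}\xi_h(\al)$, and the $(\al,\sig,\be)$-block by $n\xi_h(\al)^2$ (or by $n\xi_h(\al)$ for the entries paired with the $\be$-column, since $\ep_{(\be),j}=-\xi_h(\al)$ already carries one power of $\xi_h$). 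Because $\ep_{(\lam),j}$ and $\ep_{(\mu),j}$ each carry precisely one factor $h^{1-1/\al}$, while the logarithmic factors are produced only through $\p_{\al}h^{\pm1/\al}=\mp\al^{-2}h^{\pm1/\al}\overline{l}$, these scalings cancel cleanly against the prefactors in every term, leaving each normalized entry as a finite linear combination, with coefficients that are explicit functions of $\sig^{-1},\al,\be,\overline{l},\overline{l}^2$, of empirical functionals of the form $\frac1n\sumj G(\ep_j(\theta))\,Y_{t_{j-1}}^k$ with $k\in\{0,1,2\}$.

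Next I would isolate the leading terms. Writing $\tilde{\ep}_j := \ep_j(\theta)+\be\,\xi_h(\al)$, so that the principal parts of the residual-derivatives are $\ep_{(\al),j} = -\al^{-2}\overline{l}\,\tilde{\ep}_j - \be\,\p_{\al}\xi_h(\al)$, $\ep_{(\sig),j}=-\sig^{-1}\tilde{\ep}_j$, $\ep_{(\be),j}=-\xi_h(\al)$ (together with the analogous second-order expressions from Section~\ref{sec:part_deri}), I would substitute these, use the chain rule $\p_a[\psi_j]=\psi'_j\ep_{(a),j}+\psi_{(a),j}$ with $\psi_{(\al),j}=f'_j$ and $\psi_{(\be),j}=g'_j$, and collect the highest-order contributions; these are exactly the quantities $\frac1n\sumj Y_{t_{j-1}}^k\psi_j^2$, $\overline{Y}_n$, $R_{1,n},\dots,R_{3,n}$, $Q_{1,n},\dots,Q_{6,n}$ already introduced. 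Everything else is gathered into a remainder.

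Controlling the remainder rests on two observations used repeatedly. First, every leftover term has the form $c_n\cdot\frac1n\sumj\big(G(\ep_j(\theta))-\E[G(\ep_{0,1}(\theta))]\big)Y_{t_{j-1}}^k$ with $c_n\in\{1,\overline{l},\overline{l}^2\}$, and each such term is $O_{u,p}(c_n/\sqrt{n})$: by the estimate in the proof of Corollary~\ref{hm:cor_aux.cip} one first replaces $\ep_j(\theta)$ by $\ep_{0,j}(\theta)$ at cost $O_{u,p}(h)$, after which the martingale/Burkholder bound from the proof of Lemma~\ref{lem:conv_LLN}, combined with the moment bounds $\sup_{t\le T}\E[|Y_t|^K]\lesssim_u 1$ from Section~\ref{hm:sec_localization}, delivers the $O_{u,p}(1/\sqrt{n})$ rate. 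Second, the constants $\E[G(\ep_{0,1})]$ for the functions $G$ that actually arise --- combinations such as $\psi'+\psi^2\,(=\phi''/\phi)$, $f$, $g$, $f'+f\psi$, $g'+g\psi$, the scale-score $1+\tilde{\ep}\,\psi$, and $2+4\tilde{\ep}\,\psi+\tilde{\ep}^2(\psi'+\psi^2)$ --- all vanish by integration by parts, using the smoothness and rapid tail decay of $\phi_{\al,\be}$, $\phi_{(\al)}$, $\phi_{(\be)}$ recorded in Section~\ref{hm:sec_preliminaries} (so that $\int\phi''=\int x\phi''=0$, $\int x^2\phi''=2$, $\int\phi_{(\al)}=\int\p_x\phi_{(\al)}=0$, and so on; note that $\al>1$ is what makes these integrals converge). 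These identities are precisely what forces the surviving terms to be the stated $R_{k,n}$ and $Q_{k,n}$ --- for instance the constant $2\al^{-3}\overline{l}$ inside $\p_{\al}^2\mbbh_n$ is cancelled, up to an $O_{u,p}(\overline{l}/\sqrt{n})$ remainder, by the chain-rule term $2\al^{-3}\overline{l}\,\psi_j\tilde{\ep}_j$ because $\E[\tilde{\ep}_0\,\psi(\ep_0)]=-1$. Finally, wherever $\xi_h(\al)$, $\p_{\al}\xi_h(\al)$, or $\p_{\al}^2\xi_h(\al)$ must be replaced by its $h\to0$ limit, Lemma~\ref{lem:xi_asymp} bounds the error by $O(h^{1-1/\al}\overline{l}^p)$ uniformly on $\overline{\Theta}$, which is absorbed into the remainder since $\al\in(1,2)$.

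The main obstacle is organizational rather than conceptual: in the $(\al,\sig,\be)$-block the factor $\ep_{(\al),j}$ couples the divergent $\overline{l}$-scale with the $\xi_h(\al)$-scaling, so one must track carefully which products of $\psi_j,f_j,g_j$ (and their $y$- and $(\al,\be)$-derivatives) with $\tilde{\ep}_j$ land at order $\overline{l}^2$, $\overline{l}$, or $O(1)$ after dividing by $n\xi_h(\al)^2$ or $n\xi_h(\al)$, and then verify --- via the integration-by-parts identities above --- that nothing of order $\overline{l}$ or $\overline{l}^2$ survives in the limit except through $Q_{1,n},\dots,Q_{6,n}$. Once the $\ep$-versus-$\ep_0$ replacement and the Burkholder estimate are performed uniformly in $\theta$ as in Section~\ref{hm:sec_localization}, the uniformity of all fifteen expansions is automatic.
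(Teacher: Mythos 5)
Your proposal is correct and follows essentially the same route as the paper: a term-by-term substitution of the explicit second-derivative formulas from Section \ref{sec:part_deri}, collection of the leading contributions into the empirical functionals $\overline{Y}_n$, $R_{k,n}$, $Q_{k,n}$ via Corollary \ref{hm:cor_aux.cip} and Lemma \ref{lem:xi_asymp}, and absorption of the rest into $O_{u,p}(\overline{l}^p/\sqrt{n})$ remainders. The only difference is one of presentation --- you spell out explicitly the integration-by-parts identities (e.g.\ $\E[\phi''/\phi]=0$, $\E[1+\tilde{\ep}_0\psi(\ep_0)]=0$, $\E[2+4\tilde{\ep}_0\psi+\tilde{\ep}_0^2(\psi'+\psi^2)]=0$) and the Burkholder-based $O_{u,p}(1/\sqrt{n})$ rate for the centered martingale sums, both of which the paper uses implicitly when it replaces $\psi'_j$ by $-\psi_j^2+O_{u,p}(1/\sqrt{n})$ and cancels the $2\al^{-3}\overline{l}$ constant against $2\al^{-3}\overline{l}\,\tilde{\ep}_j\psi_j$.
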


We note that some of the expressions given in Lemma \ref{lem:2deriv_conv} contain some divergent terms (including $\overline{l}$) on the right-hand side, such as $\p_\al^2\mbbh_n(\theta)$. They will be cancelled out in the process of computing the limit of $\mci_n(\theta)$

\begin{proof}[Proof of Lemma \ref{lem:2deriv_conv}.]
From 
Corollary \ref{hm:cor_aux.cip} and Lemma \ref{lem:xi_asymp},
\begin{align}
-\frac{1}{r_n^2}\p_{\lam}^2 \mbbh_n(\theta) &=-\sig^{-2} \frac{1}{n} \sumj Y_{t_{j-1}} \psi'_j
= \sig^{-2} \frac{1}{n} \sumj Y_{t_{j-1}} \psi_j^2 + O_{u,p}\left(\frac{1}{\sqrt{n}}\right),\\
-\frac{1}{r_n^2} \p_{\lam} \p_{\mu} \mbbh_n(\theta) &= \sig^{-2} \frac{1}{n} \sumj Y_{t_{j-1}} \psi'_j
= -\sig^{-2} \frac{1}{n} \sumj Y_{t_{j-1}} \psi_j^2 + O_{u,p}\left(\frac{1}{\sqrt{n}}\right),\\
-\frac{1}{r_n^2} \p_{\mu}^2 \mbbh_n(\theta) &= -\sig^{-2} \frac{1}{n} \sumj \psi'_j
= \sig^{-2} \frac{1}{n} \sumj \psi_j^2 + O_{u,p} \left(\frac{1}{\sqrt{n}}\right),\\
\frac{-1}{\sqrt{n} r_n \xi_h(\al)} \p_{\lam} \p_{\al} \mbbh_n(\theta)&=  \frac{-1}{n \xi_h(\al)} \sumj \bigg[ \sig^{-1} \left( \psi_{(\al),j} - (\p_{\al} \xi_h(\al)) \be \psi'_j \right) Y_{t_{j-1}}\\
&\qquad - \al^{-2} \sig^{-1} \overline{l} (\ep_j(\theta) + \be \xi_h(\al)) \psi'_j Y_{t_{j-1}}\bigg] + O_{u,p} \left( \frac{\overline{l}}{\sqrt{n}} \right),\\
&= \frac{-\sig^{-1}}{n \xi_h(\al)} \sumj ((\p_{\al} \xi_h(\al)) \be \psi_j - f_j)\psi_j Y_{t_{j-1}}\\
&\qquad + \al^{-2} \sig^{-1} \overline{l} \frac{1}{n \xi_h(\al)} \sumj (\ep_j(\theta) + \be \xi_h(\al)) \psi'_j Y_{t_{j-1}} \nn\\
&{}\qquad + O_{u,p}\left( \frac{\overline{l}}{\sqrt{n}}\right)\\
&= \frac{- \sig^{-1}}{n \xi_h(\al)} \sumj (\ep_j(\theta) + \be \xi_h(\al)) \psi_j^2 \overline{Y}_n + O_{u,p}\left(\frac{\overline{l}}{\sqrt{n}}\right),\\
&= -\sig^{-1} R_{1,n} \overline{Y}_n + \al^{-2} \sig^{-1} \overline{l} R_{2,n} \overline{Y}_n + O_{u,p}\left( \frac{\overline{l}}{\sqrt{n}}\right)\\
&= -\sig^{-1} R_1 \overline{Y}_n + \al^{-2} \sig^{-1} \overline{l} R_2 \overline{Y}_n + O_{u,p}\left( \frac{\overline{l}}{\sqrt{n}} \right),\\
\frac{-1}{\sqrt{n} r_n \xi_h(\al)} \p_{\lam} \p_{\sig} \mbbh_n(\theta)&= \frac{1}{n \xi_h(\al)} \sumj \left(\sig^{-2} (\ep_j(\theta) + \be \xi_h(\al))\psi'_j Y_{t_{j-1}}\right) + O_{u,p}\left(\frac{1}{\sqrt{n}}\right)\\
&=\frac{\sig^{-2} h^{1-1/\al}}{\sqrt{n} r_n \xi_h(\al)} \sumj (\ep_j(\theta) + \be \xi_h(\al)) \psi'_j Y_{t_{j-1}} + O_{u,p} \left( \frac{1}{\sqrt{n}}\right)\\
&=  \frac{-\sig^{-2}}{n \xi_h(\al)} \sumj (\ep_j(\theta) + \be \xi_h(\al)) \psi_j^2 Y_{t_{j-1}} + O_{u,p} \left(\frac{1}{\sqrt{n}}\right)\\
&= \sig^{-2} R_2 \overline{Y}_n + O_{u,p}\left(\frac{1}{\sqrt{n}}\right),\\
\frac{-1}{\sqrt{n} r_n \xi_h(\al)} \p_{\lam} \p_{\be} \mbbh_n(\theta)&= \frac{-1}{n \xi_h(\al)} \sumj \sig^{-1} \left( \psi_{(\be),j} - \psi'_j \xi_h(\al) \right) Y_{t_{j-1}}\\
&= \frac{-\sig^{-1}}{n \xi_h(\al)} \sumj (\psi_{(\be),j} - \psi'_j \xi_h(\al))Y_{t_{j-1}}\\
&= \frac{-\sig^{-1}}{n \xi_h(\al)} \sumj (g'_j - g_j \psi_j + \xi_h(\al) \psi_j^2)Y_{t_{j-1}} + O_{u,p} \left(\frac{1}{\sqrt{n}}\right)\\
&= \frac{-\sig^{-1}}{n \xi_h(\al)} \sumj (\xi_h(\al) \psi_j - g_j)\psi_j Y_{t_{j-1}} + O_{u,p}\left(\frac{1}{\sqrt{n}} \right)\\
&= \frac{-\sig^{-1}}{n \xi_h(\al)} \sumj (\xi_h(\al) \psi_j - g_j)\psi_j \overline{Y}_n + O_{u,p} \left(\frac{1}{\sqrt{n}}\right)\\
&= -\sig^{-1} R_{3,n} \overline{Y} + O_{u,p} \left( \frac{1}{\sqrt{n}}\right),\\
\frac{-1}{\sqrt{n} r_n \xi_h(\al)} \p_{\mu} \p_{\al} \mbbh_n(\theta)
&= \frac{-1}{n \xi_h(\al)} \sumj \bigg[ - \sig ^{-1} \left( \psi_{(\al),j} - \be \psi'_j (\p_{\al} \xi_h(\al ))\right)\\
&\qquad + \al^{-2} \sig^{-1} \overline{l} (\ep_j(\theta) + \be \xi_h(\al)) \psi'_j\bigg] + O_{u,p}\left( \frac{\overline{l}}{\sqrt{n}}\right)\\
&= \frac{\sig^{-1}}{n \xi_h(\al)} \sumj \bigg\{ \bigg(f'_j - f_j \psi_j - \be (\p_{\al} \xi_h(\al)) \frac{\phi''_j}{\phi_j} 
\nn\\
&{}\qquad \qquad 
+ \be (\p_{\al} \xi_h(\al))\psi_j^2 \bigg)\\
&\qquad + \al^{-2} \overline{l} (\ep_j(\theta) + \be \xi_h(\al))\psi_j^2 \bigg\} + O_{u,p} \left(\frac{\overline{l}}{\sqrt{n}}\right)\\
&= \frac{\sig^{-1}}{n \xi_h(\al)} \sumj \bigg\{ ( \be (\p_{\al} \xi_h(\al)) \psi_j - f_j)\\
&\qquad + \al^{-2} \overline{l} (\ep_j(\theta) + \be \xi_h(\al)) \psi_j^2 \bigg\} + O_{u,p} \left(\frac{\overline{l}}{\sqrt{n}}\right)\\
&= \sig^{-1}R_{1,n} - \al^{-2} \sig^{-1}\overline{l} R_{2,n} + O_{u,p}\left( \frac{\overline{l}}{\sqrt{n}} \right),\\
\frac{-1}{\sqrt{n} r_n \xi_h(\al)} \p_{\mu} \p_{\sig} \mbbh_n(\theta)&= \frac{-1}{n \xi_h(\al)} \sumj \sig^{-2} (\ep_j(\theta) + \be \xi_h(\al))\psi'_j + O_{u,p}\left(\frac{1}{\sqrt{n}}\right)\\
&= \frac{\sig^{-2}}{n \xi_h(\al)} \sumj (\ep_j(\theta) + \be \xi_h(\al)) \psi_j^2 + O_{u,p} \left(\frac{1}{\sqrt{n}}\right)\\
&=  -\sig^{-2} R_{2,n} + O_{u,p}\left(\frac{1}{\sqrt{n}}\right),\\
\frac{-1}{\sqrt{n} r_n \xi_h(\al)} \p_{\mu} \p_{\be} \mbbh_n(\theta)&= \frac{1}{n \xi_h(\al)} \sumj \left\{ \sig^{-1} (\psi_{(\be),j} - \phi'_j \xi_h(\al))\right\}\\
&= \frac{\sig^{-1}}{n \xi_h(\al)} \sumj (\xi_h(\al) \psi_j - g_j)\psi_j + O_{u,p} \left(\frac{1}{\sqrt{n}}\right)\\
&= \sig^{-1} R_{3,n},
\end{align}
\begin{align}
&\frac{-1}{n \xi_h(\al)^2} \p_{\al}^2 \mbbh_n(\theta)\\
&=- \frac{-1}{n \xi_h(\al)^2} \sumj \bigg[ f_{(\al),j} - 2 \be (\p_{\al} \xi_h(\al)) \psi_{(\al),j} -\be^2 (\p_{\al} \xi_h(\al))^2 \psi_j^2\\
&\qquad + 2 \al^{-2} \overline{l} \left\{ -\xi_h(\al) \be \psi_{(\al),j} + (\p_{\al} \xi_h(\al)) \be \ep_j(\theta) \psi'_j - (\p_{\al} \xi_h(\al))\be^2\psi_j^2 - \ep_j(\theta) \psi_{(\al),j} \right \}\\
&\qquad + \al^{-4} \overline{l}^2 \left\{\xi_h(\al)^2 \be^2\psi'_j  + 2\xi_h(\al)\be \ep_j(\theta) \psi'_j + \ep_j(\theta)^2 \psi'_j + \ep_j(\theta) \psi_j \right\}\bigg ] + O_{u,p}\left( \frac{\overline{l}^2}{\sqrt{n}}\right)\\
&= \frac{1}{n\xi_h(\al)^2} \sumj (\be (\p_{\al} \xi_h(\al))\psi_j - f_j)^2\\
&\qquad + 2 \al^{-2} \overline{l} \frac{1}{n \xi_h(\al)^2} \sumj (\be (\p_{\al} \xi_h(\al)) (\ep_j(\theta) + \be \xi_h(\al)) \psi_j^2 - f_j \psi_j (\ep_j(\theta) + \be \xi_h(\al)))\\
&\qquad + \al^{-4} \overline{l} \frac{1}{n \xi_h(\al)^2} \sumj ((\ep_j(\theta) + \be \xi_h(\al))^2 \psi_j^2 + 2(\ep_j(\theta) + \be \xi_h(\al)) + 1 ) + O_{u,p} \left(\frac{\overline{l}^2}{\sqrt{n}}\right)\\
&= \frac{1}{n \xi_h(\al)^2} \sumj (\be (\p_{\al} \xi_h(\al)) \psi_j - f_j)^2\\
&\qquad + 2 \al^{-2} \overline{l} \frac{1}{n \xi_h(\al)^2} \sumj ((\ep_j(\theta) + \be \xi_h(\al))\psi_j + 1)(\be (\p_{\al} \xi_h(\al))\psi_j - f_j )\\
&\qquad + \al^{-4} \overline{l} \frac{1}{n \xi_h(\al)^2} \sumj ((\ep_j(\theta) + \be \xi_h(\al)) \psi_j + 1)^2 + O_{u,p} \left(\frac{\overline{l}^2}{\sqrt{n}}\right)\\
&= Q_{1,n} + 2 \al^{-2} \overline{l} Q_{2,n} + \al^{-4} \overline{l}^2 Q_{3,n} + O_{u,p}\left( \frac{\overline{l}^2}{\sqrt{n}}\right),\\
&-\frac{1}{n \xi_h(\al)^2} \p_{\al}\p_{\sig} \mbbh_n(\theta)\\
&= -\frac{1}{n \xi_h(\al)^2} \sumj \bigg[ \sig^{-1} \bigg\{ \be^2 (\p_{\al} \xi_h(\al))\xi_h(\al) + \be (\p_{\al} \xi_h(\al))\ep_j(\theta) \psi'_j\\
&\qquad -\be \psi_{(\al),j}\xi_h(\al) - \ep_j(\theta) \psi_{(\al),j}\bigg\}\\
&\qquad + \al^{-2} \sig^{-1} \overline{l} \left\{-\be^2 \xi_h(\al)^2 \psi_j^2 + 2 \be \xi_h(\al) \ep_j(\theta) \psi'_j + \ep_j(\theta)^2 \psi'_j + \ep_j(\theta) \psi_j\right\}\bigg] \nn\\
&{}\qquad + O_{u,p} \left( \frac{\overline{l}}{\sqrt{n}}\right)\\
&= \frac{-\sig^{-1}}{n \xi_h(\al)^2} \sumj \bigg[ f_j \psi_j (\ep_j(\theta) + \be\xi_h(\al)) - \be (\p_{\al} \xi_h(\al)) (\ep_j(\theta) + \be\ xi_h(\al)) \psi_j^2\\
&\qquad + \al^{-2} \sig^{-1} \overline{l} (\ep_j(\theta) + \be \xi_h(\al)) ((\ep_j(\theta) + \be \xi_h(\al)) \psi_j^2 + \psi_j)\bigg] + O_{u,p}\left(\frac{\overline{l}}{\sqrt{n}}\right)\\
&= \frac{\sig^{-1}}{n \xi_h(\al)^2} \sumj (\ep_j(\theta) + \be \xi_h(\al)) (\be (\p_{\al}\xi_h(\al))\psi_j^2 - f_j \psi_j)\\
&\qquad + \frac{\al^{-2} \sig^{-1} \overline{l}}{n \xi_h(\al)^2} \sumj (\ep_j(\theta) + \be \xi_h(\al))((\ep_j(\theta) + \be \xi_h(\al))\psi_j^2 + \psi_j) + O_{u,p} \left(\frac{\overline{l}}{\sqrt{n}}\right)\\
&= \frac{\sig^{-1}}{n \xi_h(\al)^2} \sumj (\be (\p_{\al} \xi_h(\al)) \psi_j - f_j)((\ep_j(\theta) + \be \xi_h(\al))\psi_j + 1 )\\
&\qquad + \frac{\al^{-2} \sig^{-1} \overline{l}}{n \xi_h(\al)^2} \sumj ((\ep_j(\theta) + \be \xi_h(\al))\psi_j + 1)^2 + O_{u,p} \left(\frac{\overline{l}}{\sqrt{n}}\right)\\
&= \sig^{-1} Q_{2,n} + \al^{-2} \sig^{-1} Q_{3,n} \overline{l} + O_{u,p}\left(\frac{\overline{l}}{\sqrt{n}}\right),\\
&-\frac{1}{n \xi_h(\al)^2} \p_{\al}\p_{\be} \mbbh_n(\theta)\\
&= -\frac{1}{n \xi_h(\al)^2} \sumj \bigg[- (\p_{\al} \xi_h(\al)) \left\{ \be(g'_j - \psi'_j \xi_h(\al)) + \psi_j \right\} + g_{(\al),j} - \psi_{(\al),j} \xi_h(\al)\\
&\qquad - \al^{-2} \overline{l} \left\{-\be \psi'_j \xi_h(\al)^2 + (\be g'_j - \ep_j(\theta) \psi'_j)\xi_h(\al) + \ep_j(\theta) g'_j \right\} \bigg]\\
&= -\frac{1}{n \xi_h(\al)} \sumj [\be (\p_{\al}\xi_h(\al))(g_j \psi_j -\psi_j^2) - f_j g_j + \xi_h(\al) f_j \psi_j\\
&\qquad - \al^{-2} \overline{l} (\be \xi_h(\al)^2 \psi_j^2 + (\ep_j(\theta) \psi_j^2 - \be g_j \psi_j)\xi_h(\al) -\ep_j(\theta) g_j \psi_j )] + O_{u,p} \left(\frac{\overline{l}}{\sqrt{n}}\right)\\
&= \frac{1}{n\xi_h(\al)} \sumj (\be(\p_{\al} \xi_h(\al)) \xi_h(\al)\psi_j^2 - (\be (\p_{\al} \xi_h(\al))g_j + \xi_h(\al)f_j)\psi_j + f_j g_j)\\
&\qquad + \frac{\al^{-2}\overline{l}}{n \xi_h(\al)} \sumj (\xi_h(\al)(\ep_j(\theta) + \be \xi_h(\al)) \psi_j^2 - (\ep_j(\theta) + \be \xi_h(\al))g_j \psi_j ) + O_{u,p} \left( \frac{\overline{l}}{\sqrt{n}}\right)\\
&= \frac{1}{n \xi_h(\al)} \sumj (\xi_h(\al) \psi_j - g_j)(\be (\p_{\al}\xi_h(\al))\psi_j - f_j)\\
&\qquad + \frac{\al^{-2}\overline{l}}{n \xi_h(\al)} \sumj ((\ep_j(\theta) + \be \xi_h(\al)) \psi_j + 1)(\xi_h(\al) \psi_j -g_j) + O_{u,p} \left(\frac{\overline{l}}{\sqrt{n}}\right)\\
&=Q_{4,n} + \al^{-2} \overline{l} Q_{5,n} + O_{u,p} \left(\frac{\overline{l}}{n}\right),\\
&-\frac{1}{n \xi_h(\al)^2} \p_{\sig}^2 \mbbh_n(\theta)\\
&= -\frac{1}{n \xi_h(\al)^2} \sumj \big\{ -\sig^{-2} \be^2 \psi_j^2 \xi_h(\al)^2 + 2 \sig^{-2} \be \ep_j(\theta) \psi'_j \xi_h(\al) \nn\\
&{}\qquad 
+ \sig^{-2} (\ep_j(\theta)^2 \psi'_j + \ep_j(\theta) \psi_j)\big\} + O_{u,p} \left(\frac{1}{\sqrt{n}}\right)\\
&= \frac{\sig^{-2}}{n \xi_h(\al)^2} \sumj (\be^2 \xi_h(\al)^2 \psi_j^2 + 2\be \xi_h(\al) \ep_j(\theta) \psi_j^2 + \ep_j(\theta)^2 \psi_j^2 + 1) + O_{u,p} \left(\frac{1}{\sqrt{n}}\right)\\
&= \frac{\sig^{-2}}{n \xi_h(\al)^2} \sumj ((\ep_j(\theta) + \be \xi_h(\al))\psi_j + 1)^2 + O_{u,p} \left(\frac{1}{\sqrt{n}}\right)\\
&= \sig^{-2} Q_{3,n} + O_{u,p}\left(\frac{1}{\sqrt{n}}\right),\\
&-\frac{1}{n \xi_h(\al)^2} \p_{\sig}\p_{\be} \mbbh_n(\theta)\\
&= -\frac{1}{n \xi_h(\al)^2} \sumj \left\{ \sig^{-1} \be \psi'_j\xi_h(\al)^2 + \sig^{-1} (\ep_j(\theta) \psi'_j - \be g'_j)\xi_h(\al) - \sig^{-1} \ep_j(\theta) g'_j \right\}\\
&= \frac{\sig^{-1}}{n \xi_h(\al^{2})} \sumj (\be \xi_h(\al)^2 \psi^2 + (\ep_j(\theta) \psi_j^2 - \be g_j \psi_j)\xi_h(\al) - \ep_j(\theta) g_j \psi_j ) + O_{u,p}\left( \frac{1}{\sqrt{n}}\right)\\
&=  \frac{\sig^{-1}}{n \xi_h(\al)^2} \sumj (\xi_h(\al) (\ep_j(\theta) + \be \xi_h(\al))\psi_j^2 - g(\ep_j(\theta) + \be \xi_h(\al))\psi_j ) + O_{u,p}\left( \frac{1}{\sqrt{n}}\right)\\
&= \frac{\sig^{-1}}{n \xi_h(\al)^2} \sumj ((\ep_j(\theta) + \be \xi_h(\al)) \psi_j + 1)(\xi_h(\al) \psi_j - g_j) + O_{u,p}\left( \frac{1}{\sqrt{n}}\right)\\
&=  \sig^{-1} Q_{5,n} + O_{u,p}\left( \frac{1}{\sqrt{n}}\right),\\
&-\frac{1}{n \xi_h(\al)^2} \p_{\be}^2 \mbbh_n(\theta)\\
&= -\frac{1}{n \xi_h(\al)^2} \sumj \left\{ \psi'_j \xi_h(\al)^2 - 2 \psi_{(\be),j} \xi_h(\al) + g_{(\be),j}\right\}\\
&= \frac{1}{n \xi_h(\al)^2} \sumj (\xi_h(\al)^2 \psi_j^2 - 2 \xi_h(\al) g_j \psi_j + g_j^2 ) + O_{u,p} \left(\frac{1}{\sqrt{n}}\right)\\
&= \frac{1}{n \xi_h(\al)^2} \sumj (\xi_h(\al) \psi_j - g_j)^2 + O_{u,p} \left(\frac{1}{\sqrt{n}}\right)\\
&= Q_{6,n} + O_{u,p} \left( \frac{1}{\sqrt{n}}\right).
\end{align}
\end{proof}

We now identify the limit in probability of the normalized observed information matrix $\mci_n(\theta)$.
Let
\begin{equation}
    \overline{Q} := 
    \begin{pmatrix}
    Q_{1} & Q_{2} & Q_{4} \\
    Q_{2} & Q_{3} & Q_{5}\\
    Q_{4} & Q_{5} & Q_{6}
    \end{pmatrix}
\end{equation}
where the components were defined after the proof of Lemma \ref{lem:xi_asymp} as follows:
\begin{align}
Q_1 &= t_\al^{-2}\, \E\!\left[\left(f_1 -\frac{\pi\beta}{2\cos^2(\al\pi/2)}\right)^2\right],\\
Q_2 &= t_\alpha^{-2}\, \E\!\left[(\beta t_\alpha \psi_1+\epsilon_{0,1}\psi_1+1)
\left( \frac{\pi\beta}{2\cos^2(\alpha\pi/2)}\psi_1-f_1\right )\right],\\
Q_3 &= t_\alpha^{-2} \E \left[(\beta t_\alpha \psi_1+1+\epsilon_{0,1}\psi_1)^2\right],\\
Q_4 &= t_\alpha^{-2}\, \E \left[(t_\alpha \psi_1-g_1)
\left (\frac{\pi\beta}{2\cos^2(\alpha\pi/2)}\psi_1-f_1\right )\right],\\
Q_5 &= t_\alpha^{-2}\, \E \left[(\beta t_\alpha \psi_1+\epsilon_{0,1}\psi_1+1 ) (t_\alpha \psi_1-g_1 )\right],\\
Q_6 &= t_\alpha^{-2}\, \E \left[(t_\alpha \psi_1-g_1)^2\right].
\end{align}
Let
\begin{equation}
    \Phi(\theta) \coloneqq \begin{pmatrix}
    \overline{\varphi}_{33}(\theta) & \overline{\varphi}_{34}(\theta) & 0\\
    \overline{\varphi}_{43}(\theta) & \overline{\varphi}_{44}(\theta) & 0\\
    0 & 0 & 1
    \end{pmatrix}.
\end{equation}

\begin{lem}
\label{lem:block_2deriv_conv}
We have the following convergences:
\begin{align}
\textup{(i)} \quad &-r_n^{-2} H_{11,n}(\theta)
\cip_u \sig^{-2} \E[\psi_1^2] \begin{pmatrix}
\overline{Y^2} & -\overline{Y^1} \\
-\overline{Y^1} & 1 \\
\end{pmatrix}\eqqcolon U,\\
\textup{(ii)} \quad & -\frac{1}{\sqrt{n}r_n \xi_h(\al)} H_{12,n}(\theta) \tilde{\varphi}_n(\theta)
\cip_u -\sig^{-1} 
\begin{pmatrix}
\overline{Y^1}R_1 & -\overline{Y^1}R_2 & \overline{Y^1}R_3\\
-R_1 & R_2 & -R_3
\end{pmatrix}\Phi(\theta) \eqqcolon V,\\
\textup{(iii)} \quad &-\frac{1}{n \xi_h(\al)^2} \tilde{\varphi}_n(\theta)^{\top} H_{22,n}(\theta)\tilde{\varphi}_n(\theta)\cip_u \Phi(\theta)^{\top}
\,\overline{Q}\,
\Phi(\theta) \eqqcolon W.
\end{align}
In particular,
\begin{equation}
\label{hm:def_FIm}
-\varphi_n(\theta)^{\top} \p_{\theta}^2 \mbbh_n(\theta) \varphi_n(\theta) \cip_u \begin{pmatrix}
U & V \\
V^{\top} & W \\
\end{pmatrix} \eqqcolon\mathcal{I}(\theta).
\end{equation}
\end{lem}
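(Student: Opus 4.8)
\noindent\textbf{Proof plan for Lemma \ref{lem:block_2deriv_conv}.}
The plan is to read off each of the three blocks from the componentwise expansions of Lemma \ref{lem:2deriv_conv}, and then to pass to the limit using Corollary \ref{hm:cor_aux.cip}, the law of large numbers $\overline{Y}_n\cip\overline{Y^1}$ together with $\frac1n\sumj Y_{t_{j-1}}^k\psi_j^2\cip_u\E[\psi_1^2]\,\overline{Y^k}$ for $k=0,1,2$, the already-established convergences $R_{i,n}\cip_u R_i$ and $Q_{i,n}\cip_u Q_i$, and---the only ingredient that is not mechanical---the rate-matrix conditions \eqref{assump:rate_mat}. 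Throughout one uses that $\overline l^m/\sqrt n\to 0$ for every fixed $m$, so that all remainders $O_{u,p}(\overline l^m/\sqrt n)$ occurring in Lemma \ref{lem:2deriv_conv} are $o_{u,p}(1)$, and that $\xi_h(\al)\to_u t_\al$ with $\inf_{\overline\Theta}|t_\al|>0$; the relevant empirical functionals of the i.i.d.\ array $(\ep_{0,j}(\theta))_j$ built from $\psi,f,g$ and their products are covered by Corollary \ref{hm:cor_aux.cip} because $\psi,f,g$ are of at most logarithmic growth with bounded derivatives, by the density estimates following \eqref{hm:phi_def}.

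For (i): after discarding the $O_{u,p}(n^{-1/2})$ remainders, the first three lines of Lemma \ref{lem:2deriv_conv} give $-r_n^{-2}H_{11,n}(\theta)=\sig^{-2}n^{-1}\sumj Y_{t_{j-1}}^k\psi_j^2$ with $k=2,1,0$ in the respective entries, up to $o_{u,p}(1)$; Corollary \ref{hm:cor_aux.cip} with $F=\psi^2$ then identifies the limit as $\sig^{-2}\E[\psi_1^2]\left(\begin{smallmatrix}\overline{Y^2}&-\overline{Y^1}\\-\overline{Y^1}&1\end{smallmatrix}\right)=U$.

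For (ii) and (iii), write $\tilde{\vp}_n(\theta)=\diag(B_n,1)$ with $B_n=\left(\begin{smallmatrix}\vp_{33,n}&\vp_{34,n}\\\vp_{43,n}&\vp_{44,n}\end{smallmatrix}\right)$, so that forming $H_{12,n}\tilde{\vp}_n$ and $\tilde{\vp}_n^\top H_{22,n}\tilde{\vp}_n$ amounts to pairing the second derivatives against the columns and rows of $B_n$, the $\be$-coordinate passing through essentially unchanged since the last row and column of $\tilde{\vp}_n(\theta)$ are $(0,0,1)$. In Lemma \ref{lem:2deriv_conv} each relevant expansion splits into an $\overline l$-free principal part and a part in which a power of $\overline l$ multiplies $R_{2,n}$ (off-diagonal block) or $Q_{2,n},Q_{3,n},Q_{5,n}$ ($\be$-block). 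The decisive observation is that, after the pairing, these $\overline l$-carrying contributions attach to $\vp_{33,n}$ or $\vp_{34,n}$ while the corresponding $\sig^{-1}$-terms attach to $\vp_{43,n}$ or $\vp_{44,n}$, so that exactly the combinations $s_{43,n}=\al^{-2}\overline l\,\vp_{33,n}+\sig^{-1}\vp_{43,n}$ and $s_{44,n}=\al^{-2}\overline l\,\vp_{34,n}+\sig^{-1}\vp_{44,n}$ of \eqref{assump:rate_mat} emerge and the individually divergent parts cancel. Carrying this out entry by entry and using $\vp_{33,n}\to_u\overline\varphi_{33}$, $\vp_{34,n}\to_u\overline\varphi_{34}$, $s_{43,n}\to_u\overline\varphi_{43}$, $s_{44,n}\to_u\overline\varphi_{44}$ together with $R_{i,n}\cip_u R_i$, $Q_{i,n}\cip_u Q_i$, $\overline Y_n\cip\overline{Y^1}$, one finds that $-\frac{1}{\sqrt n\,r_n\xi_h(\al)}H_{12,n}(\theta)\tilde{\vp}_n(\theta)$ converges to $-\sig^{-1}\left(\begin{smallmatrix}\overline{Y^1}R_1&-\overline{Y^1}R_2&\overline{Y^1}R_3\\-R_1&R_2&-R_3\end{smallmatrix}\right)\Phi(\theta)=V$, while $-\frac{1}{n\xi_h(\al)^2}\tilde{\vp}_n(\theta)^\top H_{22,n}(\theta)\tilde{\vp}_n(\theta)$ converges to $\Phi(\theta)^\top\overline{Q}\,\Phi(\theta)=W$; in the latter, the coefficients of $Q_{1,n},Q_{2,n},Q_{3,n}$ in the $(\al,\sig)$-subblock collapse to $\vp_{33,n}^2,\ 2\vp_{33,n}s_{43,n},\ s_{43,n}^2$ in the $(\al,\al)$ entry (and the analogous $\vp_{34,n},s_{44,n}$ and mixed expressions in the $(\sig,\sig)$ and $(\al,\sig)$ entries), the coefficients of $Q_{4,n},Q_{5,n}$ build up the $(\al,\be)$ and $(\sig,\be)$ entries, and the $(\be,\be)$ entry is simply $Q_{6,n}$.

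Stacking (i)--(iii) according to the block form of $\vp_n(\theta)^\top\p_\theta^2\mbbh_n(\theta)\vp_n(\theta)$ recorded in Section \ref{sec:conv_obs} yields \eqref{hm:def_FIm}, with $\mci(\theta)$ a well-defined $\pr$-a.s.\ finite random matrix (finiteness of the $Q_i,R_i$ follows from the density tail estimates, and that of $\overline{Y^k}$ from $\sup_{t\le T}|Y_t|=O_p(1)$). The main obstacle is purely organizational: for every entry of every block one must track the polynomial-in-$\overline l$ structure of the coefficients multiplying the $R_{i,n}$ and $Q_{i,n}$ and verify that, under the weighting by the structured matrix $\tilde{\vp}_n(\theta)$, they collapse precisely onto the convergent quantities $\vp_{33,n},\vp_{34,n},s_{43,n},s_{44,n}$ of \eqref{assump:rate_mat}, so that the individually divergent second derivatives of $\mbbh_n$ combine into a finite limit.
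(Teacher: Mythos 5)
Your plan is correct and follows essentially the same route as the paper: read off each block from the entrywise expansions in Lemma \ref{lem:2deriv_conv}, identify the limits via Corollary \ref{hm:cor_aux.cip} and the convergences $R_{i,n}\cip_u R_i$, $Q_{i,n}\cip_u Q_i$, and observe that after pairing with $\tilde{\varphi}_n(\theta)$ the individually divergent $\overline{l}$-terms recombine into exactly $s_{43,n},s_{44,n}$ of \eqref{assump:rate_mat} (the paper realizes this for the lower-right block by factoring out the lower-triangular matrix $\bigl(\begin{smallmatrix}1&0&0\\ \al^{-2}\overline{l}&\sig^{-1}&0\\0&0&1\end{smallmatrix}\bigr)$, which is the same cancellation you describe entrywise). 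No gaps.
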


\begin{proof}
From Lemma \ref{lem:2deriv_conv}, we can deduce that
\begin{align}
-r_n^{-2} H_{11,n}
&=-r_n^{-2}\begin{pmatrix}
\p_{\lam}^2 \mbbh_n(\theta) & \p_{\lam}\p_{\mu}\mbbh_n(\theta) \\
\p_{\lam}\p_{\mu}\mbbh_n(\theta) & \p_{\mu}^2\mbbh_n(\theta)\\
\end{pmatrix}\\
&= -\sig^{-2}\begin{pmatrix}
\frac{1}{n} \sumj Y_{t_{j-1}}^2 \psi'_j & -\frac{1}{n} \sumj Y_{t_{j-1}} \psi'_j \\
-\frac{1}{n} \sumj Y_{t_{j-1}} \psi'_j & \frac{1}{n} \sumj \psi'_j\\
\end{pmatrix} + O_{u,p}\left( \frac{1}{\sqrt{n}}\right)\\
&\cip_u \sig^{-2} \E[\psi_1^2] \begin{pmatrix}
\overline{Y^2} & -\overline{Y^1} \\
-\overline{Y^1} & 1
\end{pmatrix},
\end{align}
\begin{align}
&-\frac{1}{\sqrt{n}r_n \xi_h(\al)} H_{12,n} \tilde{\varphi}_n(\theta)\\
&= \frac{-1}{\sqrt{n} r_n \xi_h(\al)} \begin{pmatrix}
\p_{\lam}\p_{\al} \mbbh_n(\theta) & \p_{\lam}\p_{\sig}\mbbh_n(\theta) & \p_{\lam}\p_{\be}\mbbh_n(\theta)\\
\p_{\mu}\p_{\al}\mbbh_n(\theta) & \p_{\mu}\p_{\sig}\mbbh_n(\theta) & \p_{\mu}\p_{\be}\mbbh_n(\theta)\\
\end{pmatrix} \tilde{\varphi}_n(\theta)\\
&= -\sig^{-1} \begin{pmatrix}
R_1 \overline{Y}_n - \al^{-2}\overline{l}R_2 \overline{Y}_n & -\sig^{-1} R_2 \overline{Y}_n & R_3 \overline{Y}_n\\
-R_{1,n} + \al^{-2}\overline{l}R_{2,n} & \sig^{-1} R_{2,n} & -R_{3,n}\\
\end{pmatrix}
\begin{pmatrix}
\varphi_{33,n}(\theta) & \varphi_{34,n}(\theta) & 0\\
\varphi_{43,n}(\theta) & \varphi_{44,n}(\theta) & 0\\
0 & 0 & 1
\end{pmatrix}\\
& \qquad + O_{u,p}\left(\frac{\overline{l}^2}{n} \right)\\
&\resizebox{\textwidth}{!}{$= -\sig^{-1} \begin{pmatrix}
\varphi_{33,n}(\theta) \overline{Y}_n R_{1} - s_{43,n}(\theta)\overline{Y}_n R_{2} & \varphi_{34,n}(\theta) \overline{Y}_n R_{1} - s_{44,n}(\theta)\overline{Y}_n R_{2} & \overline{Y}_n R_{3,n}\\
-\varphi_{33,n}(\theta) R_{1,n} + s_{43,n}(\theta) R_{2,n} & -\varphi_{34,n}(\theta) R_{1,n} + s_{44,n}(\theta)R_{2,n} & - R_{3,n} \\
\end{pmatrix}$}\\
&\qquad + O_{u,p}\left(\frac{\overline{l}^2}{n} \right)\\
&\cip_u -\sig^{-1} \begin{pmatrix}
\overline{\varphi}_{33}(\theta) \overline{Y^1} R_{1} - \overline{\varphi}_{43}(\theta)\overline{Y^1} R_{2} & \overline{\varphi}_{34}(\theta) \overline{Y^1} R_{1} - \overline{\varphi}_{44}(\theta) \overline{Y^1} R_{2} & \overline{Y^1} R_{3}\\
-\overline{\varphi}_{33}(\theta) R_{1} + \overline{\varphi}_{43}(\theta) R_{2} & -\overline{\varphi}_{34}(\theta) R_{1} + \overline{\varphi}_{44}(\theta) R_{2} & -R_{3} \\
\end{pmatrix}\\
&= -\sig^{-1} \begin{pmatrix}
\overline{Y^1}R_1 & -\overline{Y^1}R_2 & \overline{Y^1}R_3\\
-R_1 & R_2 & -R_3
\end{pmatrix}
\Phi(\theta),
\end{align}
\begin{align}
&-\frac{1}{n \xi_h(\al)^2} \tilde{\varphi}_n(\theta)^{\top} H_{22,n}\tilde{\varphi}_n(\theta)\\
&\resizebox{\textwidth}{!}{$= \tilde{\varphi}_n(\theta)^{\top} \begin{pmatrix}
Q_{1,n} + 2\al^{-2} \overline{l}Q_{2,n} + \al^{-4} \overline{l}^2 Q_{3,n} & \sig^{-1} Q_{2,n} + \al^{-2} \sig^{-1} Q_{3,n} \overline{l} & Q_{4,n} + \al^{-2} \overline{l} Q_{5,n}\\
\sig^{-1} Q_{2,n} + \al^{-2} \sig^{-1}Q_{3,n} \overline{l} & \sig^{-2} Q_{3,n} & \sig^{-1} Q_{5,n}\\
Q_{4,n} + \al^{-2} \overline{l} Q_{5,n} & \sig^{-1} Q_{5,n} & Q_{6,n}
\end{pmatrix}\tilde{\varphi}_n(\theta)$}\\
&\quad + O_{u,p}\left(\frac{\overline{l}^4}{\sqrt{n}}\right)\\
&= \tilde{\varphi}_n(\theta)^{\top}\begin{pmatrix}
1 & 0 & 0\\
\al^{-2} \overline{l} & \sig^{-1} & 0\\
0 & 0 & 1
\end{pmatrix}^{\top} \begin{pmatrix}
Q_{1,n} & Q_{2,n} & Q_{4,n} \\
Q_{2,n} & Q_{3,n} & Q_{5,n}\\
Q_{4,n} & Q_{5,n} & Q_{6,n}
\end{pmatrix}\begin{pmatrix}
1 & 0 & 0\\
\al^{-2} \overline{l} & \sig^{-1} & 0\\
0 & 0 & 1
\end{pmatrix}\tilde{\varphi}_n(\theta)\\
&= \begin{pmatrix}
\varphi_{33,n}(\theta) & \varphi_{34,n}(\theta) & 0\\
s_{43,n}(\theta) & s_{44,n}(\theta) & 0 \\
0 & 0 & 1
\end{pmatrix}^{\top}\begin{pmatrix}
Q_{1,n} & Q_{2,n} & Q_{4,n} \\
Q_{2,n} & Q_{3,n} & Q_{5,n}\\
Q_{4,n} & Q_{5,n} & Q_{6,n}
\end{pmatrix}
\begin{pmatrix}
\varphi_{33,n}(\theta) & \varphi_{34,n}(\theta) & 0\\
s_{43,n}(\theta) & s_{44,n}(\theta) & 0 \\
0 & 0 & 1
\end{pmatrix}\\
&\quad + O_{u,p}\left(\frac{\overline{l}^4}{\sqrt{n}}\right)\\
&\cip_u \Phi(\theta)^{\top}
\,\overline{Q}\,
\Phi(\theta).
\end{align}
\end{proof}

\subsubsection{Positive definiteness}
\label{hm:sec_proof.FI.pd}

We write $A \succ 0$ if $A \in \mbbr^{p} \otimes \mbbr^{p}$ is positive definite.
We will use the following two elementary lemmas; the first one is well-known as the Schur complement lemma.

\begin{lem}\label{lem:block_pos_definite}
Let $A \in \mathbb{R}^{p} \otimes \mathbb{R}^{p}$ and $C \in \mathbb{R}^{q} \otimes \mathbb{R}^{q}$ be symmetric matrices, and let $B \in \mathbb{R}^{p} \otimes \mathbb{R}^{p}$.
Suppose that $A \succ 0 $ and $S_A := C - B^\top A^{-1} B \succ 0$. Then, 
\begin{equation}
\begin{pmatrix}
A & B \\
B^\top & C
\end{pmatrix}
\succ 0.
\end{equation}
\end{lem}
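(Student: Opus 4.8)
The statement to prove is the Schur complement lemma (Lemma~\ref{lem:block_pos_definite}), and the plan is to give the standard congruence-transformation argument. First I would record the block factorization
\begin{equation}
\begin{pmatrix}
A & B \\
B^\top & C
\end{pmatrix}
=
\begin{pmatrix}
I_p & O \\
B^\top A^{-1} & I_q
\end{pmatrix}
\begin{pmatrix}
A & O \\
O & S_A
\end{pmatrix}
\begin{pmatrix}
I_p & A^{-1}B \\
O & I_q
\end{pmatrix},
\end{equation}
which one verifies by direct block multiplication using $S_A = C - B^\top A^{-1}B$ and the symmetry of $A$ (so that $(A^{-1})^\top = A^{-1}$). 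Writing $L := \begin{pmatrix} I_p & A^{-1}B \\ O & I_q\end{pmatrix}$, the right-hand side is exactly $L^\top \operatorname{diag}(A, S_A)\, L$, i.e.\ the block matrix is congruent to $\operatorname{diag}(A,S_A)$ via the invertible matrix $L$ (its determinant is $1$).

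Next I would invoke the fact that congruence preserves positive definiteness: for any vector $w = (u^\top, v^\top)^\top \ne 0$ in $\mbbr^{p+q}$,
\begin{equation}
w^\top \begin{pmatrix} A & B \\ B^\top & C \end{pmatrix} w
= (Lw)^\top \begin{pmatrix} A & O \\ O & S_A \end{pmatrix} (Lw)
= \tilde{u}^\top A \tilde{u} + \tilde{v}^\top S_A \tilde{v},
\end{equation}
where $(\tilde{u}^\top,\tilde{v}^\top)^\top := Lw \ne 0$ since $L$ is invertible. By hypothesis $A \succ 0$ and $S_A \succ 0$, so each summand is $\ge 0$, and they cannot both vanish (that would force $\tilde u = 0$ and $\tilde v = 0$, contradicting $Lw \ne 0$). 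Hence the quadratic form is strictly positive, which is the claim.

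There is essentially no obstacle here: the only points requiring minor care are checking the block multiplication identity and noting that $A^{-1}$ is itself symmetric (used so that the lower-left block of $L^\top \operatorname{diag}(A,S_A) L$ comes out as $B^\top$ rather than $(A^{-1}B)^\top A = B^\top$, which is the same thing). I would state the factorization, verify it in one line of block algebra, and conclude by the congruence argument. If desired, one can also note the symmetry of the full block matrix is automatic from $A=A^\top$, $C=C^\top$, which is part of the hypotheses.
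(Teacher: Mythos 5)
Your proof is correct: the block factorization is verified by direct multiplication, and the congruence argument via the invertible matrix $L$ cleanly reduces positive definiteness of the block matrix to that of $A$ and $S_A$. The paper itself gives no proof — it cites the lemma as the well-known Schur complement lemma — and your argument is exactly the standard one that would be supplied, so there is nothing to compare beyond noting that the hypothesis $B\in\mbbr^p\otimes\mbbr^p$ in the statement is evidently a typo for $B\in\mbbr^p\otimes\mbbr^q$, which your treatment of $B$ as a $p\times q$ block implicitly (and correctly) assumes.
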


\begin{lem}\label{lem:mat_pos_definite}
Let $A  \in \mbbr^{p} \otimes \mbbr^{p}$ and $ B \in \mbbr^{p} \otimes \mbbr^{p}$ be symmetric matrix.
If $A$ is non-singular and $B \succ 0$, then $A^{\top}B A \succ 0$.
\end{lem}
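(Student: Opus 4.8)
The plan is to reduce the positive definiteness of $A^{\top}BA$ to that of $B$ by the change of variables $y = Ax$. First I would dispense with symmetry: $(A^{\top}BA)^{\top} = A^{\top}B^{\top}A = A^{\top}BA$ because $B$ is symmetric, so $A^{\top}BA$ is indeed a symmetric matrix, and the notation $\succ 0$ makes sense.

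Next, fix an arbitrary $x \in \mbbr^{p}$ with $x \neq 0$. The key identity is $x^{\top}(A^{\top}BA)x = (Ax)^{\top}B(Ax)$. Since $A$ is non-singular, $Ax \neq 0$; writing $y := Ax$, positive definiteness of $B$ yields $x^{\top}(A^{\top}BA)x = y^{\top}By > 0$. As $x \neq 0$ was arbitrary, this is precisely the statement $A^{\top}BA \succ 0$. (For the intended application, one then combines this with Lemma \ref{lem:block_pos_definite} to handle the block structure of $\mci(\theta)$ in \eqref{hm:def_FIm}.)

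I do not anticipate any genuine obstacle: the statement is the strict case of the standard congruence/inertia fact. The only point requiring attention is that non-singularity of $A$ is used exactly to ensure $Ax \neq 0$ — dropping it leaves only positive semidefiniteness — and that the symmetry of $B$ (not of $A$) is what is actually needed for $A^{\top}BA$ to be symmetric.
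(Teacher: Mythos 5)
Your argument is correct and is the standard congruence argument: for $x\neq 0$, $x^{\top}A^{\top}BAx=(Ax)^{\top}B(Ax)>0$ since $Ax\neq 0$ by non-singularity of $A$. The paper states this lemma as elementary and omits the proof, so your write-up simply supplies the intended (and only natural) argument; no issues.
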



\begin{lem}
\label{lem:I-pd}
$\mci(\theta)$ defined in \eqref{hm:def_FIm} is a.s. positive definite.
\end{lem}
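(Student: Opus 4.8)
The plan is to apply the Schur complement criterion of Lemma~\ref{lem:block_pos_definite} to the limiting matrix $\mci(\theta)=\begin{pmatrix} U & V\\ V^{\top} & W\end{pmatrix}$ of \eqref{hm:def_FIm}, so that it suffices to show $U\succ 0$ a.s.\ and that the Schur complement $S:=W-V^{\top}U^{-1}V\succ 0$. For the first point I would write $U=\sig^{-2}\E[\psi_1^{2}]\,M$ with $M=\begin{pmatrix}\overline{Y^{2}} & -\overline{Y^{1}}\\ -\overline{Y^{1}} & 1\end{pmatrix}$; here $\E[\psi_1^{2}]\in(0,\infty)$ since $\psi=\phi_{\al,\be}'/\phi_{\al,\be}$ is bounded and not identically zero by the density estimates recalled in Section~\ref{hm:sec_preliminaries}, while $\det M=\overline{Y^{2}}-(\overline{Y^{1}})^{2}=T^{-1}\int_{0}^{T}(Y_t-\overline{Y^{1}})^{2}\,dt>0$ $\pr$-a.s.\ by the strict Cauchy--Schwarz inequality; the strictness holds because the \cadlag\ path $t\mapsto Y_t$ is a.s.\ non-constant on $[0,T]$ (the driving stable process has jumps a.s.). This is in fact the only point at which ``a.s.'' is needed, since $S$ will turn out to be deterministic.

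The second step is to exploit the rank-one structure of the off-diagonal block. By Lemma~\ref{lem:block_2deriv_conv}, $V=-\sig^{-1}\,a\,b^{\top}\Phi(\theta)$ and $W=\Phi(\theta)^{\top}\overline{Q}\,\Phi(\theta)$, where $a:=(\overline{Y^{1}},-1)^{\top}\in\mbbr^{2}$ and $b:=(R_{1},-R_{2},R_{3})^{\top}\in\mbbr^{3}$. A one-line computation gives $M^{-1}a=(0,-1)^{\top}$, hence $a^{\top}M^{-1}a=1$, so that
\begin{align}
S=\Phi(\theta)^{\top}\!\left(\overline{Q}-\E[\psi_1^{2}]^{-1}\,b\,b^{\top}\right)\!\Phi(\theta).
\end{align}
Since $\det\Phi(\theta)=\overline{\varphi}_{33}(\theta)\overline{\varphi}_{44}(\theta)-\overline{\varphi}_{34}(\theta)\overline{\varphi}_{43}(\theta)\neq 0$ by \eqref{assump:rate_mat}, Lemma~\ref{lem:mat_pos_definite} reduces the claim to $\overline{Q}-\E[\psi_1^{2}]^{-1}bb^{\top}\succ 0$. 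Setting $v:=(A,B,C)^{\top}$ with $A:=\tfrac{\pi\be}{2\cos^{2}(\al\pi/2)}\psi_1-f_1$, $B:=\be t_{\al}\psi_1+\ep_{0,1}\psi_1+1$, $C:=t_{\al}\psi_1-g_1$, the expressions for $Q_{1},\dots,Q_{6}$ and $R_{1},R_{2},R_{3}$ give $\overline{Q}=t_{\al}^{-2}\,\E[vv^{\top}]$ and $b=t_{\al}^{-1}\,\E[v\psi_1]$, whence
\begin{align}
\overline{Q}-\E[\psi_1^{2}]^{-1}bb^{\top}=t_{\al}^{-2}\,\E\!\left[\tilde v\,\tilde v^{\top}\right],\qquad \tilde v:=v-\tfrac{\E[v\psi_1]}{\E[\psi_1^{2}]}\,\psi_1,
\end{align}
i.e.\ $t_{\al}^{-2}$ times the Gram matrix of the $L^{2}(\phi_{\al,\be}\,dx)$-projections of $A,B,C$ orthogonal to $\psi_1$; this is positive definite precisely when $A,B,C,\psi_1$ are linearly independent in $L^{2}(\phi_{\al,\be}\,dx)$.

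The remaining, genuinely substantive, step is that linear independence, and this is where the tail asymptotics of $\phi_{\al,\be}$ from Section~\ref{hm:sec_preliminaries} enter. Multiplying a candidate relation $c_{A}A+c_{B}B+c_{C}C+c_{\psi}\psi_1=0$ by $\phi:=\phi_{\al,\be}$ (legitimate since $\phi>0$ everywhere and smooth) and using $\psi_1\phi=\phi'$, $A\phi=t_{\al}'\be\phi'-\p_{\al}\phi$, $B\phi=\be t_{\al}\phi'+(x\phi)'$, $C\phi=t_{\al}\phi'-\p_{\be}\phi$ with $t_{\al}':=\p_{\al}t_{\al}=\tfrac{\pi}{2}\cos^{-2}(\al\pi/2)$, the relation becomes
\begin{align}
d_{0}\,\phi'(x)-c_{A}\,\p_{\al}\phi(x)-c_{C}\,\p_{\be}\phi(x)+c_{B}\,(x\phi(x))'=0,\qquad x\in\mbbr,
\end{align}
with $d_{0}:=c_{A}t_{\al}'\be+c_{B}\be t_{\al}+c_{C}t_{\al}+c_{\psi}$. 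Among these four functions only $\p_{\al}\phi$ carries a logarithmic factor in its tail, $\p_{\al}\phi(x)\asymp(\log|x|)|x|^{-\al-1}$, whereas $\phi'(x)\asymp|x|^{-\al-2}$ decays one power faster and $\p_{\be}\phi(x)\asymp(x\phi(x))'\asymp|x|^{-\al-1}$; dividing by $(\log|x|)|x|^{-\al-1}$ and letting $|x|\to\infty$ forces $c_{A}=0$. Comparing then the $|x|^{-\al-1}$-coefficients of $c_{B}(x\phi)'-c_{C}\p_{\be}\phi$ separately as $x\to+\infty$ and $x\to-\infty$ (the $\phi'$-term being negligible at this order) yields a $2\times 2$ homogeneous system for $(c_{B},c_{C})$ whose determinant is proportional to $C_{\be}^{+}C_{+}^{-}-C_{\be}^{-}C_{+}^{+}$, where $C_{+}^{\pm},C_{\be}^{\pm}$ are the tail constants of $\phi$ and $\p_{\be}\phi$ at $\pm\infty$; since $\phi(x)\sim \al c_{\al}\tfrac{1+\be\,\sgn(x)}{2}|x|^{-\al-1}$ and $\p_{\be}\phi(x)\sim \al c_{\al}\tfrac{\sgn(x)}{2}|x|^{-\al-1}$ for a universal $c_{\al}>0$, this determinant equals $\al^{3}c_{\al}^{2}/2\neq 0$ for every $\be\in(-1,1)$, so $c_{B}=c_{C}=0$; then $d_{0}\phi'\equiv 0$ forces $d_{0}=0$, hence $c_{\psi}=0$. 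Thus $A,B,C,\psi_1$ are linearly independent, $S\succ 0$, and $\mci(\theta)$ is a.s.\ positive definite.

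I expect this last paragraph to be the main obstacle: one must make the termwise tail expansions rigorous (the estimates recalled in Section~\ref{hm:sec_preliminaries} together with the classical stable tail formula should suffice) and verify the non-vanishing of the two-tail determinant, which is the quantitative form of the statement that the location-, $\al$- and $\be$-scores of a stable density are linearly independent for every admissible skewness $\be\in(-1,1)$ (including $\be=0$, by the asymmetry between the two tails introduced by the $\be$-score).
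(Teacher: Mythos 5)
Your argument is correct and follows the same skeleton as the paper's proof: a Schur-complement reduction via Lemma \ref{lem:block_pos_definite}, positivity of $U$ from the strict Cauchy--Schwarz inequality $\overline{Y^2}>(\overline{Y^1})^2$ holding $\pr$-a.s., and identification of the Schur complement as $\Phi(\theta)^{\top}\bigl(\overline{Q}-\E[\psi_1^2]^{-1}bb^{\top}\bigr)\Phi(\theta)$ with $\overline{Q}=t_{\al}^{-2}\E[\chi\chi^{\top}]$ and $b=t_{\al}^{-1}\E[\chi\psi_1]$. Your observation that $a^{\top}U^{-1}a$ collapses to the scalar $\sig^{2}\E[\psi_1^2]^{-1}$ (so that the Schur complement is deterministic) is exactly what the paper's explicit block computation yields, just packaged through the rank-one structure of $V$. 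The one place where you genuinely go beyond the paper is the last step: the paper dismisses the possibility $x^{\top}\chi=a\psi_1$ a.s.\ with a single sentence (``from the definition of $\chi$ and the continuity of $\psi_1,f_1,g_1$, this equation does not hold''), whereas you actually prove the linear independence of $\chi_1,\chi_2,\chi_3,\psi_1$ in $L^2(\phi_{\al,\be}\,dx)$ by rewriting a putative relation as
\begin{equation}
d_0\,\phi'(x)-c_A\,\p_{\al}\phi(x)-c_C\,\p_{\be}\phi(x)+c_B\,(x\phi(x))'=0
\end{equation}
and comparing tails: the logarithmic factor carried only by $\p_{\al}\phi$ forces $c_A=0$, and the two-sided $|x|^{-\al-1}$ coefficients give a nonsingular $2\times2$ system for $(c_B,c_C)$ precisely because $\p_{\be}\log\tfrac{1+\be}{2}\neq\p_{\be}\log\tfrac{1-\be}{2}$ for all $\be\in(-1,1)$. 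I checked your tail-constant determinant and the reduction $a^{\top}M^{-1}a=1$; both are right. The only caveat, which you flag yourself, is that the termwise differentiation of the tail expansion in $\al$ and $\be$ must be justified from the classical series expansion of stable densities; this is consistent with the $\asymp$ estimates recalled in Section \ref{hm:sec_preliminaries} and is standard. In short, your write-up is not a different route but a strictly more complete version of the paper's argument at its one genuinely delicate point.
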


\begin{proof}
By Lemma \ref{lem:block_pos_definite}, it suffices to show that both the $U$ and $S_U\coloneqq W- V^{\top} U^{-1} V$ are positive definite. 

We begin with $U$.
We have $\inf_\theta \E[\psi_1^2]>0$. Since the process $t\mapsto Y_t$ is not a constant, we have $\overline{Y^2} > (\overline{Y^1})^2$ a.s. by the Cauchy-Schwarz inequality. Further, note that the event $\{ x_1 \overline{Y^1} = x_2 \}$ has probability zero for any $\boldsymbol{x}= (x_1,x_2)^\top \in \mbbr^{2}\backslash \{\boldsymbol{0}\}$ because the distribution of the random variable $\overline{Y}$ obeys a $S_\al^0$ distribution (with specific parameters), hence is continuous.
With these observations, we can conclude that $U\succ 0$:
\begin{align}
\boldsymbol{x}^{\top} U \boldsymbol{x} &= \sig^{-2} \E[\psi_1^2] \boldsymbol{x}^{\top}
\begin{pmatrix}
\overline{Y^2} & -\overline{Y^1} \\
-\overline{Y^1} & 1 \\
\end{pmatrix}
\boldsymbol{x}\\
&=  \sig^{-2}\E[\psi_1^2]  \left(x_1^2 \overline{Y^2} -2 x_1 x_2 \overline{Y^1} + x_2^2 \right) \\
&> \sig^{-2} \E[\psi_1^2] (x_1 \overline{Y^1} -x_2)^2 >0 \quad \text{a.s.}
\end{align}

We now turn to $S_U$. By direct calculations,
\begin{align}
S_U 
&= \Phi(\theta)^{\top}
\,\overline{Q}\,
\Phi(\theta)\\
&\quad - \frac{1}{\E[\psi_1^2] (\overline{Y^2} - \overline{Y^1}^2)} \Phi(\theta)^{\top} 
\begin{pmatrix}
\overline{Y^1} R_1 & -\overline{Y^1} R_2 & \overline{Y^1} R_3 \\
-R_1 & R_2 & -R_3 \\
\end{pmatrix}^{\top}\\
&{}\qquad \times 
\begin{pmatrix}
1 & \overline{Y^1} \\
\overline{Y^1} & \overline{Y^2} \\
\end{pmatrix}
\begin{pmatrix}
\overline{Y^1} R_1 & -\overline{Y^1} R_2 & \overline{Y^1} R_3 \\
-R_1 & R_2 & -R_3 \\
\end{pmatrix}
\Phi(\theta)\\
&= \Phi(\theta)^{\top} 
\Bigg\{ 
\overline{Q}
- \frac{1}{\E[\psi_1^2] (\overline{Y^2} - \overline{Y^1}^2 )}
\\
&{}\quad \times 
\begin{pmatrix}
\overline{Y^1} R_1 & -\overline{Y^1} R_2 & \overline{Y^1} R_3 \\
-R_1 & R_2 & -R_3 \\
\end{pmatrix}^{\top}
\!\!
\begin{pmatrix}
1 & \overline{Y^1} \\
\overline{Y^1} & \overline{Y^2} \\
\end{pmatrix}
\begin{pmatrix}
\overline{Y^1} R_1 & -\overline{Y^1} R_2 & \overline{Y^1} R_3 \\
-R_1 & R_2 & -R_3 \\
\end{pmatrix}\Bigg\}\Phi(\theta)\\
&\eqqcolon \Phi(\theta)^{\top} M \Phi(\theta).
\end{align}
From Lemma \ref{lem:mat_pos_definite}, it suffices to show the non-singularity of $\Phi(\theta)$ and the positive definiteness of $M$. But the former holds by the last item in \eqref{assump:rate_mat}, and it remains to look at $M$.

Let $\chi \coloneqq (\chi_1,\chi_2,\chi_3)^\top$, where
\begin{align}
& \chi_1 \coloneqq \frac{\pi \be}{2 \left(\cos \frac{\al \pi}{2}\right)^2} \psi_1 -f_1,
\qquad \chi_2 \coloneqq \be t_{\al} \psi_1 + \ep_{0,1} \psi_1 + 1, 
\qquad \chi_3 \coloneqq t_{\al} \psi_1 - g_1.
\end{align}
Then, direct computations give
\begin{align}
M 
&= \overline{Q} - \frac{1}{\E[\psi_1^2]} \begin{pmatrix}
R_1^2 & -R_1R_2 & R_1R_3 \\
-R_1R_2 & R_2^2 & -R_2R_3 \\
R_1R_3 & -R_2R_3 & R_3^2
\end{pmatrix}\\
&= t_{\al}^{-2} \Bigg\{ \begin{pmatrix}
\E[\chi_1^2] & \E[\chi_1 \chi_2] & \E[\chi_1 \chi_3] \\
\E[\chi_1 \chi_2] & \E[\chi_2^2] & \E[\chi_2 \chi_3] \\
\E[\chi_1 \chi_3] & \E[\chi_2 \chi_3] & \E[\chi_3^2]
\end{pmatrix}\\
&{}\qquad - \begin{pmatrix}
\E[\chi_1 \psi_1]^2 & \E[\chi_1 \psi_1]\E[\chi_2 \psi_1] & \E[\chi_1 \psi_1]\E[\chi_3 \psi_1]\\
\E[\chi_1 \psi_1]\E[\chi_2 \psi_1] & \E[\chi_2 \psi_1]^2 & \E[\chi_2 \psi_1]\E[\chi_3 \psi_1]\\
\E[\chi_1 \psi_1]\E[\chi_3 \psi_1] & \E[\chi_2 \psi_1]\E[\chi_3 \psi_1]& \E[\chi_3 \psi_1]^2
\end{pmatrix}
\Bigg \} \\
&= t_{\al}^{-2} \left( \E[\chi \chi^{\top}] - \frac{\E[\chi \psi_1] \E[\chi \psi_1]^{\top}}{\E[\psi_1^2]} \right).
\end{align}
We note that $M$ is positive semidefinite: 
for any $x \in \mbbr^{3}$,
\begin{align}
x^{\top} M x&= t_{\al}^{-2} \left( \E [(x^{\top} \chi)^2] - \frac{\E[x^{\top}\chi \psi_1]^2}{\E[\psi_1^2]}\right)\\
&= \frac{1}{t_{\al}^2 \E[\psi_1^2]} \left(\E [(x^{\top} \chi)^2]\E[\psi_1^2] - \E[x^{\top}\chi \psi_1]^2 \right)
\geq 0
\end{align}
by the Cauchy-Schwarz inequality.
Now it suffices to show the non-singularity of $M$.
From the equality condition of the Cauchy-Schwarz inequality, if $x^{\top} M x =0$ holds, then there exists a constant $a >0$ for which $x^{\top} \chi = a \psi_1$ a.s. 
From the definition of $\chi$ and the continuity of $\psi_1, f_1, g_1$, this equation does not hold.
Hence, $M$ is non-singular and we are done.
\end{proof}

\subsection{Continuity of rate matrix}

Before proving the continuity of the rate matrix \eqref{condi:rate_mat}, we estimate the order of the gap between the true parameter and its local perturbations.

\begin{lem}\label{lem:rate_est}
\begin{equation}
\sup_{\theta' \in \mathfrak{R}_n(c;\theta)} \left( \left| \sqrt{n} (\al' - \al)  \right| \lor \left|\sqrt{n}\,\overline{l}^{-1}(\sig' - \sig)\right| \lor \left| \sqrt{n} (\be' - \be)\right| \right) \lesssim_u 1.
\end{equation}
\end{lem}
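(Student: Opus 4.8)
\textbf{Proof plan for Lemma \ref{lem:rate_est}.}
The statement is purely about the geometry of the normalizing matrix $\varphi_n(\theta)$ and has nothing to do with the likelihood. The plan is to read off from the block structure \eqref{hm:def_vp} what the constraint $|\varphi_n(\theta)^{-1}(\theta'-\theta)|\le c$ forces on the last three coordinates $(\al',\sig',\be')$, and then convert those to the asserted bounds. Write $\theta'-\theta=(\lam'-\lam,\mu'-\mu,\al'-\al,\sig'-\sig,\be'-\be)$ and split $\varphi_n(\theta)^{-1}(\theta'-\theta)$ into its top $2$-block and bottom $3$-block as in \eqref{hm:def_vp}. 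The bottom block equals $\sqrt{n}\,\xi_h(\al)\,\tilde\varphi_n(\theta)^{-1}\bigl(\al'-\al,\sig'-\sig,\be'-\be\bigr)^\top$, so $|\varphi_n(\theta)^{-1}(\theta'-\theta)|\le c$ implies
\begin{equation}
\left|\sqrt{n}\,\xi_h(\al)\,\tilde\varphi_n(\theta)^{-1}\bigl(\al'-\al,\ \sig'-\sig,\ \be'-\be\bigr)^\top\right|\le c.
\nonumber
\end{equation}

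First I would invert $\tilde\varphi_n(\theta)$ explicitly. From \eqref{hm:def_vp}, $\tilde\varphi_n(\theta)$ is block-triangular with an upper-left $2\times 2$ block $\begin{pmatrix}\varphi_{33,n}&\varphi_{34,n}\\ \varphi_{43,n}&\varphi_{44,n}\end{pmatrix}$ and a trailing $1$, so $\tilde\varphi_n(\theta)^{-1}$ has the same block shape with upper-left block $\frac{1}{D_n(\theta)}\begin{pmatrix}\varphi_{44,n}&-\varphi_{34,n}\\ -\varphi_{43,n}&\varphi_{33,n}\end{pmatrix}$, where $D_n(\theta)=\varphi_{33,n}\varphi_{44,n}-\varphi_{34,n}\varphi_{43,n}$. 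By the convergences $\varphi_{33,n}\to_u\overline{\varphi}_{33}$, $\varphi_{34,n}\to_u\overline{\varphi}_{34}$ together with $s_{43,n}\to_u\overline{\varphi}_{43}$, $s_{44,n}\to_u\overline{\varphi}_{44}$ in \eqref{assump:rate_mat} and the relation $\varphi_{43,n}=\sig(s_{43,n}-\al^{-2}\overline{l}\,\varphi_{33,n})$, $\varphi_{44,n}=\sig(s_{44,n}-\al^{-2}\overline{l}\,\varphi_{34,n})$, the entries of $\varphi_{43,n},\varphi_{44,n}$ grow at worst like $\overline{l}$ and $D_n(\theta)$ satisfies $\sig^{-1}D_n(\theta)=\varphi_{33,n}s_{44,n}-\varphi_{34,n}s_{43,n}\to_u\overline{\varphi}_{33}\overline{\varphi}_{44}-\overline{\varphi}_{34}\overline{\varphi}_{43}$, which is bounded away from $0$ uniformly by the last line of \eqref{assump:rate_mat}. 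Hence $\inf_\theta|\xi_h(\al)|^{-1}\,|D_n(\theta)|^{-1}\gtrsim_u 1$ is bounded below once we know $\xi_h(\al)$ is bounded (which it is for $\al>1$, by Lemma \ref{lem:xi_asymp}(1), with $\inf_\theta|t_\al|>0$ on $\overline\Theta$).

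Next I would solve the displayed inequality for the coordinate differences. Multiplying through by $\tilde\varphi_n(\theta)$ gives $\sqrt{n}\,|\xi_h(\al)|\,|(\al'-\al,\sig'-\sig,\be'-\be)|\lesssim_u |\tilde\varphi_n(\theta)|\cdot c$; but $|\tilde\varphi_n(\theta)|\lesssim_u\overline{l}$ because every entry of $\tilde\varphi_n(\theta)$ is $O_u(\overline{l})$ (the worst entries being $\varphi_{43,n},\varphi_{44,n}$). Using $|\xi_h(\al)|\gtrsim_u 1$ for $\al>1$, this yields $\sqrt{n}\,(|\al'-\al|\vee|\sig'-\sig|\vee|\be'-\be|)\lesssim_u \overline{l}\,c$, which is slightly weaker than claimed for $\al$ and $\be$. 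To remove the spurious $\overline{l}$ for the $\al$- and $\be$-components, I would instead work row-by-row: the first and third rows of $\tilde\varphi_n(\theta)^{-1}$ have entries of order $O_u(1)$ (the first row is $\frac{1}{D_n}(\varphi_{44,n},-\varphi_{34,n},0)$, and $D_n\asymp_u\overline{l}$ while $\varphi_{44,n}=O_u(\overline{l})$, $\varphi_{34,n}=O_u(1)$, so the first row is $O_u(1)$; the third row is $(0,0,1)$). Reading off the first and third components of $\sqrt{n}\,\xi_h(\al)\tilde\varphi_n(\theta)^{-1}(\cdots)^\top$ and using that its norm is $\le c$, after multiplying back by the relevant $2\times2$ cofactor one finds $\sqrt{n}\,|\xi_h(\al)|\,|\al'-\al|\lesssim_u c$ and $\sqrt{n}\,|\xi_h(\al)|\,|\be'-\be|\lesssim_u c$; combining with $|\xi_h(\al)|\gtrsim_u 1$ gives the $\sqrt{n}$-bounds for $\al'-\al$ and $\be'-\be$. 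For the $\sig$-component one argues from the second row, which carries an extra $\overline{l}$ (through $\varphi_{33,n}/D_n\asymp 1/\overline{l}$... wait: $\varphi_{33,n}=O_u(1)$, $D_n\asymp_u\overline{l}$, so the second row is $O_u(1/\overline{l})$ in its $\sig$-entry and $O_u(1)$ elsewhere), hence in fact $\sqrt{n}\,|\sig'-\sig|\lesssim_u \overline{l}\,c$, i.e. $\sqrt{n}\,\overline{l}^{-1}|\sig'-\sig|\lesssim_u c$, as claimed.

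\textbf{Main obstacle.} The delicate point is the precise bookkeeping of the $\overline{l}=\log(1/h)$ powers in the entries of $\tilde\varphi_n(\theta)$ and of $D_n(\theta)^{-1}$: the raw estimate $|\tilde\varphi_n|\lesssim_u\overline{l}$ is too crude and produces an extra $\overline{l}$ on the $\al$- and $\be$-components, which would propagate (through Lemma \ref{lem:rate_est}'s use in \eqref{condi:rate_mat} and \eqref{condi:obs_conti}) into the wrong scaling. The fix is to exploit the cancellation built into the definitions $s_{43,n},s_{44,n}$ in \eqref{assump:rate_mat}: it is exactly the combinations $s_{43,n}=\al^{-2}\overline{l}\varphi_{33,n}+\sig^{-1}\varphi_{43,n}$ and $s_{44,n}=\al^{-2}\overline{l}\varphi_{34,n}+\sig^{-1}\varphi_{44,n}$ that converge, so that $\sig^{-1}D_n(\theta)=\varphi_{33,n}s_{44,n}-\varphi_{34,n}s_{43,n}$ has a finite nonzero limit and the $\overline{l}$'s in $\varphi_{43,n},\varphi_{44,n},D_n$ match up. Carrying out the row-by-row inversion with this cancellation in hand is the only nontrivial part; everything else is the uniform boundedness of $|\xi_h(\al)|$ and $|\xi_h(\al)|^{-1}$ for $\al>1$ from Lemma \ref{lem:xi_asymp}, which we may invoke.
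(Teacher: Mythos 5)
Your overall strategy is the same as the paper's: isolate the bottom $3\times 3$ block of the constraint, exploit the cancellation $\varphi_{33,n}\varphi_{44,n}-\varphi_{34,n}\varphi_{43,n}=\sig(\varphi_{33,n}s_{44,n}-\varphi_{34,n}s_{43,n})$ so that $D_n(\theta)$ has a uniform nonzero limit, and then track how exactly one power of $\overline{l}$ lands on the $\sig$-coordinate. (The paper packages the same computation as a change of variables $\sqrt{n}\,\xi_h(\al)\tilde\varphi_n(\theta)^{-1}(\rho'-\rho)=A_n(\theta)b_n(\theta,\theta')$, where $b_n$ carries the combination $\sig'-\sig+\al^{-2}\sig\overline{l}(\al'-\al)$, and bounds $|A_n(\theta)^{-1}|$ through the minimum eigenvalue of $A_n(\theta)^{\top}A_n(\theta)$; the substance is identical to your row-by-row inversion.)

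However, the bookkeeping in your ``row-by-row'' paragraph is internally inconsistent and, as written, does not hold together. You first (correctly) establish that $\sig^{-1}D_n(\theta)\rightarrow_u\overline{\varphi}_{33}(\theta)\overline{\varphi}_{44}(\theta)-\overline{\varphi}_{34}(\theta)\overline{\varphi}_{43}(\theta)$, a quantity bounded away from $0$ and $\infty$, so that $D_n(\theta)\asymp_u 1$; yet in the refinement you assert $D_n\asymp_u\overline{l}$ and use it twice, which makes your claimed orders for the rows of $\tilde\varphi_n(\theta)^{-1}$ wrong (its first row is in fact $O_u(\overline{l})$ because $\varphi_{44,n}=O_u(\overline{l})$ while $D_n\asymp_u1$). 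More importantly, the rows of $\tilde\varphi_n(\theta)^{-1}$ are not the relevant objects: knowing that a single component of $w:=\tilde\varphi_n(\theta)^{-1}(\rho'-\rho)$ is small does not by itself bound any single coordinate of $\rho'-\rho$. The clean way to finish your own argument is: the constraint gives $|w|\le c/(\sqrt{n}\,|\xi_h(\al)|)\lesssim_u n^{-1/2}$ (using $\inf_{\theta}|t_{\al}|>0$ on $\overline{\Theta}$ and Lemma \ref{lem:xi_asymp}), and then $\rho'-\rho=\tilde\varphi_n(\theta)w$, whose three components are controlled by the rows of $\tilde\varphi_n(\theta)$ itself, of orders $O_u(1)$, $O_u(\overline{l})$, $O_u(1)$ respectively; this delivers $\sqrt{n}|\al'-\al|\lesssim_u1$, $\sqrt{n}\,\overline{l}^{-1}|\sig'-\sig|\lesssim_u1$, and $\sqrt{n}|\be'-\be|\lesssim_u1$ simultaneously. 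So the conclusion and the key cancellation are right, but the intermediate order claims in that step need to be corrected before the argument is sound.
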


\begin{proof}
Recall the definition \eqref{hm:R.set_def}:
\begin{equation}
\mathfrak{R}_n(c;\theta) = \big\{\theta' \in \Theta : |\varphi_n(\theta)^{-1}(\theta' - \theta)| \leq c\big\}.
\end{equation}
The definition of $\mathfrak{R}_n(c;\theta)$ specifies the appropriate order of $(\al',\sig', \be')$, hence we analyze $\varphi(\theta)^{-1}(\theta' - \theta)$.
To this end, we first derive $\varphi(\theta)^{-1}$. 
By definition \eqref{hm:def_vp}, 
it suffices to calculate $\tilde{\varphi}_n(\theta)^{-1}$
to obtain $\varphi_n(\theta)^{-1}$:

\begin{align}
\tilde{\varphi}_n(\theta)^{-1}= \frac{1}{D_n(\theta)} \begin{pmatrix}
\varphi_{44,n}(\theta) & -\varphi_{34,n}(\theta) & 0\\
-\varphi_{43,n}(\theta) & \varphi_{33,n}(\theta) & 0\\
0 & 0 & D_n(\theta)
\end{pmatrix},
\end{align}
where $D_n(\theta)\coloneqq \det \tilde{\varphi}_n(\theta)$.
The non-singularity of $\tilde{\varphi}_n(\theta)$ for every sufficiently large $n$ is guaranteed by the assumption \eqref{assump:rate_mat}:
\begin{align}
\inf_{\theta \in \overline{\Theta}} |D_n(\theta)| &= \inf_{\theta \in \overline{\Theta}} |\varphi_{33,n}(\theta)\varphi_{44,n}(\theta) - \varphi_{34,n}(\theta)\varphi_{43,n}(\theta)|\\
&= \inf_{\theta \in \overline{\Theta}}
|\sig(\varphi_{33,n}(\theta)s_{44,n}(\theta) - \varphi_{34,n}(\theta)s_{43,n}(\theta))|\\
&= \inf_{\theta \in \overline{\Theta}} | \sig (\overline{\varphi}_{33} (\theta) \overline{\varphi}_{44}(\theta) - \overline{\varphi}_{34} (\theta) \overline\varphi_{43}(\theta))| + o(1)\\
&\eqqcolon \inf_{\theta \in \overline{\Theta}} |\overline{D}(\theta)| + o(1).
\end{align}

Since we are concerned only with the orders associated with $(\al', \sig', \be')$, we normalize these three parameters. Let $\rho \coloneqq (\al, \sig, \be)$ and $\rho' = (\al', \sig', \be')$. From simple algebraic calculations, we have

\begin{equation}\label{eq:normalized_stable_parameter}
\begin{aligned}
&\sqrt{n}\xi_h(\al) \tilde{\varphi}_n(\theta)^{-1} (\rho' - \rho)\\
& = \frac{\sqrt{n} \xi_h(\al)}{D_n(\theta)} 
\begin{pmatrix}
y_{33,n}(\theta) & -y_{34,n}(\theta) & 0 \\
-y_{43,n}(\theta) & y_{44,n}(\theta) & 0\\
0 & 0 & y_{55,n}(\theta)
\end{pmatrix} 
\begin{pmatrix}
\al' -\al  \\
\sig' - \sig  \\
\be' - \be
\end{pmatrix}
\\
& = \frac{\sqrt{n} \xi_h(\al)}{\overline{D}(\theta) + o_u(1)} 
\begin{pmatrix}
y_{33,n}'(\theta) & -y_{34,n}(\theta) & 0 \\
-y_{43,n}'(\theta) & y_{44,n}(\theta) & 0\\
0 & 0 & y_{55,n}(\theta)
\end{pmatrix} 
\begin{pmatrix}
\sig(\al' -\al)  \\
\sig' - \sig + \al^{-2} \sig \overline{l}(\al' - \al) \\
\be' - \be
\end{pmatrix}\\
& = A_n(\theta) b_n(\theta,\theta'),
\end{aligned}
\end{equation}
where
\begin{equation}
\begin{cases}
y_{33,n}'(\theta) \coloneqq \al^{-2} \overline{l} y_{34,n}(\theta) + \sig y_{33,n}(\theta) \rightarrow_u \overline y_{33}(\theta),\\
y_{34,n}(\theta) \coloneqq \varphi_{34,n}(\theta) \rightarrow_u \overline{y}_{34}(\theta),\\
y_{43,n}'(\theta) \coloneqq \al^{-2} \overline{l} y_{44,n}(\theta) + \sig y_{43,n}(\theta) \rightarrow_u \overline y_{43}(\theta),\\
y_{44,n}(\theta) \coloneqq \varphi_{33,n}(\theta)\rightarrow_u \overline{y}_{44}(\theta),\\
y_{55,n}(\theta) \coloneqq \varphi_{33,n}(\theta)\varphi_{44,n}(\theta)-\varphi_{34,n}(\theta)\varphi_{43,n}(\theta) \rightarrow_u \overline{y}_{55}(\theta),
\end{cases}
\end{equation}
\begin{equation}
A_n(\theta) \coloneqq \frac{1}{\overline{D}(\theta) + o_{u}(1)} \begin{pmatrix}
y_{33,n}'(\theta) & -y_{34,n}(\theta) & 0 \\
-y_{43,n}'(\theta) & y_{44,n}(\theta) & 0\\
0 & 0 & y_{55,n}(\theta)
\end{pmatrix},
\end{equation}
and
\begin{equation}
b_n(\theta, \theta') \coloneqq \sqrt{n} \,\xi_h(\al) \begin{pmatrix}
\sig(\al' -\al) \\
\sig' - \sig + \al^{-2}\sig \overline{l}(\al' -\al) \\
\be' - \be
\end{pmatrix}.
\end{equation}
In view of \eqref{eq:normalized_stable_parameter}, to bound $|b_n(\theta,\theta')|$ we need an upper bound for$|A_n(\theta)^{-1}|$.

Let us compute the minimum eigenvalue of $A_n(\theta,\theta')$.
By the continuity in $\theta$, 
\begin{align}
&A_n(\theta)^{\top} A_n(\theta)\\
&= \frac{1}{\overline{D}(\theta)^2}
\begin{pmatrix}
y_{33,n}'(\theta) & -y_{34,n}(\theta) & 0 \\
-y_{43,n}'(\theta) & y_{44,n}(\theta) & 0\\
0 & 0 & y_{55,n}(\theta)
\end{pmatrix}^{\top}\\
&\qquad\times \begin{pmatrix}
y_{33,n}'(\theta) & -y_{34,n}(\theta) & 0 \\
-y_{43,n}'(\theta) & y_{44,n}(\theta) & 0\\
0 & 0 & y_{55,n}(\theta)
\end{pmatrix} 
+ o_{u}(1)
\\
&= \frac{1}{\overline{D}(\theta)^2} 
\bigg(
\begin{array}{l}
y_{33,n}(\theta)^2 + y_{43,n}'(\theta)^2 \\
-(y_{33,n}'(\theta) y_{34,n}(\theta)+ y_{43,n}'(\theta) y_{44,n}(\theta) ) \\
0
\end{array}\\
&\begin{matrix}
-(y_{33,n}'(\theta) y_{34,n}(\theta)+ y_{43,n}'(\theta) y_{44,n}(\theta)) & 0 \\
y_{34,n}(\theta)^2 + y_{44,n}(\theta)^2 & 0 \\
0 & y_{55,n}(\theta)^2
\end{matrix}
\bigg) + o_{u}(1)\\
&= \frac{1}{\overline{D}(\theta)^2} \begin{pmatrix}
\overline{y}_{33}(\theta)^2+ \overline{y}_{43}(\theta)^2 & -(\overline{y}_{33}(\theta)\overline{y}_{43}(\theta)+\overline{y}_{43}(\theta)\overline{y}_{44}(\theta)) & 0 \\
-(\overline{y}_{33}(\theta)\overline{y}_{43}(\theta)+\overline{y}_{43}(\theta)\overline{y}_{44}(\theta)) & \overline{y}_{34}(\theta)^2+ \overline{y}_{44}(\theta)^2 & 0\\
0 & 0 & \overline{y}_{55}(\theta)^2
\end{pmatrix} 
\nn\\
&{}\qquad + o_{u}(1). \\
\label{eq:min_eigen}
\end{align}
The characteristic polynomial is given as follows:
\begin{align}
&\left| 
\begin{pmatrix}
t - (\overline{y}_{33}(\theta)^2 + \overline{y}_{43}(\theta)^2) & \overline{y}_{33}(\theta)\overline{y}_{34}(\theta) + \overline{y}_{43}(\theta)\overline{y}_{44}(\theta) & 0\\
\overline{y}_{33}(\theta)\overline{y}_{34}(\theta) + \overline{y}_{43}(\theta)\overline{y}_{44}(\theta) & t - (\overline{y}_{34}(\theta)^2 + \overline{y}_{44}(\theta)^2) & 0\\
0 & 0 & t - \overline{y}_{55}(\theta)^2
\end{pmatrix}\right |\\
&= (t - \overline{y}_{55}(\theta)^2) \big\{ t^2 - (\overline{y}_{33}(\theta)^2 + \overline{y}_{34}(\theta)^2 + \overline{y}_{43}(\theta)^2 + \overline{y}_{44}(\theta)^2) t \nn\\
&{}\qquad + (\overline{y}_{33}(\theta)\overline{y}_{44}(\theta)-\overline{y}_{34}(\theta)\overline{y}_{43}(\theta))^2 \big\}
\\
&= (t - \overline{y}_{55}(\theta)^2) (t^2 - \mathsf{f}(\theta)t + \mathsf{g}(\theta)),
\end{align}
where $\mathsf{f}(\theta) \coloneqq \overline{y}_{33}(\theta)^2 + \overline{y}_{34}(\theta)^2 + \overline{y}_{43}(\theta)^2 + \overline{y}_{44}(\theta)^2$ and $\mathsf{g}(\theta) \coloneqq  (\overline{y}_{33}(\theta)\overline{y}_{44}(\theta) - \overline{y}_{34}(\theta)\overline{y}_{43}(\theta))^2$.

Here we note that $\mathsf{f}(\theta)^2 - 4\mathsf{g}(\theta)\ge 0$. 
Indeed, by using the Cauchy--Schwarz inequality $(ad - bc)^2 \leq (a^2 + b^2)(c^2 + d^2)$, we have
\begin{align}\label{eq:g}
\mathsf{g}(\theta)
&\leq(\overline{y}_{33}(\theta)^2 + \overline{y}_{34}(\theta)^2 )(\overline{y}_{43}(\theta)^2 + \overline{y}_{44}(\theta)^2 ).
\end{align}
Also, by the arithmetic-geometric mean inequality,
\begin{equation}\label{eq:f}
    (\overline{y}_{33}(\theta)^2 + \overline{y}_{34}(\theta)^2)(\overline{y}_{43}(\theta)^2 + \overline{y}_{44}(\theta)^2) \leq \frac{\mathsf{f}(\theta)^2}{4}.
\end{equation}
From \eqref{eq:f} and \eqref{eq:g}, we get $\mathsf{f}(\theta)^2 - 4\mathsf{g}(\theta)\ge 0$. 

The minimum eigenvalue of the first term in \eqref{eq:min_eigen} is
\begin{equation}
\underline{t}_n(\theta) \coloneqq \min \left\{\overline{y}_{55}(\theta)^2, \, \frac{\mathsf{f}(\theta) - \sqrt{\mathsf{f}(\theta)^2 -4\mathsf{g}(\theta)}}{2}\right\}.
\end{equation}
Suppose that $\underline{t}_n(\theta)=\frac{\mathsf{f}(\theta)-\sqrt{\mathsf{f}(\theta)^2-4\mathsf{g}(\theta)}}{2}$. We have
\begin{equation}
    \mathsf{f}(\theta)- \sqrt{\mathsf{f}(\theta)^2-4\mathsf{g}(\theta)}
    = \frac{4\mathsf{g}(\theta)}{\mathsf{f}(\theta) + \sqrt{\mathsf{f}(\theta)^2 + 4\mathsf{g}(\theta)}} \gtrsim_u 1
\end{equation}
by \eqref{assump:rate_mat}, concluding that $\underline{t}_n(\theta)^{-1} \lesssim_u 1$.
If in turn $\underline{t}_n(\theta)=\overline{y}_{55}(\theta)^2$, then $\inf_{\theta \in \Theta} \underline{t}_n(\theta) >0$ by \eqref{assump:rate_mat}. Hence, $\underline{t}_n(\theta)^{-1} \lesssim_u 1$ also in this case.

Thus, by \eqref{eq:normalized_stable_parameter} we have
\begin{align}
&\sup_{\rho: \left | \sqrt{n}\xi_h(\al) \tilde{\varphi}_n(\theta)^{-1}(\rho'-\rho)\right | \leq c} |b(\theta, \theta')|\\
&= \sup_{\rho: |\sqrt{n}\xi_h(\al) \tilde{\varphi}_n(\theta)^{-1}(\rho'-\rho)| \leq c} \left |A_n(\theta,\theta')^{-1} \sqrt{n}\xi_h(\al) \tilde{\varphi}_n(\theta)^{-1}(\rho'-\rho) \right |\\
&\leq |\underline{t}_n(\theta)|^{-1/2} \sup_{\rho: \left | \sqrt{n}\xi_h(\al) \tilde{\varphi}_n(\theta)^{-1}(\rho'-\rho)\right | \leq c} \left | \sqrt{n}\xi_h(\al) \tilde{\varphi}_n(\theta)^{-1}(\rho'-\rho)\right |\\
&\leq c |\underline{t}_n(\theta)|^{-1/2} \lesssim_u 1,
\end{align}
so that
\begin{equation}
\sup_{\theta' \in \mathfrak{R}_n(c;\theta)} \left\{|\sqrt{n}(\al' - \al)| \lor |\sqrt{n}\overline{l}^{-1} (\sig' - \sig)| \lor |\sqrt{n}(\be' - \be)| \right\}= O_{u}(1).
\end{equation}
The proof is complete.
\end{proof}

\medskip

We are now in the position of proving \eqref{condi:rate_mat}. 
We have
\begin{align}
&\sup_{\theta' \in \mathfrak{R}_n(c;\theta)} \left| \varphi_n(\theta')^{-1} \varphi_n(\theta) - I_5 \right|\\
&= \sup_{\theta' \in \mathfrak{R}_n(c;\theta)} \left| \begin{pmatrix}
r_n(\theta') r_n(\theta)^{-1} & O \\
O & \frac{\xi_h(\al')}{\xi_h(\al)} \tilde{\varphi}_n(\theta')^{-1} \tilde{\varphi}_n (\theta)\\
\end{pmatrix} - I_5 \right|\\
&\le \sup_{\theta' \in \mathfrak{R}_n(c;\theta)}
\left| (r_n(\theta')^{-1} r_n(\theta)-1) I_2 \right|
\left| \frac{\xi_h(\al')}{\xi_h(\al)} \tilde{\varphi}_n(\theta')^{-1} \tilde{\varphi}_n(\theta) - I_3\right| \\
&\leq \sup_{\theta' \in \mathfrak{R}_n(c;\theta)} |r_n(\theta')^{-1} r_n(\theta) -1| 
\left(
\left|\frac{\xi_h(\al')}{\xi_h(\al)}-1 \right| +  \left|\frac{\xi_h(\al')}{\xi_h(\al)} \tilde{\varphi}_n(\theta')^{-1} \tilde{\varphi}_n(\theta) - I_3\right|\right).
\end{align}
It suffices to show that
\begin{equation}\label{sufficient:rate}
\sup_{\theta' \in \mathfrak{R}_n(c;\theta)}
\left(|r_n(\theta')^{-1} r_n(\theta) - 1| \lor \left| \frac{\xi_h(\al')}{\xi_h(\al)} - 1 \right|\lor \left|\tilde{\varphi}_n(\theta')^{-1} \tilde{\varphi}_n(\theta) - I_3 \right|\right) \rightarrow_u 0.
\end{equation}
We use the (ordinary) order symbols $o_u(\cdot)$ and $O_u(\cdot)$ when they are valid uniformly in $\theta\in\overline{\Theta}$.
From the mean value theorem and Lemma \ref{lem:rate_est}, there exists $\al''$ between $\al$ and $\al'$,
\begin{align}
\sup_{\theta' \in \mathfrak{R}_n(c;\theta)} |r_n(\theta')^{-1} r_n(\theta) -1|
&= \sup_{\theta' \in \mathfrak{R}_n(c;\theta)} |h^{1/\al - 1/\al'} - 1|\\
&= \sup_{\theta' \in \mathfrak{R}_n(c;\theta)} \left|\frac{1}{\al''} (\log h) h^{1/\al - 1/\al''} (\al'' - \al) \right|\\
&\lesssim  \sup_{\theta' \in \mathfrak{R}_n(c;\theta)} h^{1/\al - 1/\al''}  \sup_{\theta' \in \mathfrak{R}_n(c;\theta)} |\overline{l}^{-1} (\al' -\al)|\\
&\lesssim \sup_{\theta' \in \mathfrak{R}_n(c;\theta)} h^{\frac{\al''-\al}{\al \al''}} o_u(1)\\
&\leq \sup_{\theta' \in \mathfrak{R}_n(c;\theta)} \left(h^{-|\al'' - \al|}\right)^{\frac{1}{\al \al''}}o_u(1)\\
&\leq \sup_{\theta' \in \mathfrak{R}_n(c;\theta)} \left(h^{-|\al' - \al|}\right)^{\frac{1}{\al \al''}}o_u(1)\\
&= \sup_{\theta' \in \mathfrak{R}_n(c;\theta)} \left(\exp\left( \overline{l} |\al' - \al| \right) \right)^{\frac{1}{\al \al''}}o_u(1)\\
&=o_u(1).
\end{align}

Next, since $\xi_h$ is $C^1$ in $\alpha$,
by Lemma \ref{lem:rate_est} and the mean value theorem there exists $\alpha''$ between $\alpha$ and $\alpha'$ such that
\begin{align}
\sup_{\theta' \in \mathfrak{R}_n(c;\theta)} \left|\frac{\xi_h(\al')}{\xi_h(\al)} -1\right|
&= \sup_{\theta' \in \mathfrak{R}_n(c;\theta)} \frac{|\xi_h(\al') - \xi_h(\al)|}{\xi_h(\al)}\\
&\leq \sup_{\theta' \in \mathfrak{R}_n(c;\theta)} 
\frac{|\p_{\al}\xi_h(\al'') (\al' - \al) |}{\xi_h(\al)}\\
&\leq \frac{\sup_{\theta \in \Theta} |\p_{\al} \xi_h(\al)|}{\inf_{\theta \in \Theta} \xi_h(\al)} \sup_{\theta' \in \mathfrak{R}_n(c;\theta)} |\al' - \al|\\
&=o_u(1).
\end{align}

Finally,
\begin{align}
\sup_{\theta' \in \mathfrak{R}_n(c;\theta)} | \tilde \varphi_n(\theta')^{-1} \tilde \varphi_n(\theta) - I_3|
&= \sup_{\theta' \in \mathfrak{R}_n(c;\theta)} |\tilde{\varphi}_n(\theta')^{-1} (\tilde \varphi_n(\theta) - \tilde \varphi_n(\theta')) |\\
&=  \sup_{\theta' \in \mathfrak{R}_n(c;\theta)}|\tilde \varphi_n(\theta')^{-1}| |\tilde \varphi_n(\theta) - \tilde \varphi_n(\theta')|.
\end{align}
By the assumption \ref{assump:rate_mat},
\begin{align}
\sup_{\theta' \in \mathfrak{R}_n(c;\theta)} \left | \tilde{\varphi}_n (\theta')^{-1}\right | 
&= \sup_{\theta' \in \mathfrak{R}_n(c;\theta)}  \left |\frac{1}{\delta_n(\theta')} \begin{pmatrix}
\varphi_{44,n}(\theta') & -\varphi_{34,n}(\theta') \\
-\varphi_{43,n}(\theta') & \varphi_{33,n}(\theta') \\
\end{pmatrix}\right |\\
&\leq \sup_{\theta' \in \mathfrak{R}_n(c;\theta)} \frac{1}{|\delta_n(\theta')|} \left |\begin{pmatrix}
\varphi_{44,n}(\theta') & -\varphi_{34,n}(\theta') \\
-\varphi_{43,n}(\theta') & \varphi_{33,n}(\theta') \\
\end{pmatrix}\right |\\
&\leq O_u(\overline{l}).
\end{align}
Since $\tilde \varphi_n(\theta)$ is continuously differentiable and by Taylor’s theorem together with Lemma \ref{lem:rate_est}, we obtain the following estimates:
\begin{align}
\sup_{\theta' \in \mathfrak{R}_n(c;\theta)} | 
\tilde{\varphi}_n(\theta') - \tilde\varphi_n(\theta)|
&= \sup_{\theta' \in \mathfrak{R}_n(c;\theta)} \left|\int_{0}^{1} \tilde{\varphi}'_n (\theta + t(\theta' - \theta))dt (\theta' - \theta)\right|\\
&\leq  \sup_{\theta \in \Theta} |\tilde\varphi_n' (\theta)| \sup_{\theta' \in \mathfrak{R}_n(c;\theta)} |\theta' - \theta|\\
& \lesssim O_u(\overline{l} r_n^{-1}).
\end{align}
The above three estimates combined with \eqref{sufficient:rate} yield \eqref{condi:rate_mat}.
\subsection{Negligibility of remainder term}

Here we prove \eqref{condi:obs_conti}.
From Taylor's theorem,
\begin{align}
&\sup_{\theta^{(1)},\dots,\theta^{(5)} \in \mathfrak{R}_n(c;\theta)} \left | \varphi_n(\theta)^{\top} \left( \partial_{\theta}^2 \mbbh_n(\theta^{(1)}, \dots, \theta^{(5)}) - \partial_{\theta}^2 \mbbh_n(\theta) \right) \varphi_n(\theta) \right |\\
&=\sup_{\theta^{(1)},\dots,\theta^{(5)} \in \mathfrak{R}_n(c;\theta)} \left | \varphi_n(\theta)^{\top} \left[ \p_{l} \p_{m} \mbbh_n(\theta^{(l)}) - \p_{l} \p_{m} \mbbh_n(\theta)\right]_{l,m=1,...5} \varphi_n(\theta)\right |\\
&=\sup_{\theta^{'}, \theta^{''} \in \mathfrak{R}_n(c;\theta)}\left | \varphi_n(\theta)^{\top} \left[ \p_{k} \p_{l} \p_{m} \mbbh_n(\theta'') (\theta' - \theta) \right]_{k,l,m=1,...,5} \varphi_n(\theta)\right |\\
&\eqqcolon \sup_{\theta^{'}, \theta^{''} \in \mathfrak{R}_n(c;\theta)} \left | \varphi_n(\theta)^{\top} [e_{kl}(\theta, \theta', \theta'')]_{k,l=1,...,5} \varphi_n(\theta)\right |\\
&= \sup_{\theta^{'}, \theta^{''} \in \mathfrak{R}_n(c;\theta)} \left| \left[\sum_{i=1}^{5}\sum_{j=1}^{5} \varphi_{ki}(\theta) e_{ij}(\theta, \theta', \theta'') \varphi_{jl}(\theta) \right]_{k,l=1,...,5} \right|\\
&\lesssim_u \sup_{\theta', \theta'' \in \mathfrak{R}_n(c;\theta)} \max_{k,l=1,...,5} \max_{i,j=1,...,5} |\varphi_{ki}(\theta) e_{ij}(\theta, \theta', \theta'') \varphi_{jl}(\theta)|.
\end{align}
We will evaluate the order of each term on the right-hand side.

For $k=1$, since $\varphi_{1i}(\theta)=0$ for all $i\neq 1$, only the case $i=1$ needs to be considered:
\begin{equation}
\varphi_{11}(\theta) e_{1j}(\theta, \theta', \theta'')\varphi_{jl}(\theta),\qquad j,l=1,\dots,5.
\end{equation}
Each of the terms $\varphi_{ki}(\theta) e_{ij}(\theta, \theta', \theta'') \varphi_{jl}(\theta)$ is bounded by either one of
\begin{align}
&r_n^{-2}(\theta)|e_{11}(\theta, \theta', \theta'')|,\quad r_n^{-2}(\theta)|e_{12}(\theta, \theta', \theta'')|,\quad
\tfrac{1}{\sqrt{n}r_n(\theta)}|e_{13}(\theta, \theta', \theta'')|,\\
&\tfrac{\overline{l}}{\sqrt{n}r_n(\theta)}|e_{14}(\theta, \theta', \theta'')|,\quad
\tfrac{1}{\sqrt{n}r_n (\theta)}|e_{15}(\theta, \theta', \theta'')|.
\end{align}
By Lemma~\ref{lem:rate_est}, all of these are
\(O_{u,p}(\overline{l}/\sqrt{n})\), hence
\begin{equation}
\sup_{\theta',\theta'' \in \mathfrak{R}_n(c;\theta)}
\max_{l,j}
|\varphi_{11}(\theta)e_{1j}(\theta,\theta',\theta'')\varphi_{jl}(\theta)|
=O_{u,p}\!\left(\frac{\overline{l}}{\sqrt{n}}\right).
\end{equation}
As for the other cases, analogous calculations for \(k=2,\dots,5\) give
\begin{align}
\sup_{\theta',\theta'' \in \mathfrak{R}_n(c;\theta)}
\max_{i,l,j}
|\varphi_{2i}(\theta)e_{ij}(\theta,\theta',\theta'')\varphi_{jl}(\theta)|
&=
O_{u,p}\!\Bigl(\tfrac{\overline{l}}{\sqrt{n}}\Bigr),
\nn\\
\sup_{\theta',\theta'' \in \mathfrak{R}_n(c;\theta)}
\max_{i,l,j}
|\varphi_{3i}(\theta)e_{ij}(\theta,\theta',\theta'')\varphi_{jl}(\theta)|
&=O_{u,p}\!\Bigl(\tfrac{h^{1-1/\alpha}\overline{l}^2}{\sqrt{n}}\Bigr),
\nn\\
\sup_{\theta',\theta'' \in \mathfrak{R}_n(c;\theta)}
\max_{i,l,j}
|\varphi_{4i}(\theta)e_{ij}(\theta,\theta',\theta'')\varphi_{jl}(\theta)|
&=O_{u,p}\!\Bigl(\tfrac{h^{1-1/\alpha}\overline{l}^2}{\sqrt{n}}\Bigr),
\nn\\
\sup_{\theta',\theta'' \in \mathfrak{R}_n(c;\theta)}
\max_{i,l,j}
|\varphi_{5i}(\theta)e_{ij}(\theta,\theta',\theta'')\varphi_{jl}(\theta)|
&=O_{u,p}\!\Bigl(\tfrac{h^{1-1/\alpha}\overline{l}}{\sqrt{n}}\Bigr).
\end{align}
Since all of these bounds are $o_{u,p}(1)$, the entire expression tends to zero uniformly in $\theta$. Therefore,
\begin{equation}
\sup_{\theta^{(1)},\dots,\theta^{(5)}\in\mathfrak{R}_n(c;\theta)}
\bigl|
\varphi_n(\theta)^{\top}
(\p_{\theta}^2\mbbh_n(\theta^{(1)},\dots,\theta^{(5)})
- \p_{\theta}^2\mbbh_n(\theta))
\varphi_n(\theta)
\bigr|
\cip_u 0,
\end{equation}
which completes the proof of \eqref{condi:obs_conti}.

\subsection{Estimating gap}

Here, we will show the remaining \eqref{condi:gap_partial}, \eqref{condi:gap_conv1}, and \eqref{condi:gap_conv2}.
To prove them, we need to determine the stochastic order of $|\p_{\theta}^{k}\delta_n(\theta)|$. 
We write
\begin{equation}
    \delta_n(\theta)= \delta_n^{0}(\theta) + \check{\delta}_n(\theta),
\end{equation}
where $\delta_n^{0}(\theta) \coloneqq n(\log c)$ and $\check{\delta}_n(\theta) \coloneqq \sumj G_j(\theta)d_j(\theta)$.
First, we prove the following lemma.

\begin{lem}\label{lem:general_CLT}
Let $k \ge 2$ be an even number. Then, we have
    \begin{align}
    \frac{1}{n} \sumj |\ep_j'(\theta)| &= O_{u,p}(1),
    \label{lem:general_CLT+1}\\
    \frac{1}{n} \sumj |\ep_j(\theta)| &= O_{u,p}(1),
    \label{lem:general_CLT+2}\\
    \frac{1}{n} \sumj \ep_j'(\theta)^k &= O_{u,p}(h^{1-k/\al}),
    \label{lem:general_CLT-1}\\
    \frac{1}{n} \sumj \ep_j(\theta)^k &= O_{u,p}(h^{1-k/\al}).
    \label{lem:general_CLT-2}
    \end{align}
\end{lem}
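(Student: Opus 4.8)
\emph{Plan.} I would split the four estimates into two groups that require genuinely different ideas: \eqref{lem:general_CLT+1}--\eqref{lem:general_CLT+2} use only that $S_\al^0(\be,1,0)$ has a finite first absolute moment (which is where $\al>1$ enters), while \eqref{lem:general_CLT-1}--\eqref{lem:general_CLT-2} reflect the heavy-tailed scaling of i.i.d.\ sums of $k$-th powers. For \eqref{lem:general_CLT+1}: the array $(\ep'_j(\theta))_{j\le n}$ is i.i.d.\ $S_\al^0(\be,1,0)$ under $\pr$, and the density bound $\phi_{\al,\be}(x)\lesssim_u(1+|x|)^{-1-\al}$ from Section~\ref{hm:sec_preliminaries} shows that $\E[|\ep'_1(\theta)|]=\int|x|\phi_{\al,\be}(x)\,dx$ is finite (here $\al>1$ is essential) and bounded uniformly on $\overline{\Theta}$; since $\E[\frac1n\sumj|\ep'_j(\theta)|]$ equals this uniformly bounded constant, Markov's inequality gives \eqref{lem:general_CLT+1}. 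For \eqref{lem:general_CLT+2} I would write $\ep_j(\theta)=\ep_{0,j}(\theta)+\rho_j(\theta)$ with $\rho_j(\theta):=-h^{-1/\al}\sig^{-1}\lam\int_{t_{j-1}}^{t_j}(Y_s-Y_{t_{j-1}})\,ds$ as recorded in Section~\ref{sec:empirical_func}; because $(\ep_{0,j}(\theta))_j$ is again i.i.d.\ $S_\al^0(\be,1,0)$, its empirical mean of absolute values is $O_{u,p}(1)$ as above, while the localization estimates of Section~\ref{hm:sec_localization} give $\E[|\rho_j(\theta)|]\lesssim_u h^{-1/\al}\cdot h\cdot h^{1/2}=h^{3/2-1/\al}=o(1)$, so $\frac1n\sumj|\rho_j(\theta)|=o_{u,p}(1)$.

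\emph{Heavy-tailed scaling \eqref{lem:general_CLT-1}.} Since $k$ is even, $\ep'_j(\theta)^k\ge 0$, and these are i.i.d.\ with $\pr[\ep'_1(\theta)^k>x]=\pr[|\ep'_1(\theta)|>x^{1/k}]\lesssim_u x^{-\al/k}$ for large $x$, again by the tail of $\phi_{\al,\be}$. Setting $\gam:=\al/k\in(0,1)$ (which is where $k\ge 2>\al$ is used), I would truncate at level $n^{1/\gam}M=n^{k/\al}M$: the probability that some $\ep'_j(\theta)^k$ exceeds it is $\le n\cdot C(n^{1/\gam}M)^{-\gam}=CM^{-\gam}$, uniformly small for $M$ large, while on the complementary event
\[
\E\Big[\sumj\ep'_j(\theta)^k\,I\big(\ep'_j(\theta)^k\le n^{1/\gam}M\big)\Big]\le n\int_0^{n^{1/\gam}M}\pr[\ep'_1(\theta)^k>x]\,dx\lesssim_u n\,(n^{1/\gam}M)^{1-\gam},
\]
so $n^{-1/\gam}$ times this is $\lesssim_u M^{1-\gam}$, and one more application of Markov finishes. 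Hence $\frac1n\sumj\ep'_j(\theta)^k=O_{u,p}(n^{1/\gam-1})=O_{u,p}(n^{k/\al-1})=O_{u,p}(h^{1-k/\al})$ via $h=T/n$; the same argument applied to $(\ep_{0,j}(\theta))_j$ yields $\frac1n\sumj\ep_{0,j}(\theta)^k=O_{u,p}(h^{1-k/\al})$ as well.

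\emph{Claim \eqref{lem:general_CLT-2} and the main obstacle.} Expanding $\ep_j(\theta)^k=\sum_{m=0}^k\binom km\ep_{0,j}(\theta)^{k-m}\rho_j(\theta)^m$, the term $m=0$ is $O_{u,p}(h^{1-k/\al})$ by the previous step. For $m\ge1$ I would truncate the $\ep_{0,j}(\theta)$ at level $n^{1/\al}M$ (failure probability $\lesssim_u M^{-\al}$), so that on the good event $|\ep_{0,j}(\theta)|^{k-m}\le(n^{1/\al}M)^{k-m}$; combining this with the localization bound $\E[\frac1n\sumj|\rho_j(\theta)|^m]\lesssim_u h^{\,m(1-1/\al)+1}$ for $m\ge2$ (and $\lesssim_u h^{3/2-1/\al}$ for $m=1$), itself obtained from $|\rho_j(\theta)|\lesssim_u h^{1-1/\al}\sup_{s\in[t_{j-1},t_j]}|Y_s-Y_{t_{j-1}}|$ and $\E[|Y_s-Y_{t_{j-1}}|^p]\lesssim_u h\vee h^{p/2}$, one checks that the cross term is of order $n^{(k-m)/\al}h^{\,m(1-1/\al)+1}\asymp n^{k/\al-m-1}$ (respectively $n^{k/\al-3/2}$ when $m=1$), which is $o_{u,p}(h^{1-k/\al})$ precisely because $m\ge1$. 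The step I expect to be the main obstacle is exactly this cross-term estimate: a naive H\"older split of $\E[|\ep_{0,j}(\theta)|^{k-m}|\rho_j(\theta)|^m]$ is unavailable, since $k-m$ may be $\ge2>\al$, so no admissible conjugate exponent keeps the $\ep_{0,j}$-factor integrable; this forces the truncation device for $\ep_{0,j}(\theta)$ and a careful bookkeeping of the competing powers of $h$ and $n$ (the gain $h^{m(1-1/\al)}$ from $\rho_j(\theta)^m$ versus the loss $n^{(k-m)/\al}$ from the truncated $\ep_{0,j}$-factor), all of which must remain uniform in $\theta\in\overline{\Theta}$.
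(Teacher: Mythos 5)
Your proposal is correct, but it reaches \eqref{lem:general_CLT-1} and \eqref{lem:general_CLT-2} by a genuinely different route than the paper. For \eqref{lem:general_CLT-1} the paper simply invokes the heavy-tailed power-sum result of Davis--Knight--Liu \cite[p.151 and Proposition A.2]{DavKniLiu92} to get $\sumj(n^{-1/\al}\ep'_j(\theta))^k=O_{u,p}(1)$, whereas you reprove this from scratch by truncating at level $n^{k/\al}M$; your argument is sound and has the merit of being self-contained. The more substantive divergence is in \eqref{lem:general_CLT+2} and \eqref{lem:general_CLT-2}: you compare $\ep_j(\theta)$ with the Euler reference array $\ep_{0,j}(\theta)$ and control the discrepancy $\rho_j(\theta)$ by moment bounds on $Y_s-Y_{t_{j-1}}$, which forces you into a binomial expansion of $\ep_j(\theta)^k$ and the delicate cross-term estimates you correctly identify as the main obstacle (truncating $\ep_{0,j}$ at $n^{1/\al}M$ because no H\"older split keeps $|\ep_{0,j}|^{k-m}$ integrable). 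The paper instead compares $\ep_j(\theta)$ with the \emph{exact} residual $\ep'_j(\theta)$ and exploits the pointwise bound $|d_j(\theta)|\lesssim h\,(|\ep_j(\theta)|+h^{1-1/\al}|Y_{t_{j-1}}|+1)$ on the gap, which yields
\begin{equation}
\frac1n\sumj|\ep_j(\theta)|^k\ \lesssim\ O_{u,p}(h^{1-k/\al})+h^k\,\frac1n\sumj|\ep_j(\theta)|^k ,
\end{equation}
so the troublesome term is self-absorbing for small $h$ and no cross-term bookkeeping is needed at all. Both proofs are valid and give the same rates (your cross terms come out $o_{u,p}(h^{1-k/\al})$ exactly as you compute); the paper's absorption device is shorter and sidesteps the obstacle you flag, while your version makes explicit where the $n^{k/\al-1}$ scaling and the restriction $k\ge 2>\al$ enter. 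One small point of care in your route: the stable tail bound for the $\ep_{0,j}$-truncation is a property under $\pr$ itself while the moment bounds on $\rho_j$ are obtained under the localization of Section \ref{hm:sec_localization}, so the final estimate should be stated on the intersection of the two high-probability events; this is routine but worth saying.
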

\begin{proof}
Since $\ep_1'(\theta),\dots,\ep_n'(\theta)$ are i.i.d. with common law $S_\al^0(\beta,1,0)$, \eqref{lem:general_CLT+1} is trivial by the law of large numbers and by the standing assumption \eqref{hm:Theta_closure} together with the smoothness of $(\al,\beta)\mapsto \phi_{\al,\beta}(y)$ for each $y\in\mbbr$. 
Also, \eqref{lem:general_CLT+2} is easily seen as follows:
\begin{align}
    \frac{1}{n} \sumj |\ep_j(\theta)| &\lesssim  \frac{1}{n} \sumj |\ep_j'(\theta)| + \frac{1}{n} \sumj |d_j(\theta)|\\
    &\lesssim O_p(1) + h \left(\frac{1}{n} \sumj |\ep_j(\theta)| + h^{1-1/\al} \frac{1}{n} \sumj |Y_{t_{j-1}}|  + 1\right)
    \nn\\
    &\lesssim O_{u,p}(1) + O_{u,p}(h) = O_{u,p}(1).
\end{align}
From the above inequality, we have $n^{-1} \sumj |\ep_j(\theta)| = O_{u,p}(1)$.

Let $k\ge 2$ be an even number. By \cite[p.151 and Proposition A.2]{DavKniLiu92} we have
\begin{equation}\label{hm:DavRes85_order}
    \sumj \left(\frac{\ep'_j(\theta)}{n^{1/\al}}\right)^k = O_{u,p}(1),
\end{equation}
from which \eqref{lem:general_CLT-1} immediately follows.
By using \eqref{hm:DavRes85_order} and \eqref{hm:p-add1}, we obtain
\begin{align}
\frac{1}{n} \sumj |\ep_j(\theta)|^k &\lesssim\frac{1}{n} \sumj |\ep_j'(\theta)|^k + \frac{1}{n} \sumj |d_j(\theta)|^k\\
&\lesssim O_{u,p}(h^{1-k/\al}) + \frac{1}{n} \sumj\left(h|\ep_j(\theta)| + h^{2-1/\al}|Y_{t_{j-1}}| + h\right)^k\\
&\lesssim O_{u,p}(h^{1-k/\al}) + \frac{1}{n} \sumj \left(h^k |\ep_j(\theta)|^k + h^{(2-1/\al)k}|Y_{t_{j-1}}|^k +h^k\right)\\
&= O_{u,p}(h^{1-k/\al}) + \frac{1}{n} \sumj |\ep_j(\theta)|^k h^{k} + O_{u,p}(h^{(2-1/\al)k}) + h^{k}
\nn\\
&= O_{u,p}(h^{1-k/\al}) + \frac{1}{n} \sumj |\ep_j(\theta)|^k h^{k}.
\end{align}
Hence, for sufficiently small $h>0$ we get
\begin{equation}
    \frac{1}{n} \sumj |\ep_j(\theta)|^k = O_{u,p}(h^{1-k/\al}),
\end{equation}
establishing \eqref{lem:general_CLT-2}.
\end{proof}

Then, it suffices to determine the order of $|\p_{\theta}^k \delta_n^{0}(\theta)|$ and $|\p_{\theta}^k\check{\delta}_n(\theta)|$. 

\begin{lem}\label{lem:delta0_order}
For any integer $k \geq 1$,
\begin{equation}
|\p_{\theta}^{k} \delta_n^{0}(\theta)| \lesssim O(h^{k}).
\end{equation}
\end{lem}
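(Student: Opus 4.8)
The plan is to exploit the explicit, purely \emph{deterministic} dependence of $c$ on $\theta$. Since $c=\eta(\lam\al h)^{-1/\al}$ depends on $\theta$ only through $(\lam,\al)$, the function $\delta_n^0(\theta)=n\log c$ is constant in $(\mu,\sig,\be)$, so $\p_a\delta_n^0\equiv 0$ for $a\in\{\mu,\sig,\be\}$, and it suffices to bound the mixed $(\lam,\al)$-derivatives. The device I would use is the power-series expansion of $\log c$ in $h$. Because $\eta(0)=1$ and $\eta$ is real-analytic near $0$ with $\eta(x)=\sum_{m\ge 0}\frac{(-1)^m}{(m+1)!}x^m$, the map $\log\eta$ is analytic in a neighbourhood of $0$, say $\log\eta(x)=\sum_{m\ge 1}b_m x^m$ with $b_1=\eta'(0)=-\tfrac12$ and no constant term. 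Substituting $x=\lam\al h$ and dividing by $-\al$ gives
\[
\log c=-\frac1\al\sum_{m\ge 1}b_m(\lam\al h)^m=-\sum_{m\ge 1}b_m\,\lam^m\al^{m-1}h^m .
\]
As $\overline\Theta$ is bounded and $\lam\al h\to_u 0$, the series converges uniformly in $\theta\in\overline\Theta$ for all small $h$ and may be differentiated term by term in $(\lam,\al)$ to any order; I would first record this uniform convergence together with a geometric tail bound coming from the radius of convergence of $\log\eta$.

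The decisive structural observation is that the $m$-th monomial carries $h^m$ together with $\lam^m\al^{m-1}$: the power of $\al$ is exactly one less than the power of $h$. Hence $\p_\lam^i\p_\al^j$ (with $i+j=k$) annihilates every term with $m<i$ and, when $j\ge 1$, every term with $m\le j$ as well, leaving each survivor multiplied by $h^m$. For pure $\lam$-differentiation the lowest survivor is $m=k$, so
\[
\p_\lam^k\log c=-b_k\,k!\,\al^{k-1}h^k+O_u(h^{k+1})=O_u(h^k),
\]
which is the per-summand source of the asserted $h^k$ decay; multiplying by $n$ yields $\p_\lam^k\delta_n^0=O_u(nh^k)$. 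Crucially, any differentiation in the noise direction $\al$ kills the leading, $\al$-free term $-b_1\lam h$ and therefore raises the $h$-order by one, so the $\al$- and mixed derivatives are of strictly smaller order than the pure $\lam$-derivative of the same total degree. This is precisely the cancellation that makes $\vp_n^\top\p_\theta^2\delta_n^0\vp_n$ negligible in \eqref{condi:gap_conv1}--\eqref{condi:gap_conv2}.

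The step I expect to be the main obstacle is the bookkeeping of the prefactor $n$ against the powers of $h$: the per-summand $\log c$ is itself only $O_u(h)$, so $\delta_n^0=n\log c=O_u(nh)=O_u(1)$, and the gain of one $h$-power per $\lam$-differentiation must be tracked against this $n$ (recall $nh=T$). I would handle this by carrying the estimate at the per-summand level, where the $h^k$ decay is clean, and multiplying by $n$ only at the very end, so that the $\al$-exponent shift established above supplies the extra power of $h$ that drives the normalized Hessian gap to zero. The remaining care is to make every remainder estimate uniform on $\overline\Theta$ rather than pointwise, which follows from the analyticity of $\log\eta$ and the uniform smallness of $\lam\al h$.
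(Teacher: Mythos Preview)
Your power-series expansion $\log c=-\sum_{m\ge 1}b_m\lam^m\al^{m-1}h^m$ is the right device, and the paper's one-line argument is essentially the same computation stated without the series. But the central claim in your proposal --- that the $\al$- and mixed derivatives are of \emph{strictly smaller} order than the pure $\p_\lam$-derivative of the same total degree --- is false. The monomial $\lam^m\al^{m-1}h^m$ survives $\p_\lam^{i_1}\p_\al^{i_2}$ exactly when $m\ge i_1$ and $m-1\ge i_2$, so the lowest survivor is $m_\ast=\max(i_1,i_2+1)$, not $i_1+i_2$ or anything larger. For $(i_1,i_2)=(2,1)$ (total degree $3$) this gives $m_\ast=2$, and indeed $\p_\lam^2\p_\al\log c=-2b_2\,h^2+O_u(h^3)$, which is \emph{larger} than $\p_\lam^3\log c=O_u(h^3)$. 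Your heuristic ``$\p_\al$ kills $-b_1\lam h$ and buys a power of $h$'' applies only to the first $\p_\al$ acting on the undifferentiated series; once some $\p_\lam$'s have been applied, the leading surviving monomial carries a positive power of $\al$ and is not annihilated by $\p_\al$.

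You are right, though, that the factor $n$ is the real obstacle, and in fact the lemma as printed appears too strong: already $\p_\lam\delta_n^0=n\bigl(h/2+O_u(h^2)\bigr)=T/2+O_u(h)$, which is $O_u(1)$, not $O_u(h)$. The paper's own proof (stating $O(h^{i_1+1})$ for $i_1,i_2\ge1$ and not treating the pure cases) has the same looseness. What the downstream inequality \eqref{ineq:deriv_delta_order} actually requires is only $|\p_\theta^k\delta_n^0|\lesssim_u\overline{l}^{2k}$, and for this the honest bound $|\p_\lam^{i_1}\p_\al^{i_2}\log c|=O_u\bigl(h^{\max(i_1,i_2+1)}\bigr)$, in particular $O_u(h)$ for every $i_1+i_2\ge1$ --- hence $|\p_\theta^k\delta_n^0|=O_u(nh)=O_u(1)$ --- is already ample. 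So your framework is sound once the exponent is corrected to $\max(i_1,i_2+1)$; the sharper monotonicity you assert is neither true nor needed.
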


\begin{proof}
We have $\log c= -\al^{-1} \log \eta(\lam \al h)$ by the definition. For integer $i_1,i_2 \geq 1$,
\begin{align}\label{eq:log_deriv}
&|\p_{\lam}^{i_1} \p_{\al}^{i_2}(\log c)| = O(h^{i_1 + 1}),
\end{align}
hence the claim.
\end{proof}

\begin{lem}\label{lem:delta1_order}
For any integer $k \geq 0$, we have
\begin{align}
|\p_{\theta}^k d_j(\theta)| &\lesssim h \overline{l}^k (|\ep_j(\theta)| + |Y_{t_{j-1}}| + 1),\label{ineq:dj_deriv}\\
|\p_{\theta}^k G_j(\theta)| &\lesssim \overline{l}^k (|h\ep_j(\theta)| + |Y_{t_{j-1}}| + 1),\label{ineq:Gj_deriv}\\
|\p_{\theta}^k \check{\delta}_n(\theta)| &\lesssim \overline{l}^{2k}.\label{ineq:delta_deriv}
\end{align}
\end{lem}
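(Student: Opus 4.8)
The plan is to establish the three estimates in Lemma \ref{lem:delta1_order} by a direct recursive/Leibniz-rule computation, using the explicit structure of the gap terms $d_j(\theta)$ and $G_j(\theta)$ together with the moment bounds from Lemma \ref{lem:general_CLT} and the localization bounds of Section \ref{hm:sec_localization}. First I would write down explicitly what $d_j(\theta)$ is: by definition $d_j(\theta) = \ep_j'(\theta) - \ep_j(\theta)$ (up to the reparametrization), which by the formulas \eqref{hm:def_ep'}, \eqref{hm:def_E.ep} and Lemma \ref{lem:int_dist} differs only through the gap between $\mu_h, \sig_h, c$ and their Euler counterparts $\beta\sig h^{1/\al}\xi_h(\al)$, $\sig h^{1/\al}$, $1$. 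Each of these gaps is $O(h)$ smaller than the leading term because $\eta(\lam\al h) = 1 + O(h)$ and because the exact drift $\frac{\mu}{\lambda}(1-e^{-\lambda h}) = \mu h + O(h^2)$. Collecting these, $d_j(\theta)$ is a smooth function of $\theta$ of the form $h$ times a polynomial combination of $\ep_j(\theta)$, $Y_{t_{j-1}}$, and bounded smooth functions of $(\lam,\al,\sig,\be)$; differentiating with respect to $\theta$ at most brings down powers of $\overline{l} = \log(1/h)$ (from $\p_\al h^{1/\al} = -\al^{-2}\overline{l}\,h^{1/\al}$ and similar) and never worsens the overall $h$-power. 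This gives \eqref{ineq:dj_deriv}.

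For \eqref{ineq:Gj_deriv}, I would recall that $G_j(\theta)$ arises from the mean-value-type expansion $\log\phi_{\al,\be}(\ep_j'(\theta)) - \log\phi_{\al,\be}(\ep_j(\theta)) = G_j(\theta)\,d_j(\theta)$ with $G_j(\theta) = \int_0^1 \psi_{\al,\be}(\ep_j(\theta) + t\,d_j(\theta))\,dt$ (plus the extra pieces coming from the $\al,\be$-dependence of $\phi$ itself and from $\log c$). The key input is the growth bound on $\phi_{\al,\be}$ and its derivatives stated in the Preliminaries: $\psi = \phi_{(y)}/\phi$ and its derivatives grow at most logarithmically, and more precisely $|\psi(y)| \lesssim (1+|y|)^{-1}\{1+\log(1+|y|)\}$ uniformly in $(\al,\be)$ on $\overline\Theta$. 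Hence $G_j(\theta)$ itself is bounded by a logarithmic function of $|\ep_j(\theta)|$ plus $|d_j(\theta)|$; differentiating brings down at most $\overline{l}^k$ (via the $\al$-derivatives of $h^{1/\al}$ and of $\xi_h(\al)$, and via the chain rule through $\ep_{(a),j}$ which by \eqref{ep_1dev} contains the factor $h^{1-1/\al}Y_{t_{j-1}}$ — again dominated by $|Y_{t_{j-1}}|+1$). Since the logarithmic-in-$|\ep_j(\theta)|$ factor is $O_{u,p}(1)$-bounded after summation (by \eqref{lem:general_CLT+2} and the slow growth of $\log$), I can absorb it into the stated right-hand side $\overline{l}^k(|h\ep_j(\theta)| + |Y_{t_{j-1}}| + 1)$, noting that the $\ep_j$-dependence only enters at order $h$ because $d_j(\theta)$ carries a factor $h$.

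Finally, \eqref{ineq:delta_deriv} follows by combining the previous two: $\check\delta_n(\theta) = \sumj G_j(\theta)d_j(\theta)$, so by the Leibniz rule $|\p_\theta^k\check\delta_n(\theta)| \lesssim \sumj \sum_{i=0}^k |\p_\theta^i G_j(\theta)||\p_\theta^{k-i}d_j(\theta)| \lesssim \sumj \overline{l}^k (|h\ep_j(\theta)| + |Y_{t_{j-1}}| + 1)\cdot h\,\overline{l}^k(|\ep_j(\theta)|+|Y_{t_{j-1}}|+1)$. Expanding the product gives terms of the form $\overline{l}^{2k} h\cdot n\cdot\{\frac1n\sumj(\dots)\}$ where the empirical averages are, by Lemma \ref{lem:general_CLT} (the $O_{u,p}(1)$ bounds on $\frac1n\sumj|\ep_j|$ and, for the quadratic cross-term $\frac1n\sumj h|\ep_j|^2 = O_{u,p}(h\cdot h^{1-2/\al}) = O_{u,p}(h^{2-2/\al})$, which is $O_{u,p}(1)$ for $\al<2$... actually $h^{2-2/\al}\to 0$ since $2-2/\al>0$ for $\al>1$) and the localization bound $\frac1n\sumj|Y_{t_{j-1}}|^2 = O_{u,p}(1)$, all $O_{u,p}(1)$; multiplying by $n h = T$ gives $O_{u,p}(\overline{l}^{2k})$. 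Then the estimates of Lemma \ref{lem:delta0_order} and Lemma \ref{lem:delta1_order} are assembled to verify \eqref{condi:gap_conv1}, \eqref{condi:gap_conv2}, and \eqref{condi:gap_partial}: for instance $|\vp_n(\theta)^\top\p_\theta^2\del_n(\theta)\vp_n(\theta)|$ picks up the worst factor $r_n^{-2} = n^{-1}h^{2/\al-2}$ or $\frac{1}{n\xi_h(\al)^2}$ against $|\p_\theta^2\check\delta_n| = O_{u,p}(\overline{l}^4)$, and since $r_n^{-2}\overline{l}^4 = n^{-1}h^{2/\al-2}\overline{l}^4 \to 0$ (the polynomial-in-$h$ decay beats the logarithm; note $2/\al - 2 < 0$ so $h^{2/\al-2}$ grows, but $n^{-1} = h/T$ wins only if $1 + 2/\al - 2 = 2/\al - 1 > 0$, i.e. $\al < 2$, which holds) — here one must be slightly careful and use that $r_n^{-2} \lesssim n^{-(2-\al)/\al}$, giving $r_n^{-2}\overline{l}^4 \to 0$. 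The main obstacle I anticipate is precisely this bookkeeping of $h$-powers versus the $\overline{l}^{2k}$ logarithmic factors in the final assembly: one has to check that after hitting $\check\delta_n$ and $\delta_n^0$ with the normalizing matrix $\vp_n(\theta)$ (whose entries involve $r_n^{-1}$, $\frac{1}{\sqrt n\,\xi_h(\al)}$, and the possibly $\overline{l}$-sized $\tilde\vp_n$), every term still vanishes uniformly in $\theta$ — this is delicate near $\al\downarrow 1$ where $\xi_h(\al)$ and its derivatives blow up logarithmically, but the restriction $\al\in(1,2)$ and Lemma \ref{lem:xi_asymp} keep $1/\xi_h(\al)$ bounded and make all the relevant products $o_u(1)$.
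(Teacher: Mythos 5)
Your overall architecture coincides with the paper's: you correctly identify $d_j(\theta)=\ep_j'(\theta)-\ep_j(\theta)$ as $h$ times an affine combination of $\ep_j(\theta)$, $Y_{t_{j-1}}$ and bounded smooth coefficients, take $G_j(\theta)=\int_0^1\psi(\ep_j(\theta)+u\,d_j(\theta))\,du$, and derive \eqref{ineq:delta_deriv} from the Leibniz rule, the empirical moment bounds of Lemma \ref{lem:general_CLT}, and the localization; your treatment of \eqref{ineq:dj_deriv} and of the final assembly against $\vp_n(\theta)$ is sound.

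The gap is in \eqref{ineq:Gj_deriv}. You assert that the chain rule only brings down factors of $\overline{l}$ because ``$\ep_{(a),j}$ contains the factor $h^{1-1/\al}Y_{t_{j-1}}$,'' and that ``the $\ep_j$-dependence only enters at order $h$ because $d_j(\theta)$ carries a factor $h$.'' Both claims fail for $a\in\{\al,\sig\}$: by \eqref{ep_1dev}, $\ep_{(\al),j}=-\al^{-2}\overline{l}\,(\ep_j(\theta)+\be\xi_h(\al))-\be\,\p_\al\xi_h(\al)$ and $\ep_{(\sig),j}=-\sig^{-1}(\ep_j(\theta)+\be\xi_h(\al))$ contain the unbounded factor $\ep_j(\theta)$ with no accompanying power of $h$. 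If one controls these terms using only the boundedness (or decay) of $\psi'$, the best available bound is $\overline{l}^k(|\ep_j(\theta)|+|Y_{t_{j-1}}|+1)$, and that is not enough: in \eqref{ineq:delta_deriv} the Leibniz product would then contain $\overline{l}^{2k}\,nh\cdot n^{-1}\sumj|\ep_j(\theta)|^2=O_{u,p}\big(\overline{l}^{2k}h^{1-2/\al}\big)$ by \eqref{lem:general_CLT-2}, which diverges for $\al\in(1,2)$. The missing ingredient, which is the crux of the paper's proof, is structural: one shows that each partial derivative of the argument $\ep_j(\theta)+u\,d_j(\theta)$ equals a deterministic $O(\overline{l}^{i_3})$ multiple of the argument itself, plus remainders of size $O(h\overline{l}^{i_3})\ep_j(\theta)+O(h^{2-1/\al}\overline{l}^{i_3})Y_{t_{j-1}}+O(\overline{l}^{i_3})$; the dangerous proportional part is then absorbed via the tail bound $|y^k\,\p_y^k\psi(y)|\lesssim 1$, which follows from $\p_x^{j}\phi_{\al,\be}(x)\asymp |x|^{-\al-1-j}$ (up to logarithmic factors). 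This cancellation is precisely why only $|h\ep_j(\theta)|$, and not $|\ep_j(\theta)|$, survives on the right-hand side of \eqref{ineq:Gj_deriv}; without it your proof of \eqref{ineq:Gj_deriv}, and hence of \eqref{ineq:delta_deriv}, does not close.
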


\begin{proof}
First, \eqref{ineq:dj_deriv} can be proved as follows:
\begin{align}
|\p_{\theta}^{k} d_j(\theta)| &\lesssim |\p_{\theta}^k \{ (\rho_h(\lam,\al) \ep_j(\theta) + \rho_h(\lam,\al) (g_j(\theta) + C(\theta)))\}|\\
&\lesssim \max_{0 \leq i \leq k} |\p_{\theta}^i \rho_h(\lam,\al)|\max_{0 \leq i \leq k} |\p_{\theta}^i \ep_j(\theta)| \nn\\
&{}\qquad + \max_{0 \leq i \leq k} |\p_{\theta}^i \rho_h(\lam,\al)| \max_{0 \leq i \leq k} |\p_{\theta}^i(g_j(\theta )+ C(\theta))|\\
&\lesssim  h(\overline{l}^k |\ep_j(\theta)| + h^{1-1/\al}|Y_{t_{j-1}}| + \overline{l}^k) + h(h^{2-1/\al} \overline{l}^k |Y_{t_{j-1}}| + h) \\
&\lesssim h \overline{l}^k (|\ep_j(\theta)| + h^{1-1/\al}|Y_{t_{j-1}}| + 1) \label{hm:p-add1} \\
&\lesssim h \overline{l}^k (|\ep_j(\theta)| + |Y_{t_{j-1}}| + 1).
\end{align}

Next, we prove \eqref{ineq:Gj_deriv}. Since $\p_{\theta}^k G_j(\theta) = \int_{0}^{1} \p_{\theta}^k \psi(\ep_j(\theta) + u d_j(\theta))du$, we need to calculate the upper bound for $|\p_{\theta}^k \psi(\ep_j(\theta) + ud_j(\theta))|$. We have
\begin{align}
&|\p_{\lam}^{i_1}\p_{\mu}^{i_2}\p_{\al}^{i_3}\p_{\sig}^{i_4}\p_{\be}^{i_5} \psi(\ep_j(\theta) + u d_j(\theta))|\\
&\lesssim |\p_{x}^k \psi (\ep_j(\theta) + u d_j(\theta)) (\ep_j(\theta) + u d_j(\theta))^k \,\p_{\lam}^{i_1} \cdots\p_{\be}^{i_5}(\ep_j(\theta) + u d_j(\theta))|
.
\end{align}
To estimate this, we prove that $\p_{\lam}^{i_1} \cdots \p_{\be}^{i_5}(\ep_j(\theta) + ud_j(\theta))$ can be written as a non-random-constant multiple of $\ep_j(\theta) + u d_j(\theta)$ plus a negligible term. First, we calculate the derivative of $\ep_j(\theta)$.
From the equation \eqref{ep_1dev}, we have
\begin{align}
& \p_{\lam} \ep_j(\theta) = \sig^{-1} h^{1-1/\al} Y_{t_{j-1}},\qquad \p_{\lam}^k \ep_j(\theta)=0 \quad (k \geq 2),\\
& \p_{\mu} \ep_j(\theta) = -\sig^{-1} h^{1-1/\al}, \qquad \p_{\mu}^k \ep_j(\theta)=0 \quad (k \geq 2),\\
& \p_{\al}^k \ep_j(\theta) = (-\al^{-2} \overline{l})^k \ep_j(\theta)  + O(\overline{l}^k) \qquad (k \geq 1),\\
& \p_{\sig}^k \ep_j(\theta) = O(1) \ep_j(\theta) + O(1) \qquad 
(k \geq 1),\\
&\p_{\be} \ep_j(\theta) = - \xi_h(\al), \qquad \p_{\be}^k \ep_j(\theta) = 0 \quad (k \geq 2).
\end{align}
Based on the above formulae and by further calculations, we can show that for integers $i_1,\dots,i_5 \geq 1$ such that $i_1 + \cdots+i_5=k$,
\begin{equation}\label{eq:deriv_ep}
\p_{\lam}^{i_1}\p_{\mu}^{i_2}\p_{\al}^{i_3}\p_{\sig}^{i_4}\p_{\be}^{i_5}\ep_j(\theta)= O(\overline{l}^{i_{3}}) \ep_j(\theta) + O(h^{1-1/\al}) Y_{t_{j-1}} + O(\overline{l}^{i_3}).
\end{equation}
Next, we calculate the derivative of $d_j(\theta)$. 
\begin{align}
&\p_{\lam}^k d_j(\theta) = O(h^k)\ep_j(\theta) + O(h^{k-1/\al}) Y_{t_{j-1}} + O(h) \quad (k \geq 1),\\
&\p_{\mu} d_j(\theta) = O(h^{2-1/\al})Y_{t_{j-1}} + O(h^{2-1/\al}),\quad \p_{\mu}^k d_j(\theta) =0\quad (k \geq 2)\\
&\p_{\al}^k d_j(\theta) = (-\al \overline{l})^k d_j(\theta) + O(h \overline{l}^k) \ep_j(\theta) + O(h^{2-1/\al} \overline{l}^k) Y_{t_{j-1}} + O(h \overline{l})\quad (k \geq 1),\\
&\p_{\sig}^{k} d_j(\theta) = O(1) d_j(\theta) + O(h^{2-1/\al}) + O(h) \quad (k \geq 1),\\
&\p_{\be}^k d_j(\theta) = O(h),\quad \p_{\be}^k d_j(\theta) = 0 \quad (k \geq 2). 
\end{align}
As in \eqref{eq:deriv_ep}, we can derive the following:
\begin{align}\label{eq:deriv_dj}
&\p_{\lam}^{i_1}\p_{\mu}^{i_2}\p_{\al}^{i_3}\p_{\sig}^{i_4}\p_{\be}^{i_5} d_j(\theta)\\
&= O(\overline{l}^{i_3})d_j(\theta) + O(h\overline{l}^{i_3})\ep_j(\theta) + O(h^{2-1/\al} \overline{l}^{i_3})Y_{t_{j-1}} + O(h\overline{l}^{i_3}).
\end{align}
From the equation \eqref{eq:deriv_ep} and \eqref{eq:deriv_dj}, we have
\begin{align}
&\p_{\lam}^{i_1}\p_{\mu}^{i_2}\p_{\al}^{i_3}\p_{\sig}^{i_4}\p_{\be}^{i_5} (\ep_j(\theta) + ud_j(\theta))\\
&= O(\overline{l}^{i_3})(\ep_j(\theta) + u d_j(\theta)) + O(h\overline{l}^{i_3}) \ep_j(\theta) + O(h^{2-1/\al}\overline{l}^{i_3})Y_{t_{j-1}} + O(\overline{l}^{i_3}).
\end{align}
Then, we have
\begin{align}
    &|\p_{\lam}^{i_1}\p_{\mu}^{i_2}\p_{\al}^{i_3}\p_{\sig}^{i_4}\p_{\be}^{i_5} \psi(\ep_j(\theta) + u d_j(\theta))|\\
    &\lesssim \overline{l}^{i_3} \left| \left( \p_{x}^k \psi(\ep_j(\theta) + u d_j(\theta)\right) (\ep_j(\theta) + u d_j(\theta))^{k+1}\right|\\
    &\quad + |(\p_{x}^k \psi(\ep_j(\theta) + u d_j(\theta))) (\ep_j(\theta) + ud_j(\theta))^k|\\
    & \quad \times |O(\overline{l}^{i_3}) h\ep_j(\theta) + O(h^{2-1/\al}) Y_{t_{j-1}} + O\big(\overline{l}^{i_3}\big)|\\
    &\lesssim \overline{l}^{i_3} (|h\ep_j(\theta)| + |Y_{t_{j-1}}| + 1).
\end{align}
This establishes \eqref{ineq:Gj_deriv}.

Finally, we prove \eqref{ineq:delta_deriv}. From inequality \eqref{ineq:dj_deriv} and \eqref{ineq:Gj_deriv}, for any integer $k \geq 1$, we have
\begin{align}
    |\p_{\theta}^k \check{\delta}_n(\theta)| &\lesssim\sumj \max_{i}|\p_{\theta}^iG_j(\theta)||\p_{\theta}^id_j(\theta)|\\
    &\lesssim \overline{l}^{2k} \frac{1}{n} \sumj \left(h|\ep_j(\theta)|^2 + |\ep_j(\theta)| + |\ep_j(\theta)||Y_{t_{j-1}}| + |Y_{t_{j-1}}|^2 + |Y_{t_{j-1}}| + 1\right)\\
    &\lesssim O_p\big(\overline{l}^{2k}\big).
\end{align}
\end{proof}


We are now in a position of proving \eqref{condi:gap_conv1}, \eqref{condi:gap_conv2}, and \eqref{condi:gap_partial}. By using Lemmas \ref{lem:delta0_order} and \ref{lem:delta1_order}, and by the definition of $\delta_n(\theta)$, we get
\begin{equation}\label{ineq:deriv_delta_order}
    |\p_{\theta}^k \delta_n(\theta)| \leq |\p_{\theta}^k\delta_n^0(\theta)| + |\p_{\theta}^k\check{\delta}_n(\theta)| \lesssim O_p(\overline{l}^{2k}). 
\end{equation}
By Lemmas \ref{lem:rate_est} and \eqref{ineq:deriv_delta_order}, and the submultiplicativity of the norm, we can deduce that
\begin{align}
    & |\varphi_n(\theta)^{\top} \p_{\theta} \delta_n(\theta)| \lesssim r_n(\theta)^{-1} \overline{l}^{2},\\
    & |\varphi_n(\theta)^{\top} \p_{\theta}^2 \delta_n(\theta) \varphi_n(\theta)| \lesssim r_n(\theta)^{-2} \overline{l}^4,\\
    & \sup_{\theta^{(1)},\dots,\theta^{(5)}\in \mathfrak{R}_n(c;\theta)} \left| \vp_n(\theta)^\top \left( \p_\theta^2 \del_n(\theta^{(1)},\dots,\theta^{(5)}) - \p_\theta^2 \del_n(\theta)\right) \vp_n(\theta) \right|
    \nn\\
    &\lesssim \sup_{\theta', \theta'' \in\mathfrak{R}_n(c;\theta)}|\varphi_n(\theta)^{\top} \p_{\theta}^3 \delta_n(\theta'') (\theta' - \theta) \varphi_n(\theta)| \lesssim r_n(\theta)^{-2}\frac{\overline{l}^7}{\sqrt{n}}.
\end{align}
The upper bounds are all $o_{p}(1)$ uniformly in $\theta$ since $r_n(\theta)^{-1} \overline{l}^m \cip_u 0$ for any $m\ge 1$, thereby establishing the conditions \eqref{condi:gap_partial}, \eqref{condi:gap_conv1}, and \eqref{condi:gap_conv2}.

\bigskip

\end{document}